\setlist[1]{itemsep=0ex, topsep=1ex}
\setlist[2]{nosep}
\theoremstyle{plain}
\newtheorem{sbthm}[subsubsection]{Theorem}
\newtheorem{sbprop}[subsubsection]{Proposition}
\newtheorem{sbcor}[subsubsection]{Corollary}
\newtheorem{sblem}[subsubsection]{Lemma}
\theoremstyle{definition}
\newtheorem{sbrem}[subsubsection]{Remark}
\newtheorem{sbpara}[subsubsection]{}
\newtheorem{example}[subsubsection]{Example}
\numberwithin{equation}{section}
\newenvironment{pf}{\proof[\proofname]}{\endproof}
\DeclareMathOperator{\PGL}{PGL}
\DeclareMathOperator{\GL}{GL}
\DeclareMathOperator{\SL}{SL}
\DeclareMathOperator{\Hom}{Hom}
\DeclareMathOperator{\pole}{pole}
\DeclareMathOperator{\Spf}{Spf}
\DeclareMathOperator{\Spa}{Spa}
\DeclareMathOperator{\faces}{face}
\DeclareMathOperator{\class}{class}
\newcommand{\BT}{\mathrm{BT}}
\newcommand{\AP}{\mathrm{AP}}
\newcommand{\smatrix}[1]{\left(\begin{smallmatrix}#1\end{smallmatrix}\right)}
\newcommand{\Pmatrix}[1]{\begin{pmatrix}#1\end{pmatrix}}
\newcommand{\ps}[1]{\llbracket #1 \rrbracket}
\newcommand{\ls}[1]{(\!(#1)\!)}
\newcommand{\mr}{\mathrm}
\newcommand{\textsp}[1]{\text{ #1 }}
\begin{document}

\title{Toroidal compactifications of the moduli spaces of Drinfeld modules}

\author{Takako Fukaya, Kazuya Kato, Romyar Sharifi}

\maketitle

\newcommand\Cal{\mathcal}
\newcommand\define{\newcommand}

\define\gp{\mathrm{gp}}%
\define\fs{\mathrm{fs}}%
\define\an{\mathrm{an}}%
\define\mult{\mathrm{mult}}%
\define\Coker{\mathrm{Coker}\,}%
\define\Ext{\mathrm{Ext}\,}%
\define\rank{\mathrm{rank}\,}%
\define\gr{\mathrm{gr}}%
\define\cHom{\Cal{Hom}}
\define\cExt{\Cal Ext\,}%

\define\cA{\Cal A}
\define\cB{\Cal B}
\define\cC{\Cal C}
\define\cD{\Cal D}
\define\cI{\Cal I}
\define\cO{\Cal O}
\define\cS{\Cal S}
\define\cT{\Cal T}
\define\cM{\Cal M}
\define\cG{\Cal G}
\define\cH{\Cal H}
\define\cR{\Cal R}
\define\cE{\Cal E}
\define\cF{\Cal F}
\define\cN{\Cal N}
\define\cU{\Cal U}
\define\cV{\Cal V}
\define\cY{\Cal Y}
\define\fF{\frak F}
\define\fM{\frak M}
\define\Dc{\check{D}}
\define\Ec{\check{E}}

\newcommand{\N}{{\mathbb{N}}}
\newcommand{\Q}{{\mathbb{Q}}}
\newcommand{\Z}{{\mathbb{Z}}}
\newcommand{\R}{{\mathbb{R}}}
\newcommand{\C}{{\mathbb{C}}}
\newcommand{\bN}{{\mathbb{N}}}
\newcommand{\bQ}{{\mathbb{Q}}}
\newcommand{\bF}{{\mathbb{F}}}
\newcommand{\bZ}{{\mathbb{Z}}}
\newcommand{\bP}{{\mathbb{P}}}
\newcommand{\bR}{{\mathbb{R}}}
\newcommand{\bC}{{\mathbb{C}}}
\newcommand{\bbQ}{{\bar \mathbb{Q}}}
\newcommand{\ol}[1]{\overline{#1}}
\newcommand{\too}{\longrightarrow}
\newcommand{\respect}{\rightsquigarrow}
\newcommand{\compatible}{\leftrightsquigarrow}
\newcommand{\upc}[1]{\overset {\lower 0.3ex \hbox{${\;}_{\circ}$}}{#1}}
\newcommand{\Gmlog}{\bG_{m, \log}}
\newcommand{\Gm}{\bG_m}
\newcommand{\Ga}{\Gamma}
\newcommand{\ep}{\varepsilon}
\newcommand{\Spec}{\operatorname{Spec}}
\newcommand{\val}{{\mathrm{val}}} 
\newcommand{\n}{\operatorname{naive}}
\newcommand{\bs}{\backslash}
\newcommand{\Gal}{\operatorname{{Gal}}}
\newcommand{\gal}{{\rm {Gal}}({\bar \Q}/{\Q})}
\newcommand{\galp}{{\rm {Gal}}({\bar \Q}_p/{\Q}_p)}
\newcommand{\gall}{{\rm{Gal}}({\bar \Q}_\ell/\Q_\ell)}
\newcommand{\wep}{W({\bar \Q}_p/\Q_p)}
\newcommand{\wel}{W({\bar \Q}_\ell/\Q_\ell)}
\newcommand{\Ad}{{\rm{Ad}}}
\newcommand{\BS}{{\rm {BS}}}
\newcommand{\even}{\operatorname{even}}
\newcommand{\End}{{\rm {End}}}
\newcommand{\odd}{\operatorname{odd}}
\newcommand{\np}{\text{non-$p$}}
\newcommand{\g}{{\gamma}}
\newcommand{\G}{{\Gamma}}
\newcommand{\Lam}{{\Lambda}}
\newcommand{\La}{{\Lambda}}
\newcommand{\lam}{{\lambda}}
\newcommand{\la}{{\lambda}}
\newcommand{\uL}{{{\hat {L}}^{\rm {ur}}}}
\newcommand{\uQp}{{{\hat \Q}_p}^{\text{ur}}}
\newcommand{\sel}{\operatorname{Sel}}
\newcommand{\dt}{{\rm{Det}}}
\newcommand{\Sig}{\Sigma}
\newcommand{\fil}{{\rm{fil}}}
\newcommand{\spl}{{\rm{spl}}}
\newcommand{\st}{{\rm{st}}}
\newcommand{\Isom}{{\rm {Isom}}}
\newcommand{\Mor}{{\rm {Mor}}}
\newcommand{\bg}{\bar{g}}
\newcommand{\id}{{\rm {id}}}
\newcommand{\cone}{{\rm {cone}}}
\newcommand{\al}{a}
\newcommand{\ChL}{{\cal{C}}(\La)}
\newcommand{\Image}{{\rm {Image}}}
\newcommand{\toric}{{\operatorname{toric}}}
\newcommand{\torus}{{\operatorname{torus}}}
\newcommand{\Qp}{{\mathbb{Q}}_p}
\newcommand{\barQp}{{\mathbb{Q}}_p}
\newcommand{\Qpur}{{\mathbb{Q}}_p^{\rm {ur}}}
\newcommand{\Zp}{{\mathbb{Z}}_p}
\newcommand{\Zl}{{\mathbb{Z}}_l}
\newcommand{\Ql}{{\mathbb{Q}}_l}
\newcommand{\Qlur}{{\mathbb{Q}}_l^{\rm {ur}}}
\newcommand{\F}{{\mathbb{F}}}
\newcommand{\eps}{{\epsilon}}
\newcommand{\add}{{{\rm add}}}
\newcommand{\epsLa}{{\epsilon}_{\La}}
\newcommand{\epsLaVxi}{{\epsilon}_{\La}(V, \xi)}
\newcommand{\epsOLaVxi}{{\epsilon}_{0,\La}(V, \xi)}
\newcommand{\Qplin}{{\mathbb{Q}}_p(\mu_{l^{\infty}})}
\newcommand{\otimesQplin}{\otimes_{\Qp}{\mathbb{Q}}_p(\mu_{l^{\infty}})}
\newcommand{\galFl}{{\rm{Gal}}({\bar {\Bbb F}}_\ell/{\Bbb F}_\ell)}
\newcommand{\gallur}{{\rm{Gal}}({\bar \Q}_\ell/\Q_\ell^{\rm {ur}})}
\newcommand{\galFF}{{\rm {Gal}}(F_{\infty}/F)}
\newcommand{\galFv}{{\rm {Gal}}(\bar{F}_v/F_v)}
\newcommand{\galF}{{\rm {Gal}}(\bar{F}/F)}
\newcommand{\epsVxi}{{\epsilon}(V, \xi)}
\newcommand{\epsOVxi}{{\epsilon}_0(V, \xi)}
\newcommand{\plim}{\lim_
{\scriptstyle 
\longleftarrow \atop \scriptstyle n}}
\newcommand{\sig}{{\sigma}}
\newcommand{\ga}{{\gamma}}
\newcommand{\del}{{\delta}}
\newcommand{\Vss}{V^{\rm {ss}}}
\newcommand{\Bst}{B_{\rm {st}}}
\newcommand{\Dpst}{D_{\rm {pst}}}
\newcommand{\Dcrys}{D_{\rm {crys}}}
\newcommand{\DdR}{D_{\rm {dR}}}
\newcommand{\Fin}{F_{\infty}}
\newcommand{\Kla}{K_{\lambda}}
\newcommand{\Ola}{O_{\lambda}}
\newcommand{\Mla}{M_{\lambda}}
\newcommand{\Det}{{\rm{Det}}}
\newcommand{\Sym}{{\rm{Sym}}}
\newcommand{\LaSa}{{\La_{S^*}}}
\newcommand{\cX}{{\cal {X}}}
\newcommand{\MHG}{{\frak {M}}_H(G)}
\newcommand{\tauMla}{\tau(M_{\lambda})}
\newcommand{\Fvur}{{F_v^{\rm {ur}}}}
\newcommand{\Lie}{{\rm {Lie}}}
\newcommand{\cL}{{\cal {L}}}
\newcommand{\cW}{{\cal {W}}}
\newcommand{\fq}{{\frak {q}}}
\newcommand{\cont}{{\rm {cont}}}
\newcommand{\SC}{{SC}}
\newcommand{\Om}{{\Omega}}
\newcommand{\dR}{{\rm {dR}}}
\newcommand{\crys}{{\rm {crys}}}
\newcommand{\hatSig}{{\hat{\Sigma}}}
\newcommand{\rdet}{{{\rm {det}}}}
\newcommand{\BdR}{{B_{\rm {dR}}}}
\newcommand{\BdRO}{{B^0_{\rm {dR}}}}
\newcommand{\Bcrys}{{B_{\rm {crys}}}}
\newcommand{\Qw}{{\mathbb{Q}}_w}
\newcommand{\barkappa}{{\bar{\kappa}}}
\newcommand{\cP}{{\Cal {P}}}
\newcommand{\cQ}{{\Cal {Q}}}
\newcommand{\cZ}{{\Cal {Z}}}
\newcommand{\oppLa}{{\Lambda^{\circ}}}
\newcommand{\sn}{{{\rm {sn}}}}
\newcommand{\et}{\text{\'et}}
\newcommand{\ZR}{{{\rm {ZR}}}}
\newcommand{\nl}{{{\rm {nl}}}}
\newcommand{\adic}{{{\rm {adic}}}}
\newcommand{\Sa}{{{\rm {Sa}}}}
\newcommand{\pair}{{{\rm {pair}}}}
\newcommand{\sep}{{{\rm{sep}}}}

\begin{abstract}
We construct toroidal compactifications of the moduli spaces of Drinfeld $\F_q[T]$-modules of rank $d$ with level $N$ structure as moduli spaces of log Drinfeld modules of rank $d$ with level $N$ structure. The toroidal compactifications are log regular schemes associated to rational cone decompositions, and there are regular ones among them. To construct these toroidal compactifications, we blow up the Satake compactification of Pink and employ the theory of formal moduli and a process of iterated Tate uniformization.
\end{abstract}

\setcounter{tocdepth}{1}
\tableofcontents

\section{Introduction} 

\subsection{Summary}\label{s1.1}

\begin{sbpara}\label{001} Let $F$ be a function field in one variable over a finite field. Fix a place $\infty$ of $F$, and let $A$ be the subring of $F$ consisting of all elements which are integral outside $\infty$. 

In this paper, 
we construct toroidal compactifications of the moduli spaces of Drinfeld $A$-modules
in the case $A=\F_q[T]$. 
In a planned sequel to this paper, they will be constructed for general $A$ by a method of reduction to the case $A=\F_q[T]$. 
\end{sbpara}

\begin{sbpara}\label{002}

Assume that $A=\F_q[T]$ for some prime power $q$, and let $N$ be an element of $A$ which does not belong to $\F_q$.  

In the case that $N$ has at least two distinct prime divisors (resp., $N$ has only one prime divisor), let $\cM^d_N$ over $A$ (resp.,  over $A[\frac{1}{N}]$) be the moduli space of Drinfeld $A$-modules of rank $d$ with Drinfeld level $N$ structure. Recall that by \cite[Section 5]{D}, the moduli space $\cM^d_N$ is regular, and $A[\frac{1}{N}]\otimes_A \cM^d_N$ is smooth over $A[\frac{1}{N}]$. 

In this paper, we construct toroidal compactifications of $\cM^d_N$ as the moduli spaces of {\it log Drinfeld modules of rank $d$ with level $N$ structure}. 
\end{sbpara}

\begin{sbpara}\label{004}
Similarly to toroidal compactifications of the moduli spaces of abelian varieties as in \cite{AMRT, FC}, our toroidal compactifications are indexed by cone decompositions and form a projective system under subdivisions.  After inverting $N$, they are log smooth over $A[\frac{1}{N}]$, and among them, smooth toroidal compactifications are cofinal.  
In the case that $N$ has at least two prime divisors, toroidal compactifications (without inverting $N$) are log regular (or equivalently, log smooth over $\F_q$ by \ref{logD20}), and among them, regular toroidal compactifications are cofinal. 
\end{sbpara}

\begin{sbpara}\label{003}  The compactification of $\cM^d_N$ of Satake-Baily-Borel type (called a Satake compactification in \cite{P2}) was constructed by Kapranov \cite{K} in the case that $A=\F_q[T]$ and by Pink \cite{P2} in general.

In a short summary \cite{P1}, Pink explained how to construct toroidal compactifications of $\cM^d_N$ quoting  a  work of K. Fujiwara. The details are not yet published. In this paper, we use the ideas of  Pink and Fujiwara.

The  toroidal compactification of the moduli space of Drinfeld $A$-modules with $A=\F_q[T]$ and $N=T$ is contained in the work of Puttick \cite{PA}.

In the case $d=2$, our toroidal compactification coincides with the proper model of the Drinfeld modular curve constructed by Lehmkuhl \cite{TL}.

\end{sbpara}

\subsection{Log Drinfeld modules with level structures}\label{lD0}
Let $F$, $\infty$, and $A$ be as in \ref{001}. Let $p$ be the characteristic of $F$.

\begin{sbpara}\label{logD0}

 Let $S$ be a scheme over $A$.  In this paper, we frequently refer to Drinfeld $A$-modules over $S$ more simply as Drinfeld modules (over $S$). 

Recall that a Drinfeld module over $S$ of rank $d\geq 1$ is a pair $(\cL, \phi)$ where $\cL$ is a line bundle over $S$ and $\phi$ is an action of $A$ on the additive group scheme $\cL$, satisfying a certain condition. A generalized Drinfeld module  over $S$ is such a pair $(\cL, \phi)$  satisfying a slightly weaker condition, found in \ref{gen_Drin_mod}.   

The Satake compactification of $\cM^d_N$ obtained in \cite{K} and \cite{P2} is regarded as the moduli space of generalized Drinfeld modules  (see Section \ref{Satake}). 

For the toroidal compactification, we introduce the notion of a log Drinfeld module of rank $d$ with level $N$ structure below.

\end{sbpara}

\begin{sbpara}\label{logD2} Let $N$ be an element of $A$ which does not belong to the total constant field of $F$. Let $d\geq 1$ be an integer. 

Recall that for a scheme $S$ over $A$ and for a Drinfeld $A$-module $(\cL, \phi)$ over $S$ of rank $d$, a Drinfeld level $N$ structure on $(\cL,\phi)$ is a homomorphism of $A$-modules
 $$
 	\iota \colon (\tfrac{1}{N}A/A)^d \to \cL,
$$
where $A$ acts on $\cL$ via $\phi$, satisfying a certain condition  (\ref{Dlevel}). If $N$ is invertible on $S$, the condition is that $\iota$ induces an isomorphism  
 $$
 	(\tfrac{1}{N}A/A)^d \xrightarrow{\sim} \phi[N]
$$ 
of group schemes, where $\phi[N] =\ker(\phi(N) \colon \cL\to \cL)$.
 
 \end{sbpara}

\begin{sbpara}\label{log1} In this paper, we consider schemes with log structures.  For generalities of log structures, we refer the reader to the paper \cite{KK1} of the second author, the paper \cite{IL} of Illusie, and the book \cite{O} of Ogus. See \ref{sat} of this paper for a short introduction. A scheme endowed with a log structure is called a log scheme. In this paper, we use saturated log structures (\ref{sat}). Saturated log structures relate well with toric geometry and the cone decompositions which we use in our theory of toroidal compactifications.

\end{sbpara}

\begin{sbpara}\label{log2}

A standard example of a saturated log structure which we consider is as follows.

Let $S$ be a normal scheme, and let $U$ be a dense open subset of $S$ with the inclusion morphism $j \colon U\to S$. Then the sheaf 
$$
	M_S :=  \cO_S\cap j_*(\cO_U^\times) =\{f\in \cO_S \mid f\;\text{is invertible on $U$}\} \subset j_*(\cO_U)
$$ 
and the map $M_S \hookrightarrow \cO_S$ form a saturated log structure. We call $M_S$ the log structure on $S$ associated to $U$ (or the log structure on $S$ associated to the closed set $S\setminus U$).

\end{sbpara}

\begin{sbpara}\label{log3}  Our toroidal compactifications are related to toric varieties. Consider the affine toric variety $\Spec(\F_p[P])$ over $\F_p$, for $P$ a finitely generated, torsion-free saturated monoid (fs monoid for short, see \ref{sat}), where $\F_p[P]$ denotes the semigroup ring of $P$ over $\F_p$. It has the standard log structure.

For a scheme $S$ over $A$ endowed with a log structure $M_S$, the following four conditions are equivalent:
\begin{enumerate}
\item[(i)] The scheme $S$ is log smooth over $\F_p$.
\item[(ii)] The scheme $S$ is log regular (\ref{logreg}) and locally of finite type over $A$.
\item[(iii)] \'Etale locally on $S$, there are an fs monoid $P$ and an \'etale morphism $S\to \Spec(\F_p[P])$ such that $M_S$ is the inverse image of the standard log structure on $\Spec(\F_p[P])$.
\item[(iv)] \'Etale locally on $S$, there are an fs monoid $P$ and a smooth morphism $S\to \Spec(\F_p[P])$ such that $M_S$ is the inverse image of the standard log structure on $\Spec(\F_p[P])$.
\end{enumerate}
Under these equivalent conditions,  $S$ is normal, and $M_S=\cO_S\cap j_*(\cO_U^\times)$, where $U$ is the dense open subset of $S$ consisting of all points at which the log structure of $S$ is trivial. 

\end{sbpara}

\begin{sbpara}\label{tL} Let $S$ be a scheme with a saturated log structure $M_S$. For a line bundle  $\cL$ on $S$, we define a sheaf $\overline{\cL}\supset \cL$  on the \'etale site of $S$. Put succinctly, $\overline{\cL}$ is an enlargement of $\cL$ allowing poles which belong to the log structure of $S$. It plays a role here because  torsion points of  generalized Drinfeld modules have poles. 

Let $\cL^\times\subset \cL$ denote the sheaf of bases of $\cL$. Setting
$$
	M_S^{-1}=\{f^{-1} \mid f\in M_S\}\subset M_S^{\gp}=\{fg^{-1} \mid f,g \in M_S\},
$$ 
we let $M_S^{-1} \cL^\times$ denote the quotient $M_S^{-1}\times^{\cO_S^\times}\cL^\times$ of $M_S^{-1}\times \cL^\times$ by the relation $(hu, e)\sim (h, ue)$ for $h\in M_S^{-1}$, $u\in \cO^\times_S$, and $e\in \cL^\times$.
We then define $\overline{\cL}$ as
$$\overline{\cL} = \cL \cup_{\cL^\times} M_S^{-1}\cL^\times,$$
where $\cup_{\cL^\times}$ means the union obtained by identifying $\cL^\times$ viewed as a subsheaf of $\cL$ and of 
$M_S^{-1}\cL^\times$. 
Note that the additive group structure of $\cL$ does not extend to $\overline{\cL}$.

For the trivial line bundle $\cO_S$, we have $\overline{\cO_S} = \cO_S \cup_{\cO_S^\times} M_S^{-1}$, which
can be identified with the sheaf of morphisms to the log scheme $\mathbb{P}^1_{\Z}$, the projective line over $\Z$, with the 
log structure associated to the divisor at infinity. 
 
We define $$\pole \colon \overline{\cL} \to M_S/\cO_S^\times$$ as the map which sends $\cL$ to $1$ and sends $f^{-1}e$  to $f\bmod \cO_S^\times$ for $f\in M_S$ and $e\in \cL^\times$.  
 
  \end{sbpara}
  
  \begin{sbpara}\label{log4} Now taking $S$ to be an $A$-scheme, let $j \colon U \to S$ be an inclusion of a dense open 
subset as in \ref{log2}.  By a 
  \emph{generalized Drinfeld module over $(S,U)$ of rank $d$ with level $N$ structure}, we mean a pair $((\cL, \phi), \iota)$ of
\begin{itemize}
	\item a generalized Drinfeld module $(\cL, \phi)$ over $S$ whose restriction $(\cL,\phi)|_U$ to $U$ is a Drinfeld module of rank $d$ and 
        \item a Drinfeld level $N$ structure $\iota$ on $(\cL, \phi)|_U$.
\end{itemize} 

We can show  (see \ref{logD53}) that for such a generalized Drinfeld module over $(S,U)$, the level $N$ structure $\iota$ on $(\cL,\phi)|_U$ extends uniquely to a map of sheaves
$$
 	\iota \colon (\tfrac{1}{N}A/A)^d\to \overline{\cL}\subset j_*(\cL|_U),
$$ 
where $\overline{\cL}$ is defined by the log structure of $S$ in \ref{log2}.

  \end{sbpara}
 
 \begin{sbpara}\label{logD6}

Let $S$ be a scheme over $A$ with saturated log structure $M_S$. By a
 \emph{log Drinfeld module over $S$} of rank $d$ with level $N$ structure, we mean a pair $((\cL, \phi), \iota)$ of  
a generalized Drinfeld module $(\cL, \phi)$ over $S$ and a map 
$$
	\iota \colon (\tfrac{1}{N}A/A)^d\to \overline{\cL}
$$ 
such that \'etale locally on $S$, there are 
\begin{itemize}
	\item a log scheme $S'$ over $A$ satisfying the equivalent conditions (i)--(iv) in \ref{log3},
	\item a morphism $f \colon S\to S'$ of log schemes over $A$,  
	\item a generalized Drinfeld module  $((\cL', \phi'), \iota')$ over $(S',U')$ of rank $d$ with level $N$ structure in the sense of \ref{log4}, 
	where $U'$ denotes the dense open set of $S'$ at which the log structure of $S'$ is trivial, and 
	\item an isomorphism between $((\cL, \phi), \iota)$ and the pullback of $((\cL', \phi'), \iota')$ 
	by $f$, where $\iota'$ is regarded as a map $(\frac{1}{N}A/A)^d\to \overline{\cL'}$ as in \ref{log4}.
 \end{itemize}
    
 Note that we do not attempt to define the notion of a log Drinfeld module without reference to a level structure.

\end{sbpara}

   \begin{sbrem}\label{logD8} \
\begin{enumerate}
 \item[(1)]   If $S$ is a  scheme over $A$ with trivial log structure, the notions of a log Drinfeld module over $S$ of rank $d$ with level $N$ structure
 and of a Drinfeld module over $S$ of rank $d$ with Drinfeld level $N$ structure are equivalent (see \ref{Deflog1}). 
 \item[(2)] A Drinfeld module is additive and a log structure is multiplicative. Hence it is not very easy to connect these two
different notions to define a log Drinfeld module.  In our formulation, a log Drinfeld module  is like a centaur: it has the body $\overline{\cL}$ whose
half $\cL$  is additive and whose other half $M_S^{-1}\cL^\times$  is of multiplicative nature.  
\end{enumerate}
   \end{sbrem}
   
 We will prove the following two propositions in the remarks following \ref{regdiv} and in \ref{3logD9}, respectively.

 \begin{sbprop}\label{logD7} 
   Let $S$ be as in \ref{logD6}, and let $((\cL, \phi), \iota)$ be a log Drinfeld module over $S$ of rank $d$ with level $N$ structure. 
   Let $a,b\in (\frac{1}{N}A/A)^d$. Then locally on  $S$, we have either 
$$
	\frac{\pole(\iota(a))}{\pole(\iota(b))} \in M_S/\cO_S^\times \quad\text{or}\quad
	\frac{\pole(\iota(b))}{\pole(\iota(a))} \in M_S/\cO_S^\times\quad \text{in}\quad M_S^{\gp}/\cO^\times_S.
	$$
 \end{sbprop}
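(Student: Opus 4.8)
The plan is to reduce to the local model: by definition \ref{logD6}(i) of a log Drinfeld module, \'etale locally on $S$ we may assume there is a morphism $f\colon S\to S'$ with $S'$ log smooth over $\F_p$ (so in particular $S'$ is normal and $M_{S'}=\cO_{S'}\cap j_*(\cO_U^\times)$ as in \ref{log2}), together with a generalized Drinfeld module over $(S',U)$ with level structure $\iota'$, such that $((\cL,\phi),\iota)$ is the pullback of $((\cL',\phi'),\iota')$ along $f$. Since $\pole$ and the monoid structures are functorial, and since the pullback map $M_{S'}^{\gp}/\cO_{S'}^\times \to M_S^{\gp}/\cO_S^\times$ is a monoid homomorphism, it suffices to prove the dichotomy for $((\cL',\phi'),\iota')$ on $S'$. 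Thus I would assume from the outset that $S=S'$ is normal with $U$ dense open, $M_S$ the log structure associated to $U$, and $\iota'\colon(\tfrac1N A/A)^d\to \cL|_U$ a genuine Drinfeld level $N$ structure on a Drinfeld module, uniquely extended to $\iota\colon(\tfrac1N A/A)^d\to\overline{\cL}\subset j_*(\cL|_U)$ by \ref{log4}.

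Next I would pass to the generic points and reduce to a valuation-ring computation. Since $S$ is normal and $U$ is dense, for any irreducible component $Z$ of $S\smallsetminus U$ of codimension one the local ring $\cO_{S,Z}$ is a discrete valuation ring with fraction field $K = \F_q(S)$ (or the appropriate local field), and $\overline{\cL}$ near $Z$ is governed by the valuation $v_Z$. Working in a trivialization $\cL|_{\text{loc}}\cong\cO_S$, the sections $\iota(a)$ and $\iota(b)$ become elements $x_a, x_b$ of $K$, and $\pole(\iota(a))$ is essentially $(\text{the negative part of } v_Z(x_a))$, i.e. $\pole(\iota(a)) = t^{-\max(0,-v_Z(x_a))}$ up to units where $t$ is a uniformizer. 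The ratio $\pole(\iota(a))/\pole(\iota(b))$ lying in $M_S/\cO_S^\times$ then amounts, at each codimension-one point, to the inequality $\max(0,-v_Z(x_a)) \ge \max(0,-v_Z(x_b))$ (with appropriate adjustments when $v$ is zero). Because $x_a$ and $x_b$ are $N$-torsion points of the same Drinfeld module, the set $\{x_c : c\in(\tfrac1N A/A)^d\}$ forms an $A$-module under $\phi$, and the key classical fact — that the divisor of poles of torsion points is controlled by the ``minimal model'' of the generalized Drinfeld module, so that torsion points with nontrivial pole all become proportional under the Tate-uniformization picture — forces the valuations $v_Z(x_a), v_Z(x_b)$ to be comparable. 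Concretely: over $\cO_{S,Z}$ the generalized Drinfeld module has a canonical form (a Tate datum / a chain of Drinfeld modules as in the analytic uniformization over the field of fractions), and the torsion points that acquire poles all lie in a rank-one sub-lattice, on which the pole order is a monotone function of the ``lattice coordinate'', giving comparability of $\max(0,-v_Z(x_a))$ and $\max(0,-v_Z(x_b))$.

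Once comparability holds at every codimension-one point of $S\smallsetminus U$, I would patch: the condition ``$\pole(\iota(a))/\pole(\iota(b))\in M_S/\cO_S^\times$'' can be checked on $\overline{\cL}$ using the description $M_S=\cO_S\cap j_*(\cO_U^\times)$, and a rational function on a normal scheme which is regular in codimension one is regular (this is the algebraic Hartogs lemma / reflexivity of $\cO_S$ on a normal scheme). So if $v_Z(\pole(\iota(a))\pole(\iota(b))^{-1})\ge 0$ for all codimension-one $Z$, then $\pole(\iota(a))/\pole(\iota(b))\in M_S/\cO_S^\times$ globally; and the dichotomy at the level of integers $\max(0,-v_Z(x_a))$ vs. $\max(0,-v_Z(x_b))$ (one is always $\ge$ the other, though which one may a priori vary with $Z$) means that after possibly shrinking $S$ to make the comparison uniform — which is where ``locally on $S$'' in the statement is used — one of the two inclusions holds. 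The main obstacle is the middle step: establishing that the pole orders of $N$-torsion points of a generalized Drinfeld module over a discrete valuation ring are totally ordered, i.e. that all torsion points with poles are ``collinear''. This should follow from the structure theory of generalized Drinfeld modules (the analogue of Tate curves / the semistable reduction picture — a generalized Drinfeld module degenerates to a Drinfeld module of lower rank with a lattice, and the poles of torsion points are exactly the nonzero lattice contributions, which live in a one-dimensional space up to the relevant valuation), but making this precise in the generalized/log setting, and uniformly across all codimension-one points, is the technical heart of the argument and is presumably where the results on generalized Drinfeld modules referenced as \ref{gen_Drin_mod} and \ref{logD53} do the real work.
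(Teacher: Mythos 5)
Your first reduction---to the local model $S'$ which is log smooth over $\F_p$, hence log regular, with $\iota$ the canonical extension of a genuine level structure on $(S',U)$---is correct and is exactly how the paper begins. After that there is a genuine gap. The comparison you establish at each codimension-one point $Z$ is vacuous: $\max(0,-v_Z(x_a))$ and $\max(0,-v_Z(x_b))$ are two integers, so one is always $\geq$ the other, and no structure theory of Tate uniformizations is needed for that. The real difficulty, which your argument does not address, is that the \emph{direction} of this comparison can differ between two boundary divisors $Z_1,Z_2$ passing through the same point $s$, and then neither ratio lies in $M_{S,s}/\cO_{S,s}^\times$. Shrinking $S$ around $s$ cannot repair this, since $s$ stays on both divisors in every neighborhood; so the "patch by Hartogs after shrinking" step fails. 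Indeed the paper's example \ref{Sdiv} shows the conclusion is \emph{false} for a merely normal pair $(S,U)$ with a generalized Drinfeld module with level structure (on the Satake compactification one has torsion points with poles $t_1,t_2$ satisfying neither $t_1\mid t_2$ nor $t_2\mid t_1$ at a boundary point), so any proof using only normality plus codimension-one data, as yours does, cannot succeed.

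The missing ingredients are (a) the additive structure of the level structure and (b) the log \emph{regularity} (not just normality) of $S'$. Since $\iota(a)+\iota(b)=\iota(a+b)$ on $U$ and $\iota(a+b)$ also extends to $\overline{\cL}$, one gets, after trivializing $\cL$ and writing $x=\pole(\iota(a))$, $y=\pole(\iota(b))$ up to units, the relation $x^{-1}+y^{-1}\in \cO_{S,s}\cup M_{S,s}^{-1}$. The paper's Lemma \ref{1logD10} then shows that on a strictly local log regular ring this relation forces $xy^{-1}\in M_{S,s}$ or $yx^{-1}\in M_{S,s}$: one completes to $k\ps{P}\ps{T_1,\dots,T_n}$ and, assuming incomparability, chooses monoid homomorphisms $g,h\colon P\to\N$ with $g(\gamma_1)>g(\gamma_2)$ and $h(\gamma_1)<h(\gamma_2)$ to specialize into $k\ps{t_1,t_2}$, where the relation $v_1t_1^{-a}t_2^{-c}+v_2t_1^{-b}t_2^{-d}=v_3t_1^{-e}t_2^{-f}$ with $a>b$, $c<d$ is impossible. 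This is where the dichotomy actually comes from; the Drinfeld-module structure theory over valuation rings plays no role in this particular proposition.
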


\begin{sbprop}\label{logD9} 
	Let $S$ and $((\cL, \phi), \iota)$ be as in \ref{logD7}, and suppose that $A=\F_q[T]$.  Then:
	\begin{enumerate}
	\item[(1)] Locally on $S$, there exists an $A/NA$-basis $(e_i)_{0\leq i\leq d-1}$ of $(\frac{1}{N}A/A)^d$ such that 
	for each $i$, we have
	$$
		\frac{\pole(\iota(a))}{\pole(\iota(e_i))} \in M_S/\cO^\times_S \subset M_S^{\gp}/\cO^\times_S
	$$
	for all $a\in (\frac{1}{N}A/A)^d$ with $a \notin \sum_{j=0}^{i-1} (A/NA)e_j$.
	\item[(2)] The values $\pole(\iota(e_i))$ with $0\leq i\leq d-1$ are independent of the choice of basis $(e_i)_i$ in (1), 
	and $\pole(\iota(e_0))=1$ for every choice of $(e_i)_i$.  
	\end{enumerate}
\end{sbprop}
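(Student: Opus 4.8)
Both assertions are étale-local on $S$, so by \ref{logD6} I may assume that $S$ is log smooth over $\F_p$ with log structure associated to a dense open $U\subseteq S$, and that $((\cL,\phi),\iota)$ comes from a generalized Drinfeld module over $(S,U)$ of rank $d$ with level $N$ structure; then $(\cL,\phi)|_U$ is a rank $d$ Drinfeld module and, by \ref{logD53}, $\iota$ extends its Drinfeld level $N$ structure. Shrinking $S$, I may also assume $\cL\cong\cO_S$ and $S\smallsetminus U=D_1\cup\cdots\cup D_m$ with the $D_i$ prime divisors, and --- since $M:=(\tfrac1N A/A)^d$ is finite and \ref{logD7} applies to each of the finitely many pairs in $M$ --- that the elements $\pole(\iota(a))$ ($a\in M$) are totally ordered for divisibility in $M_S^{\gp}/\cO_S^\times$. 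Writing $v(a)=(v_i(a))_{1\le i\le m}\in\N^m$ with $v_i(a)=\max(0,-\ord_{D_i}(\iota(a)))$, divisibility of $\pole(\iota(a))$ by $\pole(\iota(b))$ becomes $v(b)\le v(a)$ coordinatewise, and $\pole(\iota(a))=1$ iff $\iota(a)\in\cL$ iff $v(a)=0$; note also the elementary bound $v(a+b)\le\max(v(a),v(b))$.

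Next I would invoke the local classification of generalized Drinfeld $\F_q[T]$-modules with level structure over such bases, i.e.\ the Tate--Drinfeld uniformization. After a further étale localization it exhibits $(\cL,\phi)$ in terms of a Drinfeld module $\psi$ of some rank $d'$ with $1\le d'\le d$ having good reduction over $S$, a rank $d-d'$ Tate lattice $\Lambda$, and a degeneration datum which --- using \ref{logD7} to force the relevant parameters to be comparable --- may be taken to consist of a flag of saturated $A$-submodules $0=F_0\subsetneq F_1\subsetneq\cdots\subsetneq F_s=\Lambda$ together with $q$-parameters in $M_S$ inducing a strictly increasing sequence of pole levels $0=p_0<p_1<\cdots<p_s$. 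Under $\iota$, $M$ is identified with $\phi[N]$, which sits in an exact sequence $0\too\psi[N]\too\phi[N]\too\Lambda/N\Lambda\too 0$; as $\Lambda/N\Lambda$ is free over $A/NA$ this splits, so $M=M^0\oplus(\Lambda/N\Lambda)$ with $M^0:=\iota^{-1}(\psi[N])$ free of rank $d'$, in particular nonzero. The torsion points of $\psi$ are honest sections of $\cL$, so $\iota(M^0)\subseteq\cL$ and $v\equiv 0$ on $M^0$; and the analysis of $e_\Lambda$-expansions identifies $v$ with the pullback along $M\to M/M^0\cong\Lambda/N\Lambda$ of the function $\overline\lambda\mapsto p_{j(\lambda)}$, where $j(\lambda)=\min\{j:\lambda\in F_j+N\Lambda\}$. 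Consequently, for each value $p$ the sublevel set $M_{\le p}:=\{a\in M:v(a)\le p\}$ equals $M^0\oplus(F_j/NF_j)$ for the appropriate $j$; since $F_j$ is saturated, $F_j/NF_j$ is a free $A/NA$-direct summand of $\Lambda/N\Lambda$, so each $M_{\le p}$ is a free $A/NA$-direct summand of $M$, and $M^0=M_{\le p_0}\subsetneq M_{\le p_1}\subsetneq\cdots\subsetneq M_{\le p_s}=M$.

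To finish part (1) I would refine this chain of free direct summands to a chain $0=M_0\subsetneq M_1\subsetneq\cdots\subsetneq M_d=M$ of free $A/NA$-direct summands with $\rank M_i=i$ and $M_{d'}=M^0$ --- possible because a free direct summand contained in another with free quotient can be interpolated by free direct summands of consecutive ranks --- and choose $e_i$ to complete a basis of $M_i$ to a basis of $M_{i+1}$; then $(e_i)_{0\le i\le d-1}$ is an $A/NA$-basis of $M$ with $\sum_{j<i}(A/NA)e_j=M_i$. By construction $M_i\supseteq M_{<v(e_i)}$ (the sublevel set for the value just below that of $e_i$), while $e_i\notin M_i$, so $\min\{v(a):a\notin M_i\}=v(e_i)$ --- which is exactly condition (1.1). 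For part (2): $v(e_i)=\min\{v(a):a\notin M_i\}$ depends only on $M_i$, and from $\rank M_{<v(e_i)}\le i<\rank M_{\le v(e_i)}$ (both sublevel sets being free direct summands) one sees that $v(e_i)$ is the unique value $p$ with $\rank M_{<p}\le i<\rank M_{\le p}$, a quantity intrinsic to $v$; hence $\pole(\iota(e_i))$ is independent of the chosen basis. For $i=0$ this says $v(e_0)=\min\{v(a):a\ne 0\}$, which is $0$ because $M^0\smallsetminus\{0\}\ne\emptyset$, i.e.\ $\pole(\iota(e_0))=1$.

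The main obstacle is the input used in the second paragraph: establishing, via the $e_\Lambda$-expansion of the Tate--Drinfeld torsion points, that the pole of $\iota(\overline\lambda)$ is governed exactly by the flag position $j(\lambda)$, so that the sublevel sets $M_{\le p}$ are free $A/NA$-direct summands. This is the only non-formal step, and it is where the genuine computation lies; it also subsumes the non-maximal-flag case, where one must check that the total order forced by \ref{logD7} refines compatibly into the flag filtration. (When $N$ is not invertible on $S$, one runs the same argument with ``Drinfeld level structure'' replacing the isomorphism $\iota|_U\colon M\xrightarrow{\sim}\phi[N]$, the good part $M^0=\psi[N]$ still being a free $A/NA$-direct summand.)
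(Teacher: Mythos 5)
Your overall skeleton is the right one and matches the paper's: reduce to the case where $S$ is log regular with the log structure coming from $(S,U)$, use \ref{regdiv} to get the total divisibility ordering (div), pass to the Tate uniformization $(\psi,\Lambda)$ over valuation rings, and read off the basis from minimality of poles; the paper does exactly this in \ref{3logD9} and \ref{logD90}, reducing via \ref{todvr} to complete discrete valuation rings and invoking \ref{Ntor}. However, the structural claim on which your second and third paragraphs rest is not merely unproven (as you acknowledge) but false. The pole of the torsion point lying over the class of $\lambda=\sum_i\psi(a_i)\lambda_i$ in $\Lambda/\psi(N)\Lambda$ is, by \ref{Ntor1} and \ref{eep}, equal to $\epsilon^{r,n}_s\bigl(|N|^{-r}\max_i|a_i|^r s_i\bigr)$: it depends on the degrees of the coefficients $a_i$, not only on a flag position of $\lambda$ in $\Lambda$. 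Already for $n=1$, $\Lambda=\psi(A)\lambda_1$ and $N=T^2$, the nonzero classes take two distinct pole values according as $\deg a=0$ or $\deg a=1$, while the only flag of saturated $A$-submodules is $0\subsetneq\Lambda$; the sublevel set for the smaller value is $M^0\oplus\F_q\cdot\bar\lambda_1$, which is an $\F_q$-subspace but not an $A/NA$-submodule, let alone a free direct summand. Consequently your recipe "interpolate the chain of sublevel sets by free direct summands and let $e_i$ complete a basis of $M_i$ to one of $M_{i+1}$" can produce an $e_{d-1}$ mapping to a class of coefficient degree $1$, which has strictly larger pole than the classes of degree $0$ outside $M^0$, violating (1.1); and your argument for (2) via ranks of sublevel sets collapses for the same reason.

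The fix is the paper's: choose $e_i$ greedily to have \emph{minimal} pole among all $a\notin E_s+\sum_{j<i}(A/NA)e_j$ (this is possible by (div)), and then the real content is Proposition \ref{Ntor}(1), proved by reduction to complete valuation rings of height one: such minimal torsion points are exactly the $e_\Lambda(\tilde\lambda_i)$ for $(\lambda_i)_i$ an $A$-basis of $\Lambda$ that is orthonormal for the norm $\mu$ of \ref{normcv} in the sense of \ref{diagbase}(1), which simultaneously shows that the $(e_i)$ form an $A/NA$-basis and satisfy (1.1). Part (2) then follows from \ref{Ntor}(2), i.e.\ ultimately from the uniqueness statement \ref{diagbase}(4) for the norms of orthonormal basis elements, not from an intrinsic filtration of $(\tfrac1NA/A)^d$ by submodules.
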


\subsection{Main results}\label{main_result}

\begin{sbpara}

In the rest of this introduction, we suppose that $A=\F_q[T]$. 

We fix $N$ and $d$ as in Section \ref{lD0}.
In the case $N$ has at least two prime divisors (resp.,  $N$ has only one prime divisor), let  $\cC_{\log}$ be the category of schemes over $A$ (resp., $A[\frac{1}{N}]$) with saturated log structures.

\end{sbpara}

\begin{sbpara}\label{toroidal_functor}

Let $${\overline {\fM}}^d_N \colon \cC_{\log} \to (\text{Sets})$$ be the contravariant functor for which ${\overline {\fM}}^d_{N}(S)$ is the set of all isomorphism classes of log Drinfeld modules over $S$ of rank $d$ with level $N$ structure.

\end{sbpara}

\begin{sbpara}

 Let 
 $\cM^d_N$ be the moduli space of Drinfeld modules of \ref{002}.
We endow $\cM^d_N$ with the trivial log structure. 
Viewed as a functor, $\cM^d_N \colon \cC_{\log} \to (\text{Sets})$ sends an object $S$ to the set
$\text{Mor}(S, \cM^d_N)$ which consists of all isomorphism classes of Drinfeld modules over the underlying scheme of $S$ over $A$ of rank $d$ endowed with Drinfeld level $N$ structure.  So by \ref{logD8}(1), we have an embedding of functors  
$\cM^d_N \hookrightarrow \overline{\fM}^d_N$.

\end{sbpara}

\begin{sbpara}\label{Cd} 
Consider the cone
$$
	C_d := \{(s_1,\dots, s_{d-1})\in \R^{d-1} \mid 0\leq s_1\leq \dots \leq s_{d-1}\}.
$$  
For a finite rational cone decomposition $\Sig$ of $C_d$, we define the subfunctor $\overline{\fM}^d_{N, \Sig}$  of $\overline{\mathfrak{M}}^d_N$  which contains $\cM^d_N$ as follows. For a cone $\sig$ in $\R^{d-1}$, let
$$
	\sig^{\vee} = \left\{(b_i)_i\in \Z^{d-1} \mid \sum_{i=1}^{d-1} b_is_i\geq 0 \text{ for all } (s_i)_i\in \sig\right\}.
$$

For an object $S$ of $\cC_{\log}$, the set $\overline{\fM}^d_{N,\Sig}(S)$ consists of all 
$$
	((\cL, \phi), \iota)\in \overline{\fM}^d_N(S)
$$ 
such that locally on $S$, there exist an $A/NA$-basis $(e_i)_{0\leq i\leq d-1}$ of $(\frac{1}{N}A/A)^d$ satisfying (1) of \ref{logD9}  and a cone $\sig\in \Sig$ such that  
$$
	\prod_{i=1}^{d-1} \pole(\iota(e_i))^{b_i}\in M_S/\cO^\times_S \subset M_S^{\gp}/\cO_S^\times
$$
for all $b \in \sig^{\vee}$.

\end{sbpara}

The following theorem is the main result of this paper. 

\begin{sbthm}\label{main} 
\begin{enumerate}
 	\item[(1)] For every finite rational cone decomposition $\Sigma$ of $C_d$, 
	 the   functor ${\overline {\fM}}^d_{N, \Sig}$ is represented by an object $\overline{\cM}^d_{N, \Sig}$ of $\cC_{\log}$. This log scheme $\overline{\cM}^d_{N, \Sig}$ is 
	 log  regular, and  $A[\frac{1}{N}]\otimes_A \overline{\cM}^d_{N, \Sig}$  is log smooth over $A[\frac{1}{N}]$. The 
	 underlying scheme of $\overline{\cM}^d_{N, \Sig}$ is proper over $A$ (resp., $A[\frac{1}{N}]$)  if $N$ has at least two prime divisors (resp.,  only one prime divisor). 
        \item[(2)] If $\Sig'$ is a  finite rational subdivision  of $\Sig$, 
	the morphism  $\overline \cM^d_{N,\Sig'}\to \overline \cM^d_{N,\Sig}$ is proper, birational, and log \'etale.
	\item[(3)] For a given finite rational cone decomposition $\Sig$ of $C_d$, there is a finite 
	rational subdivision $\Sig'$ of $\Sig$ such that the underlying scheme of $\overline \cM^d_{N, \Sig'}$ is regular, the boundary 
	$\overline{\cM}^d_{N, \Sig'}   \setminus \cM^d_N$ is a divisor
	 with normal crossings, the underlying scheme of $A[\frac{1}{N}]\otimes_A \overline{\cM}^d_{N, \Sig'}$ is smooth over $A[\frac{1}{N}]$, and 
	$A[\frac{1}{N}] \otimes_A \overline{\cM}^d_{N, \Sig'}  \setminus A[\frac{1}{N}]\otimes_A \cM^d_N$ is a relative normal crossing divisor 
	over $A[\frac{1}{N}]$. 
	 \end{enumerate}
 \end{sbthm}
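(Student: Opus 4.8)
The plan is to prove Theorem~\ref{main} by constructing, locally on the would-be compactification, explicit charts coming from toric geometry, and then patching. I would begin with the cone $C_d \subset \R^{d-1}$ and observe that, by Proposition~\ref{logD9}, a log Drinfeld module over $S$ locally determines a well-defined ``pole datum'': the tuple $(\pole(\iota(e_i)))_{1\le i\le d-1}$ in $(M_S/\cO_S^\times)^{d-1}$, normalised so that $\pole(\iota(e_0))=1$, together with the constraints of Proposition~\ref{logD7} forcing the exponents to lie in $C_d$. The key point is that this identifies the relevant local monoid of monomials $\prod_i \pole(\iota(e_i))^{b(i)}$ that must land in $M_S/\cO_S^\times$ with the dual monoid $\sigma^\vee \cap \Z^{d-1}$ of a cone $\sigma \in \Sig$. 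So the natural candidate for $\overline{\cM}^d_{N,\Sig}$ should be, \'etale locally, a fiber product of $\cM^d_N$ (or rather of the chart of the Drinfeld modular scheme along its boundary, i.e.\ the Satake/generalized-Drinfeld-module picture recalled in \ref{logD0}) with the affine toric schemes $\Spec(\F_p[\sigma^\vee\cap \Z^{d-1}])$, glued over $\Sig$ exactly as in the construction of toroidal compactifications of $\mathcal A_g$ in \cite{AMRT,FC}.

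For part (1), I would first treat a single cone $\sigma$: build the candidate local model $\overline{\cM}^d_{N,\sigma}$ as such a toric-type scheme over the Satake/degenerating chart of Kapranov \cite{K}, equip it with the log structure associated to the boundary divisor (\ref{log2}), and verify the universal property --- i.e.\ that it represents the subfunctor of $\overline{\frak M}^d_N$ cut out by the condition in \ref{Cd} for that $\sigma$. This verification is essentially the content of Propositions~\ref{logD7}--\ref{logD9}, plus the extension statement \ref{logD53} promised in \ref{log4} that $\iota$ on $U$ extends canonically to $\overline\cL$; one then checks that giving a log Drinfeld module with the prescribed pole behaviour is the same as giving a map to the toric chart over the Satake chart. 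Then one globalizes over $\Sig$ by gluing along faces, using that the gluing of $\Spec(\F_p[\sigma^\vee\cap\Z^{d-1}])$ along faces is precisely dictated by the fan $\Sig$ and is compatible with the functor. Log regularity, and log smoothness over $A[\frac1N]$, follow from condition (iii)/(iv) of \ref{log3}: \'etale locally the scheme is (smooth or \'etale) over $\Spec(\F_p[P])$ for $P$ an fs monoid, since the Drinfeld modular scheme $\cM^d_N$ is itself regular (and smooth over $A[\frac1N]$) by \cite[\S5]{D}, and the toric direction contributes exactly the fs-monoid chart. Properness over $A$ (resp.\ $A[\frac1N]$) would be deduced by the valuative criterion: a log Drinfeld module over a complete valuation ring with log structure has a unique extension, because Proposition~\ref{logD9} forces the pole datum to extend (the cone $C_d$ being complete means every valuation of the pole coordinates is captured by some $\sigma\in\Sig$), combined with the properness of the Satake compactification from \cite{K,P2}.

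Part (2) is formal once the local toric description of part (1) is in hand: a subdivision $\Sig'$ of $\Sig$ induces, on each chart, the toric morphism $\Spec(\F_p[\tau^\vee\cap\Z^{d-1}]) \to \Spec(\F_p[\sigma^\vee\cap\Z^{d-1}])$ for $\tau\subset\sigma$, which is proper (toric morphisms from a subdivision are proper), birational (it is an isomorphism over the open torus, hence over $\cM^d_N$), and log \'etale (a subdivision induces a log \'etale, indeed a Kummer-type, morphism of fs log schemes --- standard toric log geometry, e.g.\ \cite{KK1}). These properties are \'etale-local on the base and stable under the gluing, so they pass to $\overline\cM^d_{N,\Sig'}\to\overline\cM^d_{N,\Sig}$. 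Part (3) is the classical toric resolution statement transplanted to this setting: given any finite rational $\Sig$, choose a finite rational subdivision $\Sig'$ that is \emph{smooth}, meaning each cone is generated by part of a $\Z$-basis of $\Z^{d-1}$ (possible by the standard toric desingularization, subdividing along barycenters / using the Hironaka-type combinatorial algorithm). For such $\Sig'$ each chart of $\overline\cM^d_{N,\Sig'}$ is \'etale over $\Spec(\F_p[\N^r\times\Z^{d-1-r}])=\Spec(\F_p[x_1,\dots,x_r,y_1^{\pm},\dots])$ times the regular scheme $\cM^d_N$, hence is regular (resp.\ smooth over $A[\frac1N]$), and the boundary is the normal-crossing divisor $\{x_1\cdots x_r=0\}$ pulled back; for the relative statement over $A[\frac1N]$ one notes the boundary divisor is transverse to the fibers because the toric variable directions are ``horizontal''.

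The main obstacle I expect is part (1), specifically the verification that the proposed toric-over-Satake local model genuinely \emph{represents} the subfunctor $\overline{\frak M}^d_{N,\Sig}$ --- that is, matching the additive object $\overline\cL$ with level structure against the multiplicative/toric chart data. The difficulty is exactly the ``centaur'' phenomenon flagged in Remark~\ref{logD8}(2): one must show that the full data of a generalized Drinfeld module together with $\iota\colon(\frac1N A/A)^d\to\overline\cL$ is equivalent, \'etale locally, to the combinatorial pole datum plus a Drinfeld module over the interior, with no extra moduli hidden in the degeneration. This requires a careful analysis (presumably in the body of the paper, via Tate uniformization / Drinfeld's theory of degenerating Drinfeld modules and the structure of $\overline\cL$ near the boundary) showing that the isomorphism class of the log Drinfeld module is determined by, and can be reconstructed from, the Satake point and the pole exponents $\sum b(i)s_i\ge 0$; equivalently, that the map from the toric chart to $\overline{\frak M}^d_{N,\Sig}$ is an isomorphism of functors and not merely a monomorphism. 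Everything after that --- regularity, properness, subdivisions, resolution --- is, by comparison, a transcription of the well-established toroidal-compactification formalism into log-geometric language.
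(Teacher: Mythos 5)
Your outline reproduces the right global architecture of the paper's proof---Satake compactification as the starting point, toric local models, log regularity via fs charts as in \ref{log3}, and the standard toric formalism for parts (2) and (3)---but the core of part (1) is missing, and it cannot be filled in without two specific ideas that your sketch does not contain. First, the toric-over-$\cM^r_N$ local model does \emph{not} exist for the cones of an arbitrary finite rational decomposition $\Sig$ of $C_d$: Theorem \ref{fthm} is stated and proved only for cones $\sig$ contained in a cone of the special decomposition $\Sig_k$ ($k=\deg N$), and this restriction is forced by the fact that the valuations of torsion points are related to the norms on the lattice of the degeneration by the nonlinear functions $\epsilon^{r,n}_s$ of Section \ref{expon}, which are linear only on the cones attached to simplices of the Bruhat--Tits building (this is the content of Theorem \ref{Sigk3} and Propositions \ref{lin1}--\ref{lin3}). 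Accordingly the paper does not glue charts cone by cone for arbitrary $\Sig$: it first constructs $\overline{\cM}^d_{N,\Sig_k}$ globally, as the normalization of a blow-up of the Satake compactification along the ideal generated by the pairwise pole ideals (\ref{logSa}), so that representability on $\cC_{\nl}$ is essentially tautological and properness is inherited from \cite{P2}; it proves $\overline{\frak M}^d_N=\overline{\frak M}^d_{N,\Sig_k}$ (\ref{Sigk8}); and only then obtains general $\Sig$ by the proper log \'etale base change $\overline{\cM}^d_{N,\Sig}=\overline{\cM}^d_{N,\Sig_k}\times_{[\Sig_k]}[\Sig*\Sig_k]$.

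Second, the step you yourself flag as the main obstacle---that the formal completion along a boundary stratum is an open piece of the formal completion of $\cM^r_N\times_{\F_p}\toric_{\F_p}(\sig_n)$, with no hidden moduli---is exactly Theorem \ref{fthm}, and the single Tate uniformization you invoke does not prove it. For $d>2$ the lattice $\La$ of the Tate uniformization of Section \ref{ss:Tate} is only a locally constant sheaf on the \'etale site of the generic fibre with large monodromy (see \ref{4.2Tv2} and the explicit computation \ref{Gal1}), so it cannot be used to produce the coordinates $t_1,\dots,t_n$ of the toric chart. The paper's proof instead runs an induction on $n=d-r$ via the \emph{iterated} Tate uniformization of Section \ref{itTa}, dividing at each stage by a rank-one lattice as in Proposition \ref{1step} and extracting one coordinate $t_i$ at a time; the quotient construction of Section 4.1 is applicable at each stage only because of the valuation estimate in \ref{fTc2}, which itself rests on the formula of Proposition \ref{T4.9} and the cone combinatorics. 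Without these two inputs your part (1) does not close, and parts (2) and (3), which you correctly reduce to standard toric geometry, have nothing to stand on.
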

  
We call the space $\overline{\cM}^d_{N, \Sig}$ in (1) a toroidal compactification of $\cM^d_N$. The proof of Theorem \ref{main}  is given in Section \ref{toroidal}.
  
Our theory of cone decompositions is explained in Section \ref{ss:cone1} and is based on the decomposition of the Bruhat-Tits building into simplices, which is reviewed in Section \ref{BT}.  
 
Part (3) of the following theorem provides a relationship between the shapes of cones in $\Sig$ and the local shapes of toroidal compactifications.  For $\sig\in \Sig$ and a commutative ring $R$, let $\toric_R(\sig)$ be the affine toric variety $\Spec(R[\sig^{\vee}])$ over $R$ with the standard log structure, where $\sig^{\vee}$ is the torsion free fs monoid defined in \ref{Cd}.

\begin{sbthm}\label{shape} 
There is a special finite rational cone decomposition $\Sig_k$ of $C_d$ for each $k\geq 1$ such that if $N \in A = \F_q[T]$ has degree $k$, then the following hold.
\begin{enumerate}  
	\item[(1)] The subfunctor $\overline{\fM}^d_{N, \Sig_k}$ equals $\overline{\fM}^d_N$.
   	\item[(2)] For any finite rational cone decomposition $\Sig$ of $C_d$, the morphism 
	$\overline{\cM}^d_{N,\Sig*\Sig_k} \to \overline{\cM}^d_{N, \Sig}$ is an isomorphism, where $*$ denotes the join. 
  	\item[(3)] Let $\Sig$ be a finite rational subdivision of $\Sig_k$. Then there are
	\begin{itemize}
		\item $A$-schemes $U_{\sig}$ for $\sig\in \Sig$, 
		\item an \'etale surjective morphism  $\coprod_{\sig\in \Sig} U_{\sig} \to \overline{\cM}^d_{N, \Sig}$ of $A$-schemes, 
		\item smooth morphisms $U_{\sig} \to \toric_{\F_p}(\sig)$ over $\F_p$ 
		such that the inverse image of $\cM^d_N$ in 
		$U_{\sig}$ coincides with the inverse image of the torus part $\mathbb{G}_{m,\F_p}^{d-1}$ of  
		$\toric_{\F_p}(\sig)$, and 
		\item \'etale morphisms $A[\frac{1}{N}]\otimes_A U_{\sig} \to \toric_{A[\frac{1}{N}]}(\sig)$ over $A[\frac{1}{N}]$ such that the 
		inverse image of $A[\frac{1}{N}]\otimes_A \cM^d_N$  in $A[\frac{1}{N}]\otimes_A U_{\sig}$ coincides with the inverse image 
		of the torus part $\mathbb{G}^{d-1}_{m,A[\frac{1}{N}]}$ of $\toric_{A[\frac{1}{N}]}(\sig)$. 
	\end{itemize}
\end{enumerate}

\end{sbthm}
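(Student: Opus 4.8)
The plan is to build the cone decomposition $\Sig_k$ directly out of the level-$N$ structure, so that the conditions defining $\overline{\frak M}^d_{N,\Sig_k}$ become vacuous. Recall from \ref{logD9} that locally on any $S$ we get a flag $0\subset (A/NA)e_0\subset (A/NA)e_0+(A/NA)e_1\subset\cdots$ together with pole values $\pole(\iota(e_i))\in M_S/\cO_S^\times$, and from \ref{logD7} that these pole values are totally ordered under divisibility in $M_S^{\gp}/\cO_S^\times$. Writing $t_i$ for $\pole(\iota(e_i))$, the element $(t_1,\dots,t_{d-1})$ of $M_S^{\gp}/\cO_S^\times$, paired against a point of $C_d$, records exactly the data governing membership in $\overline{\frak M}^d_{N,\Sig}$. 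The key numerical input is that $N$ has degree $k$: the flag in $(\frac{1}{N}A/A)^d$ that can occur is constrained, because the successive quotients are cyclic $A/NA$-modules and the "jumps" in pole order between $\iota(e_i)$ and $\iota(e_{i+1})$ are bounded in terms of $k$. I would first extract, from the proof of \ref{logD9} (in Section 3), the precise statement that $\pole(\iota(e_i))$ divides $\pole(\iota(e_{i-1}))^{q^k}$ or an analogous finite bound, and then \emph{define} $\Sig_k$ to be the rational cone decomposition of $C_d$ whose rays and walls are cut out by exactly those divisibility inequalities — i.e.\ the coarsest decomposition compatible with all the integral linear functionals $b\in\Z^{d-1}$ that can arise from a degree-$k$ level structure.

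With $\Sig_k$ so defined, part (1) is a tautology once the bound is in hand: for \emph{any} log Drinfeld module over $S$ of rank $d$ with level $N$ structure, choosing the local basis $(e_i)_i$ from \ref{logD9}, the collection of $b\in\Z^{d-1}$ for which $\prod_i \pole(\iota(e_i))^{b(i)}\in M_S/\cO_S^\times$ automatically contains the dual of some $\sig\in\Sig_k$, because the pole values satisfy the degree-$k$ constraints that $\Sig_k$ was built to encode. Hence $\overline{\frak M}^d_N(S)=\overline{\frak M}^d_{N,\Sig_k}(S)$ for all $S$, which is (1). (Note the statement has a typographical slip: $\overline{\frak M}^r_N$ should read $\overline{\frak M}^d_N$.) Part (2) follows formally: the subfunctor $\overline{\frak M}^d_{N,\Sig}$ is determined by imposing, locally, that \emph{some} cone of $\Sig$ works; for the join $\Sig*\Sig_k$ a cone is a product $\sig\times\tau$ with $\sig\in\Sig$, $\tau\in\Sig_k$, and since the $\Sig_k$-condition is always satisfied by (1), imposing the $\Sig*\Sig_k$-condition is the same as imposing the $\Sig$-condition. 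Therefore the two functors coincide, and by the representability in Theorem \ref{main}(1) so do the representing log schemes $\overline{\cM}^d_{N,\Sig}=\overline{\cM}^d_{N,\Sig*\Sig_k}$.

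For part (3), take $\Sig$ a finite rational subdivision of $\Sig_k$. I would produce the chart $U_\sig\to\toric(\sig)$ by combining the local structure theory already assembled for Theorem \ref{main}. Concretely: the defining condition of $\overline{\frak M}^d_{N,\Sig}$ says that, \'etale locally, a log Drinfeld module comes with a basis $(e_i)_i$ and a cone $\sig$ such that $\prod_i \pole(\iota(e_i))^{b(i)}\in M_S/\cO_S^\times$ for all $b$ in $\sig^\vee\cap\Z^{d-1}$; this is precisely a morphism of log schemes $S\to\Spec(\F_p[\sig^\vee\cap\Z^{d-1}])=\toric_{\F_p}(\sig)$ pulling back the standard log structure. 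Refining to the \'etale charts of Theorem \ref{main}(1)/\ref{logD3} gives the $U_\sig$ with smooth $U_\sig\to\toric_{\F_p}(\sig)$; the locus where the log structure is trivial is by construction the torus $\bG_{m,\F_p}^{d-1}$, and by Remark \ref{logD8}(1) the preimage of $\cM^d_N$ is exactly that locus. After inverting $N$, the morphism $\cM^d_N\otimes A[\tfrac1N]$ into $\overline{\cM}^d_N\otimes A[\tfrac1N]$ is an open immersion with log-smooth (over $A[\tfrac1N]$) total space, and the same charts become \'etale (not merely smooth) over $\toric_{A[\frac1N]}(\sig)$ because the extra relative dimension disappears; one checks \'etaleness by the log-smooth + equidimensional criterion, or directly from the moduli description of the torsion points away from $N$. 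The main obstacle — and the step I expect to occupy most of the work — is the very first one: pinning down the exact degree-$k$ divisibility bound on the pole values $\pole(\iota(e_i))$ from the structure of Drinfeld level $N$ structures, and verifying that the cones cut out by these bounds genuinely reconstruct a finite rational decomposition $\Sig_k$ of all of $C_d$ (not just a partial fan). Everything downstream — parts (1), (2), and the chart construction in (3) — is then either formal or a reduction to the local structure theory of Theorem \ref{main}.
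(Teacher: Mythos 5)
Your proposal has two genuine gaps, both at the places you yourself flag as "the main work," and in both cases the sketch you give of how to fill them does not match what is actually needed.

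First, the construction of $\Sig_k$. You propose to extract a divisibility bound of the shape "$\pole(\iota(e_i))$ divides $\pole(\iota(e_{i-1}))^{q^k}$" and define $\Sig_k$ as the coarsest decomposition cut out by such inequalities. This is not how $\Sig_k$ arises, and no decomposition of that shape would work. In the paper, one first passes (via \ref{todvr}) to complete valuation rings of height one, where a generalized Drinfeld module corresponds to a pair $(\psi,\La)$; the lattice $\La$ carries a norm living in the Bruhat--Tits building (\ref{normcv}), and the valuations of the $N$-torsion points are obtained from the lattice data by applying the Drinfeld exponential, encoded in the map $\xi^d_k$ built from $\epsilon^{r,n}_s$ (\ref{defcone2}, \ref{xicdv1}). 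The decomposition $\Sig_k$ is \emph{defined} as the image under $\xi^d_k$ of the building-theoretic decomposition $\Sig^{(k)}$ (\ref{Sigk1}, \ref{Sigk3}). Since $\xi^d_k$ involves $x\mapsto x^r$ and is not even continuous for $d\geq 3$ (\ref{Ex8}), the fact that these images are finitely generated rational cones at all is, as the paper notes, highly nontrivial; it rests on the piecewise linearity of $\hat\epsilon^{r,n}_s$ on cones attached to simplices of $AP_n$ (\ref{explin2}, \ref{prope1}--\ref{prope3}, \ref{lin1}--\ref{lin3}). The resulting walls (see \ref{Ex8}(2), with coefficients like $(q^{2h-1-k}+q^k)/(q+1)$) are not of the consecutive-power-divisibility form you describe, so "the coarsest decomposition compatible with the constraints" is not a construction — identifying which pole configurations occur \emph{is} the theorem.

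Second, part (3). Your argument produces a morphism $U_\sig\to\toric_{\F_p}(\sig)$ from the pole values, which is correct as far as it goes, but the content of (3) is that this morphism is \emph{smooth}, and your appeal to "the \'etale charts of Theorem \ref{main}(1)" is circular: the log regularity and charts in Theorem \ref{main} are deduced \emph{from} Theorem \ref{shape}(3), not the other way around. The paper proves smoothness by identifying the formal completion of $\overline{\cM}^d_{N,+,\sig}$ along the rank-$r$ stratum with an open piece of the formal completion of $\cM^r_N\times_{\F_p}\toric_{\F_p}(\sig_n)$ (Theorem \ref{fthm} and \ref{gvsa3}, \ref{MandM}), and that identification is established by the iterated Tate uniformization of Section 4.5 -- the step the introduction singles out as the most difficult part of the paper. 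Parts (1) and (2), by contrast, are essentially as formal as you say once $\Sig_k$ and its characterization via $c(\phi,N)$ are in place ((1) is \ref{Sigk8}; (2) is built into the definition of $\overline{\cM}^d_{N,\Sig}$ as a fiber product over $[\Sig_k]$ in Section 5.3).
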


In brief, \ref{shape}(3) says that the log scheme $\overline{\cM}^d_{N,\Sig}$ is \'etale locally isomorphic to $\toric_{\F_p}(\sig)\times \mathbb{G}_m$ ($\sig\in \Sig$), and the log $A[\frac{1}{N}]$-scheme $A[\frac{1}{N}]\otimes_A \overline{\cM}^d_{N,\Sig}$ is \'etale locally isomorphic to $\toric_{A[\frac{1}{N}]}(\sig)$ ($\sig\in \Sig$). 
 
See Proposition \ref{Sigk8} for part (1) of Theorem \ref{shape} and Section \ref{toroidal} for parts (2) and (3), specifically \ref{repnl2} and \ref{repnl3} for (2) and \ref{shapethm3} for (3). 

Part (3) of Theorem \ref{main} follows from part (3) of Theorem \ref{shape}, as explained in \ref{mainthm3}. 
The key point is that, as is well known in toric geometry, there is a finite subdivision $\Sig'$ of $\Sig$ such that for all $\sig\in \Sig'$, the toric variety $\toric_{\F_p}(\sig)$ is smooth over $\F_p$ and the complement of the torus part is a divisor with normal crossings.

The following theorem shows that log Drinfeld modules over an object $S$ of $\cC_{\log}$ are simply understood in the case  $S$ is as in \ref{log2}.  It will be proved in Section \ref{ss:tornl}.

    \begin{sbthm}\label{M=M0} Let $S$ be an object of $\cC_{\log}$ such that the underlying scheme of $S$ is normal and such that the log structure of $S$ is associated to a dense open subset $U$ of $S$ as in \ref{log2}. Then  the following two notions are equivalent:
\begin{enumerate}    
	\item[(i)] a log Drinfeld module $((\cL, \phi), \iota)$ over $S$ of rank $d$ with  level $N$ structure and 
    	\item[(ii)] a generalized Drinfeld module $((\cL, \phi), \iota)$ over $(S, U)$ of rank $d$ with level $N$ structure such that for each $a,b \in (\frac{1}{N}A/A)^d$, locally on $S$, either $\pole(\iota(a))\pole(\iota(b))^{-1}$ or $\pole(\iota(b))\pole(\iota(a))^{-1}$ is an element of $M_S/\cO_S^\times$ inside $M_S^{\gp}/\cO_S^\times$. 
\end{enumerate}
More specifically, we obtain (ii) from (i) by restricting $\iota$ of (i) to $U$, and we obtain (i) from (ii) by taking the unique extension of $\iota$ in (ii)  to a map to $\overline{\cL}$ (see \ref{log4}). 
    \end{sbthm}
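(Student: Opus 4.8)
The plan is to check that the two constructions described in the last sentence of the statement are mutually inverse, using the results already stated in Section \ref{lD0}. Fix $S$ normal with log structure $M_S = \cO_S \cap j_*(\cO_U^\times)$ as in \ref{log2}. The passage (i) $\Rightarrow$ (ii) is essentially formal: given a log Drinfeld module $((\cL,\phi),\iota)$ over $S$, the underlying generalized Drinfeld module $(\cL,\phi)$ makes sense over $S$, and since the log structure is generically trivial on $U$, the restriction $(\cL,\phi)|_U$ is a genuine Drinfeld module of rank $d$ (this is part of what it means to be a generalized Drinfeld module whose generic fibre over the trivial-log locus is Drinfeld; alternatively it falls out of the \'etale-local description in \ref{logD6}(i) pulled back along $f$, since $U$ maps into the trivial-log locus of $S'$). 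Restricting $\iota\colon (\tfrac1N A/A)^d \to \overline{\cL}$ to $U$, where $\overline{\cL}|_U = \cL|_U$ because $M_S|_U = \cO_U^\times$, gives a level $N$ structure on $(\cL,\phi)|_U$, so we obtain a generalized Drinfeld module over $(S,U)$ with level $N$ structure in the sense of \ref{log4}. The pole condition in (ii) is then immediate from Proposition \ref{logD7}.

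For the converse (ii) $\Rightarrow$ (i), start from a generalized Drinfeld module $((\cL,\phi),\iota)$ over $(S,U)$ with level $N$ structure satisfying the pole dichotomy. By \ref{log4} (citing \ref{logD53}), the map $\iota\colon (\tfrac1N A/A)^d \to \cL|_U$ extends uniquely to $\iota\colon (\tfrac1N A/A)^d \to \overline{\cL} \subset j_*(\cL|_U)$. It remains to verify that the pair $((\cL,\phi),\iota)$ with this extended $\iota$ satisfies condition (i) of \ref{logD6}, i.e.\ that \'etale locally it is pulled back from a log Drinfeld module over a log regular base of the type in \ref{log3}. This is the substantive point. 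The idea is to use the pole dichotomy in (ii) together with Proposition \ref{logD9}: working \'etale locally, choose an $A/NA$-basis $(e_i)_{0\le i\le d-1}$ as in \ref{logD9}(1), so that the elements $m_i := \pole(\iota(e_i)) \in M_S/\cO_S^\times$ for $1\le i\le d-1$ control all the poles, with $\pole(\iota(e_0)) = 1$. These $m_i$ determine an \'etale-local chart $S \to \Spec(\F_p[P])$ for a suitable fs monoid $P$ (built from the $m_i$ and the compatibilities among them coming from the dichotomy and the semi-linearity of $\iota$ under $\phi$), and one builds the universal object $((\cL',\phi'),\iota')$ over this chart — or rather over a log regular $S'$ mapping to it — whose pullback recovers $((\cL,\phi),\iota)$. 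One then checks that the two constructions are inverse: starting from (i), passing to (ii), and re-extending returns the original $\iota$ by the uniqueness clause in \ref{log4}; starting from (ii), building (i), and restricting to $U$ returns the original data tautologically.

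The main obstacle I expect is the construction in the (ii) $\Rightarrow$ (i) direction of the local model $S'$ over $A$ of the type in \ref{log3} together with the generalized Drinfeld module $((\cL',\phi'),\iota')$ over $(S',U')$ whose pullback under some $f\colon S\to S'$ recovers the given data — that is, realizing that a generically Drinfeld generalized Drinfeld module with the pole dichotomy is always, \'etale locally, pulled back from such a "standard" situation. This is where one has to actually use the structure theory of (generalized) Drinfeld modules and degeneration data near the boundary, and it is presumably close to the heart of the representability statement in Theorem \ref{main}; I would expect to reduce it to, or run it in parallel with, the local charts $U_\sigma \to \toric_{\F_p}(\sigma)$ appearing in Theorem \ref{shape}(3). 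By contrast, the uniqueness of the extension (from \ref{logD53}) and the verification that the round trips are identities are routine once that local model is in hand.
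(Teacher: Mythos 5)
Your overall architecture matches the paper's: the direction (i) $\Rightarrow$ (ii) is formal plus Proposition \ref{logD7}, and the entire content sits in (ii) $\Rightarrow$ (i), namely in producing the \'etale-local log regular model $(S',U')$ with its generalized Drinfeld module required by condition (i) of \ref{logD6}. You correctly locate where this is resolved (the representability theory behind Theorem \ref{main}), so the diagnosis is right; but as written the step is deferred rather than proved, and the concrete mechanism you sketch for it --- reading off an fs chart from the poles $m_i=\pole(\iota(e_i))$ and ``building the universal object over this chart'' --- is not viable as a standalone construction. The pole data alone do not determine $\phi'$ over the local model; producing a generalized Drinfeld module over a log regular base whose pullback recovers the given one is exactly the degeneration problem that occupies Sections 4 and 5, and there is no shortcut to it from the condition (div) plus \ref{logD9}.

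What the paper actually does (Theorem \ref{M=M}, from which \ref{M=M0} is deduced in one line): it first shows that the functor of data as in (ii), i.e.\ $\overline{\frak M}^d_N$ on $\cC_{\nl}$, is represented by the pair $(\overline{\cM}^d_{N,\Sig_k},\cM^d_N)$, where $\overline{\cM}^d_{N,\Sig_k}$ is the normalization of a blow-up of the Satake compactification (Section 5.2), and then establishes the log regularity of this scheme via the formal moduli and iterated Tate uniformizations of Section 4. Once that is in hand, the classifying morphism $S\to\overline{\cM}^d_{N,\Sig_k}$ attached to your data in (ii) is itself the morphism $f\colon S\to S'$ demanded by \ref{logD6}(i), with $((\cL',\phi'),\iota')$ the universal object; no further local construction is needed, and the round trips are identities by the uniqueness in \ref{logD53}. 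So your fallback to the representability statement is the intended route, but the proof of \ref{M=M0} cannot be made independent of it: one must first have the representing log regular scheme, and your direct chart-building alternative should be dropped rather than repaired.
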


\subsection{Comparison with abelian varieties}
\label{how_to}

We explain the similarities and differences between our constructions of toroidal compactifications and the constructions of toroidal compactifications of the moduli spaces of abelian varieties in \cite{AMRT, FC}.  In particular, we describe the Tate uniformizations of Drinfeld modules on adic spaces and a theory of iterated Tate uniformizations on formal schemes.

\begin{sbpara}\label{abdr}  The most difficult part of this paper is to prove that our toroidal compactification is log regular (that is, it has only toric singularities), so is worthy of the name.

Consider the compactification of the moduli space of elliptic curves with level $N$ structure. Roughly speaking, the formal completion of the compactified moduli space at the boundary is isomorphic to $\Spf(\Z[\zeta_N]\ps{q^{1/N}})$, and from this we can deduce that the compactified moduli space is regular and its tensor product with $\Z[\frac{1}{N}]$ is smooth over $\Z[\frac{1}{N}]$. Here, $q$ is the $q$-invariant which appears in the formal moduli theory.   The universal degenerate elliptic curve over  $\Spf(\Z[\zeta_N]\ps{q^{1/N}})$ is expressed as  the quotient of the multiplicative group by a $\Z$-lattice of rank $1$, which is the theory of Tate uniformization in (1) below, and $q$ is the $\Z$-basis of this $\Z$-lattice. 

Similarly, to prove the log regularity and the log smoothness of our toroidal compactification, we
consider the formal completion of the compactification at the boundary, which is a moduli space of degenerations of Drinfeld modules over formal schemes. 

\begin{enumerate}
\item[(1)] In the theory of degeneration of abelian varieties over formal schemes, the analytic presentation $X=Y/\La$  of an abelian variety $X$ with degeneration as a quotient of a semi-abelian variety $Y$ by a $\Z$-lattice $\La$ plays an important role. This construction is due to Tate, Raynaud, Mumford, and Faltings-Chai (see \cite{FC}). This $\La$ is useful in understanding the formal completion of the compactified moduli space. 

\item[(2)] In the theory of degeneration of Drinfeld modules over formal schemes contained in Section \ref{ss:Tate} of this paper, we will obtain the Tate uniformization $X=Y/\La$ of a Drinfeld module with degeneration as a quotient of a Drinfeld module $Y$ by a certain $\La$, generalizing the result  of Drinfeld over complete discrete valuation fields. Though the result of Drinfeld was useful to understand the compactified moduli space of Drinfeld modules of rank $2$, this generalized Tate uniformization is not so useful in general because this $\La$, which is a locally constant sheaf of $A$-lattices on an adic space, has too large of a monodromy action in the case of rank greater than $2$, which is hard to handle. Instead, we employ a theory of iterated Tate uniformizations, which expresses $X$ as 
an iterated quotient of $Y$. That is,  we obtain $X$ from $Y$ via $Y_0=Y$, 
 $Y_{i+1}=Y_i/\La_i$ for $0\leq i\leq n-1$ and $X=Y_n$, where the $Y_i$ are generalized Drinfeld modules and the $\La_i$ are $A$-lattices in $Y_i$ of rank $1$. These $\La_i$ are useful in understanding the compactified moduli space. See Sections \ref{itTa0} and \ref{itTa}. 
\end{enumerate}
 
 The difference between (1) and (2) is further discussed in \ref{4.2Tv2}. 
 \end{sbpara}
 
 \begin{sbpara}
The difference between the above (1) and (2) is related to the following difference in local monodromy over a valuation ring. 

\begin{itemize}
	\item Let $\cV$ be a strictly Henselian valuation ring (which need not be of height one) with field of fractions $K$. For an abelian variety over $K$,  the local monodromy satisfies $(\sig_1-1)(\sig_2-1)=0$ after taking a finite extension of $K$. 
On the other hand, for a Drinfeld module over $K$, the local monodromy satisfies $(\sig_1-1)(\sig_2-1)\dots (\sig_n-1)=0$ for some $n$ after a finite extension of $K$, but we cannot always take $n=2$. This is discussed in Section \ref{monodromy}.  
\end{itemize}

\end{sbpara}

\begin{sbpara}

 In the articles \cite{KKN}, the notion of a log abelian variety is defined, and it is shown that the toroidal compactifications of moduli spaces of abelian varieties  \cite{AMRT, FC} are understood as the moduli spaces of log abelian varieties. This notion is 
similar to the notion of a log Drinfeld module in this paper. For a log scheme $S$, a  log abelian variety $\cA$  over $S$ is contravariant functor from  the category of fs log schemes over $S$ to the category of abelian groups satisfying certain conditions and which has a subgroup functor represented by a semi-abelian scheme $\cG$ over $S$. The inclusion $\cG\subset \cA$ is similar to $\cL\subset \overline{\cL}$. One difference is that $\overline{\cL}$ does not have a group structure, though $\cA$ does.
 
 \end{sbpara}

\subsection{Plan of this paper, acknowledgements}

\begin{sbpara} The plan of this paper is as follows. Section \ref{Drin_mod} contains generalities on Drinfeld modules, generalized Drinfeld modules and log Drinfeld modules.  
In Section \ref{cone_decomp}, we discuss cone decompositions related to Bruhat-Tits buildings and Drinfeld exponential maps. In Section \ref{s:Tate}, we discuss Tate uniformizations, and we discuss formal moduli by using iterated Tate uniformizations.  In Section \ref{toroidal}, we prove our main theorem. 
  
 In Sections \ref{gen_Drin_mod}--\ref{BT} and Sections \ref{ss:constrquot}--\ref{logD_formal}, we consider general $A$. In the rest of the paper (that is, Section \ref{normord}, Section \ref{cone_decomp}, Sections \ref{iterated}--\ref{itTa}, and Section \ref{toroidal}), we restrict to the case $A=\F_q[T]$. 
 \end{sbpara}

\begin{sbpara} We plan to extend the results of Sections \ref{cone_decomp}--\ref{toroidal}  to general $A$ in a sequel (Part II) of this paper.
The reasons why we need two papers, Part I (this paper) and Part II, are as follows. First, in the theory for general $A$, we use reduction to the case  $A=\F_q[T]$ in several key points.  Hence, it is better to treat the case $A=\F_q[T]$ first. Second, the case $A=\F_q[T]$ is especially simple and the ideas can be seen clearly. (For general $A$, the description of the theory becomes very involved in parts, and the ideas are hidden behind complicated definitions.) 
 
 \end{sbpara}

\begin{sbpara}  In this paper, we use ideas of Richard Pink and Kazuhiro Fujiwara described in the paper \cite{P1} of Pink. 
In \cite{P1}, Pink uses torsion points of Drinfeld modules for his theory of toroidal compactifications of moduli spaces of Drinfeld modules. Our use of torsion points in Section \ref{main_result} above is similar to it. 
The importance of the iterated Tate uniformization appears in Pink \cite{P1}  at the place where he introduces  the study of Fujiwara. The log regularity and the log smoothness of the toroidal compactifications are not discussed in \cite{P1}. 

Though details are not written in \cite{P1}, we believe that  the ideas of Pink and Fujiwara in \cite{P1} were enough to have the theory of toroidal compactifications.

\end{sbpara}

\begin{sbpara} The second author wishes to express his thanks to Sampei Usui, Chikara Nakayama, and Takeshi Kajiwara  that the joint works with them on partial toroidal compactifications of period domains and on log abelian varieties (\cite{KU}, \cite{KNU}, \cite{KKN}) were very helpful to find the right ways in this work. He also thanks Kazuhiro Fujiwara for advice.

The work of the first two authors was supported in part by the National
Science Foundation under Grant Nos.~DMS-1303421 and DMS-1601861.  The work of the third author was supported in part by the National Science Foundation under Grant Nos.~DMS-1801963 and DMS-2101889.

\end{sbpara}

\section{Generalized and log Drinfeld modules} \label{Drin_mod}

Let $F$ be a function field in one variable with total constant field $\F_q$. Fix a place $\infty$ of $F$. Let $A$ be the ring of all elements of $F$ which are integral outside $\infty$. Let $F_{\infty}$ be the completion of $F$ at $\infty$, and let $|\;\,|$ be the standard absolute value on $F_{\infty}$. 

\subsection{Basic things} \label{gen_Drin_mod}

We review the definitions of a Drinfeld module \cite{D} and of a generalized Drinfeld module \cite{P2}. 

Let $S$ denote a scheme over $A$.

\begin{sbpara}\label{110}

A \emph{generalized Drinfeld module} over $S$  is a pair $(\cL, \phi)$, where $\cL$ is a line bundle over $S$ and $\phi$ is an $\F_q$-linear action of the ring $A$ on the (additive) group scheme $\cL$ over $S$ satisfying the following conditions. Locally on $S$, take a basis $\delta$ of $\cL$, and write 
$$
	\phi(a)(z\delta)= \sum_{n=0}^{\infty} c(\delta,a,n)z^n\delta
$$ 
($a\in A$,  $z \in \cO_S$,  $c(\delta,a, n) \in \cO_S$ independent of $z$ and zero for sufficiently large $n$). Then
\begin{enumerate}
\item[(i)] $c(\delta,a,1)=a$ for all $a \in A$, and
\item[(ii)] for each $s\in S$, there are $a \in A$ and $n \ge 2$ such that $c(\delta,a,n) \in \cO_{S,s}^{\times}$.
\end{enumerate}

A generalized Drinfeld module $(\cL,\phi)$ will frequently be written simply as $\phi$.  Its local coefficients $c(\delta, a, n)$ for a given $\delta$ will also be written $c(a,n)$.  By definition, we have 
$$
	c(u\delta, a,n)= c(\delta, a, n)u^{n-1}
$$ 
for $u \in \cO_S^{\times}$.  If $\cO_S\cong \cL$, we will often identify $\cL$ with $\cO_S$ by a choice of $\delta$ and omit the notation for $\delta$ in the formula for $\phi$.

A homomorphism of generalized Drinfeld modules is a homomorphism of line bundles which is compatible with the given actions of $A$. 

\end{sbpara}

\begin{sbpara}\label{111} 

For a generalized Drinfeld module $\phi$ over $S$, we have $\phi(a)(z)=az$ for all $a \in \F_q$.  From this, it follows that $c(a, n)=0$ for all $a \in A$ unless $n$ is a power of $q$. That is, locally we have
$$
	\phi(a)(z) = \sum_{i\geq 0}^{\infty} a_i\tau^i (z)
$$ 
for some $a_i \in \cO_S$, where $\tau$ is the homomorphism $z\mapsto z^q$. 

\end{sbpara} 

\begin{sbpara}\label{112} 

For a generalized Drinfeld module $\phi$ over $S$ and $s\in S$, there is an integer $r(s)\geq 1$ such that for each $a\in A\setminus \{0\}$, the coefficient $c(a, |a|^{r(s)})$ is invertible at $s$ and $c(a,n)$ is not invertible at $s$ if $n>|a|^{r(s)}$.  This is quickly reduced to the case that $S=\Spec(k)$ for a field $k$ over $A$ and \cite[Prop.~2.1]{D}. 
That $r(s)$ is an integer is \cite[Cor. to Prop.~2.2]{D} (see also \cite[Prop.~4.5.3]{Go}).

\end{sbpara}

\begin{sbpara} A generalized Drinfeld module over $S$ is called a \emph{Drinfeld module} over $S$ of rank $d$ if it satisfies
\begin{enumerate}
\item[(i)] $r(s)=d$ for all $s\in S$, where $r(s)$ is as in \ref{112}, and
\item[(ii)] $c(a,n)=0$ if $a\in A$ and $n>|a|^d$. 
\end{enumerate}

\end{sbpara}

\begin{sbrem} The definition of an elliptic module over $S$ of rank $d$ and the definition of a standard elliptic module over $S$ of rank $d$ are given in (B) of \cite[Section 5]{D}. 
Actually, the above definition of a Drinfeld module over $S$ of rank $d$ is the same as the definition of a standard elliptic module over $S$ of rank $d$. As is explained in \cite[Section 5]{D}, the category of Drinfeld modules (i.e., standard elliptic modules) over $S$ of rank $d$ is equivalent to the category of elliptic modules over $S$ of rank $d$. 

\end{sbrem}

\begin{sbpara}
In the case that $S$ is an integral scheme, by a generalized Drinfeld module over $S$ of \emph{generic rank} $d$ we mean a generalized Drinfeld module over $S$ whose restriction to the generic point of $S$ is a Drinfeld module of rank $d$. 

Note that for a generalized Drinfeld module $\phi$ over $S$ of generic rank $d$, we have $r(s)\leq d$ for every $s\in S$, and $\phi$ is a Drinfeld module if and only if $r(s)=d$ for all $s\in S$.

\end{sbpara}

\subsection{Stable reduction theorem} \label{stable_red}

The following proposition and its corollary are  generalizations of \cite[Prop.~7.1]{D}. The proofs are essentially the same as those given in  \cite[Section 7]{D}. 

Let $S$ be an integral scheme over $A$, and let $K$ be its function field.  We require the following lemma.

\begin{sblem}\label{stlem} Suppose that $S$ is normal.  Let $(\cL, \phi)$ and $(\cL', \phi')$ be generalized Drinfeld modules over $S$, and let $f \colon (\cL, \phi)_K \xrightarrow{\sim} (\cL', \phi')_K$ be an isomorphism  between their pullbacks to $\Spec K$. Then $f$ induces an isomorphism  $(\cL, \phi) \xrightarrow{\sim} (\cL', \phi')$. 

\end{sblem}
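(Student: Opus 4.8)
The plan is to reduce to a local statement about discrete valuation rings by using the normality of $S$, and then invoke the classical rigidity of morphisms of Drinfeld (generalized Drinfeld) modules together with the stable reduction theory of \cite{D}. First I would note that since $\cL$ and $\cL'$ are line bundles, the isomorphism $f$ over $\Spec K$ is given locally by multiplication by a nonzero element of $K$; the content is to show that this element, together with its inverse, lies in $\cO_S$ at every point, i.e. that $f$ and $f^{-1}$ extend over all of $S$. Because $S$ is normal, $\cO_S = \bigcap_{\mathfrak p} \cO_{S,\mathfrak p}$ where $\mathfrak p$ ranges over height-one primes, so it suffices to check that $f$ extends over each localization $\cO_{S,\mathfrak p}$ at a height-one prime — that is, over a discrete valuation ring with fraction field $K$. (Here one should take $S$ to be affine, which is harmless since the statement is local on $S$.)

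So I would fix a discrete valuation ring $R = \cO_{S,\mathfrak p}$ with fraction field $K$, and consider the two generalized Drinfeld modules $(\cL,\phi)_R$ and $(\cL',\phi')_R$ over $R$, which become isomorphic over $K$ via $f$. After trivializing both line bundles over $R$ (a DVR is local), $f$ is multiplication by some $u \in K^\times$. The equivariance $f\circ \phi(a) = \phi'(a)\circ f$ over $K$ translates, using the expansions $\phi(a)(z) = \sum a_i\tau^i(z)$ and $\phi'(a)(z) = \sum a'_i\tau^i(z)$ of \ref{111}, into the relations $u\,a_i = a'_i\,u^{q^i}$ for all $i$ and all $a \in A$, i.e. $a'_i = a_i u^{1-q^i}$. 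The plan is to show $u \in R^\times$. Choose $a \in A\smallsetminus\{0\}$ with $|a| > 1$ and let $m = |a|^{r}$ be the top index at which $a_m$ is a unit in $R$ for the generic rank $r$ (using \ref{112} for the generic point, and the fact that $a_m$ is automatically in $R$); similarly $a'_{m}$ is the top coefficient for $\phi'$ over $K$. From $a'_{m} = a_{m} u^{1-q^{m}}$ and the fact that the same analysis applied to $f^{-1}$ gives the reciprocal relation, one concludes $u^{1-q^{m}} \in R^\times$, hence $\val(u)(1-q^m) = 0$, hence $\val(u) = 0$ and $u \in R^\times$. One must check that one can indeed pick such an $a$ with $|a| > 1$, which is possible since $A \not\subset \F_q$ (it is the ring of functions regular outside $\infty$, infinite-dimensional over $\F_q$); and one must handle the possibility that a priori only $f$, not $f^{-1}$, is known — but then running the same computation and using that $a_1 = a = a'_1$ forces $u^{1-q} \cdot(\text{stuff})$ to be integral, and symmetry in $\phi,\phi'$ closes the argument.

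Having shown $u \in R^\times$ for every height-one prime, normality gives $u \in \cO_S^\times$ Zariski-locally, so $f$ glues to an isomorphism of line bundles $\cL \xrightarrow{\sim} \cL'$ over $S$ compatible with the $A$-actions; equivariance holds because it holds after restriction to the (schematically dense, since $S$ is integral and $\cL$ is a vector bundle) generic point. I expect the main obstacle to be the careful bookkeeping at a height-one prime: making precise which coefficients $a_i, a'_i$ are units, invoking \ref{112} correctly (it is stated for the generic point here, where $\phi_K$ is an honest Drinfeld module of rank $d$), and deducing the valuation identity $\val(u) = 0$ cleanly — in particular ensuring that one genuinely has access to a top unit coefficient on both sides, which is exactly where the generalized-Drinfeld-module hypothesis (condition (ii) of \ref{110}, the existence of some invertible higher coefficient) does its work. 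Everything else — the passage from height-one primes to all of $S$, and the extension of equivariance — is formal normality/density bookkeeping.
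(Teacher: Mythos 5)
Your core computation is the same as the paper's: express $f$ locally as multiplication by $u\in K^\times$, use the transformation rule for the degree-$n$ coefficient (which picks up $u^{n-1}$), locate a \emph{unit} coefficient in some degree $n\geq 2$ on each side, and conclude $u, u^{-1}\in\cO_{S,s}$ by normality. However, your reduction to discrete valuation rings is a genuine gap. The identity $\cO_S=\bigcap_{\mathfrak p}\cO_{S,\mathfrak p}$ over height-one primes holds for Krull (e.g.\ Noetherian normal) domains, but the lemma is stated for an arbitrary normal integral scheme over $A$, and the paper really uses it in that generality: in the proof of Proposition \ref{stable} it is applied to $T'$, and Corollary \ref{2.2.2} takes $S=\Spec(\cV)$ for a valuation ring $\cV$ of arbitrary rank, so $T'$ can be (the spectrum of) a ring like the rank-two valuation ring of \ref{valex}, for which the intersection over height-one localizations is strictly larger than the ring. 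The paper avoids this entirely: it works at an arbitrary point $s$, gets $u^{n-1}\in\cO_{S,s}$ and $u^{1-n'}\in\cO_{S,s}$ for some $n,n'\geq 2$, and then $u$ and $u^{-1}$ are integral over the normal local ring $\cO_{S,s}$ (each is a root of a monic binomial), hence lie in it. No valuation is needed, and this is the fix you should make.

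The second problem is your choice of unit coefficient. You take $m=|a|^{r}$ with $r$ the \emph{generic} rank, citing \ref{112} at the generic point; but $c(a,|a|^{d})$ being invertible at the generic point only says it is nonzero in $K$, not that it is a unit in $\cO_{S,\mathfrak p}$ — and it typically is not a unit exactly in the degenerate situations this lemma is built for (e.g.\ $\phi(T)(z)=Tz+z^q+\pi z^{q^2}$ over a DVR: the top coefficient $\pi$ is not a unit). What you need is \ref{112} at the point $s$ itself (giving $|a|^{r(s)}$ with $r(s)$ the local rank), or more directly condition (ii) of \ref{110}, which hands you some $a$ and some $n\geq 2$ with $c(\delta,a,n)\in\cO_{S,s}^\times$ — you correctly identify this as the essential input at the end, but your explicit choice of index is the wrong one. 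Finally, note that you do not need the unit indices for $\phi$ and $\phi'$ to coincide (which is not known a priori, since the local ranks could differ before you know $u$ is a unit): one unit coefficient for $\phi$ gives $u^{n-1}\in\cO_{S,s}$ and one for $\phi'$ gives $u^{1-n'}\in\cO_{S,s}$, and that suffices. With these repairs your argument collapses onto the paper's proof.
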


\begin{pf} 
	See \cite[Prop.~3.7]{P2}, in which more generally, homomorphisms (not just isomorphisms) are treated.  
\end{pf}

\begin{sbprop}\label{stable}
	Let $U$ be a dense open subset of $S$, and let $\phi$ be a generalized Drinfeld module over $U$. There exist a proper birational 
	morphism $T\to S$ and a finite separable extension $K'$ of $K$ such that if $T'$ denotes the integral closure of $T$ in $K'$ and 
	$U'$ denotes the inverse image of $U$ in $T'$, then the pullback of $\phi$ to $U'$ extends uniquely to a generalized Drinfeld module 
	over $T'$. 
\end{sbprop}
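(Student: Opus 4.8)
The plan is to mimic the classical stable reduction argument of \cite[Section 7]{D}, working one valuation at a time and then globalizing. First I would reduce to the local situation: by a standard spreading-out and Noetherian induction argument it suffices to produce, for each codimension-one point (more generally, each valuation) of $S$, an extension after a proper birational modification and a finite separable base change; the proper birational morphism $T\to S$ is then assembled by blowing up along the locus where $\phi$ fails to extend, and uniqueness of the extension (Lemma \ref{stlem}, applied over the normalization) guarantees that the local recipes glue. So the heart of the matter is: given a discrete (or more general) valuation ring $R$ with fraction field $K$ and a Drinfeld module $\phi$ of rank $d$ over $K$, after a finite separable extension of $K$ and passing to a suitable $R$-model, $\phi$ extends to a generalized Drinfeld module over $R$.

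For this local step I would follow Drinfeld's method. Choose a coordinate so $\phi(a)(z)=\sum_{i} a_i(a)\tau^i(z)$ with $a_i(a)\in K$; the leading coefficient (the one of index $\log_q|a|^d$) is a unit in $K$. Rescaling $z\mapsto uz$ multiplies the coefficient of $\tau^i$ by $u^{q^i-1}$, and the goal is to choose a single $u\in \overline{K}^\times$ (after a finite separable extension, to make the needed roots available and separable — here one may need to separate a purely inseparable issue, but $u^{q^i-1}$ is separable in $u$) so that all coefficients $u^{q^i-1}a_i(a)$ lie in $R$ for every $a$, and at least one coefficient of index $\ge 1$ becomes a unit for some $a$. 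This is an optimization over the finitely many relevant coefficients: minimize $\mathrm{val}(u)$ subject to $\mathrm{val}(u)(q^i-1)+\mathrm{val}(a_i(a))\ge 0$; the optimal $\mathrm{val}(u)$ is rational, so after a ramified (separable, since tamely or because $q^i-1$ is prime to $p$) extension we realize it, and at the optimum some inequality is tight, giving the required unit coefficient. Then conditions (i) and (ii) in \ref{110} hold: (i) is the normalization $c(\delta,a,1)=a$ which is preserved, and (ii) is exactly the tight-coefficient statement. The resulting $(\cL,\phi)$ over $R$ is a generalized Drinfeld module; uniqueness is Lemma \ref{stlem}.

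The main obstacle I anticipate is the globalization and the passage from codimension-one points to all of $S$: one must show that the locus of non-extension is closed and can be resolved by a single proper birational $T\to S$ and a single finite separable $K'/K$, rather than an infinite sequence of modifications. Here Noetherian induction on $S$ together with the uniqueness lemma (which forces compatibility of the local extensions on overlaps, so they patch to a coherent sheaf of generalized Drinfeld modules on an open subset whose complement has smaller dimension) is the mechanism; the finite extension $K'$ needed at the generic point of $S$ handles all but a closed subset in one stroke, and one iterates. A secondary technical point is ensuring the field extension can be taken \emph{separable}: the exponents $q^i-1$ appearing are prime to $p$, so adjoining the relevant roots of elements of $K$ is separable, which is why the "separable" clause in the statement is not a real obstruction. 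I expect the write-up to be essentially a transcription of \cite[Section 7]{D} with $\cL$ allowed to be a nontrivial line bundle and $S$ of higher dimension, the only genuinely new bookkeeping being the gluing via Lemma \ref{stlem}.
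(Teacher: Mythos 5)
Your local step — rescaling by $u$ with $\mathrm{val}(u)$ chosen to make all coefficients integral and one of index $\geq 2$ a unit, with separability of the needed roots coming from $q^i-1$ being prime to $p$ — is exactly Drinfeld's mechanism and is what the paper runs on each chart. The genuine gap is in the globalization. ``Blowing up along the locus where $\phi$ fails to extend'' is not a recipe: that locus has no canonical scheme structure, and blowing up a closed subset with (say) its reduced structure is not shown to make the extension problem any better. Likewise the proposed Noetherian induction does not obviously apply: the inductive hypothesis concerns a generalized Drinfeld module on a dense open subset of an integral scheme, and after handling an open set $V_1\supset U$ you are left with the closed set $S\smallsetminus V_1$, on which you have no such datum; nor have you exhibited a measure that strictly decreases under your unspecified modifications, so termination is not established.

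The correct center of the blowup is determined by your own optimization. Fix a finite set of $\F_q$-algebra generators $y_i$ of $A$ (this also disposes of the quantifier ``for every $a$'': integrality for the $y_i$ implies it for all $a$), let $c(y_i,n)$ be the nonzero coefficients with $n\geq 2$, choose $m$ with $m/(n-1)\in\Z$ for all relevant $n$, and blow up the fractional $\cO_S$-ideal generated by the finitely many elements $c(y_i,n)^{m/(n-1)}$. On the blowup this ideal is invertible, so it is covered by opens on which one fixed generator divides all the others; on such an open the rescaling factor is the $(n-1)$-th root of the corresponding $c(y_i,n)$, all rescaled coefficients are integral, and the dominant one is $1$, giving condition (ii) of \ref{110}. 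A single blowup and a single finite separable extension $K'$ (adjoin the $(n-1)$-th roots of all the $c(y_i,n)$ at once, not just ``the extension needed at the generic point'') therefore suffice, with no iteration; Lemma \ref{stlem} then gives uniqueness and hence gluing over the normalization $T'$, as you intended.
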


\begin{proof} 
Take a finite set of $\F_q$-algebra generators $y_i$ of $A$ with $1 \le i \le k$. Fix a linear basis element $\delta$ of the Drinfeld module over $K$, and define $c(y_i, n)\in K$ using $\delta$. Let $Y$ be the finite set of all pairs $(i, n)$ of integers with $1 \le i \le k$ and $n \ge 2$ such that $c(y_i, n) \neq 0$. Let $K'$ be the finite extension of $K$ obtained by adjoining all $(n-1)$th roots of $c(y_i, n)$ for all $(i,n)\in Y$. Since for every $(i,n)\in Y$, the integer $n$ is a power of $q$ by \ref{111}, the field $K'$ is separable over $K$. 

Take an integer $m\geq 1$ which satisfies $\frac{m}{n-1} \in \Z$ for all $(i, n)\in Y$, and consider the fractional $\cO_S$-ideal $I$ on $S$  generated by the $c(y_i, n)^{m/(n-1)}$. Let $T\to S$ be the blow-up of $S$ by $I$, and let $T'$ be the integral closure of $T$ in $K'$. For $(i,n) \in Y$, let $U(i, n)$ be the open part of $T$ on which $c(y_i, n)^{m/(n-1)}$ divides $c(y_j, h)^{m/(h-1)}$ for all $(j, h) \in Y$. Then the $U(i, n)$ form an open covering of $T$. Let $U'(i,n)$ be the inverse image of $U(i,n)$ in $T'$.

Let $\cL'$ be the line bundle on $T'$ for which $\delta_{i,n} := c(y_i, n)^{-1/(n-1)}\delta$ is a basis on $U'(i,n)$. 
For $x\in \mathcal{O}_{U'(i,n)}$, we set
$$
	\phi'(y_j)(x \delta_{i,n})= y_jx\delta_{i,n} + \sum_h x^h \left(\frac{c(y_j, h)^{1/(h-1)}}{c(y_i, n)^{1/(n-1)}}\right)^{h-1} \delta_{i,n},
$$
where $h$ ranges over all integers such that $(j, h)\in Y$.  Since 
$$
	\frac{c(y_j, h)^{1/(h-1)}}{c(y_i, n)^{1/(n-1)}} \in \cO_{U'(i,n)}
$$ 
by definition, $(\cL', \phi')$ is a generalized Drinfeld module over $T'$.  Over $K'$, this Drinfeld module
$\phi'$ is the pullback of $\phi$ on $K$.  Since $T'$ is normal, Lemma \ref{stlem} tells us that $\phi'$ is the unique extension to $T'$ of the pullback
of $\phi$ to $U'$.
\end{proof}

Taking $S$ to be the spectrum of a valuation ring in Proposition \ref{stable}, we have the
following corollary (as the valuative criterion for properness forces $T = S$).

\begin{sbcor}\label{2.2.2}  Let $\cV$ be a valuation ring with field of fractions $K$, and let $\phi$ be a Drinfeld module over $K$. Then there is a finite separable extension $K'$ of $K$ such that the pullback of $\phi$ to $K'$  comes from a generalized Drinfeld module over the integral closure of $\cV$ in $K'$. 

\end{sbcor}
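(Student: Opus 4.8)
The plan is to deduce this directly from Proposition~\ref{stable} by specializing $S$ to the spectrum of $\cV$. Concretely, I would apply Proposition~\ref{stable} with $S=\Spec(\cV)$, with $U=\Spec K$ the generic point of $S$, and with the given Drinfeld module $\phi$ over $U$. The proposition then produces a proper birational morphism $T\to S$ and a finite separable extension $K'/K$ such that, writing $T'$ for the integral closure of $T$ in $K'$ and $U'\subseteq T'$ for the preimage of $U$, the pullback of $\phi$ to $U'$ extends (uniquely) to a generalized Drinfeld module over $T'$. So everything reduces to identifying $T$, $T'$, and $U'$ in this special situation.

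The key observation is that $T=S$. Since $T\to S$ is proper it is separated and of finite type, so the valuative criterion of properness applies: the commutative square consisting of the inclusion $\Spec K\hookrightarrow T$ of the generic point (which exists because $T\to S$ is birational, hence an isomorphism over a neighborhood of the generic point of $S$, so that the generic point of $T$ has residue field $K$), the identity $\Spec(\cV)\to S$, and the structure map $T\to S$, admits a unique diagonal filling $\sigma\colon S\to T$. This $\sigma$ is a section of $T\to S$; being a section of a separated morphism it is a closed immersion, and its image is a closed subset of the integral scheme $T$ containing the generic point of $T$, hence all of $T$. A surjective closed immersion into a reduced scheme is an isomorphism, so $\sigma$ is an isomorphism and $T=S$. (Alternatively, one can note that the ideal $I$ produced in the proof of Proposition~\ref{stable} is finitely generated, hence principal in the valuation ring $\cV$ and generated by a non-zero-divisor, so that blowing it up changes nothing.)

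Given $T=S=\Spec(\cV)$, the scheme $T'$ is precisely $\Spec(\cV')$, where $\cV'$ is the integral closure of $\cV$ in $K'$, and $U'=\Spec K'$. Thus the conclusion of Proposition~\ref{stable} becomes exactly the statement that the pullback of $\phi$ to $\Spec K'$ extends to a generalized Drinfeld module over $\Spec(\cV')$, which is the assertion of the corollary. I do not anticipate a genuine obstacle: the only step requiring a word of justification is the equality $T=S$, and that follows immediately from either the valuative criterion of properness or the principality of finitely generated ideals over a valuation ring.
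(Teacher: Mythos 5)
Your proposal is correct and is essentially the paper's own argument: the paper deduces the corollary from Proposition \ref{stable} by taking $S=\Spec(\cV)$ and noting that the valuative criterion for properness forces $T=S$. Your additional justifications (the section being a surjective closed immersion onto the integral scheme $T$, and the alternative observation that finitely generated ideals of a valuation ring are principal so the blowup is trivial) are both valid fillings of the detail the paper leaves implicit.
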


\subsection{Generalized Drinfeld modules over complete rings} \label{compl_ring}

We give complements to \cite[Prop.~7.2]{D}, which characterizes generalized Drinfeld modules over a complete discrete valuation ring in terms of pairs consisting of a Drinfeld module and a lattice. Proposition \ref{val2} generalizes this result to allow a complete valuation ring of height one. This generalization becomes important in Section \ref{monodromy}. Proposition \ref{prop51} generalizes a part of the result even further, to commutative rings which are $I$-adically complete for an ideal $I$.  The proofs of \ref{prop51} and \ref{val2} are essentially the same as the arguments given in \cite[Section 7]{D}.

\begin{sbprop}\label{prop51} Let $R$ be a commutative ring over $A$, and let $I$ be an ideal of $R$ such that $R \xrightarrow{\sim} \varprojlim_i R/I^i$.  Let $\phi$ be a generalized Drinfeld module over $R$ with trivial line bundle, and suppose it reduces to a Drinfeld module  over $R/I$ of rank $r$. Then there is a unique pair $(\psi, e)$, where $\psi$ is a Drinfeld module over $R$ of rank $r$ with trivial line bundle
and $\psi \equiv \phi \bmod I$, and where $e\in R\ps{z}$ is such that $e \equiv z \bmod I$, the $z^n$-coefficient of $e$ is zero unless $n$ is a power of $q$, the reduction modulo $I^n$ of $e$ is a polynomial for every $n$, and 
$e\circ \psi(a)= \phi(a) \circ e$ for all $a\in A$.
\end{sbprop}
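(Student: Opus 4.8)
The plan is to construct the pair $(\psi, e)$ by successive approximation modulo the powers $I^n$, exploiting the fact that $R$ is $I$-adically complete. First I would set up the recursion: suppose inductively that we have found a polynomial $e_n \in R\ps{z}$ with $e_n \equiv z \bmod I$, with only $q$-power exponents appearing, and an action $\psi_n$ of $A$ such that $e_n \circ \psi_n(a) \equiv \phi(a) \circ e_n \bmod I^n$ for all $a$, with $\psi_n \equiv \phi \bmod I$ and $\psi_n$ of the correct shape (only $q$-powers, and the rank-$r$ truncation). To pass from level $n$ to level $n+1$, I would look for corrections $e_{n+1} = e_n + \epsilon$ and $\psi_{n+1}(a) = \psi_n(a) + \eta_a$ with $\epsilon, \eta_a$ having coefficients in $I^n$ (so that all the conditions already arranged mod $I^n$ are preserved), and impose the congruence $e_{n+1}\circ \psi_{n+1}(a) \equiv \phi(a)\circ e_{n+1} \bmod I^{n+1}$. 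Since $A = \F_q[y_1,\dots,y_k]$ is finitely generated, it suffices to arrange this for the generators $y_i$ and then check compatibility with the ring relations; the composition being $\F_q$-linear and the leading ($z^1$) terms being forced ($c(a,1)=a$), the relevant identities become linear equations in the coefficients of $\epsilon$ and the $\eta_{y_i}$ over the $\F_q$-vector space $I^n/I^{n+1}$.

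The key computation is the "linearized" obstruction: writing $D_n(a) := e_n\circ\psi_n(a) - \phi(a)\circ e_n \in I^n R\ps{z}$, one expands
$$
e_{n+1}\circ\psi_{n+1}(a) - \phi(a)\circ e_{n+1} \equiv D_n(a) + \bigl(\epsilon\circ\psi_n(a) - \phi(a)\circ \epsilon\bigr) + e_n\circ \eta_a \pmod{I^{n+1}},
$$
where I have used that products of two elements of $I^n$ lie in $I^{2n}\subseteq I^{n+1}$ and that the $z^1$-coefficient of $\psi_n(a)$ is $a$. Comparing coefficients of $z^m$, the term $\epsilon\circ\psi_n(a) - \phi(a)\circ\epsilon$ contributes, in its $z^m$-coefficient, the quantity $(a^m - a)\epsilon_m$ modulo contributions from lower-degree $\epsilon_j$; here $\epsilon_m$ is the $z^m$-coefficient of $\epsilon$. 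For $m > r$ one uses the $\epsilon_m$ (choosing $a$ with $a^m \ne a$, possible since $m>1$) to kill the $z^m$-part of $D_n$; for $2 \le m \le r$ one instead adjusts $\eta_{y_i}$ (whose $z^m$-coefficients are free) to absorb the remaining terms, and for $m=1$ nothing needs doing because $D_n$ has no $z^1$-term (both $\psi_n$ and $\phi$ have $z^1$-coefficient $a$, and $e_n$ has $z^1$-coefficient $1$). This determines $\epsilon$ and the $\eta_{y_i}$ uniquely given the normalization (taking $\eta_{y_i}$ supported in degrees $\le r$), which simultaneously yields both existence and uniqueness of the correction, hence of the limit $(\psi, e) = \lim_n (\psi_n, e_n)$ in the $I$-adic topology. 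One then checks that $\psi$ really is a Drinfeld module of rank $r$: conditions (i), (ii) of \ref{110} and the rank conditions hold modulo $I$ by hypothesis, and (ii) of \ref{112} together with the vanishing of high-degree coefficients are closed conditions preserved in the limit since $\psi \equiv \phi \bmod I$ reduces to a rank-$r$ Drinfeld module.

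The main obstacle I anticipate is bookkeeping the interaction of the recursion with the constraint that $\psi$ be a genuine $A$-action (not merely defined on generators) and that $e$ have only $q$-power exponents. For the first point, one must verify that the corrections $\eta_{y_i}$ chosen for the generators automatically satisfy the defining relations of $A$; this follows because the "naive" solution over the fraction field — or rather, the formal conjugate $e^{-1}\circ\phi(\,\cdot\,)\circ e$ — is manifestly a ring homomorphism, and uniqueness of the mod-$I^{n+1}$ correction forces our $\psi_{n+1}$ to agree with it, so it is a ring action by construction. For the second point, one notes (as in \ref{111}) that any $\F_q$-linear power series conjugating one $\F_q$-linear series into another has only $q$-power exponents, because the equation $e\circ\psi(a) = \phi(a)\circ e$ for $a \in \F_q$ forces $e(az) = a\,e(z)$; this $\F_q$-linearity is preserved at each stage. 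Finally, the claim that $e \bmod I^n$ is a polynomial follows because, at stage $n$, the coefficients $e_j$ for $j$ large are chosen to cancel $D_{n-1}$ which has bounded $z$-degree (inherited from $\phi$ and $\psi_{n-1}$ both having bounded degree mod each power of $I$), so only finitely many $e_j$ are nonzero modulo any fixed $I^n$.
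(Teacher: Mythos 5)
Your overall architecture --- constructing $(\psi,e)$ by successive approximation modulo $I^{n}$, letting the high-degree equations determine the correction to $e$ and the low-degree ones the correction to $\psi$, and recovering the full $A$-action from one nonconstant element by a commutation/uniqueness argument --- is the right one, and is essentially what the results the paper cites (Drinfeld's Prop.\ 5.2 and Pink's Prop.\ 3.4) carry out. But the pivot you use to solve the linearized equation is wrong, and this is a genuine gap. In $\tau$-degree $j$ the coefficient multiplying the unknown $\epsilon_j$ (the $z^{q^j}$-coefficient of the correction to $e$, so your $m$ equals $q^j$) is $a^{q^j}-a$, the image in $R$ of a nonzero element of $A$. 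Nothing in the hypotheses makes this invertible: $R$ is an arbitrary $I$-adically complete $A$-algebra, and $a^{q^j}-a$ can even be zero in $R$ (take $A=\F_q[T]$, $R=\F_q\ps{t}$ with $T\mapsto 0$ and $\phi(T)(z)=z^q+tz^{q^2}$; then $T^{q^j}-T$ maps to $0$ for every $j$). So ``choose $a$ with $a^{q^j}\neq a$ and divide'' neither produces nor uniquely determines $\epsilon_j$.

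The invertible quantity one must pivot on is the leading coefficient of $\phi(a)\bmod I$, i.e.\ the $\tau^{\rho}$-coefficient with $\rho$ the $\tau$-degree of $\psi(a)$ --- this is exactly where the hypothesis that $\phi\bmod I$ is a Drinfeld module of rank $r$ enters, and it is absent from your recursion. Writing out $e\circ\psi(a)=\phi(a)\circ e$ in $\tau$-degree $m>\rho$ (where the corresponding coefficient of $\psi(a)$ is required to vanish), the linearized equation over $I^{n}/I^{n+1}$ expresses $b_\rho^{q^{m-\rho}}\epsilon_{m-\rho}$, with $b_\rho$ a unit, in terms of the obstruction and of $\epsilon_{m-\rho+1},\dots,\epsilon_m$: a banded system that becomes upper triangular with invertible diagonal once one notes that the obstruction vanishes mod $I^{n+1}$ in all sufficiently large degrees. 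One therefore solves by \emph{descending} induction on the index, starting above the top degree where the obstruction survives; this is also precisely what forces $\epsilon_j\equiv 0$ for $j\gg 0$ and hence the assertion that $e\bmod I^{n}$ is a polynomial, which your forward recursion on $\epsilon_m$ would not deliver even if its pivots were units.
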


\begin{proof}
As in the first five lines of part (2) of the proof of \cite[Prop.~7.2]{D},
the result follows from \cite[Prop.~5.2]{D} (taking $B = R/I^n$ therein).  More details are given in the argument of \cite[Prop.~3.4]{P2}.  Very roughly, \cite[Prop.~5.2]{D} is used to find the existence of a unique $e$ and $\psi(b)$ such that $e \circ \psi(b) = \phi(b) \circ e$, given a nonconstant $b \in A$.  By the commutativity of $e\phi(a)e^{-1}$ and $e\phi(b)e^{-1}$ for every $a \in A$, we can use \cite[Prop.~5.1]{D} to see that the same $e$ 
works for $\phi(a)$.  
\end{proof}

\begin{sbrem}\label{prop51re} There is a natural generalization of \ref{prop51} to the situation where we do not assume that the line bundle $\cL$ of $\phi$ is trivial. We have still a correspondence  $\phi\mapsto (\psi, e)$ where the line bundle of $\psi$ is $\cL$ and $e= \sum_{i=0}^{\infty} c_i \tau_i$ where $c_i \in R$ and $\tau_i$ is a semi-linear map $\cL\to \cL$ with respect to the ring homomorphism  $R\to R$ given by $x \mapsto x^{q^i}$.

\end{sbrem} 

 In the rest of this subsection, Drinfeld (resp.,  generalized Drinfeld) modules are assumed to have trivialized line bundles. 
\begin{sbpara}\label{val1} 
 
Let $\cV$ be a valuation ring over $A$ with maximal ideal $\fM$, and let $K$ be the field of fractions of $\cV$.  Fix an algebraic closure $\bar K$ of $K$, let $K^{\sep}\subset \bar K$ be the separable closure of $K$, and set $G_K = \Gal(K^{\sep}/K)$. Let $v_K$ be an additive valuation of $K$ associated to $\cV$,
and let $v_{\bar K}$ denote its unique extension to an additive valuation on $\bar K$. 
We suppose that
\begin{itemize}
	\item $\cV$ is of height one, i.e., the value group of $v_K$ is isomorphic as an ordered 
	group to a subgroup of $\R$, and
	\item $\cV$ is complete, i.e., for every nonzero $a\in \fM$, the canonical map $\cV \to \varprojlim_i \cV/a^i\cV$
	is an isomorphism.
\end{itemize}

For a Drinfeld module $\psi$ over $K$, by a $\psi(A)$-lattice in $K^{\sep}$, we mean a finitely generated, projective, $G_K$-stable $\psi(A)$-submodule $\Lambda$ of $K^{\sep}$ such that $\{\la\in \La \mid v_{\bar K}(\la)\geq c\}$ is finite for all $c \in \R$.

We say that a Drinfeld module $\psi$ over $K$ has potentially good reduction if it comes from a Drinfeld module over the integral closure of $\cV$ in some finite extension of $K$.
\end{sbpara}

The following lemma will be useful later.

\begin{sblem}\label{pre0}  
Let the notation be as in \ref{val1}.
Let $\psi$ be a Drinfeld module over $\cV$ of rank $r$, and let $\Lambda$ be a $\psi(A)$-lattice in $K^{\sep}$.
  Then  $v_{\bar K}(\la) <0$ for all $\la \in \La \setminus \{0\}$. 

\end{sblem}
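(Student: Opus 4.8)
The plan is to prove the equivalent statement that $\Lambda_{\geq 0}:=\{x\in\Lambda\mid v_{\bar K}(x)\geq 0\}$ equals $\{0\}$. By the defining property of a $\psi(A)$-lattice, taken with $c=0$, the set $\Lambda_{\geq 0}$ is finite, so it suffices to show that it is a $\psi(A)$-submodule of $\Lambda$ and that $\Lambda$ contains no nonzero finite $\psi(A)$-submodule.

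First I would check that $\Lambda_{\geq 0}$ is stable under $\psi(A)$. This is the only place where the hypothesis that $\psi$ is defined over $\cV$ (rather than merely over $K$) enters: writing $\psi(a)(z)=\sum_i a_i z^{q^i}$ with $a_i\in\cV$, so $v_{\bar K}(a_i)=v_K(a_i)\geq 0$, one gets for any $x$ with $v_{\bar K}(x)\geq 0$ that $v_{\bar K}(\psi(a)(x))\geq\min_i\bigl(v_{\bar K}(a_i)+q^i v_{\bar K}(x)\bigr)\geq 0$; and $\psi(a)(x)\in\Lambda$ because $\Lambda$ is $\psi(A)$-stable. Hence $\psi(a)(x)\in\Lambda_{\geq 0}$.

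Next I would observe that $\Lambda$ has no nonzero finite $\psi(A)$-submodule. The $A$-action on $K^{\sep}$ via $\psi$ is faithful, because the $\tau^0$-coefficient of $\psi(a)$ is $a$; thus $\psi(A)\cong A$, an infinite domain, and $\Lambda$ is torsion-free over it since it is projective. Therefore, for $x\in\Lambda\smallsetminus\{0\}$, the map $a\mapsto\psi(a)(x)$ is an injection $A\hookrightarrow\Lambda$, so the $\psi(A)$-submodule generated by $x$ is infinite. Combining the two points, the finite $\psi(A)$-submodule $\Lambda_{\geq 0}$ must be $\{0\}$, which is the assertion.

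There is no substantial obstacle here; the argument is short. The one point that requires a little care is the interplay between the additive law of $\psi$ and the valuation — that integrality of the coefficients of $\psi$ forces $\psi(A)$ to preserve $\{v_{\bar K}\geq 0\}$ — together with the elementary fact that a torsion-free module over the infinite domain $A$ has no nonzero finite submodule. Note in particular that neither completeness of $\cV$ nor the height-one hypothesis is used in this lemma.
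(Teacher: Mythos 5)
Your proof is correct and is essentially the paper's own argument: the paper observes that a nonzero $\la$ with $v_{\bar K}(\la)\geq 0$ would give $v_{\bar K}(\psi(a)\la)\geq 0$ for all $a\in A$, contradicting the finiteness of $\{\la\in\La\mid v_{\bar K}(\la)\geq 0\}$. You have merely made explicit the two implicit steps (integrality of the coefficients of $\psi$ over $\cV$, and the infinitude of the orbit $\psi(A)\la$ via torsion-freeness of the projective module $\La$), which is fine.
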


\begin{proof} If $\la\in \La$ is nonzero and $v_{\bar K}(\lam)\geq 0$, then  $v_{\bar K}(\psi(a)\la) \geq 0$ for all $a\in A$, and this contradicts the finiteness of 
$\{\lam\in \La \mid v_{\bar K}(\la)\geq 0\}$.  
\end{proof}

 \begin{sbprop}\label{val2}  Let the notation be as in \ref{val1}.   
  
\begin{enumerate}
	\item[(1)] Generalized Drinfeld modules $\phi$ of generic rank $d$ over $\cV$ are in one-to-one correspondence with  pairs $(\psi, \La)$ consisting of a Drinfeld module $\psi$ over $\cV$ of rank $r\leq d$ and a $\psi(A)$-lattice $\La$ in $K^{\sep}$ of rank $d-r$.
	\item[(2)] Drinfeld modules of rank $d$ over $K$ are in one-to-one correspondence with pairs $(\psi, \La)$ consisting
	of a Drinfeld module $\psi$ over $K$ of rank $r\leq d$ with potentially good reduction and a $\psi(A)$-lattice 
	$\La$  in $K^{\sep}$  of rank $d-r$.
\end{enumerate} 

 \end{sbprop}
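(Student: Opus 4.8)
The plan is to prove both parts by adapting the classical Tate uniformization argument of Drinfeld (\cite[Section 7]{D}) from the complete discrete valuation case to the complete height-one valuation case, using the complements established in Section 2.3, especially Proposition \ref{prop51} and Corollary \ref{2.2.2}. Throughout, one passes freely between a generalized Drinfeld module $\phi$ and the data extracted from its reduction; the key point is that a complete height-one valuation ring is $a$-adically complete for any nonzero $a \in \mathfrak{m}$, so Proposition \ref{prop51} applies with $R = \cV$ and $I = a\cV$.

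\emph{Part (1).} Given a generalized Drinfeld module $\phi$ over $\cV$ of generic rank $d$, its reduction to the residue field is a Drinfeld module of some rank $r \le d$; by \ref{112} the rank function is upper semicontinuous and the generic rank is $d$, so $r \le d$. Lifting via Proposition \ref{prop51} (applied to $I = \mathfrak{m}$, or rather $I = a\cV$ as above, together with an approximation argument to get a genuine reduction) produces a Drinfeld module $\psi$ over $\cV$ of rank $r$ and a morphism $e \in \cV\ps{z}$ with $e \equiv z$, $\F_q$-linear, satisfying $e \circ \psi(a) = \phi(a) \circ e$ for all $a$. One then sets $\La := \ker(e\colon \bar K \to \bar K)$, which is a $\psi(A)$-submodule of $\bar K$; the $G_K$-stability is automatic since $e$ has coefficients in $\cV \subset K$. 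Counting: the formal group laws show $e$ has "degree" accounting for exactly $d - r$ worth of extra zeros in the appropriate sense, so $\La$ is a projective $\psi(A)$-module of rank $d - r$; the finiteness condition $\{\la \in \La : v_{\bar K}(\la) \ge c\}$ finite follows from the convergence of the power series $e$ and the fact that $e$'s zeros are isolated with absolute values tending to infinity. Conversely, given $(\psi, \La)$, one builds $e$ as an infinite product over $\La$ (a "Drinfeld exponential" attached to $\La$) — convergence is guaranteed by the lattice finiteness condition together with Lemma \ref{pre0}, which forces all $v_{\bar K}(\la) < 0$ — and defines $\phi(a) := e \circ \psi(a) \circ e^{-1}$, checking that the coefficients land in $\cV$ and that conditions (i), (ii) of \ref{110} hold, with generic rank $d$. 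The two constructions are mutually inverse by the uniqueness clause in Proposition \ref{prop51}.

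\emph{Part (2).} For a Drinfeld module $\phi$ of rank $d$ over $K$ (no integrality assumed), first apply Corollary \ref{2.2.2} to a finite extension $K'/K$: the pullback of $\phi$ to $K'$ comes from a generalized Drinfeld module over the integral closure $\cV'$ of $\cV$ in $K'$, which is again a complete height-one valuation ring. By Part (1) applied over $\cV'$, this corresponds to a pair $(\psi', \La')$ with $\psi'$ of rank $r$ having good reduction over $\cV'$ and $\La'$ of rank $d - r$ in $\overline{K'} = \bar K$. The content is Galois descent: one shows $\psi'$ and $\La'$, a priori defined over $K'$, descend to a Drinfeld module $\psi$ over $K$ with potentially good reduction and a $G_K$-stable $\psi(A)$-lattice $\La$ in $K^{\sep}$. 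The descent of $\psi$ uses that the uniformization data is canonical (the uniqueness in Prop.~\ref{prop51}), so $\Gal(K'/K)$ acts on it and the fixed data is defined over $K$; the descent of $\La = \ker(e)$ with $e$ having $K$-rational coefficients is then automatic, and $\La \subset K^{\sep}$ because torsion points of $\psi$ and roots of $e$ are separable (as $e$'s exponents are powers of $q$ but its linear coefficient is a unit, an argument as in \ref{111} and the separability remark in the proof of Proposition \ref{stable}). Conversely, $(\psi, \La) \mapsto \phi$ as in Part (1), now over $K$ rather than $\cV$; potential good reduction of $\psi$ plus the lattice data reconstitutes a rank-$d$ Drinfeld module over $K$.

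\emph{Main obstacle.} The routine parts are the formal-group bookkeeping and the convergence estimates for the exponential-type power series $e$ and its inverse. The genuinely delicate step is the passage from height one (rather than discrete): in Drinfeld's original setting, $v_K$ is discrete, monomials $z^{q^n}$ have well-separated valuations on the lattice, and the partial products truncate nicely; for a dense value group one must argue more carefully that the infinite product defining $e$ converges coefficientwise in $\cV$ (using that $\La$'s elements have valuations bounded away from $0$ and accumulating only at $-\infty$), that $e^{-1}$ exists with the stated integrality, and — for Part (2) — that the reductions mod $I^n$ appearing in Proposition \ref{prop51} assemble correctly despite $\cV/I^n$ not being Artinian. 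I expect this convergence/completeness analysis to be where the real work lies, and it is precisely what Propositions \ref{prop51} and \ref{val2}'s placement in Section 2.3 is meant to package.
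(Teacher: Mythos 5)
Your proposal follows essentially the same route as the paper, which likewise reduces to Drinfeld's argument in \cite[Section 7]{D}: one picks a nonzero $t\in\frak m$, uses the $t$-adic completeness of $\cV$ to invoke Proposition \ref{prop51} and obtain the unique pair $(\psi,e)$, sets $\La=\ker(e)$, recovers $\phi$ from $(\psi,\La)$ via the exponential product, and handles (2) by Corollary \ref{2.2.2} together with Galois descent and the uniqueness in \ref{prop51}. The one point worth making explicit (which both you and the paper gloss over) is that $t$ must be chosen with valuation small enough that the finitely many coefficients $c(y_i,n)$ with $n>|y_i|^r$ all lie in $t\cV$, so that $\phi\bmod t$ is genuinely a Drinfeld module of rank $r$; this is possible since the ideals of a valuation ring are totally ordered.
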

 \begin{proof}
	The proof is similar to that of \cite[Prop.~7.2]{D}, so we omit some details. 
	By the semistable reduction theorem \ref{2.2.2}, part (2) is reduced to part (1). We prove part (1). 
	
	The direction $\phi\mapsto (\psi, \La)$ is as follows. 
	 Given a generalized Drinfeld module
	$\phi$ over $\cV$ of generic rank $d$, its reduction modulo $\fM$ is a Drinfeld module of some rank $r \le d$. Hence for some nonzero
	element $t$ in $\fM$, the reduction of $\phi$ modulo $t$ is also a Drinfeld module of rank $r$.  Moreover, as $\cV$ 
	is complete for the ideal generated by $t$, Proposition \ref{prop51} and Remark \ref{prop51re} provide a unique pair 
	$(\psi, e)$ with $\psi$ a
	Drinfeld module over $\cV$ of rank $r$ and
	with $e \in \cV\ps{z}$ such that 
	$e \equiv z \bmod t$ and
	$e \circ \psi(a) = \phi(a) \circ e$ for all $a \in A$, and the coefficient of $z^n$ in $e$ is $0$ unless $n$ is a power of $q$.  
	By the method of \cite[Section 3]{D}, we see that the Taylor series $e$ converges everywhere. 
		
	Since $e \in \cV\ps{z}$ and $e\equiv z \bmod \frak M$, its kernel $\La$ in $K^{\sep}$ is $G_K$-stable with nonzero elements having 
	negative valuation.  Moreover, for $N \in A$ with nonzero image in $\cV$, the map $e$ carries $\psi^{-1}(N)\La/\La$ 
	(i.e., the kernel of $\psi(N)$ on $K^{\sep}/\La$)
	isomorphically onto the finite $A$-module $\ker \phi(N)\cong (A/NA)^d$, and $\La$ contains no nonzero roots of 
	$\psi(N) \in \cV[z]$ because the roots are integral over $\cV$.  We then have a commutative diagram with exact rows
	and columns
	$$
		\SelectTips{cm}{} \xymatrix@R=20pt{
			&& 0 \ar[d] & 0 \ar[d] \\
			& 0 \ar[d] & (A/NA)^r \ar[d] & (A/NA)^d \ar[d] \\
			0 \ar[r] & \Lambda \ar[d]^{\psi(N)} \ar[r] & K^{\sep} \ar[d]^{\psi(N)} \ar[r]^e & K^{\sep} \ar[d]^{\phi(N)} \ar[r] & 0 \\
			0 \ar[r] & \Lambda \ar[r] \ar[d] & K^{\sep} \ar[d] \ar[r]^e & K^{\sep} \ar[r] \ar[d] & 0 \\
			& \Lambda/\psi(N)\Lambda \ar[d] & 0 & 0 \\
			& 0. \\
		}
	$$
	The snake lemma then tells us that $\La/\psi(N)\La\cong (A/NA)^{d-r}$. 
	
	By the general theory of zeros of nonarchimedean functions proven 
	in \cite[Prop.~4]{ML} (see also \cite[Prop.~2.1]{Go}), the set $\{\la \in \La \mid v_{\bar K}(\la)\geq c\}$ is finite for every $c\in \R$. 
	The finiteness of $\La/\psi(N)\La$ and 
	$\{\la \in \La \mid v_{\bar K}(\la)\geq c\}$ imply that $\La$ is a finitely generated $\psi(A)$-module. (This can be seen by imitating the 
	classical proof of the fact that if $L$ is a $\Z$-module with a norm $|\cdot|$ such that  $L/aL$ is finite for some integer $a\geq 2$ and such that $\{x\in L \mid |x|\leq c\}$ is finite for every $c$, then $L$ is a finitely generated $\Z$-module, an important fact used in 
	the proof Mordell-Weil theorem on abelian varieties.)  Hence $\La$ is a lattice of rank $d-r$.
	
	The inverse map 
	$(\psi,\La)\mapsto \phi$ is given as follows. Let $e(z)= z\prod_{\la\in \La\setminus \{0\}} (1-\la^{-1}z)$. By the method of \cite[Section 3]{D}, we see that for each $a\in A$, there is a polynomial $\phi(a)$ over $K$ of degree $|a|^d$ such that $e\circ \psi(a)=\phi(a) \circ e$. By \ref{pre0},  we have that $e\in \cV\ps{z}$ and $e\equiv z\bmod \frak M$. From this, it follows that $\phi(a)$ has coefficients in $\cV[z]$. By the method of \cite[Section 3]{D}, we may conclude that $\phi$ is a generalized Drinfeld module over $\cV$ with generic rank $d$.
\end{proof}

\subsection{Local monodromy over valuation rings} \label{monodromy}

\begin{sbpara}
Let $\cV$ be a strictly Henselian valuation ring over $A$ with maximal ideal $\fM$ and field of fractions  $K$. 
Let $\phi$ be a generalized Drinfeld module over $\cV$ with generic rank $d$.

For a place $v\neq \infty$ of $F$ 
that does not correspond to the kernel of $A\to \cV/\fM$, let $F_v$ denote the completion of $F$ at $v$ and $O_v$ its valuation ring.  
Consider the $v$-adic Tate module 
$$
	T_v(\phi) =\text{Hom}_A(F_v/O_v, \phi\{v\})
$$ 
of $\phi$, where $\phi\{v\}$ is the $v$-primary torsion of $\phi$ in a separable closure $K^{\sep}$. Then $T_v(\phi)$ is a free $O_v$-module of rank $d$ with a continuous  action of $G_K$. Let $V_v(\phi)= F_v\otimes_{O_v} T_v(\phi)$. 
    
For each prime ideal $\frak p$ of $\cV$, let $r(\frak p)$ denote the rank of the Drinfeld module over the residue field $\cV_{\frak p}/{\frak p}\cV_{\frak p}$ of $\frak p$ obtained from $\phi$. Let 
$$
	0 = d(-1) < d(0) <\dots <d(m)=d
$$ 
be such that 
$$
	\{ d(i) \mid 0 \le i \le m\} = \{ r(\frak p) \mid \frak p \in \Spec(\cV) \}.
$$
  
\end{sbpara}

\begin{sbthm}\label{thmlm}   
The action of $G_K$ on $V_v(\phi)$ is quasi-unipotent. More precisely, the following statements hold.
\begin{enumerate}
\item[(1)] 
For a sufficiently small open subgroup $H$ of $G_K$, there are $G_K$-stable $F_v$-subspaces $V_i$ of $V_v(\phi)$ of dimension $d(i)$ 
for $-1 \le i \le m$ such that 
$$
	0=V_{-1}\subset V_0\subset \dots \subset V_m=V_v(\phi)
$$
and $H$ acts trivially on the graded subquotients $V_i/V_{i-1}$ with $0\leq i\leq m$.
\item[(2)] Assume furthermore that the value group $\Gamma$ of $\cV$ has the property that $\Gamma \otimes_{\Z} \Z_{(p)}$ for $p=\mathrm{char}(\F_q)$ is  a finitely generated $\Z_{(p)}$-module.  Then $V_{i-1} = I_H V_i$ for all $0 \le i \le m$, where $I_H$ denotes the augmentation ideal in the group ring $F_v[H]$.
\end{enumerate}
  \end{sbthm}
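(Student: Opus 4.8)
The plan is to build the filtration from the chain of prime ideals of $\cV$ by iterating the Tate uniformization of Proposition \ref{val2}, and then to extract the monodromy from it. I would begin with harmless reductions. The subspaces $V_i$ produced below will be intrinsic — characterized by the valuation filtration on the $v$-power torsion of $\phi$ in $K^{\sep}$, using that the unique extension of $v_K$ to $K^{\sep}$ is $G_K$-invariant — so their $G_K$-stability is automatic, and one is free to replace $K$ by a finite separable extension (this replaces $\cV$ by its integral closure, still a strictly Henselian valuation ring since $\cV$ is Henselian and its residue field is separably closed) and to shrink $H$ at will. Since $\phi$ is given by finitely many coefficients, a standard limit argument lets one also assume $\cV$ has finite height; and since $v$ does not match the characteristic place at $\mathfrak m$, the $v$-power torsion is \'etale and behaves well under reduction.

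Next I would induct on the height $h$ of $\cV$. If $h\le 1$, pass to the completion (again strictly Henselian, and harmless for the monodromy question after restricting $G_K$) and invoke Proposition \ref{val2}: $\phi$ corresponds to $(\psi,\Lambda)$ with $\psi$ of rank $d(0)$ of good reduction over $\cV$ and $\Lambda$ a $\psi(A)$-lattice of rank $d-d(0)$, giving an exact sequence of $G_K$-modules $0\to V_v(\psi)\to V_v(\phi)\to \Lambda\otimes_A F_v\to 0$ in which $G_K$ acts through a finite quotient on both $V_v(\psi)$ (good reduction over a strictly Henselian ring; in fact trivially) and on $\Lambda\otimes_A F_v$ (generators of $\Lambda$ lie in a finite extension of $K$); setting $V_0=V_v(\psi)$, $V_1=V_v(\phi)$ handles this case. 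For $h\ge 2$, let $\mathfrak p$ be the unique height-one prime of $\cV$. If the rank does not drop at $\mathfrak p$, then $\phi$ has good reduction over the Henselian valuation ring $\cV_{\mathfrak p}$, so $V_v(\phi)$ is unramified there and agrees, as a Galois module, with the Tate module of $\phi\bmod\mathfrak p$; apply the inductive hypothesis to $\phi\bmod\mathfrak p$ over the strictly Henselian valuation ring $\cV/\mathfrak p$ of height $h-1$. If the rank does drop at $\mathfrak p$ — necessarily to the value $d(m-1)$, as no prime lies between $(0)$ and $\mathfrak p$ — then over the completion of $\cV_{\mathfrak p}$ Proposition \ref{val2} gives $\phi=\psi/\Lambda$ with $\psi$ of rank $d(m-1)$ of good reduction modulo $\mathfrak p$ and $\Lambda$ of rank $d-d(m-1)$, hence $0\to V_v(\psi)\to V_v(\phi)\to \Lambda\otimes_A F_v\to 0$ with finite Galois action on the quotient; since $\psi\bmod\mathfrak p=\phi\bmod\mathfrak p$ is a generalized Drinfeld module over $\cV/\mathfrak p$ of generic rank $d(m-1)$ with Tate module $V_v(\psi)$, the inductive hypothesis yields the filtration $V_{-1}\subset\cdots\subset V_{m-1}$ of $V_v(\psi)$, and one takes $V_m=V_v(\phi)$. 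In every case $(\sigma-1)V_v(\phi)\subseteq V_v(\psi)=V_{m-1}$ for $\sigma$ in a small enough open subgroup $H$, so $I_HV_i\subseteq V_{i-1}$ for all $i$, and $H$ acts trivially on each graded quotient; this proves (1).

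For (2) the inclusions must become equalities. Unwinding the induction, $I_HV_i=V_{i-1}$ is, granting $I_HV_{i-1}=V_{i-2}$, equivalent to the connecting cocycle $c\colon H\to\Hom_{F_v}(\Lambda\otimes_A F_v,\,V_v(\psi))$ of the exact sequence above having image spanning $V_{i-1}$ modulo $V_{i-2}$. Now $c$ is the Kummer cocycle of $\Lambda$ relative to $\psi$: for $\lambda\in\Lambda$ with a compatible system $(z_n)$ of $v^n$-division points of $\lambda$ under $\psi$, one has $c(\sigma)(\lambda)=(\sigma z_n-z_n)_n$. By Lemma \ref{pre0} every nonzero element of $\Lambda$ has negative valuation, so its division points generate ramified extensions and $c$ is nonzero; what remains is to show that the classes $c(\sigma)$, projected to the graded quotient, span it. This is precisely where the hypothesis that $\Gamma\otimes_{\Z}\Z_{(p)}$ is finitely generated over $\Z_{(p)}$ enters: it bounds the relevant value group and forces Kummer theory over $\cV$ to produce enough ramification that the monodromy is as large as the filtration allows. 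I expect this surjectivity — identifying $c$ with the period/valuation pairing at each level of the tower and deducing fullness from the finiteness of the value group — to be the main obstacle; the devissage above is comparatively routine.
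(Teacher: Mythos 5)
Your treatment of part (1) is essentially the paper's devissage, just reorganized as an induction on the height of $\cV$ rather than built directly from the primes $\frak P_i=\bigcup\{\frak p: r(\frak p)=d(i)\}$ and $\frak Q_i=\bigcap\{\frak q: r(\frak q)=d(i-1)\}$ as the paper does (the latter avoids any need to reduce to finite height, which is the shakiest of your preliminary reductions, and it also supplies the Henselian argument identifying the Galois group acting on each $\La_i$ with a quotient of $G_K$). That part is fine.

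The genuine gap is in part (2): you reduce it, correctly, to the claim that the Kummer cocycle $\sigma\mapsto(\sigma-1)\tilde\la$ attached to a nonzero $\la\in\La$ has values spanning the graded piece, and then you explicitly defer that claim (``I expect this surjectivity \dots to be the main obstacle''). But that claim \emph{is} the content of (2); everything before it follows formally from (1). Nonvanishing of the cocycle (which is all that Lemma \ref{pre0} immediately gives you) is far from spanning a space of dimension $d(i-1)-d(i-2)$, let alone the full $V_0$. The paper's argument, after reducing to $\cV$ complete of height one with $G_K$ acting trivially on $\La$, runs as follows: choose $f\in A$ with $(f)=v^a$ and a compatible system $(\la_j)_j$ with $\psi(f)\la_{j+1}=\la_j$, $\la_0=\la$. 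Since $v_K(\la)<0$, the leading term of $\psi(f^j)$ dominates, so $\la=u\la_j^{|f|^{d(0)j}}$ with $u\in\cV^\times$; hence $v_{\bar K}(\la_j)=|f|^{-d(0)j}v_K(\la)$, and the ramification index of $K(\la_j)/K$ is at least $|f|^{d(0)j}c^{-1}$, where $c$ is the order of the torsion subgroup of $(\Z_{(p)}\otimes_{\Z}\Gamma)/\Z_{(p)}v_K(\la)$ --- finite precisely because $\Gamma\otimes_{\Z}\Z_{(p)}$ is assumed finitely generated. Therefore $\la_j$ has at least $|f|^{d(0)j}c^{-1}$ conjugates, so the image of $g\mapsto(g-1)\la_j$ in $T_0/\phi(f^j)T_0$ has index at most $c$, \emph{uniformly in} $j$; passing to the limit, $\{(g-1)\tilde\la\}$ generates a finite-index $O_v$-submodule of $T_0$ and hence spans $V_0$ over $F_v$. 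Without this uniform ramification bound --- which is exactly where the hypothesis on $\Gamma$ enters, not merely as a qualitative ``finiteness of the value group'' --- your proof of (2) is not complete.
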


\begin{proof} We fix a trivialization of the line bundle of $\phi$.  For each positive integer $i \le m$,
let $\frak P_i$ be the union of all prime ideals $\frak p$ of $\cV$ such that $r(\frak p)=d(i)$, and let $\frak Q_i$ be the intersection of all prime ideals $\frak q$ of $\cV$ such that $r(\frak q)=d(i-1)$. Since the set of ideals of a valuation ring is totally ordered, $\frak P_i$ and $\frak Q_i$ are prime ideals of $\cV$ with $r(\frak P_i)=d(i)$, $r(\frak Q_i)= d(i-1)$, and $\frak P_i\subsetneq \frak Q_i$. 
The image of the local ring $\cV_{\frak Q_i}$ in the residue field $\kappa_i$ of $\frak P_i$ is a valuation ring of height one as it has a unique nonzero
prime ideal. Let $\cV_i$ be the completion of this image, and let $\hat \kappa_i$ be its field of fractions.  Let $\phi_i$ be the generalized Drinfeld module over $\cV_i$ induced by $\phi$. Let $(\psi_i, \La_i)$ be the corresponding pair as in \ref{val2}(1).  We also let $\cV_0 = \cV$, $\phi_0 = \phi$, and $\frak P_0 = \fM$.  

For each nonnegative integer $i \le m$, let $\phi'_i$ be the Drinfeld module of rank $d(i)$ over $\kappa_i$ given by $\phi_i$.
We have a $G_K$-stable $F_v$-subspace $V_i$  of $V_v(\phi)$ as follows. 
Let $\phi\{v\}_i$ be the subgroup of $\phi\{v\}$ consisting of all elements which belong to the integral closure of the local ring $\cV_{{\frak P}_i}$ in $K^{\sep}$. 
 Let 
 $$
 	T_i=\Hom_A(F_v/O_v, \phi\{v\}_i)
$$ 
and $V_i=F_v\otimes_{O_v} T_i$. The map $\phi\{v\}_i \to \phi'_i\{v\}$ is an isomorphism by assumption on $v$, and hence we have isomorphisms $T_i \xrightarrow{\sim} T_v(\phi'_i)$ and  $V_i \xrightarrow{\sim} V_v(\phi'_i)$. It follows that $\dim_{F_v}(V_i)=d(i)$.  
 
The action of $G_K$ on $T_i$ factors through the absolute Galois group $G_{\kappa_i}$ of $\kappa_i$. 
For each positive integer $i  \le m$, we have an exact sequence 
$$
	0\to T_{i-1}\to T_i\to O_v \otimes_A \La_i\to 0
$$
of $G_{\hat \kappa_i}$-modules. The action of $G_{\hat \kappa_i}$ on $T_i/T_{i-1}$ factors through a finite quotient group of $G_{\hat \kappa_i}$ because $\La_i$ is finitely generated over $A$ and  each element of $\La_i$ is fixed by some open subgroup of $G_{\hat \kappa_i}$.
Since $\cV$ is Henselian, the local ring $\cV_{\frak Q_i}$  and its image in $\kappa_i$ are also Henselian. The map $G_{\hat \kappa_i} \to G_{\kappa_i}$ is then an isomorphism (this follows from \cite[Prop.~9.1.16]{GaRo}, and is also a special case of \cite[Thm.~5.1.3]{Fu}).
  This shows that the action of $G_{\kappa_i}$ on $V_i/V_{i-1}$ factors through a finite quotient. 
Hence an open subgroup of $G_K$ acts trivially on $V_i/V_{i-1}$. This proves (1) and therefore the quasi-unipotence of the $G_K$-action.

    We prove (2). 
   By replacing $\phi$ by $\phi_i$ ($1\leq i\leq m$) and replacing $\cV$ by $\cV_i$, we may assume that $\cV$ is of height one and complete, $\frak P_i$ is the zero ideal, $\frak Q_i$ is the maximal ideal of $\cV$, and $i=1$. By replacing $\cV$ by its integral closure in some finite separable extension of $K$, we may assume that $G_K$ acts on $\La_1$ trivially.  Write $\La_1$ as $\La$ and $\psi_1$ as $\psi$.

  Take an element $f$ of $A$ such that $(f)= v^a$ for some $a\geq 1$. We identify $T_v(\phi)$ with $\varprojlim_n \phi[f^n]$, where $\phi[f^{n+1}]\to \phi[f^n]$ is  $\phi(f)$. 
Let $\Omega$ be the set of all families $(x_j)_{j\geq 0}$ such that $x_j\in K^{\sep}$, $x_0\in \La$, and $\psi(f)x_{j+1}=x_j$ for all $j\geq 0$. Then we have a commutative diagram of exact sequences
$$
	\SelectTips{cm}{} \xymatrix{  0 \ar[r] & T_0 \ar[r] \ar@{=}[d] & \Omega \ar[r] \ar[d]^g & \La \ar[r] \ar[d] & 0\\
	0 \ar[r] & T_0 \ar[r] & T_v(\phi) \ar[r]   & O_v\otimes_A \La \ar[r] & 0, }
$$
where $g((x_j)_j) = (e(x_j))_j$ for the exponential map $e \in \cV\ps{z}$ which satisfies $e \circ \psi(a) = \phi(a) \circ e$ for all $a \in A$.

Take $\lam \in \La\setminus \{0\}$ and take $\tilde \lam=(\lam_j)_{j\geq 0} \in \Omega$ such that $\lam_0=\lam$.  Regard $\tilde \lam$ as an element of $T_v(\phi)$ by the above commutative diagram.
Let $c$ be the order of the torsion part of $(\Z_{(p)}\otimes_{\Z} \Gamma)/ \Z_{(p)}v_K(\lambda)$. 
As $v_K(\lambda) < 0$ by Lemma \ref{pre0}, we have
$$
	\lam= \psi(f^j)\lam_j= u \lam_j^{|f|^{d(0)j}},
$$	 
where $u$ 
is a unit of $\cV$.  The ramification index of $K(\lambda_j)/K$ is then $\geq |f|^{d(0)j}c^{-1}$, so 
$$
	[K(\lambda_j):K]\geq |f|^{d(0)j}c^{-1}.
$$ 
Since the degree of the latter extension is not bigger than the number of distinct conjugates of $\lambda_j$, the image of the function $G_K\to T_0/\phi(f^j)T_0$ given by $g\mapsto (g-1)\lambda_j$ 
has order at least $|f|^{d(0)j}c^{-1}$. Since $T_0/\phi(f^j)T_0$ has order $|f|^{d(0)j}$, 
this image has index $\leq c$. Since $j$ is arbitrary, the image of the map $G_K\to T_0$ given by $g\mapsto (g-1)\tilde \lambda$ generates an $O_v$-submodule of $T_0$ also of index $\leq c$.  In particular, the image of the latter map generates $V_0$ over $F_v$, proving (2).
\end{proof}

We consider a result for abelian varieties corresponding to Theorem \ref{thmlm}. It should be very well-known, but we provide a proof. 

\begin{sbprop}\label{2.5.2}  Let $\cV$ be a strictly Henselian valuation ring with maximal ideal $\mathfrak{M}$ and field of fractions $K$. Let $B$ be an abelian variety over $K$, and let $\ell$ be a prime number  which does not coincide with the characteristic of $\cV/\mathfrak{M}$. Then 
there is an open subgroup $H$ of $G_K$  such that $(\sig_1-1) (\sig_2-1)=0$ on $T_{\ell}(B)$ for all $\sig_1, \sig_2\in H$. 

\end{sbprop}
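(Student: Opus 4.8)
The plan is to deduce this from the semi-stable reduction theorem for abelian varieties together with Grothendieck's description of the Galois action on the Tate module of a semi-abelian degeneration; the resulting bound is ``length two'' (a product of two operators $\sigma_i-1$ vanishes) rather than ``length $n$'' as in Theorem~\ref{thmlm}, and this reflects the fact that the monodromy weight filtration of $T_\ell(B)$ has only the three steps $0\subseteq W_0\subseteq W_1\subseteq W_2=T_\ell(B)$, coming from the ``weight one'' nature of $H^1$ of an abelian variety.

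First I would record what strict Henselianity buys us. Since the residue field $k=\cV/\frak m$ is separably closed and $\ell\neq\mathrm{char}(k)$, the polynomial $X^{\ell^n}-1$ splits over $k$, so by Hensel's lemma $\cV$ contains all $\ell$-power roots of unity; hence the $\ell$-adic cyclotomic character of $G_K$ is trivial. Moreover $G_K$ has no nontrivial unramified quotient, i.e.\ $G_K$ coincides with its own inertia group. Both properties persist after replacing $K$ by a finite extension $K'$: the integral closure of $\cV$ in $K'$ is, after localizing at a prime over $\frak m$, again a strictly Henselian valuation ring, its residue field being a finite --- hence purely inseparable, hence again separably closed --- extension of $k$.

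Next I would apply the semi-stable reduction theorem: after replacing $K$ by a suitable finite separable extension $K'$ --- equivalently, after replacing $H$ by the open subgroup $G_{K'}$ --- the abelian variety $B_{K'}$ acquires semi-abelian reduction over the integral closure $\cV'$ of $\cV$ in $K'$, with split torus part in the reduction. Here one either invokes a form of the theorem valid over valuation rings, or reduces to the case of a complete valuation ring of height one exactly as in the proof of Theorem~\ref{thmlm}: stratify $\Spec\cV$ by the toric rank of the reduction of $B$ at the various primes and work along a chain of primes, completing at the relevant height-one localizations. Either way one arrives at a semistable $G_{K'}$-representation $T_\ell(B)=T_\ell(B_{K'})$. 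By the theory of semi-abelian degeneration (SGA~7~I, Exp.~IX; see also \cite{FC}), $T_\ell(B)$ carries a $G_{K'}$-stable filtration $0\subseteq W_0\subseteq W_1\subseteq W_2=T_\ell(B)$ in which $W_0$ is the ``toric part'', $W_1/W_0$ the Tate module of the good-reduction abelian part, and $W_2/W_1$ the ``lattice part''; in our situation $W_1$ and $W_2/W_1$ are trivial $G_{K'}$-modules ($W_0$ by triviality of the cyclotomic character and splitness of the torus, $W_1/W_0$ by N\'eron--Ogg--Shafarevich and separable closedness of the residue field, $W_2/W_1$ because it is unramified and becomes trivial after the extension $K'$), and the monodromy operator $\sigma-1$ maps $W_2$ into $W_0$ and annihilates $W_1$ for every $\sigma\in G_{K'}$. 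Consequently, for $\sigma_1,\sigma_2\in G_{K'}$ the operator $\sigma_2-1$ carries $T_\ell(B)$ into $W_0\subseteq W_1$, and then $\sigma_1-1$ kills it, so $(\sigma_1-1)(\sigma_2-1)=0$ on $T_\ell(B)$; we take $H=G_{K'}$.

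The step I expect to be the main obstacle is the reduction in the previous paragraph: arranging semi-abelian reduction and establishing the three-step monodromy structure over a general (strictly Henselian) valuation ring rather than over a discrete valuation ring. This is precisely the difficulty handled in the proof of Theorem~\ref{thmlm} --- one must control how the reduction type of $B$ varies along the totally ordered family of primes of $\cV$ and descend to the complete height-one situation --- and it is there that essentially all the content lies; once that reduction is in place, the remainder is the classical theory of semistable abelian varieties.
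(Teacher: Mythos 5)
Your overall strategy (semi-stable reduction plus the Grothendieck/Faltings--Chai filtration, with the final computation that $(\sig_2-1)$ lands in a submodule on which $\sig_1$ acts trivially) is the right mechanism, and your preliminary remarks about strictly Henselian valuation rings and the triviality of the cyclotomic character are fine. But the step you yourself flag as the main obstacle --- producing a semi-abelian model of $B_{K'}$ over the \emph{whole} of $\cV'$, equivalently a single three-step filtration whose middle graded piece is unramified for all of $G_{K'}$ --- is exactly where the content lies, and neither of your two proposed routes closes it. The second route (stratify $\Spec(\cV)$ by toric rank and iterate the complete height-one theory along a chain of primes, ``as in Theorem \ref{thmlm}'') is the dangerous one: that iteration is precisely what the paper carries out for Drinfeld modules, and there it yields unipotency of length $m+1$, not $2$ (Theorem \ref{thmlm}(2) and the example \ref{valex} show the length genuinely grows with the height). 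If you run the same iteration for an abelian variety over a height-two $\cV$, the abelian part $B_1$ of the reduction at the height-one prime is an abelian variety over the residue field $k_1$, which is \emph{not} separably closed and over which $B_1$ may degenerate further; the $T_\ell(B_1)$-component of $(\sig_2-1)x$ is then a Kummer cocycle of the reduction of a period point, and applying $(\sig_1-1)$ to it lands a priori in the toric part of the second-stage degeneration, which is nonzero. Showing that this term vanishes --- i.e.\ that the tower of degenerations collapses to a single semi-abelian scheme over all of $\cV'$ --- requires input special to abelian varieties (the polarization/positivity of the period pairing); it is not a formal consequence of the height-one theory.

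The paper supplies that input in one stroke and by a different mechanism: the morphism $\Spec(K)\to$ (moduli space of polarized abelian varieties with level structure) extends, by the valuative criterion of properness, to a morphism from $\Spec(\cV')$ to a toroidal compactification, and over the toroidal compactification the universal degeneration of \cite{FC} has local monodromy that is unipotent of length two at every point. Pulling back along $\Spec(\cV')$ gives the semi-abelian scheme $G$ over all of $\cV'$ and the two-step filtration $T_\ell(G)\subset T_\ell(B)$ with trivial $G_{K'}$-action on sub and quotient, from which your final computation goes through (one does not even need the finer three-step filtration). So your first suggested route --- ``invoke a form of the theorem valid over valuation rings'' --- is the one to take, but as written it begs the question; to make the proof complete you should derive that form from the properness of the toroidal compactification (or give a reference that proves semi-abelian reduction over general valuation rings), and you should drop the suggestion that the reduction can be done by iterating the height-one case.
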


\begin{proof} If we replace $K$ by a finite separable extension to give a suitable level structure on $B$, 
we have a morphism from $\Spec(K)$ to the moduli space of polarized abelian varieties associated to $B$. By the valuative criterion, this morphism extends to a morphism from 
$\Spec(\cV)$ to a toroidal compactification $\cT$ of the moduli space, as constructed in \cite[Ch.~IV]{FC}. Let $x\in \cT$ be the image of the closed point of $\Spec(\cV)$, and let $Q$ denote the field of fractions of the completion $\hat \cO_{\cT, x}$ of the local ring $\cO_{\cT,x}$.

The abelian variety $X$ over $Q$ arising from the universal abelian variety over the moduli space with the relevant level structure is obtained by the Mumford construction of \cite[Ch.~III]{FC}, as described in \cite[Ch.~VI, Section 1]{FC}.
That is to say, $X$ is analytically understood as the quotient $G/\La$ of a semi-abelian scheme $G$ over $R$ by a finite rank $\Z$-lattice $\La$ in $G(Q)$. We then have an exact sequence 
$$
	0\to T_{\ell}G \to T_{\ell}X \to \Z_{\ell} \otimes_{\Z} \La \to 0
$$ 
of $\Gal(Q^{\sep}/Q)$-modules. The absolute Galois group of the field of fractions of the strict henselization of $\hat \cO_{\cT,x}$ acts trivially on both $T_{\ell}G$ and $\Z_{\ell}\otimes_{\Z} \La$, and hence we have $(\sig_1-1)(\sig_2-1)=0$  on $T_{\ell}X$ for every $\sig_1, \sig_2$ in this Galois group. The same is then true for action of the absolute Galois group of the field of the fractions of the strict henselization of $\cO_{\cT,x}$. Consequently, the same is true for $G_K$ acting on $T_{\ell}B$. 
\end{proof}

\begin{sbpara}\label{valex} From \ref{thmlm}(2), we see that the analogue of \ref{2.5.2} for Drinfeld modules is not true. For example, 
take $A=\F_q[T]$, let $k$ be a separably closed field over $A$, and let 
$$
	\cV\subset k\ls{t_1}\ps{t_2}
$$ 
be the set of all elements whose images in $k\ls{t_1}$ belong to $k\ps{t_1}$. Then $\cV$ is a strictly Henselian valuation ring and the value group of $\cV$ is isomorphic to $\Z^2$ with the lexicographic order. Consider the generalized Drinfeld module over $\cV$ defined by 
$$
	\phi(T)(z)= Tz + z^q + t_1z^{q^2}+ t_2z^{q^3}.
$$
Let $\frak p_1=(t_1)\supsetneq \frak p_2=(t_2) \supsetneq \frak p_3=0$ be the prime ideals of $\cV$.
Then the Drinfeld module over the residue field of $\frak p_i$ induced by $\phi$ has rank $i$ 
for $i=1,2,3$. Hence \ref{thmlm}(2) shows that if $v$ is a place of $A$ such that $v\neq \infty$ and such that $A\to k$ does not factor through the residue field of $v$, there is no open subgroup $H$ of $G_K$ such that  $(\sig_1-1)(\sig_2-1)=0$ on $V_v(\phi)$ for all $\sig_1, \sig_2\in H$. 
\end{sbpara}

\subsection{Log geometry and toric geometry}\label{tolo}

\begin{sbpara}\label{sat} 
Let $\cS$ be a site with a sheaf of commutative rings $\cO$. A \emph{log structure} $M$ on $\cS$ is a sheaf of commutative monoids 
 endowed with 
a homomorphism $\alpha\colon M \to \cO$  of sheaves of monoids for the multiplicative structure of $\cO$ such that the map $\alpha^{-1}(\cO^\times)\overset{\alpha}\to \cO^\times$   is an isomorphism. Hence, $\cO^\times$ is regarded as a subgroup sheaf of $M$ via $\alpha$. We say the log structure is trivial if $M=\cO^\times$. The semigroup law of the log structure $M$ is written multiplicatively. 

By a \emph{saturated monoid}, we mean a commutative monoid $P$ which satisfies the following two conditions.
\begin{enumerate}
	\item[(i)]  In $P$, if $ab=ac$, then $b=c$. That is, for the group completion  $P^{\gp}= \{ab^{-1}\mid a, b\in P\}$ of $P$, the canonical homomorphism  $P\to P^{\gp}$ is injective.
	\item[(ii)] If $a\in P^{\gp}$ and if $a^n\in P\subset P^{\gp}$ for some $n\geq 1$, then $a\in P$.
\end{enumerate}
A finitely generated saturated monoid is called an \emph{fs monoid}. Recall that a monoid is called \emph{sharp} if $1$ is its only invertible element.

We say that a log structure $M$ is \emph{saturated} if it is a sheaf of saturated monoids.
We call $M$ an \emph{fs log structure} if locally on $\cS$, there is an fs monoid  $P$ and a homomorphism $\alpha \colon P\to \cO$ (called a chart of $M$) such that $M$ is isomorphic to the pushout of $P \leftarrow \alpha^{-1}(\cO^\times)\to \cO^\times$ in the category of sheaves of commutative monoids on $\cS$, which is  endowed with the natural homomorphism to $\cO$. An fs log structure is saturated.

Let $M$ be a saturated log structure on $\cS$, and let $\cL$ be an $\cO$-module on $\cS$ which is locally free of rank $1$.  Then we define a sheaf $\overline{\cL}$ on $\cS$ by
$$
	\overline{\cL} = \cL\cup_{\cL^\times} (M^{-1}\times^{\cO^\times} \cL^\times),
$$  
where $\cup_{\cL^{\times}}$ denotes the quotient of the disjoint union obtained by identifying $\cL^{\times}$ on both sides, and $\times^{\cO^{\times}}$ denotes the maximal quotient of the direct product on which the actions of $\cO^{\times}$ given by multiplying the first and second coordinates agree.
It is regarded as the twist $\cL^\times \times^{\cO^\times} (\cO\cup_{\cO^\times} M^{-1})$ of $\cO\cup_{\cO^\times} M^{-1}$ by the $\cO^\times$-torsor $\cL^\times$.

In this paper, we consider the following types of log structures:
\begin{enumerate}
	\item[(a)] log structures on the \'etale site $\cS$ of a scheme $S$ with $\cO=\cO_S$,
	\item[(b)] log structures on the \'etale site $\cS$  of a locally Noetherian formal scheme $S$ with $\cO=\cO_S$,
	\item[(c)] log structures on the site $\cS$ of open sets of an adic space $S$ with $\cO=\cO^+_S$,
	\item[(d)] log structures on the \'etale site $\cS$ of an adic space $S$ with $\cO=\cO_S$.
\end{enumerate}

In (c) and (d), recall that an adic space has two  sheaves of commutative rings $\cO_S$ and $\cO_S^+\subset \cO_S$ (cf. \cite{Hu}). 
For the \'etale site of a formal scheme, see \cite{GM}. For the \'etale site of an adic space, see \cite{Hu2}. 
In (a) (resp.,  (b)),   $S$ endowed with a log structure is called a \emph{log scheme} (resp., a \emph{log locally Noetherian formal scheme}). A scheme endowed with an fs  log structure is called an \emph{fs log scheme}. The category of fs log schemes has fiber products (see \cite[Ch.~III, 2.1.6]{O}).

In this paper, we will use saturated log structures. In applications later in this paper, we will use log structures on adic spaces in (c) and (d) only in the following restricted ways. When we use (c), 
 we will consider  only the special log structure $M= \cO_S^+\cap \cO_S^\times \subset \cO_S$ endowed with the inclusion map $\alpha \colon M\to \cO=\cO_S^+$. When we use  (d), we will consider  only  whether  the log structure  is trivial or not.  
\end{sbpara}

\begin{sbpara} 
For a line bundle $\cL$ on an fs log scheme $S$, 
we can understand $\overline{\cL}$ as the sheaf $\cF_{\cL}$ of $S$-morphisms to the projective bundle $\mathbb{P}_S(\cL\oplus \cO_S)\supset \cL$ that is endowed with the log structure of the fiber product   $S\times_{S^{\circ}} P$ where $S^{\circ}$ is the scheme $S$ with the trivial log structure and $P = \mathbb{P}_S(\cL \oplus \cO_S)$ is endowed with the log structure associated to the Cartier divisor $P\setminus \cL$.

This can be understood as follows. From \ref{sat}, we have that
$\overline{\cL}= \cL^\times \times^{\cO_S^{\times}} (\cO_S \cup_{\cO_S^{\times}} M_S^{-1})$, 
and it follows that
$$
	\cF_{\cL} =\cL^\times \times^{\cO_S^\times} \cF_{\cO_S}.
$$ 
Hence, we are reduced to the case that $\cL=\cO_S$. In this case, the above projective bundle with log structure is $S \times_{\Spec(\Z)} \mathbb{P}^1_{\Z}$, where $\mathbb{P}^1_{\Z}$ is endowed with the log structure given by the Cartier divisor at infinity, and
$\overline{\cO_S}= \cO_S \cup M_S^{-1}$. An element $f\in \cO_S$ corresponds to the morphism 
$S\to \Spec(\Z[T]) \subset \mathbb{P}^1_{\Z}$ sending $T$ to $f$, while $f\in M_S^{-1}$ corresponds to the morphism 
$S\to \Spec(\Z[T^{-1}]) \subset \mathbb{P}^1_{\Z}$ sending $T^{-1}$ to $f^{-1}\in M_S$. 

This generalizes to a vector bundle $V$ on $S$ as follows. Let $\overline{V}$ be the sheaf  of morphisms to the projective bundle $P=\mathbb{P}_S(V \oplus \cO_S)$ endowed with the log structure defined similarly to the above using the Cartier divisor $P\setminus V$. We have $\overline{V}= 
V\cup_{V^\times} M_S^{-1}V^\times$, where $V^\times$ denotes those elements of $V$ that lie in a basis.
 The authors wonder whether $\overline{V}$ can be used to formulate the notion of a log shtuka, the shtuka version of a log Drinfeld module, for a shtuka is a vector bundle.   
\end{sbpara}

We review basic facts about toric geometry (the geometric meaning of cone decompositions in toric geometry, etc.) formulated in terms of log structures.

\begin{sbpara}\label{MN1} In the rest of this subsection, 
let $L_\Z$ be a finitely generated free $\Z$-module, and let
$$
	L^*_\Z =\Hom_\Z(L_\Z, \Z).
$$ 
Set $L_\R=\R\otimes_\Z L_\Z$ and $L^*_\R=\R\otimes_\Z L^*_\Z$, and let 
$$
	(\,\;,\;) \colon L_\R\times L^*_\R\to \R
$$ 
be the canonical pairing. 

For a finitely generated cone $\sig$ in $L_\R$, let 
$$
	\sig^*= \{x\in L^*_\R \mid  (y,x)\geq 0 \textsp{for all} y\in \sig\}.
$$ 
Then $\sig^*$ is a finitely generated cone in $L^*_\R$, and we have 
$$
	\sig= \{y\in L_\R\mid  (y,x)\geq 0 \textsp{for all} x\in \sig^*\}.
$$ 
Then $\sig$ is rational if and only if $\sig^*$ is rational. 

If $\sig$ is rational, then let $$\sig^{\vee} = \sig^*\cap L_\Z.$$
Then $\sig^{\vee}$ is an fs monoid. 

\end{sbpara}

\begin{sbpara}\label{fan}  Let $L_\Z$ and $L^*_\Z$ be as in \ref{MN1}.

By a \emph{finite rational fan} in $L_\R$, we mean
 a finite set $\Sigma$ of finitely generated rational sharp (i.e., strongly convex) cones in $L_\R$ such that
\begin{enumerate}
	\item[(i)] if $\sig\in \Sig$, then all faces of $\sig$ belong to $\Sig$, 
	\item[(ii)] if $\sig,\tau\in \Sig$, then $\sig\cap \tau$ is a face of $\sig$.
\end{enumerate}

For example, if $\sig$ is a finitely generated rational sharp cone in $L_\R$, then the set $\faces(\sig)$ of all faces of $\sig$ is a finite rational fan in $L_\R$. 

By a \emph{finite rational subdivision} of a finite rational fan $\Sig$ in $L_{\R}$, we mean a finite rational fan $\Sig'$ in $L_\R$ such that
\begin{enumerate}
\item[(iii)] for each $\tau\in \Sig'$, there is $\sig\in \Sig$ such that $\tau\subset \sig$,
\item[(iv)] $\bigcup_{\sig\in \Sig}\sig= \bigcup_{\tau \in \Sig'} \tau$. 
\end{enumerate}

In the case $\Sig=\faces(\sig)$, a finite rational subdivision of $\Sig$ is called a \emph{finite rational cone decomposition} of $\sig$ and is called also a finite rational subdivision of $\sig$. 
\end{sbpara}

\begin{sbpara}\label{toric}
Let $\Sig$ be a finite rational fan in $L_\R$. 
Let $\cS$ be a site with a sheaf of commutative rings $\cO$ and with a saturated log structure $M$. We define the sheaves $\toric(\Sig)$ and $[\Sig]$ on $\cS$ by 
$$
    \toric(\Sig) =\bigcup_{\sig\in \Sig}\toric(\sig)  \subset L_\Z\otimes_{\Z} M^{\gp}, \quad \text{where  } \toric(\sig) := \cH om(\sig^{\vee}, M), 
$$ 
$$
	[\Sig]= \bigcup_{\sig\in \Sig}  [\sig] \subset L_\Z\otimes_{\Z} (M^{\gp}/\cO^\times), \quad \text{where  } [\sig] :=  \cH om(\sig^{\vee}, M/\cO^\times).
$$
Here, $\bigcup$ indicates a union as sheaves, $\cH om$ denotes the sheaf of homomorphisms, and $\sigma^{\vee}$ is viewed as a constant sheaf. In the case $\Sig=\faces(\sig)$, we have $\toric(\Sig)=\toric(\sig)$ and $[\Sig]=[\sig]$.

\end{sbpara}

\begin{sbpara}\label{toric2}

Let $\Sig$ be as in \ref{toric}. Let $\toric_{\Z}(\Sig)$ be the toric variety over $\Z$ associated to $\Sig$ in the classical toric geometry regarded as a log smooth fs log scheme over $\Z$. That is, $\toric_{\Z}(\Sig)$ is the union over $\sig \in \Sig$ of its open sets $\toric_{\Z}(\sig)=\Spec(\Z[\sig^{\vee}])$, each endowed with the log structure associated to $\sig^{\vee}\to \Z[\sig^{\vee}]$.

The toric variety $\toric_{\Z}(\Sig)$ represents the functor
$S\mapsto \Gamma(S, \toric(\Sig))$
from the category of locally ringed spaces with saturated log structures to the category of sets, where $\toric(\Sig)$ is the sheaf on $S$ defined in \ref{toric}. In this story,  the log structure on $S$ is given on the site of open sets of $S$, and we regard the log structure on $\toric_{\Z}(\Sig)$ as a sheaf on the Zariski site. 

In the cases (a), (b), (d) in \ref{sat}, the log structure is given on the \'etale site. In these cases, $\toric(\Sig)(S) := \Gamma(S, \toric(\Sig))$ is identified with the set of morphisms $S\to \toric_{\Z}(\Sig)$ of locally ringed spaces with log structures on the \'etale sites. 

These follow from the fact that for each $\sig\in\Sig$, a morphism $S\to \toric_{\Z}(\sig)$ induces and uniquely comes from a homomorhism $\sig^{\vee}\to M$ of sheaf of monoids.
 
\end{sbpara}

\begin{sbpara}\label{weaker}

 In the case (a) (resp., (b), resp., (d)), by the weaker topology on $\cS$, we mean that we regard the category $\cS$ as another site in which we take open coverings of schemes (resp., formal schemes, resp., adic spaces) as coverings. In particular, the weaker topology on $\cS$ in the case (a) means the Zariski topology.

By the fact that $\toric_{\Z}(\Sig)= \bigcup_{\sig \in \Sig} \toric_{\Z}(\sig)$ is an open covering, we have the following. 
\end{sbpara}

 \begin{sblem}\label{toric3} In the case (a) (resp., (b), resp., (d)), the sheaf $\toric(\Sig)$ on the \'etale site $\cS$ of $S$, which is the union of the subsheaves $\toric(\sig)$ ($\sig \in \Sig$)  for the \'etale topology, is actually the 
 union of the $\toric(\sig)$ as a sheaf for the weaker topology on $\cS$. 
 
\end{sblem}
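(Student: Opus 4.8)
The plan is to reduce the statement to the representability result recalled in \ref{toric2}: in each of the cases (a), (b), (d), the functor $S\mapsto \toric(\sig)(S)$ is represented by the affine log scheme (resp.\ formal scheme, resp.\ adic space) $\toric_{\Z}(\sig)$ over $\Z$, and the inclusions $\toric_\Z(\sig)\hookrightarrow \toric_\Z(\Sig)$ induced by faces $\sig\le\tau$ realize $\toric_\Z(\Sig)=\bigcup_{\sig\in\Sig}\toric_\Z(\sig)$ as an \emph{open} covering (in the Zariski topology on schemes, resp.\ the analogous topology on formal schemes and adic spaces), not merely an \'etale covering. Thus the content of the lemma is purely formal: a sheaf for the \'etale topology which is glued from subsheaves along an open covering of a representing object is already glued from those subsheaves along the weaker (open-set) topology.

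Concretely, here are the steps I would carry out. First, fix $S$ an object of $\cS$. A section of $\toric(\Sig)$ over $S$ is, by \ref{toric2}, a morphism $g\colon (S,\cO,M)\to \toric_\Z(\Sig)$ of locally ringed spaces with log structures. Since $\toric_\Z(\Sig)=\bigcup_{\sig\in\Sig}\toric_\Z(\sig)$ is an open covering and $S$ is a locally ringed space, the preimages $g^{-1}(\toric_\Z(\sig))$ form an open covering of $S$ \emph{in the weaker topology}; on each such open $V_\sig$ the restricted morphism factors through $\toric_\Z(\sig)$, i.e.\ defines a section of $\toric(\sig)$ over $V_\sig$. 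This shows the natural map from the sheafification of $\sig\mapsto\toric(\sig)$ for the weaker topology, into $\toric(\Sig)$, is surjective. Second, for injectivity/separatedness one observes that each $\toric(\sig)\hookrightarrow \toric(\Sig)$ is an inclusion of subsheaves (the $\toric(\sig)$ sit inside the common ambient sheaf $L_\Z\otimes_\Z M^{\gp}$, as recorded in \ref{toric}), so there is nothing to check beyond the fact that a section which locally lies in the subsheaves $\toric(\sig)$ is determined by its restrictions; this is automatic since the weaker-topology covering $\{V_\sig\}$ of $S$ refines to an \'etale covering and $\toric(\Sig)$ is already an \'etale sheaf. Assembling these two points gives that $\toric(\Sig)$ is the union of the $\toric(\sig)$ as a sheaf for the weaker topology.

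For case (c), which is not claimed in the lemma, I note in passing that the site of open sets already \emph{is} the weaker topology, so there is nothing to prove; the lemma is stated precisely for the \'etale sites (a), (b), (d) where the assertion has content. One subtlety to spell out is that in cases (b) and (d) the relevant topology on formal schemes and adic spaces, and the representability of $\toric(\sig)$ by $\toric_\Z(\sig)$ base-changed appropriately (as a locally Noetherian formal scheme, resp.\ as an adic space), must be invoked in the same form as in \ref{toric2}; but these are exactly the statements recalled there, together with the references \cite{GM} and \cite{Hu2} for the \'etale sites in question.

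The main obstacle, such as it is, is bookkeeping rather than mathematics: one must make sure that ``morphism of locally ringed spaces with log structures into a space with an open covering factors, after passing to an open cover of the source, through the pieces of the covering'' is applied in the correct category in each of the three cases, and that the ambient sheaf $L_\Z\otimes_\Z M^{\gp}$ in which all the $\toric(\sig)$ sit makes sense and behaves the same way for the weaker topology as for the \'etale topology. Once the representability statement of \ref{toric2} is granted in each case, the proof is a one-line application of the fact that an \'etale sheaf restricted to an open covering of a representing object is reconstructed from that covering in the weaker topology.
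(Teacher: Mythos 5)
Your proposal is correct and is essentially the paper's own argument: the paper derives Lemma \ref{toric3} immediately from the identification in \ref{toric2} of $\toric(\Sig)(S)$ with morphisms of locally ringed spaces with log structures to $\toric_{\Z}(\Sig)$, together with the fact that $\toric_{\Z}(\Sig)=\bigcup_{\sig\in\Sig}\toric_{\Z}(\sig)$ is an open covering, so that pulling back this covering gives a covering of $S$ in the weaker topology over which the section factors through the $\toric(\sig)$. Your additional remarks on separatedness and on the bookkeeping in cases (b) and (d) are fine but not needed beyond what \ref{toric2} already records.
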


\begin{sbpara}\label{toric4}  Let $\Sig$ be as above, and let $S$, $\cS$, $\cO$  and $M$ be as in one of the cases (a)--(d) in \ref{sat}.

We have that  
$$ L_{\Z}\otimes_{\Z} \cO^\times \subset  \toric(\Sig)\subset L_\Z\otimes_{\Z} M^{\gp},$$
$L_{\Z} \otimes_{\Z} \cO^\times$ acts on $\toric(\Sig)$, and 
$$\toric(\Sig)/(L_{\Z}\otimes_{\Z} \cO^\times)\cong [\Sig]$$ as  sheaves on $\cS$.

\end{sbpara}

\begin{sblem}\label{toric5}
In the case (a) (resp., (b), resp., (d)), $$\toric(\Sig)/(L_{\Z}\otimes_{\Z} \cO^\times)\cong [\Sig]$$ also as sheaves for the weaker topology on $\cS$. 
\end{sblem}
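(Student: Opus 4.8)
The plan is to deduce \ref{toric5} from the étale‑site isomorphism of \ref{toric4}, the only real point being that the surjectivity of $\toric(\Sig)\to[\Sig]$ survives when one passes to the coarser (weaker) topology. Concretely, I would first reformulate \ref{toric4} as a short exact sequence of sheaves on the étale site $\cS$,
$$
	1\to L_\Z\otimes_\Z\cO^\times\to\toric(\Sig)\to[\Sig]\to1,
$$
where, following \ref{toric}, $\toric(\Sig)$ and $[\Sig]$ are viewed as subsheaves of $L_\Z\otimes_\Z M^{\gp}$ and of $L_\Z\otimes_\Z(M^{\gp}/\cO^\times)$ and the middle map is the restriction of $L_\Z\otimes_\Z M^{\gp}\to L_\Z\otimes_\Z(M^{\gp}/\cO^\times)$. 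Since the weaker topology is coarser than the étale one, all three terms remain sheaves for the weaker topology, and $L_\Z\otimes_\Z\cO^\times$ is still exactly the kernel of $\toric(\Sig)\to[\Sig]$ — an assertion about a subsheaf inclusion, insensitive to the topology. Hence the whole lemma reduces to showing that $\toric(\Sig)\to[\Sig]$ is an epimorphism of sheaves for the weaker topology, i.e. that every section of $[\Sig]$ lifts to $\toric(\Sig)$ locally for the weaker topology.

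The essential input is that, for the saturated log structures occurring in cases (a), (b), (d) of \ref{sat}, the map $M^{\gp}\to M^{\gp}/\cO^\times$ is an epimorphism of sheaves for the weaker topology: in case (a), $M$ is, locally for the Zariski topology, the log structure $\cO_S\cap j_*(\cO_U^\times)$ on a normal scheme (see \ref{log2}, \ref{log3}), so a section of $M^{\gp}/\cO^\times$ over an open set is the class of a rational function invertible on $U$, equivalently an effective‑Cartier‑type datum supported on the complement of $U$, which is Zariski‑locally principal and therefore Zariski‑locally lifts to $M^{\gp}$; cases (b) and (d) are handled the same way with the log structures described in \ref{sat} (the case of trivial $M$ being vacuous). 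Granting this, tensoring $1\to\cO^\times\to M^{\gp}\to M^{\gp}/\cO^\times\to1$ with the free $\Z$-module $L_\Z$ keeps it exact, so $L_\Z\otimes_\Z M^{\gp}\to L_\Z\otimes_\Z(M^{\gp}/\cO^\times)$ is again a weaker‑topology epimorphism. Now, given a section $\bar h$ of $[\Sig]$ over a weaker‑open $U$, I would lift it, locally for the weaker topology, to a section $h$ of $L_\Z\otimes_\Z M^{\gp}$. Because $[\Sig]$ is the image of $\toric(\Sig)$ by \ref{toric4}, étale‑locally $h$ differs from a section of $\toric(\Sig)$ by an element of $L_\Z\otimes_\Z\cO^\times$; since $\toric(\Sig)$ is stable under the $L_\Z\otimes_\Z\cO^\times$-action and membership in the subsheaf $\toric(\Sig)$ of $L_\Z\otimes_\Z M^{\gp}$ is an (étale‑)local condition, $h$ is itself a section of $\toric(\Sig)$. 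Thus $h$ is a weaker‑local lift of $\bar h$, which gives the required epimorphism. (If one prefers, \ref{toric3} then refines $h$ further into some $\toric(\sig)$, but this is not needed for the statement.)

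I expect the main obstacle to be precisely the essential input: one must identify which saturated log structures actually occur in cases (a), (b), (d) — via \ref{log2}, \ref{log3}, and the conventions of \ref{sat} in the adic cases — and verify, case by case, that a local section of the characteristic sheaf $M^{\gp}/\cO^\times$ is the class of an honest section of $M^{\gp}$ after shrinking only in the weaker topology. For the étale site this surjectivity is automatic; the content of the lemma is exactly that, for the log structures at hand, no genuine étale localization is needed to compute the quotient $\toric(\Sig)/(L_\Z\otimes_\Z\cO^\times)$.
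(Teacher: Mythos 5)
Your overall reduction matches the paper's: the kernel $L_\Z\otimes_\Z\cO^\times$ is unaffected by coarsening the topology, so everything comes down to showing that $\toric(\Sig)\to[\Sig]$ is surjective for the weaker topology, which in turn reduces to the surjectivity of $M^{\gp}\to M^{\gp}/\cO^\times$ for the weaker topology (and your observation that a lift in $L_\Z\otimes_\Z M^{\gp}$ of a section of $[\Sig]$ automatically lies in $\toric(\Sig)$ is correct, since the preimage of $M/\cO^\times$ in $M^{\gp}$ is $M$). The gap is in your ``essential input''. Cases (a), (b), (d) of \ref{sat} allow an \emph{arbitrary} saturated log structure on the \'etale site; there is no shorter list of ``log structures that actually occur'' to identify, so you cannot assume that $M$ is Zariski-locally of the form $\cO_S\cap j_*(\cO_U^\times)$ on a normal scheme --- \ref{log3} describes when that happens and it is a genuine hypothesis, not a consequence of being in case (a). The lemma is applied (via \ref{toric6}, \ref{toric7} and the definitions of the moduli functors) to general objects of $\cC_{\log}$, so the general case is really needed. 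Moreover, even in the special case you treat, the decisive step ``an effective-Cartier-type datum \dots is Zariski-locally principal'' is exactly where the content sits: the datum is only given \'etale-locally, and passing from \'etale-local to Zariski-local principality is descent for line bundles, i.e.\ the very fact you are trying to avoid proving.

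The paper's proof isolates that fact and uses it in full generality: the preimage in $M^{\gp}$ of a section of $M^{\gp}/\cO^\times$ over an object $S'$ is an $\cO^\times$-torsor for the \'etale topology (it is \'etale-locally nonempty by definition of the quotient sheaf), hence is classified by an element of $H^1(S',\cO^\times)$; since the stalks of $\cO_S$ (resp.\ $\cO_S$, resp.\ $\cO_S^+$) for the weaker topology are local rings, every such class vanishes locally for the weaker topology, so the torsor acquires a section --- that is, the lift exists --- locally for the weaker topology. If you replace your case analysis by this torsor/$H^1$ argument, the rest of your proof goes through unchanged.
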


\begin{proof} The stalks $\cO_{S,s}$ for $s\in S$ are local rings. Hence 
the exact sequence $1 \to \cO^\times\to M^{\gp}\to M^{\gp}/\cO^\times \to 1$ of sheaves on $\cS$ is  exact also for  the weaker topology because for each object $S'$ of $\cS$, each element of $H^1(S', \cO^\times)$ vanishes locally for the weaker topology by Hilbert's theorem 90. This shows that the map $\toric(\Sig)\to [\Sig]$ is surjective as a map of sheaves for the weaker topology.
\end{proof}

\begin{sblem}\label{toric6}
In case (a) (resp., (b), resp., (d)), the sheaf $[\Sig]$ on the \'etale site of $S$, which is the union of the subsheaves $[\sig]$ ($\sig \in \Sig$) for the \'etale topology, is actually the union of $[\sig]$ as the sheaf for the weaker topology.

\end{sblem}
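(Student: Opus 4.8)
The plan is to derive Lemma \ref{toric6} from Lemmas \ref{toric3} and \ref{toric5} by chaining two localizations for the weaker topology, using the quotient presentation of $[\Sig]$ by $L_\Z\otimes_\Z\cO^\times$. Since both \ref{toric3} and \ref{toric5} are available in each of the cases (a), (b), (d), the argument will be uniform in the three cases, and I will not repeat it case by case.

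Fix an object $S'$ of $\cS$ and a section $x\in[\Sig](S')$, where $[\Sig]$ is the sheaf on the \'etale site. First I would invoke Lemma \ref{toric5}: the projection $\toric(\Sig)\to[\Sig]$ is surjective as a map of sheaves for the weaker topology, so there is a covering of $S'$ for the weaker topology over each member of which $x$ lifts to a section $t$ of $\toric(\Sig)$. Next I would invoke Lemma \ref{toric3}: $\toric(\Sig)$ is the union of the $\toric(\sig)$ for $\sig\in\Sig$ already as a sheaf for the weaker topology, so after refining the covering --- again only for the weaker topology --- we may assume that $t$ lies in $\toric(\sig)=\cHom(\sig^{\vee},M)$ for a single cone $\sig\in\Sig$. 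Finally, by the definitions recalled in \ref{toric}, the image of $\toric(\sig)$ under $\toric(\Sig)\to L_\Z\otimes_\Z(M^{\gp}/\cO^\times)$ lands inside $[\sig]=\cHom(\sig^{\vee},M/\cO^\times)$; hence $x$, being the image of $t$, lies in $[\sig]$ over each member of this covering. Thus every local section of the \'etale sheaf $[\Sig]$ lies, locally for the weaker topology, in one of the subsheaves $[\sig]$.

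For the reverse inclusion I would simply note that each $[\sig]$ ($\sig\in\Sig$) is a subsheaf of $[\Sig]$, so the union $\bigcup_{\sig\in\Sig}[\sig]$ formed as a sheaf for the weaker topology is contained in the restriction of $[\Sig]$ to the weaker site. Combining the two inclusions gives that this restriction of $[\Sig]$ is exactly the sheafification for the weaker topology of the presheaf union of the $[\sig]$, which is the assertion. I do not expect a genuine obstacle here: the content is entirely carried by Lemmas \ref{toric3} and \ref{toric5}. The only thing to keep straight is the interplay of the two Grothendieck topologies --- namely that restriction from \'etale sheaves to sheaves for the weaker topology preserves sheaves and is compatible with the inclusions $\toric(\sig)\hookrightarrow\toric(\Sig)$ and $[\sig]\hookrightarrow L_\Z\otimes_\Z(M^{\gp}/\cO^\times)$ --- so that the two successive refinements of the covering (first lifting $x$ into $\toric(\Sig)$, then pushing it into a single $\toric(\sig)$) can legitimately be performed one after the other, both for the weaker topology.
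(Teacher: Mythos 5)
Your proposal is correct and is exactly the argument the paper intends: the paper's proof of this lemma is the one-line "This follows from \ref{toric3} and \ref{toric5}," and your chaining of the two localizations for the weaker topology (first lifting via the surjectivity in \ref{toric5}, then refining via \ref{toric3}, then projecting back down to $[\sig]$) is the intended expansion of that line. No issues.
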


\begin{proof} This follows from \ref{toric3} and \ref{toric5}. 
\end{proof}

\begin{sblem}\label{toric7} Assume we are in one of the cases (a), (b), and (d) in \ref{sat}, let $M$ be a saturated log structure on $\cS$, and let $f,g\in \Gamma(\cS, M^{\gp}/\cO^\times)$.  Then the following (i) and (ii) are equivalent. 
\begin{enumerate}
\item[(i)] \'Etale locally on $S$, we have either $fg^{-1}\in M/\cO^\times$ or $f^{-1}g\in M/\cO^\times$ in $M^{\gp}/\cO^\times$. 
\item[(ii)] Locally on $S$ for the weaker topology, we have either $fg^{-1}\in M/\cO^\times$ or $f^{-1}g\in M/\cO^\times$ in $M^{\gp}/\cO^\times$.
\end{enumerate}
\end{sblem}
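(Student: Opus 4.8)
The implication (ii)$\Rightarrow$(i) is immediate: any covering for the weaker topology is in particular an étale covering, since open immersions are étale. The content is (i)$\Rightarrow$(ii), and the plan is to build, out of the given étale-local data, two genuine open subsets of $S$ which cover it and on which $fg^{-1}$, respectively $f^{-1}g$, already lies in $M/\cO^\times$.

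First I would record the elementary descent fact that, since $M/\cO^\times$ is a subsheaf of $M^{\gp}/\cO^\times$ on the étale site $\cS$, if $V\to S$ is a surjective étale morphism and $c\in\Gamma(\cS, M^{\gp}/\cO^\times)$ satisfies $c|_V\in (M/\cO^\times)(V)$, then $c\in (M/\cO^\times)(S)$: the two pullbacks of $c$ to $V\times_S V$ agree, so the sheaf axiom for $M/\cO^\times$ yields a section over $S$ which must coincide with $c$. Now put $c=fg^{-1}$, let $W_1\subseteq S$ be the union of the images of all étale morphisms $U\to S$ with $c|_U\in (M/\cO^\times)(U)$, and let $W_2$ be defined the same way with $c$ replaced by $c^{-1}$. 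Since étale morphisms are open maps --- in each of the settings (a), (b), (d) --- the sets $W_1,W_2$ are open in $S$. Moreover, for such a $U\to S$ with image $T$, the induced morphism $U\to T$ is étale and surjective, so the descent fact gives $c|_T\in (M/\cO^\times)(T)$; the various such $T$ form an open covering of $W_1$ and the corresponding sections all restrict from $c$, hence glue to $c|_{W_1}\in (M/\cO^\times)(W_1)$, and symmetrically $c^{-1}|_{W_2}\in (M/\cO^\times)(W_2)$.

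It then remains to see that $W_1\cup W_2=S$. By hypothesis (i) there is an étale covering $\{U_i\to S\}$ such that for each $i$ either $c|_{U_i}\in (M/\cO^\times)(U_i)$ or $c^{-1}|_{U_i}\in (M/\cO^\times)(U_i)$; in the first case the image of $U_i$ lies in $W_1$, in the second in $W_2$, and since $\coprod_i U_i\to S$ is surjective these images exhaust $S$. Therefore $\{W_1\hookrightarrow S,\ W_2\hookrightarrow S\}$ is a covering for the weaker topology on which $fg^{-1}\in M/\cO^\times$, respectively $f^{-1}g\in M/\cO^\times$, which is precisely (ii).

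There is no substantial difficulty here; the only points needing a word of justification are the openness of étale morphisms of schemes, of locally Noetherian formal schemes, and of adic spaces, and the fact that $M/\cO^\times$ is a subsheaf of $M^{\gp}/\cO^\times$ for the étale topology --- which is what makes the descent step legitimate. Both are standard, so I would not expect a genuine obstacle.
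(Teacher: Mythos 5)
Your proof is correct, but it takes a genuinely different route from the paper's. The paper deduces the lemma from its toric machinery: condition (i) says exactly that the pair $(fg^{-1},1)$, or rather $(f,g)$, defines a section of the union, as sheaves on the \'etale site, of $[\Sig_1]$ and $[\Sig_2]$ for the two fans obtained by cutting the first quadrant of $\R^2$ along the diagonal, and Lemma \ref{toric6} — which rests on \ref{toric3} (the Zariski-open decomposition of $\toric_{\Z}(\Sig)$ into the affine pieces $\toric_{\Z}(\sig)$) together with \ref{toric5} (local liftability of sections of $M^{\gp}/\cO^\times$ to $M^{\gp}$ for the weaker topology, via local triviality of $\cO^\times$-torsors) — says that this union is already a union of sheaves for the weaker topology. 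Your argument bypasses both the toric interpretation and the lifting step: it uses only that $M/\cO^\times$ is a subsheaf of $M^{\gp}/\cO^\times$ on the \'etale site, so that membership in it descends along \'etale surjections and glues over open coverings, and that \'etale morphisms are open in each of the settings (a), (b), (d). This makes your proof more elementary and self-contained; what the paper's route buys is economy, since Lemmas \ref{toric3}--\ref{toric6} are needed elsewhere (e.g.\ in \ref{toSig0} and \ref{tl3}) and the present statement then comes essentially for free. The one point you should make explicit is the injectivity of $M/\cO^\times\to M^{\gp}/\cO^\times$, which follows from the cancellativity of saturated monoids (condition (i) of \ref{sat}) and the fact that $\cO^\times\subset M$; this is what legitimizes both the descent step and the gluing of the sections $c|_T$ over $W_1$.
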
 

\begin{proof} Let $L_{\Z}=\Z^2$. Set
\begin{eqnarray*}
	\sig_1= \{(x,y)\in \R^2 \mid  x\geq y\geq 0\} & and & \sig_2= \{(x,y)\in \R^2\mid y\geq x\geq 0\}.
\end{eqnarray*} 
Let $\Sig_i=\faces(\sig_i)$ for $i \in \{1,2\}$, and let $\Sig=\Sig_1\cup \Sig_2$. If the condition (i) is satisfied, then the pair 
$$
	(f,g) \in \Gamma(\cS,M^{\gp}/\cO^{\times})^2 = \Gamma(\cS,L_{\Z} \otimes_{\Z} M^{\gp}/\cO^{\times})
$$ 
belongs to $([\sig_1]\cup [\sig_2])(S)$, where $\cup$ is the union of sheaves for the \'etale topology. By \ref{toric6}.  this union $[\Sig]$ is in fact the union of $[\sig_1]$ and $[\sig_2]$ as a sheaf for the weaker topology. Hence the condition (ii) is satisfied. 
\end{proof}

\begin{sbpara}\label{toSig0} Assume we are one of the cases (a)--(d).
Let $\Sig$ and $\Sig'$ be finite rational fans in $L_\R$, and assume that for each $\sig'\in \Sig'$, there exists $\sig\in \Sig$ such that $\sig'\subset \sig$. Then we have injective maps $\toric(\Sig')(S) \to \toric(\Sig)(S)$ and $[\Sig'](S) \to [\Sig](S)$. The morphism $\toric_{\Z}(\Sig') \to \toric_{\Z}(\Sig)$ of fs log schemes is log \'etale. We have $\toric(\Sig) \times_{[\Sig]}
[\Sig'] = \toric(\Sig')$ as subfunctors of $\toric(\Sig)$ by \ref{toric4}.

On the category of fs log schemes, the morphism $[\Sig']\to [\Sig]$ is represented by log \'etale morphisms. In fact, Zariski locally on $S$, 
 any element $a\in [\Sig](S)$ comes from an element of $\toric(\Sig)(S)$ by \ref{toric5}, that is, from a morphism  $S\to \toric_{\Z}(\Sig)$ of fs log schemes. Thus, we have
$$
	S\times_{[\Sig]} [\Sig'] = S \times_{\toric_{\Z}(\Sig)} \toric_{\Z}(\Sig) \times_{[\Sig]} [\Sig'] = S \times_{\toric_{\Z}(\Sig)}  \toric_{\Z}(\Sig').
$$

If $\Sig'$ is a finite rational subdivision of $\Sig$, then the morphism 
 $\toric_{\Z}(\Sig') \to \toric_{\Z}(\Sig)$ is  the well-known proper birational morphism of classical toric geometry. In this case, on the category of fs log schemes, $[\Sig']\to [\Sig]$ is represented by proper morphisms. 
 
\end{sbpara}

\begin{sbpara}\label{toSig1} Assume we are one of the cases (a)--(d). 
Let $\Sig$ be a finite rational fan in $L_\R$. 
Define the topology of the set $\Sig$ by taking the subsets $\faces(\sig)$ for $\sig\in \Sig$ as a base of open sets.

An element $a$ of $[\Sig](S)$ induces a continuous map $\varphi \colon S\to \Sig$ which sends $s\in S$ to the smallest cone $\sig$ of $\Sig$ such that the map $L^*_\Z\to M^{\gp}/\cO^\times$ induced by $a$ sends $\sig^{\vee}\subset L^*_\Z$ to  $M/\cO^\times\subset M^{\gp}/\cO^\times$ at $s$. This map is understood as follows. Locally on $S$, the element $a$ lifts to an element $\tilde a$ of $\toric(\Sig)(S)$. This $\tilde a$ gives a continuous map $S\to \toric_{\Z}(\Sig)$. On the other hand, we have a continuous map $\toric_{\Z}(\Sig) \to \Sig$ which sends $x\in \toric_{\Z}(\Sig)$ to the smallest element $\sig$ of $\Sig$ such that $x$ belongs to $\toric_{\Z}(\sig)$. The map $\varphi$ is locally the composition $S\to \toric_{\Z}(\Sig)\to \Sig$ of continuous maps.
\end{sbpara}

\begin{sbpara}\label{tl2}  

 Consider the case (c) of \ref{sat}. 
We call the log structure  $M=\cO_S^+\cap \cO_S^\times$ with the inclusion map $M\to \cO^+_S$ on the site of open sets of $S$,  the \emph{canonical log structure} on $\cO_S^+$. This is a saturated log structure.

 Note that for $s\in S$, the stalk $\cO_{S,s}$ is a local ring, and there is a valuation ring $\cV$ in the residue field $\kappa(s)$ of $\cO_{S,s}$ such that the field of fractions of $\cV$ coincides with $\kappa(s)$ and  such that the stalk $\cO^+_{S, s}$ coincides with  the inverse image of $\cV$ under $\cO_{S,s}\to \kappa(s)$. Hence the stalk  $M_s$ is the inverse image of $\cV\setminus \{0\}$ in $\cO_{S,s}$.  Hence 
  $M^{\gp}= \cO_S^\times$. We have $M_s^{\gp}=M_s\cup M_s^{-1}$.

\end{sbpara}

\begin{sbprop}\label{tl3}  For an adic space $S$ endowed with the canonical log structure \ref{tl2} on $\cO_S^+$ and for a finite rational subdivision $\Sig'$ of $\Sig$, the canonical map
$[\Sig'](S) \to [\Sig](S)$ is bijective. 
\end{sbprop}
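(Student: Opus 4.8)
The plan is as follows. Injectivity of $[\Sig'](S)\to[\Sig](S)$ is already part of \ref{toSig0}: both $[\Sig']$ and $[\Sig]$ are subsheaves of $L_\Z\otimes_\Z(M^\gp/\cO^\times)$, and $[\Sig']\subseteq[\Sig]$ since every $\sig'\in\Sig'$ is contained in some $\sig\in\Sig$, whence $\sig^\vee\subseteq(\sig')^\vee$ and so $[\sig']\subseteq[\sig]$. Thus the real content is surjectivity, i.e.\ that every $a\in[\Sig](S)$ already lies in $[\Sig'](S)$. Since $[\Sig']$ is a sheaf on the site of open sets of $S$ and $[\sig']\subseteq[\Sig']$ for each $\sig'\in\Sig'$, it suffices to find, for every $s\in S$, an open neighbourhood $U$ of $s$ and a cone $\sig'\in\Sig'$ with $a_t\in[\sig']_t$ for all $t\in U$; indeed $a|_U$ is then a section of the ambient sheaf with all stalks in the subsheaf $[\sig']$, hence lies in $[\sig'](U)\subseteq[\Sig'](U)$, and such $U$ cover $S$.

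Fix $s\in S$. By \ref{tl2} there is a valuation ring $\cV\subseteq\kappa(s)$ with field of fractions $\kappa(s)$ for which $(M^\gp/\cO^\times)_s$ is the value group $\Gamma$ of $\cV$ and $(M/\cO^\times)_s=\Gamma^{\geq0}$ its value monoid; in particular $\Gamma$ is \emph{totally ordered}. The section $a\in\Gamma(S,L_\Z\otimes_\Z(M^\gp/\cO^\times))$ has at $s$ a well-defined value $a_s\in(L_\Z\otimes_\Z(M^\gp/\cO^\times))_s=\Hom(L^*_\Z,\Gamma)$, and since stalks commute with the finite union of subsheaves $[\Sig]=\bigcup_{\sig\in\Sig}[\sig]$, the hypothesis $a\in[\Sig](S)$ gives $a_s\in[\sig]_s$, i.e.\ $a_s(\sig^\vee)\subseteq\Gamma^{\geq0}$, for some $\sig\in\Sig$.

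The key point is then the combinatorial claim: \emph{if $\Gamma$ is a totally ordered abelian group, $\sig\in\Sig$, $\Sig'$ a finite rational subdivision of $\Sig$, and $\varphi\colon L^*_\Z\to\Gamma$ a homomorphism with $\varphi(\sig^\vee)\subseteq\Gamma^{\geq0}$, then $\varphi((\sig')^\vee)\subseteq\Gamma^{\geq0}$ for some $\sig'\in\Sig'$.} Morally this is the valuative criterion of properness for the proper toric morphism $\toric_\Z(\Sig')\to\toric_\Z(\Sig)$ of \ref{toSig0}, but I would prove it directly to avoid lifting $a$ to $\toric(\Sig)(S)$. The subgroup of $\Gamma$ generated by $\varphi(L^*_\Z)$ is finitely generated, hence of finite rank, so after a Hahn embedding we may assume $\varphi=(\varphi_1,\dots,\varphi_m)\colon L^*_\Z\to\R^m$ with the lexicographic order and $\varphi_i\in L_\R$. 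For small $\varepsilon>0$ the point $p_\varepsilon:=\varphi_1+\varepsilon\varphi_2+\dots+\varepsilon^{m-1}\varphi_m\in L_\R$ lies in $\sig$ (evaluate $\langle p_\varepsilon,x\rangle$ on the finitely many generators $x$ of $\sig^\vee$, using that $(\varphi_1(x),\dots,\varphi_m(x))$ is lexicographically $\geq0$); since $\bigcup_{\sig\in\Sig}\sig=\bigcup_{\sig'\in\Sig'}\sig'$ and $\Sig'$ is finite, one cone $\sig'\in\Sig'$ contains $p_{\varepsilon_n}$ for a sequence $\varepsilon_n\downarrow0$, and then $\langle p_{\varepsilon_n},y\rangle\geq0$ for all $n$ and all $y\in(\sig')^\vee$ forces $(\varphi_1(y),\dots,\varphi_m(y))\geq_{\mathrm{lex}}0$, i.e.\ $\varphi((\sig')^\vee)\subseteq\Gamma^{\geq0}$.

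Applying the claim with $\varphi=a_s$ produces $\sig'\in\Sig'$ with $a_s\in[\sig']_s$; it remains to see that $U_{\sig'}:=\{t\in S\mid a_t\in[\sig']_t\}$ is open. Choosing monoid generators $n_1,\dots,n_k$ of $(\sig')^\vee\subseteq L^*_\Z$, we have $U_{\sig'}=\bigcap_i\{t\mid a_t(n_i)\in(M/\cO^\times)_t\}$; locally on $S$ the section $t\mapsto a_t(n_i)$ of the quotient sheaf $M^\gp/\cO^\times=\cO_S^\times/(\cO_S^+)^\times$ lifts to an invertible $h_i\in\cO_S^\times$, and then $\{t\mid a_t(n_i)\in(M/\cO^\times)_t\}=\{t\mid|h_i(t)|\leq1\}$ is the rational subset $R(h_i/1)$, hence open and independent of the lift. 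Thus $U_{\sig'}$ is an open neighbourhood of $s$ with $a_t\in[\sig']_t$ throughout, and letting $s$ vary completes the proof. I expect the main obstacle to be the combinatorial claim, and behind it the two features of the canonical log structure on an adic space that make the argument work and have no analogue for schemes: at each point $s$ the value monoid $(M/\cO^\times)_s$ sits inside the \emph{totally ordered} group $(M^\gp/\cO^\times)_s$, and the membership conditions defining $U_{\sig'}$ cut out open (rational) subsets.
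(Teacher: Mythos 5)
Your proof is correct and follows essentially the same route as the paper's: both exploit that the stalks of $M^{\gp}/\cO^\times$ are totally ordered to select, at each point $s$, a cone $\sig'\in\Sig'$ with $a_s((\sig')^{\vee})\subset (M/\cO^\times)_s$, and then spread this out over an open neighborhood by the same openness-of-divisibility argument. The only difference is that you prove the key combinatorial step (that a submonoid $V\supset\sig^{\vee}$ of $L^*_\Z$ with $V\cup(-V)=L^*_\Z$ contains $(\sig')^{\vee}$ for some $\sig'\in\Sig'$) explicitly via a Hahn embedding, whereas the paper absorbs exactly this step into its asserted identification of $\Sig_{\val}=\varprojlim\Sig'$ with the set of such submonoids.
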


\begin{sbpara} We give a preparation for the proof of \ref{tl3}.

Let $\Sig_{\val}=\varprojlim \Sig'$, where $\Sig'$ ranges over all finite rational subdivisions of $\Sig$. Then $\Sig_{\val}$ is identified with the set of all submonoids $V$ of $L^*_\Z$ satisfying the following conditions:
\begin{enumerate}
\item[(i)] $V\cup (-V)= L^*_\Z$,
\item[(ii)] $\sig^{\vee} \subset V$ for some $\sig\in \Sig$.
\end{enumerate}

In fact, for $x=(x_{\Sig'})_{\Sig'}\in \Sig_{\val}$, the corresponding $V$ is given by $V= \bigcup_{\Sig'}  (x_{\Sig'})^{\vee}\subset L^*_\Z$. 
Conversely, from $V$, we have the corresponding $x=(x_{\Sig'})_{\Sig'}$ where  $x_{\Sig'}$ is the smallest element  $\tau$ of $\Sig'$ such that $\tau^{\vee}\subset V$. 

 With this identification, the inverse limit topology on $\Sig_{\val}$ is described as follows.
The sets $\{V\in \Sig_{\val}\mid I \subset V\}$, where $I$ ranges over all finite subsets of $L^*_{\Z}$,  form a base of open sets.   
\end{sbpara}

\begin{sbpara} We prove \ref{tl3}.

An element $a$ of $[\Sig](S)$ induces a continuous map $\varphi \colon S\to \Sig$ (\ref{toSig1}). 
Furthermore,  $a$ induces a continuous map $\varphi_{\val} \colon S\to \Sig_{\val}$ which sends $s\in S$ to the inverse 
image $V$  of $(M/\cO^\times)_s$ under $L^*_\Z\to (M^{\gp}/\cO^\times)_s$. (Note that since $M_s^{\gp}=M_s\cup M_s^{-1}$, we have $L_{\Z}^*= V \cup(-V)$.)
The composition $S\overset{\varphi_{\val}}\to \Sig_{\val}\to \Sig$ coincides with $\varphi$.

Let $\varphi'$ be the composition 
$S\overset{\varphi_{\val}}\to \Sig_{\val}\to \Sig'$. For each $s\in S$, the map $a \colon L^*_\Z\to (M^{\gp}/\cO^\times)_s$ induces $\varphi'(s)^{\vee}\to (M/\cO^\times)_s$. 
By the continuity of $\varphi'$, there is an open neighborhood $U$ of $s$ in $S$ such that $a$ induces $\varphi'(U)^{\vee}\to (M/\cO^\times)_U$. Hence $a$ comes from $[\Sig'](S)$. 

\end{sbpara}

\subsection{Level structures}\label{ss:level}

We fix an element $N$ of $A$ which does not belong to the total constant field of $F$. 
Fix an integer $d\geq 1$.

  \begin{sbpara}\label{Dlevel}  
    
Recall that for a scheme $S$ over $A$ and a Drinfeld module $(\cL, \phi)$ over $S$ of rank $d$, a \emph{Drinfeld level $N$ structure} on $(\cL, \phi)$  is an $A$-equivariant homomorphism $\iota \colon (\frac{1}{N}A/A)^d \to \cL$, where $A$ acts on $\cL$ via $\phi$, such that 
$$
	\ker(\phi(N))=\sum_{a\in (\frac{1}{N}A/A)^d} [\iota(a)]
$$ 
as Cartier divisors on $\cL$. Here $[\iota(a)]$ is the image of the section $\iota(a) \colon S\to \cL$ regarded as a Cartier divisor on $\cL$. We will call a Drinfeld level $N$ structure simply a \emph{level $N$ structure}. 

Throughout this subsection, we will let $(S,U)$ denote a pair as in \ref{log4}, which is to say a normal scheme $S$ over $A$ and a dense open subset $U$. Recall from \ref{log4} that a generalized Drinfeld module $((\cL,\phi),\iota)$ of rank $d$ over $(S,U)$ with level $N$ structure is a pair consisting of generalized Drinfeld module $(\cL, \phi)$ over $S$ with restriction $(\cL,\phi)|_U$ a Drinfeld module over $U$ of rank $d$ and a level $N$ structure $\iota$ on $(\cL, \phi)|_U$.
  \end{sbpara}

\begin{sblem}\label{levelex} Let $S$ be a normal scheme over $A$, and let  $(\cL, \phi)$ be a Drinfeld module over $S$. Let $U$ be a dense open subset of $S$, and let $\iota$ be a level $N$ structure on $(\cL, \phi)|_U$. Then $\iota$ extends uniquely to a level $N$ structure of $(\cL, \phi)$.

\end{sblem}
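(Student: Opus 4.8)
The plan is to reduce the whole statement to the observation that, because $(\cL,\phi)$ has rank $d$, the additive polynomial $\phi(N)$ is monic up to a unit, so that $N$-torsion sections of $\cL|_U$ are integral over $\cO_S$ and hence, by normality, extend over all of $S$. Uniqueness is immediate and I would dispose of it first: the total space of $\cL$ is separated over $S$ (it is locally affine over $S$) and $S$ is reduced, being normal, so any two sections of $\cL$ agreeing on the dense open $U$ agree on $S$, the locus of agreement being a closed subscheme of $S$ containing $U$. This also shows it suffices to construct the extension Zariski-locally on $S$ and glue.

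For existence I would work on an affine open $\Spec R\subseteq S$ over which $\cL$ is trivial, shrinking so that $R$ is a normal domain with field of fractions $K$; write $\cL=\Spec R[X]$ and $\phi(N)(X)\in R[X]$, which by \ref{111} is a polynomial in the $X^{q^i}$ and whose coefficient of $X^{|N|^d}$ is the unit $c(N,|N|^d)$ by \ref{112}. Thus $P(X):=c(N,|N|^d)^{-1}\phi(N)(X)$ is monic of degree $|N|^d$. Each $\iota(a)$ for $a\in(\frac{1}{N}A/A)^d$ is a section of $\cL|_U$ over $U\cap\Spec R$, hence an element $g_a$ of $\Gamma(U\cap\Spec R,\cO_S)\subseteq K$ (the open $U\cap\Spec R$ is dense, hence nonempty); and since $\iota$ is $A$-equivariant and $Na=0$ in $(\frac{1}{N}A/A)^d$, we get $P(g_a)=c(N,|N|^d)^{-1}\iota(Na)=0$. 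So $g_a$ is integral over $R$, whence $g_a\in R$ because $R$ is integrally closed in $K$. This extends $\iota(a)$ to a section $\tilde\iota(a)$ over $\Spec R$; by uniqueness these patch together to a map $\tilde\iota\colon(\frac{1}{N}A/A)^d\to\cL$ over $S$.

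It then remains to check that $\tilde\iota$ is a level $N$ structure. Additivity of $\tilde\iota$ and its compatibility with the $A$-action through $\phi$ are identities of morphisms $S\to\cL$ valid on the dense open $U$, hence valid on $S$ by the separated/reduced argument. For the Cartier divisor identity in \ref{Dlevel}, I again pass to the trivialization $\cL=\Spec R[X]$: there $\ker(\phi(N))$ is cut out by the monic polynomial $P(X)$ and $\sum_a[\tilde\iota(a)]$ by the monic polynomial $Q(X):=\prod_a(X-\tilde\iota(a))$, both in $R[X]$ and of degree $|N|^d$. By hypothesis they cut out the same divisor over $U\cap\Spec R$, and two monic polynomials over a reduced ring cutting out the same ideal are equal, so $P=Q$ in $\Gamma(U\cap\Spec R,\cO_S)[X]$; since $R\hookrightarrow\Gamma(U\cap\Spec R,\cO_S)$ (as $R$ is a domain and $U\cap\Spec R$ is dense), we conclude $P=Q$ in $R[X]$, i.e.\ $\ker(\phi(N))=\sum_a[\tilde\iota(a)]$ on $\cL$. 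Gluing over the affines finishes the proof.

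The one genuinely essential step is the integrality argument in the second paragraph: it uses the rank-$d$ hypothesis in an unavoidable way, since for a merely generalized Drinfeld module $\phi(N)$ need not be monic up to a unit and torsion sections may then acquire poles along the boundary — exactly the phenomenon that forces the enlarged sheaf $\overline{\cL}$ to be introduced elsewhere in the paper. Everything else is a routine "dense open subset of a reduced scheme" manipulation.
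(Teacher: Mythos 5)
Your proof is correct and follows essentially the same route as the paper's: the leading coefficient of $\phi(N)$ is a unit because $\phi$ has rank $d$, so the torsion sections over $U$, viewed in the function field, are integral over $R$ and hence lie in $R$ by normality, after which the Cartier divisor identity follows by comparing the two monic factorizations of $\phi(N)(z)$. The paper is merely terser, deducing the factorization $\phi(N)(z)=c\prod_i(z-\alpha_i)$ with $\alpha_i\in R$ directly and reading off both the extension of $\iota$ and the divisor equality from it.
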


\begin{proof} We may assume that $S=\Spec(R)$ for a normal integral domain $R$ with field of fractions $K$ and that the line bundle $\cL$ is trivialized. Then the coefficient of the  polynomial $\phi(N)(z)$  in the highest degree $n=|N|^d$, where $d$ is the rank of $(\cL, \phi)$, is a unit $c$ of $R$. Since this polynomial is a product of polynomials over $K$ 
of degree $1$ by assumption, and since $R$ is normal, $\phi(N)(z)= c\prod_{i=1}^n (z-\alpha_i)$ for some $\alpha_i\in R$. Hence the $A/NA$-homomorphism $\iota \colon (\frac{1}{N}A/A)^d\to \cL|_U$ is extended to $\iota \colon (\frac{1}{N}A/A)^d\to \cL$, 
and we have 
$$
	\ker(\phi(N))=\text{div}(\phi(N)(z))= \sum_i \text{div}(z-\alpha_i)= \sum_{a\in (\frac{1}{N}A/A)^d} [\iota(a)].
$$ 
\end{proof}

\begin{sblem}\label{logD100} Let $R$ be a normal local integral domain with maximal ideal $m_R$, residue field $k$, and field of fractions $K$. Consider a polynomial $f\in R[z]$ in one variable. Assume that the image of $f$ in $k[z]$ is nonzero, and assume that $f$ is a product of polynomials of degree $1$
in $K[z]$. Then 
$$
	f= c \prod_{i=1}^m (z-a_i)  \cdot \prod_{j=1}^n (1-b_jz)
$$
for some $m,n \ge 0$, $a_i \in R$ ($1 \le i \le m$), $b_j \in m_R$ ($1 \le j \le n$), and $c \in R^{\times}$.

\end{sblem}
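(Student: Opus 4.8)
The plan is to factor $f$ over $K$ into linear pieces and sort the roots according to their valuations at the unique maximal ideal, using normality of $R$ to bring the relevant algebraic elements into $R$ itself. First I would write $f = c_0 \prod_{k=1}^{N}(z - \gamma_k)$ with $\gamma_k \in K$ and $c_0 \in K^\times$; since $R$ is a normal integral domain, every $\gamma_k$ that satisfies a monic equation over $R$ lies in $R$, but in general the $\gamma_k$ are only in $K$. The key dichotomy is between roots $\gamma$ with $\gamma \in R$ and roots $\gamma$ with $\gamma^{-1} \in m_R$: I claim every root falls into one of these two classes. Indeed, pass to the valuation-theoretic picture — $R$ normal local means $R = \bigcap_{\mathfrak{p}} R_{\mathfrak{p}}$ over height-one primes, or more simply one can argue with a single valuation at a time — the hypothesis that the image $\bar f \in k[z]$ is nonzero forces the leading coefficient of $f$ (after clearing denominators) to behave well, and this is exactly what rules out roots of ``intermediate'' valuation.

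More concretely, here is the order of steps I would carry out. Step 1: normalize so that $c_0$ and all $\gamma_k$ are handled by clearing denominators; since $f \in R[z]$ has some nonzero reduction mod $m_R$, after writing $f = c_0\prod(z-\gamma_k)$, group the factors by whether $\gamma_k$ is a unit multiple away from $R$ or not. Precisely, for each root $\gamma$, either $\gamma \in R$, or $1/\gamma \in m_R$; write $(z - \gamma) = \gamma \cdot ( \gamma^{-1} z - 1) = -\gamma(1 - \gamma^{-1} z)$ in the second case and set $b = \gamma^{-1}$. Step 2: collecting the constants $\gamma$ (resp.\ $-\gamma$) from the second type of factor into the overall scalar, we get $f = c \prod_{i=1}^m (z - a_i)\prod_{j=1}^n(1 - b_j z)$ with $a_i \in R$, $b_j \in m_R$, and $c \in K^\times$. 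Step 3: show $c \in R^\times$ — comparing the product $f$, which lies in $R[z]$ and has nonzero image in $k[z]$, against the right-hand side, whose reduction mod $m_R$ is $\bar c \prod(z - \bar a_i)$ (the $1 - b_j z$ reduce to $1$), forces $\bar c \neq 0$ in $k$, hence $c \in R^\times$ once we also see $c \in R$; that $c \in R$ follows because the leading coefficient of the right side is $\pm c \prod b_j$ up to sign, which compared with the leading coefficient of $f \in R[z]$ pins $c$ down, or more cleanly: $f(0) = c\prod(-a_i) \in R$ and an inductive/gcd argument on content gives $c \in R$.

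The real content is Step 1, the claim that there are no roots of intermediate size — i.e.\ that for every root $\gamma$, either $\gamma$ is integral over $R$ (hence in $R$ by normality) or $\gamma^{-1} \in m_R$. The hard part will be seeing that the hypothesis ``$\bar f \neq 0$ in $k[z]$'' does exactly this job. One clean way: let $v$ be a valuation on $K$ with valuation ring containing $R$ and maximal ideal contracting into $m_R$ (e.g.\ a height-one localization of the normalization, or use that $R$ is an intersection of valuation rings); for such $v$, the Newton polygon of $f$ with respect to $v$ has all its breakpoints forced by $f \in R[z]$ and $\bar f \neq 0$ to have slopes that are either $\geq 0$ (roots with $v(\gamma) \geq 0$) or correspond to $v(\gamma) > 0$ for the reciprocal — there is no slope strip that would give a root with $v$ strictly between, because the constant-term-to-leading-term behavior is constrained by the nonvanishing of the reduction. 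I would actually prefer to avoid invoking valuations explicitly and instead argue: suppose $\gamma$ is a root with $\gamma \notin R$; then $\gamma^{-1} \in R$ is \emph{not} a unit (else $\gamma \in R$), so $\gamma^{-1} \in m_R$, where the one nonobvious implication — that $\gamma \notin R$ forces $\gamma^{-1} \in R$ — is where the hypothesis on $\bar f$ enters, via: if both $\gamma$ and $\gamma^{-1}$ failed to be in $R$, one localizes at a height-one prime to reduce to a DVR, where the factorization $f = c_0 \prod(z - \gamma_k)$ and $f \in R[z]$ with unit-or-not leading coefficient makes all $v(\gamma_k)$ comparable to $0$ in the required way, contradicting $\bar f \neq 0$ if some $v(\gamma_k) < 0$ and $v(\gamma_k^{-1}) \geq 0$ simultaneously is impossible. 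So I would reduce to the DVR case first (normal $\Rightarrow$ intersection of its localizations at height-one primes, and it suffices to check the integrality/reciprocal-integrality of each root in each such DVR, then glue), handle the DVR case by an elementary Gauss-lemma / Newton-polygon argument, and then assemble the global factorization and chase the scalar $c$ as in Steps 2--3 above.
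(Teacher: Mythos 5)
Your Steps 2--3 are essentially sound, though the scalar-chasing at the end is off as stated: the leading coefficient of the right-hand side is $c\,(-1)^n\prod_j b_j$ with $\prod_j b_j\in m_R$, so comparing leading coefficients would require dividing by a non-unit, and $f(0)=c\prod(-a_i)$ has the same defect if some $a_i\in m_R$. The coefficient to compare is that of $z^m$: on the right-hand side it equals $c\cdot(1+\varepsilon)$ with $\varepsilon\in m_R$, since every contribution other than $1\cdot 1$ involves an elementary symmetric function of the $b_j$; this gives $c\in R$, and then $\bar f=\bar c\prod(z-\bar a_i)\neq 0$ gives $c\in R^\times$. The genuine gap is in Step 1, exactly where you locate ``the real content''. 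Reducing to height-one localizations (or to any single valuation ring between $R$ and $K$) makes the dichotomy vacuous: for a fixed valuation $v$ and any $\gamma\in K^\times$ one trivially has $v(\gamma)\geq 0$ or $v(\gamma^{-1})>0$. The entire difficulty is that a single root could a priori satisfy $v_{\frak p}(\gamma)>0$ at one height-one prime and $v_{\frak q}(\gamma)<0$ at another, so that neither $\gamma$ nor $\gamma^{-1}$ lies in $R$ (such elements of $K$ certainly exist, e.g.\ $x/y$ over $k[x,y]_{(x,y)}$; that they cannot occur as roots of such an $f$ is precisely what must be proved). Your per-prime Newton polygons cannot see this, and the hypothesis $\bar f\neq 0$ lives at the maximal ideal, which is not of height one; you never explain how it propagates into the separate height-one pictures so as to force their classifications to agree. ``Then glue'' is exactly the unproved assertion. (Also, $R=\bigcap_{\mathrm{ht}\,\frak p=1}R_{\frak p}$ requires $R$ Noetherian, which is not assumed.)

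The paper closes this gap by a mechanism absent from your proposal. A limit argument first reduces to the case that $R$ is a local ring of a finitely generated $\Z$-algebra, hence excellent, so that $\hat R$ is again a normal domain. One pulls out all roots lying in $R$, writing $f=g\prod_i(z-a_i)$ with $g\in R[z]$ having no root in $R$ and $\bar g\neq 0$. Weierstrass preparation in $\hat R\ps{z}$ gives $g=hu$ with $h$ monic and $u$ a unit power series; if $\deg h\geq 1$, a root of $h$ is integral over $\hat R$, is a root of $g$ and hence lies in $K$ (as $f$ splits over $K$), hence lies in $\hat R\cap K=R$ by normality of $\hat R$ --- contradicting the choice of $g$. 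Therefore $h=1$ and $g(0)\in R^\times$, and reversing $g/g(0)$ exhibits $\beta^{-1}$, for each root $\beta$ of $g$, as a root of a monic polynomial over $R$; so $\beta^{-1}\in R$ by normality, and $\beta^{-1}\in m_R$ because $\beta\notin R$. Some such appeal to completion or Henselization --- manufacturing an honest root of $g$ in $R$ from the assumption $g(0)\in m_R$, $\bar g\neq 0$ --- is the ingredient your argument is missing.
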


\begin{proof} By a simple limit argument, we are reduced to the case that $R$ is a local ring of a finitely generated $\Z$-algebra. Assume we are in this case. We can write $f= g \prod_{i=1}^m  (z-a_i)$ with $a_i\in R$ and with $g\in R[z]$ which has no root in $R$.  The completion $\hat R$ of $R$ is also a normal integral domain as $R$ is excellent. By the Weierstrass preparation theorem, $g= hu$ where $h$ is a monic polynomial over $\hat R$ and $u\in \hat R\ps{z}^\times$. If $h$ is of degree $\geq 1$, take a  root $\alpha$ of $h$, which is integral over $\hat R$. Then $\alpha$ is a root of $g$ and hence belongs to $K$. By the normality of $\hat R$, it then belongs to $\hat R\cap K=R$. This is a contradiction. Hence $h=1$ and  $g= u$. Thus the constant term $c$ of $g$ is a unit of $R$. Write $g=c\cdot (1+ \sum_{i=1}^n c_i z^i)$ with $c_i\in R$. Then the inverse of any root $\beta$ of $g$ is a root of the monic polynomial $z^n + \sum_{i=1}^n c_i z^{n-i}$, and hence $\beta^{-1}\in R$. Since $\beta \notin R$, we have $\beta^{-1}\in m_R$. 
\end{proof}

\begin{sbprop}\label{logD53} Let $((\cL, \phi), \iota)$ be a generalized Drinfeld module over $(S,U)$ of rank $d$ with level $N$ structure. Then $\iota \colon (\frac{1}{N}A/A)^d \to \cL|_U$ extends uniquely to a map  $(\frac{1}{N}A/A)^d \to \overline{\cL}$, where $\overline{\cL}$ is defined by the associated log structure of $S$ as in \ref{tL}.

\end{sbprop}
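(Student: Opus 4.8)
The plan is to reduce immediately to a local affine situation and then invoke Lemma \ref{logD100}. Since the statement is étale-local on $S$ and we are dealing with a pair $(S,U)$ as in \ref{log2} (so $S$ is normal and $U$ is dense open with $M_S = \cO_S \cap j_*(\cO_U^\times)$), I would first pass to the stalk at a point $s$ of $S$: set $R = \cO_{S,s}$, a normal local integral domain with field of fractions $K$, and trivialize $\cL$ near $s$ by choosing a basis $\delta$, so $\overline{\cL}$ becomes identified with $\cO_S \cup_{\cO_S^\times} M_S^{-1}$. Since the extension, if it exists, is unique (both $\cL$ and $\overline{\cL}$ inject into $j_*(\cL|_U)$ because $U$ is dense and $S$ is normal — indeed reduced), it suffices to produce the extension locally and the uniqueness glues it. So the whole problem becomes: for each $a \in (\frac1N A/A)^d$, show $\iota(a) \in K$, viewed as a section of $\cL|_U$ over the generic point, actually lies in $R \cup_{R^\times} M_{S,s}^{-1} \subseteq K$.

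The key step is the factorization of $\phi(N)(z)$. Write $\phi(N)(z) = \sum_n c(N,n) z^n \in R[z]$. Because $(\cL,\phi)|_U$ is a Drinfeld module of rank $d$ with level $N$ structure, over $K$ (in fact over $\mathcal O_U$) we have $\phi(N)(z) = c \prod_{a \in (\frac1N A/A)^d}(z - \iota(a))$ with $c = c(N, |N|^d)$ a unit on $U$; since $N \notin \F_q$ the polynomial $\phi(N)$ is non-constant, and its top coefficient, being invertible on the dense open $U$ and lying in the normal ring $R$... wait — actually I should be careful: $c(N,|N|^d)$ need not be a unit of $R$ a priori since $(\cL,\phi)$ is only a \emph{generalized} Drinfeld module on $S$. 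But the image of $\phi(N)(z)$ in the residue field $k$ is the $N$-multiplication polynomial of the generalized Drinfeld module over $k$, which is non-zero (its linear coefficient is $N$, and $N \neq 0$ in $k$ since $N \notin \F_q$ forces... hmm, $N$ could vanish at $s$). In any case the linear term of $\phi(N)(z)$ is $N z$, and whether or not $N$ is zero in $k$, the polynomial $\phi(N)(z) \bmod m_R$ is nonzero because $(\cL, \phi)$ is a generalized Drinfeld module (condition (ii) of \ref{110} gives a coefficient invertible at $s$), hence some coefficient $c(N,n)$ is a unit in $R$, so $f := \phi(N)(z) \bmod m_R \neq 0$. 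Moreover $f$ is a product of degree-one polynomials in $K[z]$ since $(\cL,\phi)|_U$ is a Drinfeld module with level structure. Thus Lemma \ref{logD100} applies to $f = \phi(N)(z) \in R[z]$.

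Lemma \ref{logD100} then yields $\phi(N)(z) = c \prod_{i=1}^m (z - a_i) \prod_{j=1}^n(1 - b_j z)$ with $a_i \in R$, $b_j \in m_R$, $c \in R^\times$. The roots of $\phi(N)$ in an algebraic closure of $K$ are precisely the $a_i$ together with the $b_j^{-1}$; matching with $\{\iota(a) : a \in (\frac1N A/A)^d\}$, each $\iota(a)$ equals some $a_i \in R$ — which maps into $\cL \subseteq \overline{\cL}$ — or some $b_j^{-1}$ with $b_j \in m_R \subseteq M_{S,s}$, hence $b_j^{-1} \in M_{S,s}^{-1}$, which maps into $M_S^{-1}\cL^\times \subseteq \overline{\cL}$. (One should note $b_j \in \cO_S \cap j_*(\cO_U^\times) = M_S$: indeed $b_j^{-1}$ is a root of $\phi(N)$, a nonzero element of $K$, so $b_j$ is a unit in $\cO_U$, and $b_j \in R$, so $b_j \in M_{S,s}$; this uses that level structure sections are nonvanishing on $U$, equivalently $0$ is not a repeated relevant root — more precisely $\phi(N)$ is separable on $U$.) This exhibits the extension $\iota(a) \in \overline{\cL}$ at the stalk; spreading out over an open neighborhood and gluing via uniqueness gives the global map $\iota \colon (\frac1N A/A)^d \to \overline{\cL}$.

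The main obstacle I anticipate is the bookkeeping around whether the leading and other coefficients of $\phi(N)(z)$ behave well on the non-generic locus — i.e. confirming that $\phi(N)(z) \bmod m_R \neq 0$ (needed to apply Lemma \ref{logD100}) and that every $b_j$ genuinely lies in $M_{S,s}$ rather than merely in $m_R$. The first follows from the generalized-Drinfeld-module axiom \ref{110}(ii) (equivalently, $\phi$ has some $r(s) \geq 1$, so $\phi(N)$ is non-constant mod $m_R$); the second follows from separability of $\phi(N)$ on $U$ (the level structure $\iota$ being injective on $U$ is built into \ref{Dlevel}, or can be extracted from the rank-$d$ hypothesis), which guarantees $b_j \in \cO_U^\times$. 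Once these two points are nailed down the rest is formal, so the proof should be short.
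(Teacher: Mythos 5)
Your proposal is correct and follows essentially the same route as the paper: localize at a point, trivialize $\cL$, observe that $\phi(N)\bmod m_s$ is nonzero, and apply Lemma \ref{logD100} to conclude that each $\iota(a)$ lies in $\cO_{S,s}$ or has inverse in $M_{S,s}$, with uniqueness coming from $\overline{\cL}\subset j_*(\cL|_U)$. The extra points you flag (nonvanishing of $\phi(N)\bmod m_s$ via axiom (ii) of \ref{110}, and $b_j\in M_{S,s}$ because $b_j^{-1}=\iota(a)$ is regular on $U$) are exactly the small verifications the paper leaves implicit, and your handling of them is sound.
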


\begin{proof} Working locally on $S$, we
 may assume $\cL=\cO_S$. Let $s\in S$. Note that the polynomial $\phi(N)$ modulo the maximal ideal of $\cO_{S,s}$ is not zero.  We apply Lemma \ref{logD100} to  $R=\cO_{S,s}$ and $f=\phi(N)$. Then we see that for $a\in (\frac{1}{N}A/A)^d$, we have either $\iota(a)\in \cO_{S,s}$ or $\iota(a)^{-1}\in \cO_{S,s}$. We also have $\iota(a)\in j_*(\cO_U)$. 
 Since $\overline{\cL}$ is the union of its subsheaves $\cO_S \subset j_*(\cO_U)$ and $\cO_S^{-1} \cap j_*(\cO_U^{\times}) = \cO_S^{-1} \cap j_*(\cO_U)$, we have that $\iota(a)$ belongs to $\overline{\cL}$ at $s$.
\end{proof}

To reduce various problems to the case of complete discrete valuation rings, the following lemma is useful.

\begin{sblem}\label{todvr} Let $R$ be an excellent integral domain with field of fractions $K$, and let $I$ be an ideal of $R$. Let $\cE$ be the set of all discrete valuation rings $\cV \subset K$ containing $R$ and with maximal ideal $m_{\cV}$ containing $I$.

\begin{enumerate}
\item[(1)] Assume that $R$ is normal and $I$ is contained in the Jacobson radical of $R$. (Note that this condition of $I$ is satisfied  if either $R$ is a local ring with maximal ideal $I$ or $R$ is $I$-adically complete.) Then  $R=\bigcap_{\cV\in \cE} \cV$. 
\item[(2)] Assume that $R/I$ is nonzero and reduced. Then $I=\bigcap_{\cV\in \cE}  m_{\cV}$. 
\end{enumerate}

\end{sblem}

\begin{proof} We first prove the following claim. 
\medskip

{\bf Claim.} If $R$ is local with maximal ideal $m$, then there is a discrete valuation ring $\cV$ such that $R\subset \cV\subset K$ 
	and $R\cap m_{\cV}=m$. 

\begin{proof}[Proof of Claim]
In fact, let $X$ be the normalization of the blow-up of $\Spec(R)$ along $m$, and take a point $x$ of $X$ lying over $m$. Then $m$ generates a principal ideal in $\cO_{X,x}$ generated by an element $t$  which is not a unit. Since $\cO_{X,x}$ is normal and Noetherian by the assumption $R$ is excellent, there is a prime ideal $\frak p$ of height one of $\cO_{X,x}$ which contains $t$. Then the local ring $\cV$ of $\cO_{X,x}$ at $\frak  p$  has the desired property. \phantom\qedhere
\end{proof} 

We prove part (1). Let $f\in K \setminus R$, and let $g \colon X \to \Spec(R)$ be the blow-up along the 
 fractional ideal $R+Rf$. For each point $x$ of $X$, we have either $f\in \cO_{X, x}$ or $f^{-1}\in \cO_{X,x}$. Since $\Gamma(X, \cO_X)=R$,  there exists $x\in X$ such that $f\notin \cO_{X,x}$. Since $g$ is proper, the image in $\Spec(R)$ of the closure of $x$ in $X$ is closed and hence contains a maximal ideal $m$ of $R$. By assumption on $I$, we have $I\subset m$. Let $y\in X$ be the element of the closure of $x$ in $X$ whose image in $\Spec(R)$ is $m$. Then $f\notin \cO_{X, y}$.  Hence $f^{-1}$ belongs to the maximal ideal $m_y$ of $\cO_{X,y}$. 
By the claim applied to $\cO_{X,y}$, there is  a discrete valuation ring $\cV$ such that $\cO_{X,y}\subset \cV\subset K$ and $\cO_{X,y}\cap m_{\cV}=m_y$. We have $I\subset m\subset m_{\cV}$, so $\cV \in \cE$. On the other hand, $f^{-1}\in m_{\cV}$, and hence $f\notin \cV$. 

As for (2), by assumption, $I$ is the intersection of all prime ideals $\frak p$ of $R$ such that $I\subset \frak p$.  By the claim applied to the local ring of $R$ at $\frak p$, there is a discrete valuation ring $\cV$ such that $R\subset \cV \subset K$ and $\frak p=R\cap m_{\cV}$.
\end{proof} 

\begin{sbpara}\label{cpf2} 

As in \ref{val1}, let $\cV$ be a complete valuation ring over $A$ of height $1$, let $K$ be the fraction field of $\cV$, 
let $\bar K$ be an algebraic closure of $K$, and let $K^{\sep}$ be the separable closure of $K$ in $\bar K$.
Let $\phi$ be a generalized Drinfeld module with trivial line bundle over $\cV$, 
and let $(\psi,\La)$ be the associated pair of a Drinfeld module
and a $\psi(A)$-lattice in $K^{\sep}$ from \ref{val2}. Let $e_{\La} \in \cV\ps{Z}$ denote the exponential map of $\La$ (see \ref{prop51}).
Let 
$$\psi(N)^{-1}\La=\{x\in \bar K\mid \psi(N)x\in \La\},$$
\begin{eqnarray*}
	\phi[N]= \{x\in \bar K\mid \phi(N)x=0\} &\text{and}& \psi[N]= \{x\in \bar K\mid \psi(N)x=0\}.
\end{eqnarray*}
Then we have 
$$e_{\La} \colon \psi(N)^{-1}\La/\La \xrightarrow{\sim} \phi[N].$$
We also have an exact sequence 
$$0\to \psi[N] \xrightarrow{e_{\La}} \phi[N]\to \La/\psi(N)\La \to 0.$$
The map $\phi[N]\to \La/\psi(N)\La$ sends $\beta\in \phi[N]$ to the class of any $\lam\in \La$ such that there is a $\lam'\in \bar K$ satisfying $\beta=e_{\La}(\lam')$ and $\psi(N)\lam'= \lam$. 

\end{sbpara}

In this paper, for a sheaf $\cF$ on the \'etale site of a scheme, $\cF_{\bar s}$ denotes the stalk of $\cF$ lying over a point $s$ in the sense of the \'etale topology. 

\begin{sblem}\label{nlevelN} Let $((\cL, \phi),\iota)$ be a generalized Drinfeld module over $(S,U)$ of rank $d$ with level $N$ structure. 
Define the subsheaf $E$ of $(\frac{1}{N}A/A)^d$ on the \'etale site of $S$ to be  the inverse image of $\cL$ under the map $\iota \colon (\frac{1}{N}A/A)^d\to \overline{\cL}$ given by \ref{logD53}. 

\begin{enumerate}
\item[(1)] Let $s\in S$.   We have $E_s := \Gamma(\Spec(\cO_{S,s}), E)\xrightarrow{\sim} E_{\bar s}$. Let  $r$ be the rank of the fiber of $(\cL, \phi)$ at $s$. Then the $A/NA$-module $E_s$ is  free of rank $r$.  Let $E'_s= (\frac{1}{N}A/A)^d\setminus E_s$. If the line bundle $\cL$ is trivialized at $s$, the polynomial $\phi(N)(z)$ is expressed over $\cO_{S,s}$ as 
$$\phi(N)(z)= c \prod_{a\in E_s}  (z-\iota(a)) \cdot \prod_{a\in E'_s} (1- \iota(a)^{-1}z)$$
for some unit $c$ of $\cO_{S,s}$, and $\iota(a)^{-1}$ belongs to the maximal ideal $m_s $ of $\cO_{S,s}$ for all $a\in E_s'$. 
\item[(2)] Let $I$ be a quasi-coherent ideal of $\cO_S$ such that $\cO_S/I$ is a sheaf of reduced rings. Assume that $(\cL, \phi)\bmod I$ is a  Drinfeld module of rank $r$ over $\cO_S/I$ and that the line bundle $\cL$ is trivialized. Then the image of the composite map
$$(\tfrac{1}{N}A/A)^d\setminus E\overset{\iota}\to M_S^{-1}\cL^\times =M_S^{-1}\overset{b}\to M_S \to \cO_S,$$
where $b(f)=f^{-1}$, is contained in $I$. 
\end{enumerate}
\end{sblem}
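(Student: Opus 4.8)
The plan is to establish (1) by reducing to the case of a complete discrete valuation ring via Lemma~\ref{todvr}, and then to deduce (2) from (1) by a second such reduction. For (1), I would first apply Lemma~\ref{logD100} with $R=\cO_{S,s}$ — a normal local domain since $S$ is normal — and $f=\phi(N)(z)$. This is legitimate: the reduction of $f$ modulo the maximal ideal $m_s$ has degree $|N|^{r}$ by \ref{112}, hence is nonzero; and $f$ splits into linear factors over the fraction field $\operatorname{Frac}(\cO_{S,s})$ because the generic point lies in $U$, where the level $N$ structure forces $\ker(\phi(N))=\sum_a[\iota(a)]$ and hence $\phi(N)(z)=c\prod_a(z-\iota(a))$. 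Lemma~\ref{logD100} then gives $\phi(N)(z)=c\prod_i(z-a_i)\prod_j(1-b_jz)$ with $a_i\in\cO_{S,s}$, $b_j\in m_s$, $c\in\cO_{S,s}^{\times}$; comparing with the factorization over $\operatorname{Frac}(\cO_{S,s})$, each $\iota(a)$ equals either an integral root $a_i$ or a reciprocal root $b_j^{-1}$, and the former are precisely the $a$ with $\iota(a)\in\cO_{S,s}$, i.e.\ $a\in E_s$. This yields the displayed factorization and the claim $\iota(a)^{-1}\in m_s$ for $a\in E'_s$; reducing that factorization modulo $m_s$ and comparing degrees with $\phi(N)(z)\bmod m_s$, which has degree $|N|^{r}$ by \ref{112}, gives $|E_s|=|N|^{r}$. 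Moreover $E_s$ is an $A/NA$-submodule, since $\phi(a)$ maps $\cO_{S,s}$ into itself and the uniqueness in \ref{logD53} forces $\iota(a+b)=\iota(a)+\iota(b)$ whenever the right side lies in $\cO_{S,s}$; and $E_s=E_{\bar s}$ because the dichotomy ``$\iota(a)\in\cO$ or $\iota(a)^{-1}\in m$'' is preserved under $\cO_{S,s}\to\cO_{S,\bar s}$.

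For the freeness of $E_s$ I would choose a complete discrete valuation ring $\hat\cV$ dominating $\cO_{S,s}$ with center $m_s$ (take a valuation ring as in step~(0) of the proof of Lemma~\ref{todvr}, then complete) and pull $\phi$ back to it; its closed fiber still has rank $r$, so Proposition~\ref{val2}(1) presents $\phi_{\hat\cV}$ as a pair $(\psi,\Lambda)$ with $\psi$ of rank $r$ and $\Lambda$ a $\psi(A)$-lattice of rank $d-r$, together with $e\in\hat\cV\ps{z}$ satisfying $e\circ\psi(a)=\phi(a)\circ e$ and $\ker e=\Lambda$. As $\psi$ has good reduction its $N$-torsion is integral, while by Lemma~\ref{pre0} each nonzero element of $\Lambda$ has negative valuation; hence a torsion point $\iota(a)$ is integral exactly when it lies in the image of $\psi[N]$, that is, exactly when $a$ lies in the kernel of $\bar\iota\colon(\tfrac1N A/A)^d\xrightarrow{\iota}\phi[N]\to\tfrac1N\Lambda/\Lambda$, so $E_{\hat\cV}=\ker\bar\iota$. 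The divisor identity $\ker(\phi(N))=\sum_a[\iota(a)]$ makes $\iota$ surjective onto $\phi[N]$, hence $\bar\iota$ surjective onto $\tfrac1N\Lambda/\Lambda\cong(A/NA)^{d-r}$, so $E_{\hat\cV}$ is the kernel of a surjection of free $A/NA$-modules and therefore a free direct summand of rank $r$. Since $E_s\subseteq E_{\hat\cV}$ and $|E_s|=|N|^{r}=|E_{\hat\cV}|$, we conclude $E_s=E_{\hat\cV}$, free of rank $r$.

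For (2): at a point $\bar s$ outside the support of $\cO_S/I$ one has $I_{\bar s}=\cO_{S,\bar s}$ and there is nothing to prove, so fix $\bar s$ in the support and $a\notin E_{\bar s}=E_s$. As $\cO_{S,s}/I_s$ is reduced, Lemma~\ref{todvr}(2) gives $I_s=\bigcap_\cV m_\cV$ over discrete valuation rings $\cV$ with $\cO_{S,s}\subseteq\cV\subseteq\operatorname{Frac}(\cO_{S,s})$ and $m_\cV\supseteq I_s$, so it suffices to show $\iota(a)^{-1}\in m_\cV$ for each such $\cV$. Completing $\cV$ to $\hat\cV$, the reduction of $\phi_{\hat\cV}$ modulo $m_{\hat\cV}$ factors through $\cO_{S,s}/I_s$, over which $\phi$ has rank $r$, so the closed fiber of $\phi_{\hat\cV}$ has rank $r$; applying (1) over $\hat\cV$ gives $|E_{\hat\cV}|=|N|^{r}$, while $E_s\subseteq E_{\hat\cV}$ and $|E_s|=|N|^{r}$ (the fiber of $\phi$ at $\bar s$ is that of $\phi\bmod I$, of rank $r$), so $E_s=E_{\hat\cV}$. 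Hence $a\notin E_{\hat\cV}$, and (1) over $\hat\cV$ gives $\iota(a)^{-1}\in m_{\hat\cV}$, so $\iota(a)^{-1}\in m_\cV$, as required.

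The main obstacle is the freeness of $E_s$ in (1): its cardinality alone does not force freeness (a submodule of $(A/NA)^d$ of size $|N|^r$ need not be free), so one genuinely needs the Tate-type uniformization $\phi=(\psi,\Lambda)$ of Proposition~\ref{val2} together with Lemma~\ref{pre0} to exhibit $E_s$ as the kernel of a surjection onto a free module. Within this, the delicate points are extracting the surjectivity of $\bar\iota$ from the divisor form of the level-structure axiom — passing to a finite extension if necessary so that $\Lambda$ becomes Galois-trivial — and matching $E_s$ over $S$ with $E_{\hat\cV}$ over the dominating valuation ring purely by a cardinality count; a minor technical point is that $\cO_{S,s}$ must be excellent for Lemma~\ref{todvr} to apply, which holds for the schemes considered here.
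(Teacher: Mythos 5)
Your proof is correct and follows essentially the same route as the paper's: Lemma \ref{logD100} gives the factorization and the count $\#E_s=|N|^r$, the reduction via \ref{todvr} to a complete discrete valuation ring and the Tate pair $(\psi,\Lambda)$ of Proposition \ref{val2} realize $E$ as the kernel of the surjection onto $\Lambda/\psi(N)\Lambda\cong(A/NA)^{d-r}$ (whence freeness of rank $r$), and \ref{todvr}(2) handles part (2). The only difference is cosmetic: you identify $E_{\hat\cV}$ with $\ker\bar\iota$ directly via Lemma \ref{pre0}, whereas the paper deduces the same equality by comparing $\#E_s$ with the rank $|N|^r$ of the finite flat group scheme $\psi[N]$.
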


\begin{proof} (1)  The bijectivity $E_s \xrightarrow{\sim} E_{\bar s}$ is clear. Trivialize the line bundle $\cL$ at $s$. By \ref{logD100}, we obtain the product formula for $\phi(N)(z)$ stated above and that $E_s$ has order $|N|^r$. To prove $E_s\cong (A/NA)^r$, we may assume that $S$ is of finite type over $A$ and hence is excellent. By \ref{todvr}(1), we may take a discrete valuation ring $\cV'$ in the field of fractions of $\cO_{S,s}$ such that $\cO_{S,s}\subset \cV'$ and $m_s\subset m_{\cV'}$. Let $\cV$ be the completion of $\cV'$, and let $K$ be the field of fractions of $\cV$. By replacing $\cO_{S,s}$ by $\cV$, we may assume that $\cO_{S,s}=\cV$. 

As in as in \ref{cpf2}, we then have a pair $(\psi,\La)$ and an exact sequence 
of finite flat group schemes over $K$ in which the cokernel of the exponential map $e_{\La} \colon \psi[N] \to \phi[N]$ is $\La/\psi(N)\La$.  Concerning the level $N$ structure $\iota$ over $K$, for $a\in (\frac{1}{N}A/A)^d$ with $\iota(a)\in e(\psi[N])$, we have $a\in E_s$. Since the finite flat group scheme $\psi[N]$ has rank $|N|^r$, and this is equal to the order of $E_s$, we have that $\iota(a)\in e_{\La}(\psi[N])$ if and only if $a\in E_s$. Hence the above exact sequence shows that $(\frac{1}{N}A/A)^{d-r}\cong \La/N\La$ as an $A/NA$-module. Hence $E_s\cong (A/NA)^r$ as an $A/NA$-module. 

(2) We may assume that $S$ is of finite type over $A$ and hence is excellent. By \ref{todvr}(2), we may assume $S=\Spec(\cV)$ for a complete discrete valuation ring $\cV$. Then this (2) follows from the formula for $\phi(N)(z)$ in (1) at the closed point $s$ of $\Spec(\cV)$, 
\end{proof}

In the remainder of this subsection, we provide conditions for the automorphism group of a generalized Drinfeld module $((\cL, \phi), \iota)$ with level $N$ structure to be trivial: see \ref{auto} below.

\begin{sblem}\label{MNx} Let $S$ be a normal integral scheme over $A$ with function field $K$, and let $\phi$ be a generalized Drinfeld module over $S$ with line bundle $\cO_S$. Let $N_1, N_2 \in A$ be such that $N_1$ is invertible on $S$. 
Suppose that $x\in K$ satisfies $\phi(N_1N_2)x=0$ and $\phi(N_2)x\neq 0$. Then $x^{-1}\in \Gamma(S, \cO_S)$.

\end{sblem}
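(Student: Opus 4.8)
The plan is to reduce to a valuation ring and there run a short estimate with the ultrametric inequality. Since $S$ is integral with function field $K$, one has $\Gamma(S,\cO_S)=\bigcap_{s\in S}\cO_{S,s}$ inside $K$, so it suffices to prove $x^{-1}\in\cO_{S,s}$ for every $s\in S$; thus I may replace $S$ by $\Spec R$ with $R=\cO_{S,s}$ a normal local domain with field of fractions $K$ (the line bundle is already $\cO_S$). Applying Lemma \ref{todvr}(1) with $I=\mathfrak{m}_R$ gives $R=\bigcap_{\cV}\cV$, the intersection over all discrete valuation rings $\cV$ with $R\subseteq\cV\subseteq K$ dominating $R$; so it is enough to show $x^{-1}\in\cV$, equivalently $v(x)\le 0$ for the valuation $v$ of $\cV$, for each such $\cV$.

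Fixing such a $\cV$: pulling $\phi$ back to $\cV$ and using the canonical basis of $\cL=\cO_S$, I would write $\phi(a)(z)=\sum_{j\ge 0}c(a,q^j)z^{q^j}$ with all $c(a,q^j)\in\cV$ and $c(a,1)=a$ (by \ref{110}(i) and \ref{111}); since $N_1$ is invertible on $S$, $v(N_1)=0$. Put $y=\phi(N_2)(x)$, which is nonzero by hypothesis, and observe $\phi(N_1)(y)=\phi(N_1N_2)(x)=0$. Suppose, for contradiction, $v(x)>0$. Each monomial of $\phi(N_2)(x)=\sum_j c(N_2,q^j)x^{q^j}$ then has valuation $\ge q^j v(x)\ge v(x)>0$, so $v(y)\ge v(x)>0$. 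Now look at $0=\phi(N_1)(y)=N_1y+\sum_{j\ge 1}c(N_1,q^j)y^{q^j}$: the term $N_1y$ has valuation exactly $v(y)$ (as $v(N_1)=0$), while each term with $j\ge 1$ has valuation $\ge q^j v(y)>v(y)$ since $v(y)>0$. Hence the right-hand side has valuation exactly $v(y)<\infty$, contradicting that it vanishes. Therefore $v(x)\le 0$, i.e.\ $x^{-1}\in\cV$; running back through the reductions, $x^{-1}\in\Gamma(S,\cO_S)$.

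The point to watch — and the only real obstacle — is that $\phi$ is merely a \emph{generalized} Drinfeld module over $\cV$, so $\phi[N_1]$ need not be \'etale and one cannot argue via separability of torsion points; the ultrametric computation above sidesteps this, using nothing but the facts that $\phi(a)$ has coefficients in $\cV$ with linear coefficient $a$ and that $N_1$ is a unit. (Equivalently, one may phrase the final step as a Newton-polygon statement: every nonzero root of $\phi(N_1)(z)/z$ has nonpositive valuation, since its constant term $N_1$ is a unit and all of its coefficients lie in $\cV$.)
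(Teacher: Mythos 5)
Your argument is correct and is in essence the paper's: both proofs rest on the two observations that $\phi(N_2)$ maps the maximal ideal of a valuation ring into itself, and that, since $N_1$ is a unit, the linear coefficient of $\phi(N_1)$ is a unit, so a nonzero root of $\phi(N_1)$ cannot have positive valuation. The paper organizes this in two steps: for $N_2=1$ it rewrites $0=N_1x+\sum_{i\ge 2}a_ix^i$ as a monic equation for $x^{-1}$ over $\cO_S$ (dividing by the unit $N_1$), so $x^{-1}\in\cO_S$ by normality; it then reduces the general case to this one by choosing a valuation ring dominating $S$ in which $x$ lies in the maximal ideal. You merge the two steps into a single ultrametric estimate, which is fine. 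The one point to repair is the appeal to Lemma \ref{todvr}(1): that lemma carries a standing excellence hypothesis on $R$, which is not among the hypotheses of \ref{MNx}, and it restricts to discrete valuation rings. You do not need it: the classical fact that a normal domain equals the intersection of all valuation rings of $K$ containing it (this is what the paper invokes) requires no excellence, and your computation works verbatim for a valuation with values in an arbitrary totally ordered abelian group, since it uses only that the coefficients of $\phi(N_1)$ and $\phi(N_2)$ lie in $\cV$, that $v(N_1)=0$, and the ultrametric inequality.
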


\begin{proof} We write $\cO_S$ for $\Gamma(S,\cO_S)$ for short.
First assume $N_2=1$. Since $N_1$ is invertible on $S$ and 
$$
	0=\phi(N_1)(x)= N_1x +\sum_{i=2}^n a_ix^i
$$ 
for some $n\geq 2$ and $a_i\in \cO_S$ for $2 \le i \le n$, we have that $x^{-1}$ is integral over $\cO_S$. Since $S$ is normal,  $x^{-1}$ belongs to $\cO_S$. 

In general, assume that $x^{-1}$ does not belong to $\cO_S$. Since $S$ is normal, there is a valuation ring $\cV\subset K$ which dominates a point of $S$ such that $x^{-1}$ does not belong to $\cV$. Then  $x$ belongs to the maximal ideal of $\cV$. Hence  $\phi(N_2)x$ belongs to the maximal ideal of $\cV$. Hence $(\phi(N_2)x)^{-1}$ does not belong to $\cV$. But $\phi(N_1)\phi(N_2)x=0$ and we have seen $(\phi(N_2)x)^{-1} \in \cO_S$, which is a contradiction. \end{proof}

\begin{sbpara}\label{*S} In the remainder of the section, let  $((\cL, \phi), \iota)$ be
 a generalized Drinfeld module over $(S, U)$  of rank $d$ with level $N$ structure. Assume we are in one of the following two situations:
\begin{enumerate}
\item[(i)] $N$ has at least two prime divisors, or
\item[(ii)] $N$ is invertible on $S$. 
\end{enumerate}

\end{sbpara}

\begin{sbprop}\label{*Simage} Assume we are in situation (i) or (ii) of \ref{*S}. We let $(\frac{1}{N}A/A)^d_{*}$ be
the nonempty subset of $(\frac{1}{N}A/A)^d$ consisting of
\begin{itemize}
\item in situation (i), all elements $a$ such that  the ideal $\{b\in A \mid ba=0\}$ of $A$ has at least two prime divisors, and
\item in situation (ii), all nonzero elements.
\end{itemize}
Then we have 
$$\iota((\tfrac{1}{N}A/A)^d_{*,S})\subset M_S^{-1}\cL^\times \subset \overline{\cL}.$$
\end{sbprop}

\begin{proof} 
We may assume that the line bundle $\cL$ is trivial.  Let $a \in (\frac{1}{N}A/A)^d_*$. Working locally on $S$, let $v$ be a maximal ideal of $A$ which contains the ideal $\{b\in A\mid ba=0\}$ and which is not in the image of $S\to \Spec(A)$. 
Let $N_1 \in A$ generate the highest power of $v$ dividing $N$, and let $N_2 = N/N_1$.  Then the conditions of Lemma \ref{MNx} are satisfied for 
$x = \iota(a)$,
so $\iota(a)^{-1} \in \Gamma(S,\cO_S)$, which implies the result.
\end{proof}

\begin{sbprop}\label{unit} Let the situation be as in \ref{*S}. 
\begin{enumerate}
\item[(1)] Let $a\in (\frac{1}{N}A/A)^d$, and assume that $a$ is a part of a basis of the free  $A/NA$-module $(\frac{1}{N}A/A)^d$. Then $\iota(a)\in M_S^{-1}\cL^\times$.
\item[(2)] 
Locally on $S$, there exists  $a\in (\frac{1}{N}A/A)^d$ such that $a$ is a part of a basis of the free
$A/NA$-module $(\frac{1}{N}A/A)^d$ and 
$\iota(a) \in \cL^{\times}$.
\end{enumerate}
\end{sbprop}

\begin{proof} Part (1) follows from \ref{*Simage}. We prove (2).   Let $s\in S$,  trivialize $\cL$ at $s$, and let $E_s$ be as in \ref{nlevelN}. 
Since $E_s\cong (A/NA)^r$ as an $A/NA$-module with $r\geq 1$ by \ref{nlevelN}, there exists $a\in E_s$ which is a part of a basis of the free $A/NA$-module $(\frac{1}{N}A/A)^d$.  Since $a\in E_s$, the element
$\iota(a)$ belongs to $\cO_{S,s}$. On the other hand, since $a$ belongs to $(\frac{1}{N}A/A)^d_{*}$, we have that $\iota(a)^{-1}$ belongs to $\cO_{S,s}$ by  \ref{*Simage}. Hence $\iota(a)\in \cO_{S,s}^\times$. 
 \end{proof}

\begin{sbprop}\label{auto} Under the assumption in \ref{*S},  the automorphism group of $((\cL, \phi), \iota)$ is trivial.
\end{sbprop}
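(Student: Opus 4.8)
The plan is to deduce this directly from Proposition~\ref{unit}; essentially all of the work has already been done there. First I would record that an automorphism $f$ of the pair $((\cL,\phi),\iota)$ is, in particular, an $\cO_S$-linear automorphism of the line bundle $\cL$, hence is multiplication by a unit $u\in\Gamma(S,\cO_S^\times)$, subject to the conditions that $u$ commute with every $\phi(a)$ and that $f\circ\iota=\iota$ (this last condition being insensitive to whether we regard $\iota$ as a map to $\cL|_U$ or, via \ref{logD53}, to $\overline{\cL}$). Since the statement $u=1$ is local on $S$, I may freely shrink $S$.

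Next I would apply Proposition~\ref{unit}: under the hypotheses of \ref{*S}, after shrinking $S$ there is an $a\in(\tfrac{1}{N}A/A)^d$ such that $\iota(a)$ is a base of $\cL$, i.e.\ $\iota(a)\in\cL^\times$. The compatibility $f\circ\iota=\iota$ gives $u\cdot\iota(a)=f(\iota(a))=\iota(a)$; since $\iota(a)$ is nowhere vanishing, evaluating at each point of $S$ yields $u=1$. Hence $f=\id$ locally, and therefore globally, which is the assertion. Observe that the hypothesis that $f$ commute with $\phi$ plays no role.

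So there is no real obstacle at the level of the proposition itself; the content is concentrated in Proposition~\ref{unit}, which in turn rests on Proposition~\ref{*Simage}---this is precisely where the assumption of \ref{*S} (that $N$ has at least two prime divisors, or is invertible on $S$) is used, to ensure that the torsion points indexed by $(\tfrac{1}{N}A/A)^d_{*}$ have trivial pole---together with Lemma~\ref{nlevelN}(1), which shows that the $A/NA$-submodule $E_s$ of torsion points landing in $\cL$ is free of rank $r\geq 1$ and hence meets $(\tfrac{1}{N}A/A)^d_{*}$. The only things meriting a moment's care are that an automorphism of the pair is literally multiplication by a unit of $\cO_S$ and that the argument does not depend on a choice of local trivialization of $\cL$.
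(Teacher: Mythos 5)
Your argument is correct and is exactly the paper's proof: the paper's entire justification for \ref{auto} is ``This follows from \ref{unit},'' and the details you supply (an automorphism is multiplication by a unit $u$, and $u\,\iota(a)=\iota(a)$ for a nowhere-vanishing section $\iota(a)$ forces $u=1$) are precisely the intended ones. No gap.
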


\begin{proof} Any automorphism $f \colon \cL \to \cL$ of $(\cL,\phi)$ with $f \circ \iota = \iota$ must fix a local basis of $\cL$ by \ref{unit}(2), so it must be trivial. \end{proof}

\subsection{Log Drinfeld modules}\label{Nlev}

We fix an element $N$ of $A$ which does not belong to the total constant field of $F$. 
Fix an integer $d\geq 1$.  We prove basic results on log Drinfeld modules of rank $d$ with level $N$ structure.

 \begin{sbpara}\label{logreg}  Let $S$ be a scheme with a log structure $M$.  
 We say  $S$ is \emph{log regular} if $S$ is locally Noetherian, the log structure is fs,  and the following two conditions (i) and (ii) 
 are satisfied for every $s\in S$. Let $I$ be the ideal of $\cO_{S,\bar s}$ generated by the image of $M_{S,\bar s}\setminus \cO_{S,\bar s}^\times \to \cO_{S,\bar s}$.  
\begin{enumerate}
	\item[(i)]  The local ring $\cO_{S,\bar s}/I$ is regular, and
	\item[(ii)] $\dim(\cO_{S,\bar s})= \dim(\cO_{S,\bar s}/I) + \text{rank}_{\Z}((M_S^{\gp}/\cO_S^\times)_{\bar s})$. 
\end{enumerate}

Regarding log regularity, see \cite{KK} for log structures on the Zariski site, and see for instance \cite[Expos\'e VI]{ILO} and \cite[Section 12.5]{GaRo}  for log structures on the \'etale site (which we use  in this paper).  We call a log regular log scheme a \emph{log regular scheme} for brevity.

 \end{sbpara}
 
 \begin{sbpara}\label{logD20} For a log regular scheme $S$, the following (1)--(4) hold by 4.1, 11.6, 8.2, and 3.1 of \cite{KK},
 in that order.
  \begin{enumerate}
	\item[(1)] The underlying scheme of $S$ is  normal. 
 	\item[(2)] We have $M_S=\cO_S\cap j_*(\cO^\times_U)\subset j_*(\cO_U)$. Here $U$ is the open set of $S$ consisting all points at which the log structure is trivial, and $j \colon U\to S$ is the inclusion map. 
	\item[(3)] Any log smooth scheme over $S$ is log regular.
	\item[(4)] Let $k$ be a perfect field, and suppose that $S$ is a scheme  over $k$ of finite type. 
	Then $S$ is log smooth over $k$. 
\end{enumerate}

  \end{sbpara}

\begin{sbpara}\label{Deflog1} 
For a scheme $S$ over $A$ with a saturated log structure, we recall from \ref{logD6} that a log Drinfeld module $((\cL, \phi), \iota)$ over $S$ of rank $d$ with level $N$ structure is a pair consisting of a generalized Drinfeld module $(\cL, \phi)$ over $S$ and a map 
$\iota \colon (\tfrac{1}{N}A/A)^d \to \overline{\cL}$ 
such that \'etale locally on $S$, there are a morphism $f \colon S \to S'$ of log schemes over $A$, where $S'$ is log regular and locally of finite type over $A$,
and a generalized Drinfeld module $((\cL', \phi'), \iota')$ over $(S',U')$ of rank $d$ with level $N$ structure, 
where $U'$ denotes the dense open subset of $S'$ at which the log structure of $S'$ is trivial, such that $((\cL, \phi), \iota)$ and the pullback of $((\cL', \phi'), \iota')$ by $f$ are isomorphic, where $\iota'$ is regarded as a map $(\frac{1}{N}A/A)^d\to \overline{\cL'}$ as in \ref{logD53}.

Let us prove (1) of \ref{logD8}. That is, 
if the log structure of $S$ is trivial, then the notion of a log Drinfeld module over $S$ of rank $d$ with level $N$ structure is equivalent to the notion of a Drinfeld module over $S$ of rank $d$ with level $N$ structure.

Assume we are given a log Drinfeld module over $S$ of rank $d$ with level $N$ structure. By definition, \'etale locally on $S$,  it comes from a log regular  $S'$. The morphism $S\to S'$ factors as $S\to U\to S'$, where $U$ is the part of $S'$ with trivial log structure, which is open in $S'$. On $U$, we have a Drinfeld module of rank $d$ with level $N$ structure in the usual sense. Since we pull it back, the original object is a Drinfeld module of rank $d$ with level $N$ structure. Conversely, assume that we have a Drinfeld module over $S$ of rank $d$ with level $N$ structure. It is obtained via a morphism $S\to \cM^d_N$ to the moduli space of Drinfeld modules of rank $d$ and level $N$ as the pullback of the universal object over the regular scheme $\cM^d_N$, yielding a log Drinfeld module with level $N$ structure.

\end{sbpara}

\begin{sbpara}\label{strong} 

Let $(S, U)$ be as in \ref{log4}, and let $((\cL, \phi), \iota)$ be a  generalized Drinfeld module  over $(S, U)$ of rank $d$ with level $N$ structure. We consider the following condition on divisibility.

\begin{enumerate}
\item[(div)] For every $a,b\in (\frac{1}{N}A/A)^d$, locally on $S$ we have either 
\begin{eqnarray*}
	\pole(\iota(a))\pole(\iota(b))^{-1}\in M_S/\cO_S^\times &\text{or}& \pole(\iota(b))\pole(\iota(a))^{-1}\in M_S/\cO_S^\times
\end{eqnarray*} 
in $M_S^{\gp}/\cO_S^\times$. 
\end{enumerate}

Note that by \ref{toric7}, we have the same condition (div) if we replace ``locally'' (this means Zariski locally) by ``\'etale locally''.

\end{sbpara}

\begin{sbthm}\label{regdiv}
Let $S$ be a log regular scheme over $A$, let $U$ be the dense open set of $S$ consisting of all points at which the log structure is trivial, and let $((\cL, \phi), \iota)$ be a generalized Drinfeld module over $(S,U)$ of rank $d$ with level $N$ structure. Then the condition (div) is satisfied.

\end{sbthm}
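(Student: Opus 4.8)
The plan is to reduce the condition (div) to a coordinate‑by‑coordinate inequality between the pole valuations of $\iota(a)$ and $\iota(b)$ along the finitely many boundary divisors through a point, and then to deduce that inequality from a ``nestedness'' of the degeneration of $\phi$ --- the torsion‑theoretic shadow of the nested local monodromy of Theorem \ref{thmlm}. By the remark at the end of \ref{strong} (which rests on Lemma \ref{toric7}) it suffices to verify (div) \'etale locally, so I fix a geometric point $\bar s$ and work over $R:=\cO_{S,\bar s}$, a strictly henselian normal local domain with $M_R=\cO_R\cap j_*(\cO_U^\times)$ and $U$ the dense open log‑trivial locus (\ref{logD20}). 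After trivializing $\cL$ near $\bar s$, Proposition \ref{logD53} and Lemma \ref{nlevelN} place each $\iota(a)$ in $\overline{\cL}$, which is then a subsheaf of $j_*(\cO_U)\subset K$, the fraction field of $R$, and identify $\pole(\iota(a))$ with the class of $\iota(a)^{-1}$ when $\iota(a)\notin R$ (and with $1$ otherwise). Since $R$ is normal, for $f,g\in M_R$ one has $g/f\in M_R$ precisely when $v_{\frak p}(g)\ge v_{\frak p}(f)$ for every height‑one prime $\frak p$; and only those $\frak p$ which are components of $S\smallsetminus U$ can matter, since along any other $\frak p$ the torsion point $\iota(a)$ --- having poles only in the log structure --- is a unit. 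Thus, for a fixed pair $a,b$, the assertion becomes: the vectors $\bigl(v_{\frak p}(\pole\,\iota(a))\bigr)_{\frak p}$ and $\bigl(v_{\frak p}(\pole\,\iota(b))\bigr)_{\frak p}$, indexed by the boundary divisors through $\bar s$, are comparable coordinatewise.

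Next I would analyze each boundary divisor $\frak p$ separately. The localization $\cO_{S,\frak p}$ is a discrete valuation ring; passing to its completion (harmless for the valuations in question), Proposition \ref{val2}(1) presents $\phi$ as a pair $(\psi_{\frak p},\Lambda_{\frak p})$, with $\psi_{\frak p}$ a Drinfeld module of rank $r(\frak p)$ and good reduction and $\Lambda_{\frak p}$ a $\psi_{\frak p}(A)$‑lattice of rank $d-r(\frak p)$, fitting into $0\to\psi_{\frak p}[N]\to\phi[N]\to\Lambda_{\frak p}/N\Lambda_{\frak p}\to 0$ through the exponential map. Using Lemma \ref{pre0} (nonzero lattice vectors have negative valuation) one checks that $E_{\frak p}:=\psi_{\frak p}[N]$ is exactly the set of torsion points $\iota(a)$ with $v_{\frak p}(\iota(a))\ge 0$, an $A/NA$‑submodule of $\phi[N]$ of rank $r(\frak p)$, and that for $a\notin E_{\frak p}$ the positive value $v_{\frak p}(\pole\,\iota(a))$ is controlled, via the Newton polygon of the exponential of $\Lambda_{\frak p}$, by the class of $\iota(a)$ modulo $E_{\frak p}$.

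The heart of the argument --- and what I expect to be the main obstacle --- is to fit all these local pictures into one filtration. Concretely, I would produce a single chain $0=W_0\subsetneq W_1\subsetneq\dots\subsetneq W_m=\phi[N]$ of $A/NA$‑submodules whose ranks run through the degeneration values $d(0)<\dots<d(m)=d$ of $\phi$ over $R$, with $W_1$ the globally integral torsion (of rank $d(0)$ by Lemma \ref{nlevelN}, or by \ref{prop51} applied to $\widehat R$), such that for every boundary divisor $\frak p$ the submodule $E_{\frak p}$ is one of the $W_i$ and $v_{\frak p}(\pole\,\iota(a))$ is nondecreasing in the level $\ell(a):=\min\{\,i:a\in W_i\,\}$ --- refined, inside a layer, by the order of $a$ as a torsion point in the corresponding rank‑one direction. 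Granting this, the proof finishes: the levels of $a$ and $b$ lie in a totally ordered set, say that of $a$ is the smaller, and the monotonicity then holds simultaneously at every $\frak p$, yielding the sought coordinatewise inequality.

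Producing the common filtration is exactly the statement that the degeneration of $\phi$ over the non‑valuation ring $R$ is nested; I would establish it by reducing to complete discrete valuation rings, where Proposition \ref{val2} and the peeling‑off procedure used in the proof of Theorem \ref{thmlm} apply, and then comparing the layers extracted along different boundary divisors, all of which descend from the single generalized Drinfeld module $\phi$ over $\widehat R$. For $d\le 2$ there is nothing to do here, since $W_1$ already has corank at most one and one recovers the picture of \cite{TL}; the genuine content is confined to $d\ge 3$, in parallel with the failure (see \ref{valex}) of the monodromy relation $(\sigma_1-1)\cdots(\sigma_n-1)=0$ to hold with $n=2$.
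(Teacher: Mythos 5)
Your reductions are sound and match the paper's setup: working \'etale locally (via \ref{toric7}/\ref{strong}), using normality and \ref{logD20}(2) to translate divisibility in $M_S/\cO_S^\times$ into coordinatewise comparison of valuations along the boundary divisors through $\bar s$, and analyzing each such divisor via \ref{val2}(1), \ref{pre0} and the exponential. But the step you yourself flag as the heart --- the existence of a common filtration $W_\bullet$ of $\phi[N]$ such that every $E_{\frak p}$ is a $W_i$ and $v_{\frak p}(\pole\,\iota(a))$ is ordered the same way at every boundary $\frak p$ --- is essentially a restatement of the theorem, and the argument you sketch for it does not close the gap. Two problems. First, even at a single $\frak p$, by \ref{Ntor1} and \ref{eep} the pole order of $\iota(a)$ is governed by $\max_i|a_i|^{r}s_i$ for the norms $s_i$ of an adapted basis of $\Lambda_{\frak p}$; it is not a function of a ``level plus order in a rank-one direction,'' and the ratios $s_i/s_j$ can a priori differ from one boundary divisor to another, reversing the comparison of $\pole\,\iota(a)$ and $\pole\,\iota(b)$ between two divisors. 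Second, your proposed mechanism for ruling this out --- reduction to complete discrete valuation rings and the peeling-off of \ref{thmlm} --- is inapposite: \ref{thmlm} lives over a valuation ring, where the primes form a chain, whereas the whole difficulty here is that a log regular local ring of dimension $\ge 2$ has several \emph{incomparable} height-one boundary primes, and ``descending from the same $\phi$ over $\widehat R$'' by itself imposes no relation between the lattices $\Lambda_{\frak p_1}$ and $\Lambda_{\frak p_2}$.

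The input your proposal is missing is the additivity of the level structure combined with the fact that $\iota(a+b)$ again lies in $\overline{\cL}$. Writing $\iota(a)=x^{-1}u$, $\iota(b)=y^{-1}v$ with $x,y\in M_{S,\bar s}$ and $u,v\in\cO^\times$, the identity $\iota(a)+\iota(b)=\iota(a+b)\in\cO_{S,\bar s}\cup M_{S,\bar s}^{-1}\cO^\times$ is a single global constraint tying together all boundary divisors at once. The paper's proof consists of exactly this observation plus the purely monoid-theoretic Lemma \ref{1logD10}: in a strictly local log regular ring, if $x^{-1}+y^{-1}\in\cO\cup M^{-1}$ then $xy^{-1}\in M$ or $yx^{-1}\in M$; this is proved by passing to the completion $k\ps{P}\ps{T_1,\dots,T_n}$ and, assuming incomparability, choosing two homomorphisms $P\to\N$ ordering $x,y$ oppositely, specializing to $k\ps{t_1,t_2}$, and reading off a contradiction from the shape of the resulting pole. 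If you want to salvage your divisor-by-divisor framework, you must at some point feed in this additive relation (or an equivalent one); the degeneration data $(\psi_{\frak p},\Lambda_{\frak p})$ alone will not produce the cross-divisor compatibility.
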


Note that Theorem \ref{regdiv} implies Proposition \ref{logD7}, which asserts that a log Drinfeld module with level $N$ structure $\iota$ over an $A$-scheme $S$ with saturated log structure $M_S$ satisfies the condition (div) of \ref{strong}. That is, it is
sufficient to work with a pair $(S',U')$ as in $(S,U)$ of \ref{regdiv} by the definition of log Drinfeld module in \ref{logD6}.

We prove Theorem \ref{regdiv}. Since we can work \'etale locally on $S$ by the remark at the end of \ref{strong}, this theorem follows from the following lemma.

\begin{sblem}\label{1logD10} Let $S$ be a log regular scheme over a field, and assume it is strictly Henselian. Let $s$ be the closed point of $S$.  Let $x, y\in M_{S,s}$, and assume that $x^{-1}+y^{-1}\in \cO_{S,s}\cup M_{S,s}^{-1}$. Then either $xy^{-1}\in M_{S,s}$ or $yx^{-1}\in M_{S,s}$. 
\end{sblem}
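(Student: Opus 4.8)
The plan is to translate the statement into a combinatorial assertion about the sharp fs monoid $\bar M := M_{S,s}/\cO^\times_{S,s}$ and then verify it using Kato's structure theorem for log regular local rings.

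First I would set up the reduction. Write $R=\cO_{S,s}$; by \ref{logD20}(1) the underlying scheme is normal, so $R$ is a normal, hence (being local) integral, local ring. Since the log structure is fs, $\bar M=M_{S,s}/\cO^\times_{S,s}$ is a sharp fs monoid, and saturatedness together with sharpness forces $\bar M^{\gp}$ to be a free abelian group of finite rank. For $h\in M^{\gp}_{S,s}$ one has $h\in M_{S,s}$ if and only if its class $\bar h$ lies in $\bar M$: indeed, if $\bar h=\bar m$ with $m\in M_{S,s}$ then $hm^{-1}\in\cO^\times_{S,s}$, so $h\in M_{S,s}$. Hence $xy^{-1}\in M_{S,s}$ (resp. $yx^{-1}\in M_{S,s}$) is equivalent to $\bar x\ge\bar y$ (resp. $\bar y\ge\bar x$) for the partial order on $\bar M^{\gp}$ defined by $\bar M$, and it suffices to prove the contrapositive: if $\bar x,\bar y$ are incomparable --- in particular both nonzero --- then $x^{-1}+y^{-1}\notin\cO_{S,s}\cup M_{S,s}^{-1}$. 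Finally, $R\to\widehat R$ is injective and faithfully flat with $\widehat R$ again a normal domain (it is a completed affine monoid algebra, by the next paragraph) and pullback log structure having characteristic monoid $\bar M$ at the closed point, so the hypothesis $x^{-1}+y^{-1}\in\cO_{S,s}\cup M_{S,s}^{-1}$ implies $x^{-1}+y^{-1}\in\widehat R\cup M_{\widehat R}^{-1}$; thus we may replace $R$ by $\widehat R$.

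Because $S$ lies over a field, the equal-characteristic structure theorem for log regular local rings (\cite[Theorem 3.2]{KK}; see also \cite[Section 12.5]{GaRo} for the \'etale-local case) provides an isomorphism $\widehat R\cong k[[P]]$ with $P=\bar M\oplus\bN^r$, $k$ the residue field, $r=\dim(\cO_{S,s}/I)$, under which $M_{\widehat R}$ corresponds to $\{u\chi^{(q,0)}:u\in\widehat R^\times,\ q\in\bar M\}$; here $k[[P]]$ is the completion of the monoid algebra at its maximal monomial ideal, so each nonzero $f\in\widehat R$ has a well-defined support $\operatorname{supp}(f)\subseteq P$, and for $q\in\bar M^{\gp}$ one has $\chi^{(q,0)}\mid f$ in $\widehat R$ iff $\operatorname{supp}(f)\subseteq(q,0)+P$. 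Writing $x=u\chi^{(\bar x,0)}$, $y=v\chi^{(\bar y,0)}$ with $u,v\in\widehat R^\times$ gives
$$x^{-1}+y^{-1}=\chi^{-(\bar x+\bar y,0)}\,g,\qquad g:=u^{-1}\chi^{(\bar y,0)}+v^{-1}\chi^{(\bar x,0)}.$$
Incomparability means $\bar x\notin\bar y+\bar M$ and $\bar y\notin\bar x+\bar M$, so $\chi^{(\bar x,0)}$ does not occur in $u^{-1}\chi^{(\bar y,0)}$ and $\chi^{(\bar y,0)}$ does not occur in $v^{-1}\chi^{(\bar x,0)}$; hence $\chi^{(\bar x,0)}$ and $\chi^{(\bar y,0)}$ occur in $g$ with nonzero coefficients (the constant terms of $v^{-1}$, $u^{-1}$). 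Now for the two cases: if $x^{-1}+y^{-1}\in\widehat R$ then $\operatorname{supp}(g)\subseteq(\bar x+\bar y,0)+P$, yet $\chi^{(\bar x,0)}\in\operatorname{supp}(g)$ would force $-\bar y\in\bar M$, impossible as $\bar y\ne0$; if $x^{-1}+y^{-1}\in M_{\widehat R}^{-1}$ then $g=w\,\chi^{(\bar z,0)}$ for a unit $w$ and some $\bar z\in\bar M^{\gp}$, so $\operatorname{supp}(g)\subseteq(\bar z,0)+P$ with $(\bar z,0)$ itself in $\operatorname{supp}(g)$, whence $\bar x-\bar z,\ \bar y-\bar z\in\bar M$, while the coefficient of $\chi^{(\bar z,0)}$ in $g$ is nonzero only if $\bar z-\bar x\in\bar M$ or $\bar z-\bar y\in\bar M$, and by sharpness of $\bar M$ this yields $\bar z=\bar x$ or $\bar z=\bar y$, i.e. $\bar y-\bar x\in\bar M$ or $\bar x-\bar y\in\bar M$ --- contradicting incomparability. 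This proves the lemma.

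I expect the main obstacle to be the middle step: one must cite the structure theorem in its correct equal-characteristic, \'etale-local form, and carefully separate the summand $\bar M\oplus 0$ of $P$ that carries the log structure from the $r$ ordinary regular parameters; once the normal form $\widehat R\cong k[[\bar M\oplus\bN^r]]$ is in hand, everything else is bookkeeping with supports. I would also remark that the case $x^{-1}+y^{-1}\in\cO_{S,s}$ can be handled without the structure theorem: writing $x+y=xy\,c$ with $c\in R$ gives $y=x(yc-1)$, and $yc-1\in R^\times$ since $y\in\mathfrak m_R$, so $yx^{-1}\in R$ and, being invertible on $U$, lies in $M_{S,s}$.
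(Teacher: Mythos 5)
Your argument is correct, and its skeleton coincides with the paper's: both proofs dispose of the case $x^{-1}+y^{-1}\in\cO_{S,s}$ by the elementary observation that $y=x(cx-1)^{-1}$ with $cx-1$ a unit, and both then pass to the completion and invoke Kato's structure theorem to write it as $k\ps{P}\ps{T_1,\dots,T_n}$ with the log structure given by the chart $P\to R$ (the concern you raise about the \'etale-local form is moot here: $S$ is assumed strictly local, and the paper likewise cites \cite[3.1]{KK} for the completion of the strict local ring). Where you genuinely diverge is the endgame. The paper reduces to the assertion that an identity $u_1\gamma_1^{-1}+u_2\gamma_2^{-1}=u_3\gamma_3^{-1}$ with $\gamma_i\in P$ and $u_i$ units forces $\gamma_1,\gamma_2$ to be comparable, and proves it by choosing two monoid homomorphisms $g,h\colon P\to\N$ separating $\gamma_1,\gamma_2$ in opposite directions, specializing along $(g,h)$ to $k\ps{t_1,t_2}$, and reading off an impossible identity of bidegrees there. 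You instead stay inside $k\ps{P}$ and track supports: incomparability guarantees that the monomials $\chi^{\bar x}$ and $\chi^{\bar y}$ both survive with nonzero coefficient in $g=u^{-1}\chi^{\bar y}+v^{-1}\chi^{\bar x}$, and either alternative of the hypothesis confines $\operatorname{supp}(g)$ to a single translate $\gamma+P$, which by sharpness pins $\gamma$ down to $\bar x$ or $\bar y$ and hence forces comparability. Your route avoids choosing the separating homomorphisms and the auxiliary two-variable ring, at the price of slightly more bookkeeping about units and supports in the completed monoid algebra; the paper's route trades that bookkeeping for a completely explicit contradiction in $k\ps{t_1,t_2}$. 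Both are complete proofs.
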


\begin{proof} Let $m_s$ be the maximal ideal of $\cO_{S,s}$.  We may assume that $x,y\in m_s$.  If 
$$
	x^{-1}+y^{-1}=c\in \cO_{S,s},
$$ 
then $xy^{-1}=cx-1\in \cO_{S,s}^\times$.  Hence we may assume that $x^{-1}+y^{-1}=z^{-1}$ for some $z\in M_{S,s}$.

The completion $R$ of $\cO_{S,s}$ is isomorphic to $k\ps{P}\ps{T_1,\dots, T_n}$ for some field $k$ and sharp fs monoid $P$ such that the log structure of $\Spec(R)$ is given by $P\to R$ \cite[3.1]{KK}. It is sufficient to prove that if $\gamma_1,\gamma_2,\gamma_3\in P$ and $u_1,u_2,u_3\in R^\times$ are such that 
$$
	u_1\gamma_1^{-1}+u_2\gamma_2^{-1}= u_3\gamma_3^{-1},
$$ 
then either $\gamma_1\gamma_2^{-1}\in P$ or $\gamma_2\gamma_1^{-1}\in P$. Assume this is not satisfied. 

As is well known, if $\gamma\in P^{\gp}$ is such that $f(\gamma)\geq 0$ for the canonical extension of every homomorphism $f \colon P\to \N$, then $\gamma\in P$. Taking $\gamma=\gamma_1\gamma_2^{-1}$ and also $\gamma= \gamma_2\gamma_1^{-1}$, we see that there exist homomorphisms $g,h \colon P\to \N$ 
such that $g(\gamma_1)>g(\gamma_2)$ and $h(\gamma_1)<h(\gamma_2)$. The pair $(g,h)$ induces $P\to \N\times \N$ and hence induces 
a ring homomorphism 
$$
	k\ps{P}\ps{T_1,\dots, T_n}\to k\ps{\N\times \N}=k\ps{t_1,t_2},
$$ 
where each $T_i$ is sent to $0$. Let $a=g(\gamma_1)$, $b=g(\gamma_2)$, $c=h(\gamma_1)$, 
$d=h(\gamma_2)$ so that $a,b,c,d\in \N$ with $a>b$ and $c<d$. Write the image of $\gamma_3$ in $k\ps{t_1,t_2}$  as $t_1^et_2^f$ with $e,f\in \N$. Then there are $v_1,v_2,v_3\in k\ps{t_1,t_2}^\times$ 
such that 
$$
	v_1t_1^{-a}t_2^{-c}+v_2t_1^{-b}t_2^{-d}= v_3t_1^{-e}t_2^{-f}.
$$ 
Thus we have 
$$
	v_1t_2^{d-c}+v_2t_1^{a-b}= v_3t_1^{a-e}t_2^{d-f}
$$ 
with $a-b>0$ and $d-c>0$, which is impossible. 
\end{proof}

\begin{sbrem}\label{notlog}   Without log regularity, the condition (div) in \ref{strong} need not be satisfied.  See \ref{Sdiv}.

\end{sbrem}

\begin{sbprop}\label{unit2} Let   $((\cL, \phi), \iota)$ be 
 a log Drinfeld module of rank $d$ with level $N$ structure over a scheme $S$ over $A$ with a saturated log structure. 
 Suppose  that either $N$ has at least two prime divisors or $N$ is invertible on $S$. 
\begin{enumerate}
\item[(1)] Let $a\in (\frac{1}{N}A/A)^d$ and assume that $a$ is a part of a basis of the free $A/NA$-module $(\frac{1}NA/A)^d$. Then $\iota(a)\in M_S^{-1}\cL^\times$. 
\item[(2)] Locally on $S$, there exists an $a\in (\frac{1}{N}A/A)^d$ which is a part of a basis
of the free $A/NA$-module $(\frac{1}NA/A)^d$ and
such that $\iota(a) \in \cL^\times$.
\end{enumerate}
\end{sbprop}

\begin{proof} Using the definition \ref{logD6} of a log Drinfeld module, this reduces to the case of a generalized Drinfeld module with level $N$ structure over a log regular scheme, and hence to \ref{unit}, as log regular schemes are normal.
\end{proof}

\begin{sbprop}\label{auto2} Under the assumptions of Proposition \ref{unit2}, the automorphism group of $((\cL, \phi), \iota)$ is trivial. 
\end{sbprop}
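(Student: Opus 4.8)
The plan is to derive Proposition~\ref{auto2} at once from Proposition~\ref{unit2}, in exact parallel with the way Proposition~\ref{auto} was derived from Proposition~\ref{unit}.

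First I would unwind the definitions. An automorphism $g$ of $((\cL,\phi),\iota)$ is an $\cO_S$-linear automorphism of the line bundle $\cL$ commuting with the $A$-action $\phi$ whose natural extension to $\overline{\cL}$ (see~\ref{tL}) fixes the level structure $\iota\colon(\tfrac{1}{N}A/A)^d\to\overline{\cL}$. Since the assertion $g=\id$ is local on $S$, I may pass to a suitable open (or \'etale) neighbourhood of an arbitrary point of $S$.

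Next I would invoke Proposition~\ref{unit2}: after shrinking $S$, there is $a\in(\tfrac{1}{N}A/A)^d$ with $\iota(a)\in\cL^\times$, i.e.\ $\iota(a)$ is a local basis of $\cL$. On this neighbourhood the line-bundle automorphism $g$ is multiplication by a unit $c\in\Gamma(S,\cO_S^\times)$, and the condition that $g$ preserve $\iota$ gives $c\cdot\iota(a)=g(\iota(a))=\iota(a)$ in $\cL$; since $\iota(a)$ is a basis, this forces $c=1$, so $g$ is the identity on this neighbourhood. Covering $S$ by such neighbourhoods yields $g=\id$ on all of $S$.

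There is essentially no obstacle here: the proposition is a formal consequence of Proposition~\ref{unit2}. The only minor points worth recording are that an automorphism in this context is an $\cO_S$-linear isomorphism of $\cL$ (hence locally scalar multiplication), that a scalar automorphism fixing a single basis vector is the identity, and that being the identity is a local property on $S$ — all of which are immediate.
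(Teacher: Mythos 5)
Your proof is correct and is exactly the argument the paper intends: the paper's proof of Proposition~\ref{auto2} is the one-line "This follows from \ref{unit2}," and you have simply spelled out the same reduction (an automorphism is locally multiplication by a unit, which must fix the basis element $\iota(a)$ supplied by Proposition~\ref{unit2}, hence equals $1$). No issues.
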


\begin{proof} This follows from \ref{unit2}(2). \end{proof}

\subsection{Bruhat-Tits buildings}\label{BT}
 
 Like the toroidal compactifications considered in \cite{AMRT} and \cite{FC},  our toroidal compactifications are associated to cone decompositions. Our cone decompositions 
are based on the decomposition of the Bruhat-Tits building of $\PGL_n(F_{\infty})$ into  simplices.

We give an almost self-contained review of Bruhat-Tits buildings for $\PGL_n$ in a style which matches this paper, using the conditions $C(S)$ and $C(S,\mu)$ introduced in \ref{C(S)}.  These conditions are about inequalities of norms, and such inequalities give cones and cone decompositions used in this paper. At the end of this subsection, we prove Proposition \ref{normcv}, via which Bruhat-Tits buildings are related to Drinfeld modules.

Let $E$ be a complete discrete valuation field with finite residue field $\F_q$. 
Let $O_E$ be the valuation ring of $E$, and let $m_E$ be the maximal ideal of $O_E$. Let $|\;\,| \colon E\to \R_{\geq 0}$ be the standard absolute value, sending a uniformizer to $q^{-1}$.
Let $n\geq 1$.

\begin{sbpara} \label{norm} 
Let $V$ be an $E$-vector space of finite dimension $n$. 
By a \emph{norm} on $V$, we mean a map $\mu \colon V\to \R_{\geq 0}$ which satisfies the following two equivalent conditions.
\begin{enumerate}
	\item[(i)] There exists a basis $(e_i)_{1\leq i\leq n}$ of $V$ such that there are $(s_i)_{1 \le i \le n} \in \R_{>0}^d$ 
	for which
	$$\mu\left(\sum_{i=1}^n x_ie_i\right) = \max\{s_i|x_i| \mid 1\leq i\leq n\}$$
for all $x_i\in E$. 
	\item[(ii)] We have
	\begin{enumerate}
		\item[(a)] $\mu(ax)= |a|\mu(x)$ for all $a\in E$ and $x\in V$, 
		\item[(b)] $\mu(x+y)\leq \max(\mu(x), \mu(y))$ for all $x,y\in V$, and
		\item[(c)] $\mu(x)>0$ for all $x\in V \setminus \{0\}$. 
	\end{enumerate}
\end{enumerate}
See \cite{GI} for the equivalence.  Note that in (b), we have $\mu(x+y) = \max(\mu(x),\mu(y))$ if $\mu(x) \neq \mu(y)$.
We will call a basis of $V$ satisfying (i) an {\it orthogonal basis} for $\mu$.
\end{sbpara}

\begin{sbpara} The rank $n$ \emph{Bruhat-Tits building} $|\BT_n|$ is a topological space with underlying set consisting of all homothety classes of norms (\ref{sat}) on the $E$-vector space $E^n$.  The topology on $|\BT_n|$ is defined by the embedding 
$$|\BT_n| \hookrightarrow \prod_{(x,y) \in (E^n-\{0\})^2} \R_{>0}, \quad \mu\mapsto  \mu(x)\mu(y)^{-1},$$ where for $\mu \in |\BT_n|$, the expression $\mu(x)\mu(y)^{-1}$ means $\tilde \mu(x)\tilde \mu(y)^{-1}$ for a norm $\tilde \mu$ with class $\mu$. 

The group $\PGL_n(E)$ acts continuously on $|\BT_n|$. That is, the class $g\in \PGL_n(E)$ of  $\tilde g\in \GL_n(E)$ sends the class $\mu\in |\BT_n|$  of a norm $\tilde \mu$ to the class $g\mu\in |\BT_n|$ of the norm $x\mapsto \tilde \mu(\tilde g^{-1}(x))$. 

\end{sbpara}

\begin{sbpara}\label{BTn}

The Bruhat-Tits building $|\BT_n|$ is identified with the geometric realization of a simplicial complex $\BT_n$ which is also called the Bruhat-Tits building.

The set of $0$-simplices of $\BT_n$ will be introduced in \ref{0sim}. In \ref{sim}, we will
 introduce general simplices of $\BT_n$ by using a condition $C(S)$ in \ref{C(S)}, and for each simplex $S$ of $\BT_n$, we will introduce its geometric realization $|\BT_n|(S)\subset |\BT_n|$ by using   a condition $C(S, \mu)$ in \ref{C(S)}. $|\BT_n|$ is the union of these $|\BS_n|(S)$

\end{sbpara}

\begin{sbpara}\label{0sim}

A \emph{$0$-simplex} of $\BT_n$ is the class of an $O_E$-lattice in $E^n$. Here, by an \emph{$O_E$-lattice in $E^n$}, we mean a finitely generated $O_E$-submodule of $E^n$ which generates $E^n$ over $E$. Two $O_E$-lattices $L$ and $L'$ are said to be equivalent if $L'=aL$ for some $a\in E^\times$. 

For an $O_E$-lattice $L$ in $E^n$, we have the associated norm $\mu_L$ on $E^n$ defined as
$$\mu_L(x) = \min\{|a|\mid a\in E, x\in aL\}.$$ 
Via the map $\class(L)\mapsto \class(\mu_L)$, we regard a $0$-simplex of $\BT_n$ as an element of $|\BT_n|$.

\end{sbpara}

\begin{sbpara}\label{C(S)}  We introduce conditions $C(S)$ and $C(S,\mu)$.
For $\mu \in |\BT_n|$ and $x, y \in E^n$, we write $\mu(x)\geq \mu(y)$ to indicate that $\tilde \mu(x)\geq \tilde \mu(y)$ for a norm $\tilde \mu$ with class $\mu$. 
 
 For a subset $S$ of $|\BT_n|$, the condition $C(S)$ is as follows.

\begin{description}
\item[$C(S)$] 
For each pair $(x,y)$ of nonzero elements of $E^n$, we have either $\mu(x)\geq \mu(y)$ for all $\mu\in S$ or $\mu(x)\leq \mu(y)$ for all $\mu\in S$. 
\end{description}

For a subset $S$ of $|\BT_n|$ and $\mu\in |\BT_n|$, the condition $C(S,\mu)$ is as follows. 

\begin{description}
\item[$C(S, \mu)$] For any pair $(x,y)$ of nonzero elements of $E^n$ such that $\mu'(x)\geq \mu'(y)$ for all $\mu'\in S$, we have $\mu(x)\geq \mu(y)$. 
\end{description}
\end{sbpara}

\begin{sbpara}\label{sim} A \emph{simplex} of $\BT_n$ is a nonempty set $S$ of $0$-simplices of $\BT_n$ satisfying the condition $C(S)$. 
It follows from \ref{CS1}(ii) below that a simplex of $\BT_n$ is a finite set. A simplex is called  an \emph{$r$-simplex} if it has cardinality $r+1$. 

The \emph{geometric realization} of a simplex $S$ of $\BT_n$ is 
$$
	|\BT_n|(S) = \{ \mu\in |\BT_n| \mid C(S,\mu) \text{ is satisfied} \}.
$$ 

\end{sbpara}
The traditional definition of a simplex of $\BT_n$ is given by the following \ref{CS1}(ii).

\begin{sbprop}\label{CS1}
For any nonempty set $S$ of $0$-simplices of $\BT_n$, the condition $C(S)$ is equivalent to each of the following two conditions:
\begin{enumerate}
\item[(i)] For any two $O_E$-lattices $L$ and $L'$ in $E^n$ with classes in $S$, we have either $L\subset L'$ or $L'\subset L$. 
\item[(ii)] There are $O_E$-lattices $L^0, \dots, L^r$ in $E^n$ such that $S$ is the set of equivalence classes of $L^0, \dots, L^r$  and $L^0\supsetneq L^1\supsetneq \dots \supsetneq L^r \supsetneq m_EL^0$. 
\end{enumerate}
\end{sbprop}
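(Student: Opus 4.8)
The plan is to prove the cycle $(i)\Rightarrow(ii)\Rightarrow(iii)\Rightarrow(i)$, translating everything through the dictionary between norms and lattices. Fix a uniformizer $\pi$ of $O_E$, normalized so that $|\pi|=q^{-1}$. For an $O_E$-lattice $L\subset E^n$ and $x\in E^n\smallsetminus\{0\}$, discreteness of the valuation makes $v_L(x):=\max\{k\in\Z\mid x\in\pi^kL\}$ a well-defined integer, and $\mu_L(x)=q^{-v_L(x)}$. Thus $\mu_L(x)\geq\mu_L(y)$ is equivalent to $v_L(x)\leq v_L(y)$, and the integer $v_L(x)-v_L(y)$ depends only on $\mathrm{class}(L)$, being unchanged by $L\mapsto\pi^jL$. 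So $C(S)$ says precisely: for all nonzero $x,y$, the sign of $v_L(x)-v_L(y)$ is the same (or it is $0$) for every lattice $L$ with $\mathrm{class}(L)\in S$. With this in hand, $(i)\Rightarrow(ii)$ is the contrapositive: if $(ii)$ fails there are lattices $L,L'$ with classes in $S$ and neither contained in the other, so one picks $x\in L\smallsetminus L'$ and $y\in L'\smallsetminus L$; then $v_L(x)\geq 0>v_L(y)$ while $v_{L'}(x)<0\leq v_{L'}(y)$, i.e. $v_L(x)-v_L(y)>0$ but $v_{L'}(x)-v_{L'}(y)<0$, violating $C(S)$ for $(x,y)$.

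For $(ii)\Rightarrow(iii)$, fix one lattice $L$ with $\mathrm{class}(L)\in S$ and consider all lattices $M$ with $\mathrm{class}(M)\in S$ and $\pi L\subseteq M\subseteq L$. Under $M\mapsto M/\pi L$ these correspond to certain $\F_q$-subspaces of $L/\pi L\cong\F_q^n$; by $(ii)$ any two of them are comparable, so they form a finite strictly decreasing chain $L=L^0\supsetneq L^1\supsetneq\cdots\supsetneq L^r=\pi L$, which is moreover the complete list of such $M$. I claim $L^0,\dots,L^{r-1}$ is a chain as in $(iii)$, noting $L^{r-1}\supsetneq\pi L^0=m_EL^0$. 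Two things must be checked. First, the classes $\mathrm{class}(L^0),\dots,\mathrm{class}(L^{r-1})$ are pairwise distinct: if $\mathrm{class}(L^i)=\mathrm{class}(L^j)$ with $i<j$ then $L^j=\pi^kL^i$ with $k\geq1$, forcing $L^j\subseteq\pi L^0=L^r$, which contradicts strictness. Second, every class of $S$ occurs among them: given any $M$ with $\mathrm{class}(M)\in S$ there is a unique $k$ with $\pi^kM\subseteq L$ and $\pi^{k-1}M\not\subseteq L$; by $(ii)$, $\pi^{k-1}M$ is comparable to $L$ and not contained in it, so $\pi^{k-1}M\supseteq L$, whence $\pi L\subseteq\pi^kM\subseteq L$, so $\pi^kM$ is one of $L^0,\dots,L^r$ and $\mathrm{class}(M)$ is one of ours.

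For $(iii)\Rightarrow(i)$, the hypothesis $L^r\supsetneq m_EL^0=\pi L^0$ glues consecutive "periods" together, so the family $\{\pi^jL^i\mid j\in\Z,\ 0\leq i\leq r\}$ is a single strictly decreasing chain, order-isomorphic to $\Z$; write it as $\cdots\supsetneq M_{-1}\supsetneq M_0\supsetneq M_1\supsetneq\cdots$ with $\pi M_j=M_{j+m}$, where $m:=r+1$. For $x\in E^n\smallsetminus\{0\}$ put $w(x):=\max\{j\mid x\in M_j\}\in\Z$; then $v_{M_j}(x)=\lfloor(w(x)-j)/m\rfloor$, so $v_{M_j}(x)-v_{M_j}(y)=\lfloor(w(x)-j)/m\rfloor-\lfloor(w(y)-j)/m\rfloor$, which by monotonicity of $\lfloor\cdot\rfloor$ is $\geq0$ for all $j$ when $w(x)\geq w(y)$ and $\leq0$ for all $j$ when $w(x)\leq w(y)$. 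Since every lattice whose class lies in $S$ is one of the $M_j$, this is exactly $C(S)$.

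The step I expect to be the main obstacle is $(ii)\Rightarrow(iii)$: fixing the correct "window" $\pi L\subseteq M\subseteq L$ and, above all, checking that a suitable scaling by a power of $\pi$ lands every class of $S$ inside it — this is precisely where one must use that $(ii)$ asserts comparability of all lattice representatives, not merely of one chosen set. The remaining implications reduce to routine manipulation of the integer-valued function $v_L$ and monotonicity of the floor function.
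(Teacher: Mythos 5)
Your proof is correct and follows essentially the same route as the paper: the contrapositive for (i)$\Rightarrow$(ii), normalizing each class of $S$ into the window between $L$ and $\pi L$ for (ii)$\Rightarrow$(iii), and periodically extending the chain for (iii)$\Rightarrow$(i). The only differences are presentational — you work with additive valuations $v_L$ and make the last step explicit via the floor-function identity $v_{M_j}(x)=\lfloor (w(x)-j)/m\rfloor$, where the paper argues the same monotonicity directly.
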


\begin{proof}
$C(S) \Rightarrow$ (i): Assume that there are $O_E$-lattices $L, L'$  in $E^n$ with classes in $S$ such that $L\not\subset L'$ and $L'\not\subset L$. Take $x, y\in E^n$ such that $x\in L$, $x\notin L'$, $y\in L'$, $y\notin L$. Then $\mu_L(x)<\mu_L(y)$ and $\mu_{L'}(x)>\mu_{L'}(y)$, so $C(S)$ fails.

(i) $\Rightarrow $ (ii): Fix an $O_E$-lattice $L^0$ in $E^n$ with class in $S$. For each element $\mu$ of $S$, take the $O_E$-lattice $L(\mu)$ in $E^n$ with class $\mu$ such that  
$L(\mu)\subset L^0$ and $L(\mu) \not\subset m_EL^0$, By assumption, the set $\{L(\mu)\mid \mu \in S\}$ is totally ordered by inclusion, and $L(\mu) \supsetneq m_EL^0$ for all $\mu \in S$ by definition. 

(ii) $\Rightarrow C(S)$: Take $O_E$-lattices $L^0, \dots, L^r$ satisfying (ii). Let $L^{i+(r+1)t}= m_E^tL^i$ for $0\leq i\leq r$ and $t\in \Z$. Then the $L^i$ for $i\in \Z$ form a decreasing filtration on $E^n$, and $\bigcup_{i\in \Z} L^i=E^n$. Let $x,y \in E^n \setminus \{0\}$. Let $i,j\in \Z$ be such that $x\in L^i$, $x\notin L^{i+1}$, $y\in L^j$, $y\notin L^{j+1}$. If $i\leq j$ (resp., $i\geq j$),  we have $\mu_{L^m}(x)\geq \mu_{L^m}(y)$ (resp., $\mu_{L^m}(x) \leq \mu_{L^m}(y)$) for all $m\in \Z$. 
\end{proof}

\begin{sbpara}  The group $\PGL_n(E)$ acts on $\BT_n$ in the natural way, and this action is compatible with the action of $\PGL_n(E)$ on $|\BT_n|$. For each $r$, the  action of $\PGL_n(E)$ on the set of all $r$-simplices in $\BT_n$ is transitive.

\end{sbpara}

\begin{sblem}\label{linnor} Let $(\mu_i)_{i\in I}$ be a nonempty finite family of norms on $E^n$ such that the set $S$ of the classes of $\mu_i$ satisfies $C(S)$. Let $a_i\in \R_{>0}$ for each $i\in I$. Then $\sum_{i \in I} a_i\mu_i$ is a norm on $E^n$. 
\end{sblem}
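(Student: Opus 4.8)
The plan is to verify directly that $\mu=\sum_{i\in I} a_i\mu_i$ satisfies the three conditions (a), (b), (c) of \ref{norm}(ii). Conditions (a) and (c) are immediate and require nothing beyond the corresponding properties of the individual $\mu_i$: each $\mu_i$ is $E$-homogeneous and strictly positive on $E^n\smallsetminus\{0\}$, and each $a_i>0$, so $\mu(ax)=\sum_i a_i\mu_i(ax)=|a|\sum_i a_i\mu_i(x)=|a|\mu(x)$ for $a\in E$, $x\in V$, and $\mu(x)>0$ whenever $x\neq 0$. The only substantive point is condition (b), the ultrametric inequality, and this is precisely where the hypothesis $C(S)$ enters.

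To prove (b), fix $x,y\in E^n$. If $x=0$, $y=0$, or $x+y=0$ then $\mu(x+y)\leq\max(\mu(x),\mu(y))$ is trivial, so we may assume $x$, $y$, and $x+y$ are all nonzero. We apply $C(S)$ to the pair $(x,y)$: since the relation ``$\mu_i(x)\geq\mu_i(y)$'' depends only on the homothety class of $\mu_i$, the condition $C(S)$ tells us that either $\mu_i(x)\geq\mu_i(y)$ for \emph{every} $i\in I$, or $\mu_i(x)\leq\mu_i(y)$ for \emph{every} $i\in I$. By symmetry in $x$ and $y$ we may assume the first alternative. Then $\max(\mu_i(x),\mu_i(y))=\mu_i(x)$ for all $i$, so the ultrametric inequality for each individual norm $\mu_i$ gives $\mu_i(x+y)\leq\mu_i(x)$. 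Multiplying by $a_i>0$ and summing over $i$ yields $\mu(x+y)\leq\mu(x)\leq\max(\mu(x),\mu(y))$, which is (b). (Alternatively, one could instead produce an orthonormal basis as in \ref{norm}(i), but the axiomatic route above is cleaner.)

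I do not expect a real obstacle here: the entire force of the lemma lies in the observation that $C(S)$ makes $x$ and $y$ ``comparable'' uniformly across all the $\mu_i$, so that the pointwise identity $\max(\mu_i(x),\mu_i(y))=\mu_i(x)$ holds simultaneously for all $i$ and the weighted sum of the individual ultrametric inequalities collapses to a single ultrametric inequality for $\mu$. Without $C(S)$ the quantity $\sum_i a_i\max(\mu_i(x),\mu_i(y))$ need not be bounded by $\max\bigl(\sum_i a_i\mu_i(x),\sum_i a_i\mu_i(y)\bigr)$, and $\mu$ would fail to be a norm; so the proof is essentially a three-line verification once $C(S)$ is invoked.
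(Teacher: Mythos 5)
Your proof is correct and follows essentially the same route as the paper's: invoke $C(S)$ to reduce to the case $\mu_i(x)\geq\mu_i(y)$ for all $i$ simultaneously, then sum the individual ultrametric inequalities to get $\mu(x+y)\leq\mu(x)$. The only difference is that you also spell out the trivial verifications of conditions (a) and (c), which the paper omits.
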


\begin{proof}
Set $\mu = \sum_{i \in I} a_i\mu_i$.
Let $x$ and $y$ be nonzero elements of $E^n$. We prove that $\mu(x+y)\leq \max(\mu(x), \mu(y))$. We may assume that $\mu_i(x)\geq \mu_i(y)$ for all $i$. We then have 
$$
	\mu(x+y)=\sum_{i\in I} a_i\mu_i(x+y) \leq \sum_{i \in I} a_i \mu_i(x)= \mu(x).
$$ 
\end{proof}

\begin{sbprop}\label{CS2}  Let $S$ be a simplex of $\BT_n$, and let $\mu\in |\BT_n|$. Then $\mu \in |BT_n|(S)$ if and only if 
for norms $\tilde \mu$ with class $\mu$ and $\tilde \nu$ with class $\nu$ for each $\nu \in S$, there exist $a_\nu \in \R_{\geq 0}$ for $\nu \in S$ such that
$\tilde \mu= \sum_{\nu \in S}  a_\nu \tilde \nu$.
\end{sbprop}

\begin{proof} 
Let us suppose $\mu\in |\BT_n|(S)$, the opposite direction being clear.
Let $L^i$ ($i\in \Z$) be as in  (ii) in \ref{CS1}. For $m\in \Z$, let $Y_m = L^m \setminus L^{m+1}$.
 Let $P$ be the set of all maps $E^n \to \R$ such that $f(0)=0$, $f(x)=f(y)$ if $x, y\in Y_m$ for some $m \in \Z$, and $f(ax)= |a|f(x)$ if $a\in E$ and $x\in E^n$. For $m\in \Z$, take an element $x_m$ of $Y_m$. 
 \medskip
 
 {\bf Claim 1.} We have $\tilde \mu\in P$ and $\tilde \mu(x_m)\geq \tilde \mu(x_{m+1})$ for all $m \in \Z$. 

\begin{proof}[Proof of Claim 1.] If $x,y\in Y_m$, then $\mu_{L^i}(x)=\mu_{L^i}(y)$ for all $i$, and hence the condition $C(S, \mu)$ shows that $\tilde \mu(x)= \tilde \mu(y)$. Furthermore, since $\mu_{L^i}(x_m) \geq \mu_{L^i}(x_{m+1})$ for all $i$, the condition $C(S, \mu)$ shows that  $\tilde\mu(x_m)\geq \tilde \mu(x_{m+1})$. \phantom\qedhere
\end{proof}

 {\bf Claim 2.} If $f\in P$, then $f= \sum_{i=0}^r a_i\mu_{L^i}$ with $a_i= (f(x_{i-1})-f(x_i))(q-1)^{-1}$.
  
\begin{proof}[Proof of Claim 2.] We have an isomorphism   
$$
	P\xrightarrow{\sim} \R^{r+1}, \quad f \mapsto (f(x_m))_{0\leq m\leq r}.
$$ 
On the other hand,   $\mu_{L^i}(x_m)$ is $q^c$ where $c$ is the smallest integer such that $c\geq (i-m)(r+1)^{-1}$. Hence for $0\leq i\leq r$ and $0\leq m\leq r$, the difference $\mu_{L^i}(x_{m-1})- \mu_{L^i}(x_m)$ is $q-1$ if $i=m$ and $0$ if $i\neq m$.
Hence we see that $f(x_{m-1}) - f(x_m) = \sum_{i=0}^r a_i (\mu_{L^i}(x_{m-1})- \mu_{L^i}(x_m))$, and the expression for $f$ then follows from the fact that $f(x_m) = q^jf(x_{m+j(r+1)})$
for all $j$.
 \phantom\qedhere
\end{proof}

Since $\tilde{\mu} \in P$ by Claim 1, the result follows from Claim 2.
\end{proof}

 \begin{sblem}\label{Ss}  Let $\nu$ be a $0$-simplex of $\BT_n$.
 \begin{enumerate}
 \item[(1)] We have $|\BT_n|(\{\nu\})= \{\nu\}$. 
 \item[(2)] For a simplex $S$ of $\BT_n$, we have $\nu\in |\BT_n|(S)$ if and only if $\nu\in S$. 
 \end{enumerate}
 \end{sblem}
 
\begin{pf}
Part (1) follows from \ref{CS2} applied to $\{\nu\}$, and the ``if'' statement in (2) is clear. Now assume $\nu \in |\BT_n|(S)$. Then $C(S \cup \{\nu\})$ is satisfied and hence $S\cup \{\nu\}$ is a simplex. Hence it is sufficient to prove that if $S' = S \cup \{\nu\}$ is a simplex and $\nu\notin S$, then $C(S'\setminus \{\nu\}, \{\nu\})$ is not satisfied. This is seen from the presentation of $S'$ as in \ref{CS1}(ii) (in which $S$ is replaced by $S'$). 
\end{pf}

\begin{sbpara}\label{AP1} Let $|\AP_n|\subset |\BT_n|$  be the set of all classes of norms $\mu$ on $E^n$ such that the standard base $(e_i)_{1\leq i\leq n}$ of $E^n$ is an orthogonal base for $\mu$. We have a canonical bijection $$ |\AP_n|\cong \R^n_{>0}/\R_{>0},$$
where the multiplicative group $\R_{>0}$ acts on $\R^n_{>0}$ diagonally,
which sends the class of a norm $\mu$ which belongs to $|\AP_n|$  to the class of $(\mu(e_i))_i$. The inverse map sends 
the class of $s\in \R^n_{>0}$ to the class of the norm $\mu_s$ defined by 
$$\mu_s(x)= \max\{ s_i|x_i| \mid 1 \le i \le n \} \;\textsp{for} x\in E^n.$$
This 
$|\AP_n|\subset |\BT_n|$ is called  the \emph{standard apartment} in $|\BT_n|$.

The map 
$$
	\PGL_n(E) \times \R^n_{>0}\to |\BT_n|, \quad (g, s) \mapsto g\class(\mu_s)
$$ 
is surjective, as is seen from the condition (i) on a norm in \ref{norm}.  
The topology of $|\BT_n|$ is also described as the  quotient topology of the topology of 
$\PGL_n(E)\times \R_{>0}^n$
under this surjection.
\end{sbpara}

\begin{sbpara}\label{apcone1}

 For an $O_E$-lattice $L$ in $E^n$, the class of $\mu_L$ in $|\BT_n|$ belongs to $|\AP_n|$ if and only if $L=\bigoplus_{i=1}^n m_E^{a_i} e_i$ for some $a_i \in \Z$ where $(e_i)_i$ denotes the standard basis of $E^n$. For this $L$, we have $\mu_L=\mu_s$ where $s=s(L) :=  (q^{a_1}, \dots, q^{a_n})\in \R^n_{>0}$. Let $\AP_n$ be the simplicial subcomplex of $\BT_n$ with simplicies those $S$ such that every element of $S$ is the class of such an $L$. 

For a simplex $S$ of $\AP_n$,  let $$\sig(S) = \{s\in \R^n_{>0} \mid \text{class}(\mu_s) \in |\BT_n|(S)\}\cup \{0\}.$$ 
This has the following two properties:
\begin{enumerate}
\item[(1)]  For any diagonal matrix $h$ in $\GL_n(E)$ with $i$th entries $a_i\in E^\times$, we have $g\sig(S)=\sig(hS)$, where
$g \in \GL_ n(\R)$ is the diagonal matrix $g$ with $i$th entry $|a_i|$.
\item[(2)] For any permutation matrix $g$, we have $g\sig(S)=\sig(gS)$.
\end{enumerate}
\end{sbpara}

\begin{example}\label{ExC(S)} By the \emph{standard $(n-1)$-simplex} of $\BT_n$, we mean the $(n-1)$-simplex $S$ consisting of the classes of the $O_E$-lattices $L^i$ ($0\leq i\leq n-1$) defined by
$$L^i = \sum_{j=1}^{n-i} O_E e_j + \sum_{j=n-i+1}^n m_E e_j.$$
Note that
$$\sum_{i=1}^n O_E e_i = L^0 \supset L^1\supset \dots \supset L^{n-1}\supset m_EL^0.$$
We make the following remarks.
\begin{enumerate}
\item[(1)] We have $\sig(S)= \{s\in \R^n_{\geq 0}\mid s_1\leq s_2\leq \dots \leq s_n \leq qs_1\}.$ 
\item[(2)] Let $J$ be a nonempty subset of $\{1,\dots, n\}$, and let $S'\subset S$ be the simplex consisting of all classes of $L^i$ such that $i\in J$. Then $\sig(S')$ consists of all $s\in \sig(S)$ satisfying the following two conditions:
\begin{enumerate}
\item[(i)]  if $2 \leq i\leq n$ and $i\notin J$, then $s_{i-1}=s_i$, and
\item[(ii)] if $1\notin J$, then $s_n=qs_1$. 
\end{enumerate}
\item[(3)] Every simplex in $\BT_n$ is written as $gS'$ for some $g\in \PGL_n(E)$ and $S'$ as above. 

\item[(4)] Let $T$ be the set of diagonal matrices in $\PGL_n(E)$, and let $N_T$ be the normalizer of $T$ in $\PGL_n(E)$. That is, $N_T$ is the subgroup of $\PGL_n(E)$ generated by $T$ and the permutation matrices. Then every simplex in $\AP_n$ is written as $gS'$ for some $g\in N_T$  and $S'$ as above. 
\end{enumerate}
Statements (1) and (2) follow from \ref{CS2}, whereas (3) and (4) are clear by the understanding of a simplex as in \ref{CS1}(ii). 
\end{example}

\begin{sbprop}\label{apcone0} Let $S$ be a simplex in $AP_n$.
\begin{enumerate}
\item[(1)] The set $\sig(S)$
is the subcone of $\R^n_{>0}\cup\{0\}$ generated by $s(L)\in \R^n_{>0}$  for the $O_E$-lattices $L$ in $E^n$ whose classes belong to $S$. 
\item[(2)] The map taking $s \in \R^n_{> 0}$ to $\class(\mu_s)$ restricts to a bijection $\sig(S)\setminus \{0\}\to 
 |\BT_n|(S)$.
\end{enumerate}
\end{sbprop}

\begin{pf} Part (1) is reduced to \ref{ExC(S)}(2) by \ref{ExC(S)}(4). 
Part (2) follows from \ref{CS2} and \ref{ExC(S)}(2).
\end{pf}

\begin{sbprop}\label{apcone2}  Let $I=\{(h,i,j)\in \Z^3\mid 1\leq j \le i \leq n\}$. For each map $\alpha \colon I \to \{\R_{\leq 0}, \{0\}, \R_{\geq 0}\}$ satisfying the condition that
\begin{itemize}
\item for every $(i,j) \in \Z^2$ with $1\leq j \le i\leq n$, there exists $h \in \Z$ such that $\alpha(h,i,j) \neq \R_{\leq 0}$ and $\alpha(h-1, i, j) \neq \R_{\geq 0}$,
\end{itemize}
define 
$$c(\alpha) = \{s\in \R_{\geq 0}^n\mid q^hs_j -s_i\in \alpha(h,i,j)\textsp{for all} (h,i,j)\in I\}.$$
Then the set $\Sig$ of cones $c(\alpha)$ for such maps $\alpha$ is a cone decomposition of $\R_{> 0}^n\cup\{0\}$, and 
the map $S\mapsto \sig(S)$ is a bijection from $\AP_n$ to $\Sig\setminus \{0\}$. 
\end{sbprop} 

This cone decomposition of  $\R^n_{> 0}\cup\{0\}$
will be the base of the cone decompositions  for our toroidal compactifications. 

\begin{pf}
That $\sig(S) \in \Sig\setminus \{0\}$ for $s \in \AP_n$ follows from (2) and (4) of \ref{ExC(S)}. It is clear that the map $S\mapsto \sig(S)$ is injective. We prove that it is surjective. 

By (1) (resp., (2)) in \ref{apcone1}, if $\tau\in \Sig\setminus \{0\}$ and $g \in \GL_n(E)$ is a diagonal matrix with entries in $q^{\Z}$ (resp., a permutation matrix), then $\tau$ is in the image of $S\mapsto \sig(S)$ if and only if $g\tau$ is.  
Let $\tau\in \Sig\setminus \{0\}$. For each $2\leq i\leq n$, let $h(i)$ denote the smallest integer such that $s_1\leq q^{h(i)}s_i$ for all $s\in \tau$. Since $\tau \in \Sig$, we then have $s_1 \geq q^{h(i)-1}s_i$ for all $s\in \tau$. Replacing $\tau$ by $g\tau$ for a diagonal matrix $g$ with entries in $q^{\Z}$,  we may assume that $h(i)=0$ for all $2\leq i\leq n$. Then for $i,j\in \{2,\dots,n\}$,  $s_i \leq qs_1\leq qs_j$. Hence we have either $s_i \geq s_j$ for all $s\in \tau$ or $s_i\leq s_j$ for all $s\in \tau$. By replacing $\tau$ with $g\tau$ for a permutation  $g$ on $\{2,\dots,n\}$,  we may assume that $s_1\leq s_2\leq \dots \leq s_n\leq qs_1$ for all $s\in \tau$. Then $\tau=\sig(S')$ for some $S'$ in \ref{ExC(S)}. 

 We claim that $\Sig$ is a cone decomposition of $\R^n_{>0}\cup \{0\}$. It is clear that $\R_{>0}^n \cup \{0\}$ is the union of the
 cones in $\Sig$, and any two cones in $\Sig$ intersect in a face of each cone. It remains only to show that each face of a given
 $\tau \in \Sig$ is also in $\Sig$. We have $c(\alpha) = \{0\}$ for many choices of $\alpha$, e.g., if $\alpha(1,i,i) = \R_{\leq 0}$ for all $i$.
 Suppose that $\tau \neq \{0\}$. 
 Note $\tau=\sig(S)$ for some $S\in \AP_n$ as we have seen above. By \ref{ExC(S)}(4) and (1) and (2) of \ref{apcone1},
 we may  assume that $\tau=\sig(S')$, where $S'$ is as in \ref{ExC(S)}(2), and for such a $\tau$, 
 the desired property holds.
\end{pf}

\begin{example}\label{APn=2}
In the case $n=2$, the cone decomposition of $\R^2_{> 0} \cup \{0\}$ given by $\sig(S)$ for simplices $S$ of $\AP_2$ consists of the cones 
$$
 	\sig(S_h)= \{s\in \R^2_{\geq 0}\mid q^{h-1}s_1\leq s_2\leq q^hs_1\},
$$
for $h \in \Z$, where 
$$
	S_h =\{\class(O_E e_1+m_E^{h-1} e_2), \class(O_E e_1+ m_E^he_2)\},
$$ 
and the cones 
$$
	\sig(\{\class(L_h)\})=\{s\in \R^2_{\geq 0}\mid s_2=q^hs_1\}
$$
for $h \in \Z$, where $L_h= O_E e_1+ m_E^hO_E e_2$, as well as the zero cone.

\end{example}

\begin{sblem}\label{1CS}  Let $\mu\in |\BT_n|$ and let $S_{\mu}$ be the set of all $0$-simplices $\nu$ of $\BT_n$ such that $C(\{\mu\}, \nu)$ is satisfied.
Then
\begin{enumerate}
\item[(1)] $S_{\mu}$ is the unique simplex $S$ such that any norm $\tilde \mu$ with class $\mu$ may be written as a sum $\sum_{\nu \in S} a_{\nu} \tilde{\nu}$ with $a_{\nu} \in \R_{> 0}$,
where $\tilde{\nu}$ is a norm with class $\nu$, and
\item[(2)] $S_{\mu}$ is the smallest simplex $S$ such that $\mu\in |\BT_n|(S)$. 
\end{enumerate}
\end{sblem}

\begin{pf} We prove first that
$\mu\in |\BT_n|(S)$ for some simplex $S$, which by \ref{CS2} then contains a simplex satisfying the condition on $S$ in (1).
Since the map
$$
 	\PGL_n(E) \times |\AP_n|\to |\BT_n|, \quad (g, \mu)\mapsto g\mu
$$ 
is surjective, we are reduced to the case  $\mu\in |\AP_n|$ and hence to \ref{apcone0}(2) and \ref{apcone2}. 

It remains to prove that if $S$ is a simplex satisfying the condition in part (1), then $S=S_{\mu}$. 
Let $L^i$ for $i \in \Z$ be as in the proof of \ref{CS1}, and set $Y_i = L^i \setminus L^{i+1}$ as in the proof of \ref{CS2}. If $x\in Y_i$ and $y\in Y_j$, then  $\mu(x)\geq \mu(y)$ if and only if $i\leq j$. For $\mu'\in |\BT_n|$, the condition $\mu' \in |\BT_n|(S)$ (which is to say that $C(S,\mu')$ is satisfied) is equivalent to the following condition:
\begin{itemize}
	\item If $x\in Y_i$ and $y\in Y_j$ and $i\leq j$, then $\mu'(x) \geq \mu'(y)$.
\end{itemize}

On the other hand, $C(\{\mu\},\mu')$ is also equivalent to this condition as a consequence of \ref{CS2}. That is, $C(\{\mu\}, \mu')$ and $C(S, \mu')$ are equivalent. Consequently, all $\nu \in S_{\mu}$ satisfy $C(S, \nu)$ and hence lie in $S$ by \ref{Ss}(2). Conversely, all $\nu \in S$ satisfy $C(S, \nu)$ trivially and therefore also $C(\{\mu\}, \nu)$, so are contained in $S_{\mu}$ by its definition. Therefore $S = S_{\mu}$, as claimed.
\end{pf}

\begin{sbprop}\label{2CS} Let $S$ be a subset of $|\BT_n|$ and let $T= \bigcup_{\mu\in S} S_{\mu}$, where $S_{\mu}$ is as in \ref{1CS}.
\begin{enumerate}
\item[(1)] The condition $C(S)$ is satisfied if and only if $T$ is a simplex.
\item[(2)] If the condition $C(S)$ is satisfied, then $S\subset |\BT_n|(T)$. 
\item[(3)] The condition $C(S)$ is satisfied if and only if $S \subset |\BT_n|(S')$ for some simplex $S'$. 
\end{enumerate}
\end{sbprop}

\begin{proof}
If $C(S)$ is satisfied, then  $C(S\cup T)$ is satisfied, and hence $C(T)$ is satisfied. Conversely, assume $T$ is a simplex. Then for each $\mu \in S$, since $\mu \in |\BT_n|(S_{\mu})$ by \ref{1CS}(2), we have  $\mu\in |\BT_n|(T)$. Hence $S\subset |\BT_n|(T)$ and hence $C(S)$ is satisfied. 
Thus (1) and (2) hold, and (3) follows from (1) and (2). 
\end{proof}

\begin{sbprop} Let $S$ be a subset of $|\BT_n|$ satisfying $C(S)$ and let $\mu\in |\BT_n|$. Let $T$ be the simplex defined in \ref{2CS}. Then  
$C(S, \mu)$ is satisfied  if and only if $C(T,\mu)$ is satisfied.  

\end{sbprop}

\begin{proof}
 If $C(S, \mu)$ is satisfied, then $C(T,\mu)$ is satisfied since $S\subset |\BT_n|(T)$ by \ref{2CS}(2). 
Conversely, if $C(T,\mu)$ is satisfied, then $C(S, \mu)$ is satisfied since all elements $\nu$ of $T$ satisfy $C(S, \nu)$, 
\end{proof}

\begin{sbprop}\label{normcv}  Let $\cV$ be a complete valuation ring of height one over $A$ as in \ref{val1}. 
Let $\psi$ be a Drinfeld module over $\cV$ of rank $r$ with trivial line bundle, and let $\La$ be a $\psi(A)$-lattice in $K^{\sep}$.
Then there exists a unique norm $\mu$ on the $F_{\infty}$-vector space $F_{\infty}\otimes_A \La$, where $A$ acts on $\La$ via $\psi$, having the property that if $z \in \bar K$ and $a\in A\setminus \{0\}$ are such that $\psi(a)z$ is a nonzero element of  $\La$, then
$$
	-v_{\bar K}(z)=|a|^{-r}\mu(\psi(a)z)^r.
$$ 
In particular, we have $v_{\bar K}(\psi(a)z) = |a|^r v_{\bar K}(z)$ whenever $\psi(a)z \in \Lambda \setminus \{0\}$.
\end{sbprop}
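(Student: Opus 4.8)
The plan is to identify the norm explicitly on the lattice $\La$ and then extend it to $F_\infty\otimes_A\La$, separating the easy uniqueness from the existence. For uniqueness, note that if $\mu$ is any norm with the stated property, then taking $a=1$ and $z=\lambda$ gives $\mu(\lambda)^r=-v_{\bar K}(\lambda)$ for every $\lambda\in\La\smallsetminus\{0\}$; a norm is homogeneous and continuous and $\La\otimes_A F=\{\,b^{-1}\lambda : b\in A\smallsetminus\{0\},\ \lambda\in\La\,\}$ is dense in $F_\infty\otimes_A\La$, so $\mu$ is determined.

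For existence, I would define $\mu_0\colon\La\to\R_{\ge 0}$ by $\mu_0(0)=0$ and $\mu_0(\lambda)=(-v_{\bar K}(\lambda))^{1/r}$ otherwise. Positivity of $\mu_0$ on $\La\smallsetminus\{0\}$ is Lemma \ref{pre0}, and the ultrametric inequality $\mu_0(\lambda+\lambda')\le\max(\mu_0(\lambda),\mu_0(\lambda'))$ follows from $v_{\bar K}(\lambda+\lambda')\ge\min(v_{\bar K}(\lambda),v_{\bar K}(\lambda'))$. The key point is the homogeneity $\mu_0(\psi(a)\lambda)=|a|\,\mu_0(\lambda)$ for $a\in A\smallsetminus\{0\}$, which is a Newton-polygon computation using that $\psi$ has good reduction: since $\psi$ is defined over $\cV$ with trivial line bundle, $\psi(a)(X)-\psi(a)\lambda$ lies in $\cV[X]$, its leading coefficient $c(a,|a|^r)$ is a unit of $\cV$, its intermediate coefficients lie in $\cV$, and its constant term $-\psi(a)\lambda$ has $v_{\bar K}<0$ by Lemma \ref{pre0} (here one also uses that $\psi[a](\bar K)$ consists of elements of valuation $\ge 0$ while $\La\smallsetminus\{0\}$ consists of elements of valuation $<0$, so that $\psi(a)\lambda\ne 0$). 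Hence the Newton polygon of $\psi(a)(X)-\psi(a)\lambda$ is the single segment from $(0,v_{\bar K}(\psi(a)\lambda))$ to $(|a|^r,0)$, so all of its roots — in particular $\lambda$ — have valuation $v_{\bar K}(\psi(a)\lambda)/|a|^r$, i.e.\ $v_{\bar K}(\psi(a)\lambda)=|a|^r v_{\bar K}(\lambda)$. Exactly the same computation applied to $\psi(a)(X)-\lambda$ for $\lambda\in\La\smallsetminus\{0\}$ shows that any $z\in\bar K$ with $\psi(a)z=\lambda$ has $v_{\bar K}(z)=v_{\bar K}(\lambda)/|a|^r$, which is precisely the relation $-v_{\bar K}(z)=|a|^{-r}\mu_0(\lambda)^r$ required of $\mu$, once $\mu|_\La=\mu_0$ is known.

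It remains to extend $\mu_0$ to a norm $\mu$ on $V:=F_\infty\otimes_A\La$. By homogeneity one first extends $\mu_0$ to $W:=\La\otimes_A F$ by $\mu_0(b^{-1}\lambda)=|b|^{-1}\mu_0(\lambda)$, and the three norm axioms pass to $W$; thus $W$ is a finite-dimensional $F$-vector space with an ultrametric $F$-norm. I would then build an orthogonal $F$-basis $\lambda_1,\dots,\lambda_{n'}$ of $W$ by the usual inductive procedure — given an orthogonal direct summand spanned by $\lambda_1,\dots,\lambda_k$ and a vector $w$ outside it, subtract from $w$ a best $F$-linear combination of the $\lambda_i$; the required infimum is attained because $F$ is dense in the complete field $F_\infty$ and, for an ultrametric norm, the locus where such an infimum is attained is a nonempty closed ball, which therefore meets $F$. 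Using $1\otimes\lambda_i$ as an orthonormal basis with weights $\mu_0(\lambda_i)$ defines, via condition (i) of \ref{norm}, a norm $\mu$ on $V$ with $\mu|_W=\mu_0$ by orthogonality; in particular $\mu|_\La=\mu_0$, so by the previous paragraph $\mu$ has the asserted property, and by the uniqueness above it is the unique such norm.

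I expect the Newton-polygon step — the homogeneity of $\mu_0$ — to be the heart of the matter, as it is where the hypotheses that $\cV$ is a complete height-one valuation ring, that $\psi$ has good reduction, and that lattice points have strictly negative valuation all get used; the extension of an $F$-norm to $F_\infty$ is routine but worth spelling out, since $F$ itself is not complete.
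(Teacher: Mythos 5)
Your proof is correct and follows essentially the same route as the paper: your Newton-polygon computation is exactly the paper's observation that $\psi(a)(z)=uz^{|a|^r}$ for a unit $u$ whenever $v_{\bar K}(z)<0$ (all coefficients of $\psi(a)$ lie in $\cV$ and the top one is a unit), and the definition $\mu(\lambda)=(-v_{\bar K}(\lambda))^{1/r}$ on $\La$ followed by extension by homogeneity to $F\otimes_A\La$ is identical. The only divergence is the last extension to $F_{\infty}\otimes_A\La$, where the paper simply extends by continuity and asserts the result is a norm, while you construct an orthogonal $F$-basis; both versions quietly rely on the extension remaining positive on nonzero vectors (in your setup, on the distance $d$ from $w$ to the span of $\lambda_1,\dots,\lambda_k$ being nonzero, without which the "closed ball of minimizers" has radius $0$ and need not meet the dense subspace), a nondegeneracy that ultimately comes from the finiteness condition in the definition of a $\psi(A)$-lattice, so your treatment is, if anything, slightly more explicit than the paper's.
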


\begin{proof}
Let $a\in A$ and $z\in {\bar K}$, and write 
$$
	\psi(a)(z)=\sum_{i=1}^m a_iz^i \quad \text{with}\;\; a_1=a, \; m= |a|^r, \; a_m \in \cV^\times.
$$ 
If $v_{\bar K}(z)<0$, then since $a_i\in \cV$ for each $i$ and $a_m \in \cV^\times$, we have $\psi(a)(z)=u z^{|a|^r}$ for some $u\in \cV^\times$, so 
$$
	v_{\bar K}(\psi(a)z)= |a|^rv_{\bar K}(z).
$$   

Since $v_{\bar K}$ takes negative values on nonzero elements of $\Lambda$ by Lemma \ref{pre0}, there is then a well-defined map $\mu \colon \La\to \R_{\ge 0}$ given by $\mu(\lam) = (-v_{\bar K}(\la))^{1/r}$ for $\lam\neq 0$ and by $\mu(0)=0$ and satisfying $\mu(\psi(a)\la)= |a|\mu(\la)$ for all $a \in A$ and $\la\in \La$.  This map $\mu$ uniquely extends to a map  $\mu \colon F\otimes_A \La \to \R_{\geq 0}$ such that $\mu(a\otimes \la)= |a|\mu(\la)$ for all $a\in F$ and $\la\in \La$, and in turn, it extends uniquely to a continuous map  $\mu \colon F_{\infty}\otimes_A \La\to \R_{\geq 0}$. This $\mu$ satisfies the condition (ii) of a norm in \ref{norm}. 

If $z\in \bar K$ and $a\in A$ are such that $\psi(a)z\in \La\setminus \{0\}$, then $v_{\bar{K}}(\psi(a)z) < 0$ by \ref{pre0}, which forces $v_{\bar K}(z) < 0$.  We then have 
$$
	|a|^rv_{\bar K}(z) = v_{\bar K}(\psi(a)z)= -\mu(\psi(a)z)^r.
$$ 
The uniqueness of the norm $\mu$ is evident. 
\end{proof}

\subsection{Norm-ordered bases} \label{normord}

In this subsection, we suppose that $A = \F_q[T]$. We first study of orthogonal bases of free $A$-modules such that each element has minimal norm among elements not in the $A$-span of the prior elements. We use this to characterize the orbits of the Bruhat-Tits building $|\BT_n|$ under $\PGL_n(A)$. 

A large part of the following proposition is contained in Lemma 4.2 of \cite{taguchi}.
It will be applied to the study of generalized Drinfeld modules over $A$, through \ref{val2} and \ref{normcv}.  

\begin{sbprop}\label{diagbase} 

Let $\La$ be a free $A$-module of finite rank $n$, and let $\mu$ be a norm on $F_{\infty}\otimes_A \La$. 
\begin{enumerate}
	\item[(1)] For a family $(\lam_i)_{1\leq i\leq n}$ of elements of $\La$, the following three properties are equivalent.
	\begin{enumerate}
	         \item[(i)] The collection $(\lam_i)_i$ is an $A$-basis of $\La$ 
	         such that $\mu(\la_i)\leq \mu(\la_{i+1})$ for $1\leq i\leq n-1$
	         and is orthogonal as an $F_{\infty}$-basis of 
	         $F_{\infty}\otimes_A \La$ in the sense of \eqref{norm}.   
		\item[(ii)] For each $i$ with $1\leq i\leq n$, we have $\lam_i\notin \La_{i-1} :=  \sum_{j=1}^{i-1} A\lam_j$, and $\lam_i$ has the 
		smallest norm under $\mu$ among elements of $\La \setminus \La_{i-1}$. 
		 \item[(iii)] The collection $(\lam_i)_i$ is an $A$-basis of $\La$ 
	         such that $\mu(\la_i)\leq \mu(\la_{i+1})$ for $1\leq i\leq n-1$ and such that for every $1\leq i\leq n$ and every $a_j\in A$ with 
	         $1\leq j\leq i-1$, we have
	         $\mu(\la_i) \leq \mu(\la_i+ \sum_{j=1}^{i-1} a_j\la_j)$. 
 	\end{enumerate}
 	\item[(2)] There is a family $(\lam_i)_{1\leq i\leq n}$ satisfying the equivalent conditions in (1).
	\item[(3)] Let $(\la_i)_{1\leq i\leq n}$ be a family of elements of $\La$ satisfying the equivalent 
	conditions in (1). Let $\la_i'= \sum_{j=1}^n  a_{ij}\la_j$ for $1\leq i \leq  n$, where $a_{ij}\in A$. 
	Then $(\la_i')_{1\leq i\leq n}$   satisfies  the equivalent conditions in (1) if and only if the following two conditions are satisfied:
	\begin{enumerate}
	\item[(i)]  $\mu(a_{ij}\la_j)\leq \mu(\la_i)$ for all $i,j$. In particular, $a_{ij}=0$ if $\mu(\la_j)>\mu(\la_i)$. 
        \item[(ii)] Let $0\leq k< k+m\leq n$ and assume that $\mu(\la_i)=\mu(\la_{k+1})$ if and only if $ k+1\leq i\leq k+m$. 
	Then the matrix 
	$(a_{k+i, k+j})_{1\leq i,j\leq   m}$, which belongs to $M_m(\F_q)$ by (i), belongs to $\GL_m(\F_q)$.
        \end{enumerate}
        \item[(4)]  If $(\la_i)_{1\leq i\leq n}$ and $(\la'_i)_{1\leq i\leq n}$ are families of elements of $\La$ satisfying the equivalent 
        conditions in (1), we have $\mu(\la_i)=\mu(\la'_i)$ for $1\leq i\leq n$. 
 \end{enumerate}
 \end{sbprop}
 
\begin{proof} 
	It is clear that for part (2) we can find $(\lambda_i)_i$ satisfying the condition (ii) of part (1) recursively.  
	The implications (i) $\Rightarrow$ (ii) and (i) $\Rightarrow$ (iii) of part (1) are easily seen.  
	We prove the implications (ii) $\Rightarrow$ (i) and (iii) $\Rightarrow$ (i). Let $ (\lam_i)_i$ be as in either
	 (ii)  or (iii).  Let 
	$$
		\La'= \left(\sum_{i=1}^{n-1} F\lam_i\right) \cap \La.
	$$ 
	By induction on $n$, we may suppose that the tuple $(\lam_i)_{1\leq i\leq n-1}$ is an $A$-basis of $\La'$ and is an orthogonal basis 
	for the restriction of $\mu$ to $F_{\infty} \otimes_A \La'$. Furthermore, we have
        $\mu(\lambda_{i-1}) \le \mu(\lambda_i)$ for $2\leq i \le n$.

 \medskip
 
 {\bf Claim 1.} $(\lam_i)_{1\leq i\leq n}$ is an orthogonal basis for $\mu$. 
 
 \medskip
 
We prove Claim 1.  
Given $(x_i)_{1\le i \le n} \in F_{\infty}^n$, we must show that
$$
	\mu\left(\sum_{i=1}^n  x_i\lam_i\right)=\max\{|x_i|\mu(\lam_i)\mid  1\leq i\leq n \}.
$$
By induction, we have this if $x_n = 0$, so we may assume $x_n \neq 0$.  By replacing $x_i$ with $x_n^{-1}x_i$ for $i \le n-1$, we may further assume that $x_n = 1$.  By induction, we are quickly reduced to the case that
$$
	\mu(\lam_n) =  \max\{|x_i|\mu(\lam_i)\mid 1\leq i\leq n-1 \},
$$
since the result otherwise follows quickly from the triangle inequality for the norm.  Let us assume this equality holds.

Since
$F_{\infty}= A + m_{\infty}$, where $m_{\infty}$ is the maximal ideal of the valuation ring of $F_{\infty}$, 
we can write $x_i= a_i+r_i$ for some $a_i \in A$ and $r_i\in m_{\infty}$ for all $1 \le i \le n-1$.
We then have 
$$
	\sum_{i =1}^n x_i\lam_i= \left(\sum_{i=1}^{n-1} r_i\lam_i\right) + \left(\lam_n+\sum_{i=1}^{n-1} a_i\lam_i\right).
$$  
Note that
$$
	\mu\left(\sum_{i=1}^{n-1} r_i\lam_i\right) = \max\{ |r_i| \mu(\lam_i) \mid 1 \le i \le n-1\} <  \mu(\lam_n).
$$
By the condition (ii) or the condition (iii), we also have 
$$
	\mu\left(\lam_n +\sum_{i=1}^{n-1} a_i\lam_i\right) \geq \mu(\lam_n),
$$ 
and it follows from these two inequalities that
$$
	\mu\left(\sum_{i=1}^n x_i \lambda_i\right) = \mu\left(\lam_n +\sum_{i=1}^{n-1} a_i\lam_i\right).
$$
On other hand, for $1\leq i\leq n-1$ such that $a_i\neq 0$, we have
$\mu(a_i\lam_i)= \mu(x_i\lam_i)$. Hence 
$$
	\mu\left(\lam_n +\sum_{i=1}^{n-1} a_i\lam_i\right)\leq 
	\max(\mu(\lam_n), \max\{|x_i|\mu(\lam_i)\mid 1\leq i\leq n-1\}) = \mu(\lam_n).
$$ 
Therefore, we have 
$\mu(\sum_{i=1}^n x_i \lambda_i) =  \mu(\lam_n)$,	
as desired. This proves (iii) $\Rightarrow$ (i). For the proof of (ii) $\Rightarrow$ (i), it remains to prove
 
\medskip

{\bf Claim 2.} $(\lam_i)_{1\leq i\leq n}$ is an $A$-basis of $\La$. 

\medskip

We prove Claim 2.  We can write any nonzero $\lam \in \Lam$ as 
$$
	\lam= \left(\sum_{i=1}^{n-1} a_i\lam_i\right)+b\lam_n
$$ 
with $a_i\in A$ and $b\in F$. Suppose $b \notin A$, and write $b=a_n+r$ with $a_n\in A$ and $r \in F$ such that $|r|<1$. Replacing $\lam$ by $\lam-\sum_{i=1}^n a_i\lam_i$, we may assume that $\lam=r\lam_n$. Then $\mu(\lam)<\mu(\lam_n)$. This contradicts condition (ii).

The proof of (3) is straightforward, and (4) follows from (3). 
 \end{proof}

We shall refer to a basis of $\La$ as in \ref{diagbase} satisfying the equivalent properties of \ref{diagbase}(1) 
with respect to a given norm $\mu$ as \emph{norm-ordered}.

\begin{sbprop}\label{diagbase2} The map $\R^n_{>0}\to |\AP_n|$ given by $s\mapsto \class(\mu_s)$, where $\mu_s$ is as in \ref{AP1}, induces a bijection
$$\{s\in \R^n_{>0}\mid s_1\leq \dots \leq s_n\}/\R_{>0} \xrightarrow{\sim} \PGL_n(A)\bs |\BT_n|.$$

  \end{sbprop}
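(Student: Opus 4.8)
The plan is to deduce both surjectivity and injectivity directly from Proposition \ref{diagbase}, applied to $\La = A^n$ (with standard basis $(e_i)$) and an arbitrary norm on $F_\infty \otimes_A A^n = F_\infty^n$. Two elementary observations organize everything: first, $\mu_s$ has $(e_i)$ as an orthonormal basis with $\mu_s(e_i) = s_i$, so $s$ is recovered from $\mu_s$; second, for $g \in GL_n(A)$ the norm $g\mu$ sends $x = \sum x_i e_i$ to $\mu(g^{-1}x)$, and $g$ carries the $A$-lattice $A^n$ to itself. Since rescaling $s$ by $c \in \R_{>0}$ only rescales $\mu_s$ and hence leaves its class in $|BT_n|$ unchanged, the composite map factors through $\{s_1 \le \dots \le s_n\}/\R_{>0}$, and it remains to show the induced map to $PGL_n(A)\backslash|BT_n|$ is a bijection.

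For surjectivity I would take an arbitrary norm $\mu$ representing a class in $|BT_n|$, and invoke Proposition \ref{diagbase}(2) to obtain an $A$-basis $(\lambda_i)_{1\le i\le n}$ of $A^n$ which is orthonormal for $\mu$ and satisfies $\mu(\lambda_1) \le \dots \le \mu(\lambda_n)$. Let $g \in GL_n(A)$ be the base-change automorphism with $g^{-1} e_i = \lambda_i$; this lies in $GL_n(A)$ precisely because $(\lambda_i)$ is an $A$-basis. Then for $x = \sum x_i e_i$ one computes $(g\mu)(x) = \mu(\sum_i x_i \lambda_i) = \max_i |x_i|\,\mu(\lambda_i)$, so $g\mu = \mu_s$ with $s_i = \mu(\lambda_i) > 0$ and $s_1 \le \dots \le s_n$. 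Hence the class of $\mu$ modulo $PGL_n(A)$ is represented by a point of the increasing cone.

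For injectivity, suppose $s, s'$ lie in the increasing cone and the classes of $\mu_s$, $\mu_{s'}$ lie in one $PGL_n(A)$-orbit; lift this to $g \in GL_n(A)$ and $c \in \R_{>0}$ with $\mu_{s'}(x) = c\,\mu_s(g^{-1}x)$ for all $x$. Then $(e_i)$ and $(ge_i)$ are both $A$-bases of $A^n$ and both orthonormal for $\mu_{s'}$, with respective norm values $s'_i$ and $\mu_{s'}(ge_i) = c s_i$, both nondecreasing; so each satisfies condition (i) of Proposition \ref{diagbase}(1) for $\mu_{s'}$, and Proposition \ref{diagbase}(4) forces $s'_i = c s_i$ for all $i$, i.e.\ $s$ and $s'$ agree in $\{s_1\le\dots\le s_n\}/\R_{>0}$. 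The only delicate point — and the place where the statement really uses the integral group $PGL_n(A)$ rather than $PGL_n(F_\infty)$ — is that the orbit relation must be promoted to an honest $GL_n(A)$-change of basis together with the homothety scalar $c$, so that $(ge_i)$ is again an $A$-basis of $A^n$ and Proposition \ref{diagbase}(4) becomes applicable; beyond this bookkeeping I do not expect any genuine obstacle, as Proposition \ref{diagbase} already contains the substantive content.
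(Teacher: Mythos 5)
Your proof is correct and uses the same essential ingredients as the paper's: Proposition \ref{diagbase}(2) to produce an orthonormal $A$-basis realizing any class as some $\mu_s$ with $s$ in the increasing cone (surjectivity), and \ref{diagbase}(4) to force the norm values to agree up to a common scalar (injectivity). The paper packages this as a pair of mutually inverse compositions through an auxiliary set of isomorphism classes $(\La,\mu)$, but the underlying argument is the one you give.
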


\begin{proof} Let $P= \{s\in \R^n_{>0}\mid s_1\leq \dots \leq s_n\}/\R_{>0}$ and $Q= \PGL_n(A)\bs |\BT_n|$, and let $R$ be the set of all isomorphism classes of pairs $(\La, \mu)$, where  $\La$ is a free $A$-module of rank $n$ and $\mu$ is a homothety class  of a norm on the $F_{\infty}$-vector space $F_{\infty}\otimes_A \La$. We have a canonical map 
$Q\to R$ sending $\mu$ to $(A^n, \mu)$.  We have the map $R \to P$ which sends the class of $(\La, \mu)$ to the class of $(\tilde \mu(\la_1), \dots, \tilde \mu(\la_n))$, where $(\la_i)_{1\leq i\leq n}$ is norm-ordered $A$-basis of $\La$, for $\tilde \mu$ a norm with class $\mu$. This map $R\to P$ is well defined by \ref{diagbase}(4). Then as is easily seen, the compositions $P\to Q \to R \to P$ and $Q\to R \to P \to Q$ are the identity maps. 
\end{proof}

\section{Cone decompositions}\label{cone_decomp}

In this section, we consider cone decompositions. We suppose throughout that $A = \F_q[T]$ so that $\infty$ is the standard infinite place
of $F = \F_q(T)$. The main idea is as follows.

As in the introduction, we use cone decompositions to define toroidal compactifications of moduli spaces. As in \ref{Cd}, the cone gives a condition that the poles of torsion points of a log Drinfeld module satisfy a certain divisibility. 

On the other hand, as we saw in Section \ref{BT}, the decomposition of the Bruhat-Tits building into simplices gives a cone decomposition. In \ref{normcv}, the Bruhat-Tits building, which is a space of norms, is related to Drinfeld modules because in the degeneration over a complete valuation ring of height one, we have a lattice $\La$ which has a norm defined by the poles of elements of $\La$. 

The main result of this section is Theorem \ref{Sigk3}, which connects these two kinds of cone decompositions: one is related to poles of torsion points, and the other is related to the poles of elements in the lattice $\La$ and to the Bruhat-Tits building. The former and the latter poles are related by the exponential map of the Drinfeld module, which is studied in Sections \ref{expon} and \ref{expon2}. Theorem \ref{Sigk3} is proved in Section \ref{ss:cone2}. In Section \ref{mofu}, we define moduli functors for toroidal compactifications using cone decompositions and poles of torsion points.  

We shall employ the following notation throughout this section. Let $\cV$ denote a complete valuation ring over $A$ of height $1$, and as in \ref{val1}, let $K$
denote its field of fractions, $\bar{K}$ an algebraic closure of $K$, and $K^{\sep}$ the separable closure of $K$ in $\bar{K}$. 
Let $v_{\bar{K}}$ denote the additive valuation on $\bar{K}$ extending the valuation $v_K$ of $K$.

We use $\phi$ to denote a generalized Drinfeld module over $\cV$ of generic rank $d$ and with trivial line bundle. 
Let $(\psi, \La)$ be a pair of a Drinfeld module of rank $r \ge 1$ and an $\psi(A)$-lattice $\La$ of rank $n = d-r$ in $K^{\sep}$ associated to
$\phi$ as in \ref{val2}(1). Let $\mu$ be the norm on $F_{\infty} \otimes_A \La$ attached to $\psi$ by \ref{normcv}.
Let $e_{\La}$ denote the exponential map of the lattice $\La$ (see \ref{prop51} and \ref{cpf2}). 

Finally, we let $N$ denote an element of $A$ not contained in $\F_q$.

\subsection{Cone decompositions and torsion points 1}\label{ss:cone1}
 
We state the four results \ref{Ntor}, \ref{Sigk3}, \ref{Sig1}, \ref{Sigk5} concerning torsion points of generalized Drinfeld modules over complete valuation rings of height one. The proofs of these results  are given in Section \ref{ss:cone2} below.  

\begin{sbpara} \label{cphi} Let  
$$
	C_d=\{(s_1,\dots, s_{d-1})\in \R^{d-1} \mid 0\leq s_1\leq \dots \leq s_{d-1}\}.
$$  

We attach an invariant
$$
	c(\phi) \in C_d/\R_{>0}
$$ 
to the generalized Drinfeld module $\phi$ over $\cV$ taken at the start of Section \ref{cone_decomp}. We define $c(\phi)$ as the class of  
$$
	(0^{r-1},(\mu(\la_i))_{1 \le i \le n})\in C_d,
$$ 
where $(\la_i)_{1 \le i \le n}$ is a norm-ordered $A$-basis of $\La$, as in \ref{diagbase}(1). This $c(\phi)$ is well defined by \ref{diagbase2}.

\end{sbpara}
\begin{sbpara}\label{Sigk1} Let $k\geq 1$ be an integer. Let 
${}_d\Sig^{(k)}$, or more simply $\Sig^{(k)}$, be the cone decomposition of $C_d$ defined as follows. Let 
$$
	{}_d I^{(k)} = I^{(k)} =\{(h, i, j)\in \Z^3\mid 1\leq j \le i\leq d-1 \textsp{and} 0\leq h\leq k-1\}.
$$
For each map $\alpha \colon I^{(k)} \to \{\R_{\leq 0}, \{0\}, \R_{\geq 0}\}$ with $\alpha(0,i,j) \neq \R_{\geq 0}$ for all $1 \le j \le i \le d-1$, let 
$$c(\alpha)= \{s\in C_d \mid q^hs_j-s_i\in \alpha(h,i,j)\textsp{for all} (h,i,j)\in I^{(k)} \}.$$
Then $\Sig^{(k)}$ is the set of the cones $c(\alpha)$ for these maps $\alpha$. 

This is a coarser version of the restriction of the cone decomposition $\{\sigma(S) \mid S\in \AP_{d-1}\}$ (along with $\{0\}$) of $\R_{>0}^{d-1} \cup \{0\}$ in \ref{apcone2} to the intersection of the latter set with $C_d$. For $\alpha$  as above, if $\alpha(k-1, i, j) \neq \R_{\leq 0}$ for all $1\leq j\leq i \leq d-1$, then we have either $c(\alpha)
=\sigma(S)$ for some $S \in \AP_{d-1}$ or $c(\alpha)=\{0\}$. However, in general, it can happen that 
$c(\alpha)$ contains infinitely many $\sigma(S)$ with $S \in \AP_{d-1}$ as in the case of $\sig^{k-1}\in \Sig^{(k)}$ in \ref{Sigd=3} below.

 \end{sbpara}
 
 \begin{example}\label{Sigd=3} Assume $d=3$.

 For integers $h\geq 1$, let 
$$
	\sig_h = \{(s_1,s_2)\in \R^2_{\geq 0}\mid  q^{h-1}s_1\leq s_2\leq q^hs_1\}.
$$ 
This is the cone $\sig(S_h)$ in \ref{APn=2} associated to the simplex $S_h$ of $\AP_2$. For $h\geq 0$, let
$$
	\sig^h = \{(s_1,s_2)\in \R^2_{\geq 0}\mid  q^h s_1\leq s_2\}= \left(\bigcup_{h'>h}\sig_{h'}\right) \cup \{(0,s)\mid  s\geq 0\}.
$$
Then for $k\geq 1$, the set $\Sig^{(k)}$ consists of $\sig_h$ for $1\leq h\leq k-1$ and $\sig^{k-1}$, and their faces. 

 \end{example}
 
We state our results on torsion points and cone decompositions: see Section \ref{ss:cone2} for the proofs.

\begin{sbprop}\label{Ntor} \ 
\begin{enumerate}
\item[(1)] 
Let $(\beta_i)_ {1\leq i\leq n}$ be a family of $N$-torsion points of $\phi$ in $\bar K$.
Then the following two conditions on $(\beta_i)_{1\le i \le n}$ are equivalent. 
\begin{enumerate}
	\item[(i)] We have $\beta_i= e_{\La}(\tilde \la_i)$ for $1\leq i\leq n$, where the $\tilde \la_i$ are elements of $\bar K$ such that  $(\psi(N)\tilde \lam_i)_{1\leq i\leq n}$ is a norm-ordered $A$-basis of $\La$. 
	\item[(ii)] For  $1\leq i\leq n$, we have
$$
	-v_{\bar K}(\beta_i) = \min\left\{-v_{\bar K}(\beta)\mid \beta \in \phi[N] \setminus \left(e_{\La}(\psi[N])+ \sum_{j=1}^{i-1}\phi(A)\beta_j\right)\right\}.
$$ 
\end{enumerate}
\item[(2)] For given $a_i\in A$ with $1\leq i\leq n$, the valuation $v_{\bar K}(\sum_{i=1}^n \phi(a_i)\beta_i)$ is independent of the choice of $(\beta_i)_i$ satisfying the conditions in (1). In particular, $v_{\bar K}(\beta_i)$ is independent of the choice of such $(\beta_i)_i$. 
\end{enumerate}
\end{sbprop}

\begin{sbpara}\label{Ntor(2)} 
We define 
$$c(\phi, N)\in C_d/\R_{>0}$$ as the class of  $(0^{r-1}, (-v_{\bar K}(\beta_i))_{1\leq i\leq n})$, where $r$ and $(\beta_i)_{1\leq i\leq n}$ are as in \ref{Ntor}. 

\end{sbpara}

\begin{sbthm}\label{Sigk3} Let $k$ and $k'$  positive integers with $k\leq k'$. 
\begin{enumerate}
\item[(1)] For each $\sig \in \Sig^{(k')}$, there is a unique finitely generated rational subcone $\tau$ of $C_d$ such that we have the equivalence 
$$c(\phi)\in \sig/\R_{>0}\quad  \Leftrightarrow \quad c(\phi, N)\in \tau/\R_{>0}$$
for every 
\begin{itemize}
	\item complete valuation ring $\cV$ of height one over $A=\F_q[T]$,
	\item generalized Drinfeld module $\phi$ over $\cV$ of generic rank $d$, and 
	\item $N\in A$ of degree $k$,
\end{itemize}	
where $c(\phi)$ is as in \ref{cphi}, and $c(\phi,N)$ is as in \ref{Ntor(2)}.
\item[(2)] The subcone $\tau$ in (1) remains unique if we instead restrict the set of $\cV$ in (i) to complete discrete valuation rings over $A$.
\item[(3)] The set of all $\tau$ as in (1) for $\sig \in \Sig^{(k')}$ forms a finite rational subdivision $\Sig_{k,k'}={}_d\Sig_{k,k'}$ of $C_d$. 
\end{enumerate}
\end{sbthm}

We will denote the fan $\Sig_{k,k}$ by $\Sig_k$. (See \ref{fan} for the definition of a finite rational fan.)

\begin{sbprop}\label{Sig1}
The fan $\Sig_1$ coincides with the fan of all faces of $C_d$.
\end{sbprop}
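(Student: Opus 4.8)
The plan is to deduce the statement quickly from the \emph{uniqueness} clause of Theorem \ref{Sigk3}, by checking only that the fan $\Sig_1 = {}_d\Sig_1$ contains the cone $C_d$ itself; everything else is then formal, because any finite rational fan that subdivides $C_d$ and has $C_d$ among its members is necessarily the fan $\text{face}(C_d)$ of all faces of $C_d$.

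\emph{Step 1: $C_d \in {}_d\Sig^{(1)}$.} For $k=1$ the index set in \ref{Sigk1} is $I=\{(0,i,j)\mid 1\le j<i\le d-1\}$, and on a point $s\in C_d$ one has $s_j-s_i\le 0$ whenever $j<i$, simply by the chain $0\le s_1\le\cdots\le s_{d-1}$. Hence the constraint "$s_j-s_i\in\R_{\le 0}$" is automatically satisfied on $C_d$, so the choice $\alpha\equiv\R_{\le 0}$ gives $\sigma(\alpha)=C_d$. (The same observation shows that for $k=1$ every cone of ${}_d\Sig^{(1)}$ is cut out of $C_d$ by imposing equalities $s_j=s_i$, so that in fact ${}_d\Sig^{(1)}=\text{face}(C_d)$ already; but only the membership $C_d\in{}_d\Sig^{(1)}$ is needed below.)

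\emph{Step 2: the cone $\sigma'$ of Theorem \ref{Sigk3} attached to $\sigma=C_d$ equals $C_d$.} Apply Theorem \ref{Sigk3} with $k=1$ to $\sigma=C_d\in{}_d\Sig^{(1)}$, giving the unique finitely generated rational subcone $\sigma'$ of $C_d$ satisfying condition (*). I claim $\sigma'=C_d$. Indeed, by their very definitions the classes $c(\phi)$ (see \ref{Cd}) and $c(\phi,N)$ (see \ref{Ntor(2)}) lie in $C_d/\R_{>0}$, so for \emph{every} complete valuation ring $\cV$ of height one over $\F_q[T]$, every generalized Drinfeld module $\phi$ over $\cV$, and every $N$ of degree $1$, both "$c(\phi,N)\in C_d$" and "$c(\phi)\in C_d$" hold; hence the subcone $C_d$ of $C_d$ satisfies (*) trivially. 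By the uniqueness part of Theorem \ref{Sigk3}, $\sigma'=C_d$, and therefore $C_d\in{}_d\Sig_1$.

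\emph{Step 3: conclusion.} By Theorem \ref{Sigk3}, $\Sig_1={}_d\Sig_1$ is a finite rational fan with $\bigcup_{\tau\in\Sig_1}\tau=C_d$ and $\tau\subseteq C_d$ for all $\tau\in\Sig_1$. Since $C_d\in\Sig_1$, condition (ii) of \ref{fan} applied to $C_d\cap\tau=\tau$ shows that every $\tau\in\Sig_1$ is a face of $C_d$; conversely, a fan is closed under passing to faces, so every face of $C_d$ lies in $\Sig_1$. Hence $\Sig_1=\text{face}(C_d)$. The only point that requires attention is the correct invocation of the uniqueness clause of Theorem \ref{Sigk3}; there is no genuine obstacle. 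In particular this avoids the more laborious alternative of computing $-v_{\bar K}(\beta_i)$ in terms of $\mu(\lambda_i)$ via the exponential $e_\La$ and verifying that $c(\phi,N)$ and $c(\phi)$ lie in the same face of $C_d$ for all $\phi$ (which would require a delicate analysis of $v_{\bar K}(e_\La(z))$ at the breakpoints of its Newton-polygon-type function).
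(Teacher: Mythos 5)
Your proof is correct. It differs from the paper's argument in how it gets from Theorem \ref{Sigk3} to the conclusion. The paper uses the explicit correspondence $\sig\mapsto\xi^d_1(\sig)$ between $\Sig^{(1)}$ and $\Sig_1$ produced in the proof of \ref{Sigk3}: it identifies $\Sig^{(1)}$ with the set of faces of $C_d$, notes that $\xi^d_1(C_d)=C_d$ since $\xi^d_1$ is a bijection of $C_d$ onto itself, and then verifies $\xi^d_1(\sig)=\sig$ face by face (via $\sig\subset\xi^d_1(\sig)$ together with the observation that $\xi^d_1(\sig)$ cannot be a strictly larger face). You instead extract only the single membership $C_d\in\Sig_1$, and you get it essentially for free: after the easy check that $C_d\in\Sig^{(1)}$, condition (*) of \ref{Sigk3} with $\sig=\sig'=C_d$ is the biconditional ``true iff true'', so the uniqueness clause forces the cone of $\Sig_1$ attached to $C_d$ to be $C_d$ itself; the fan axioms (i) and (ii) of \ref{fan}, applied to the subdivision $\Sig_1$ of $C_d$, then immediately give $\Sig_1=\text{face}(C_d)$. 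The trade-off: the paper's route also shows that $\xi^d_1$ fixes each face of $C_d$ individually, a small extra piece of information, whereas your route never has to evaluate $\xi^d_1$ or $\epsilon^{r,n}_s$ at all, and your Step 3 is insensitive to a mild imprecision in \ref{Sigk1} (read literally, the cones $\sigma(\alpha)$ for $k=1$ do not include the faces of $C_d$ on which some coordinates vanish, a point the paper's proof passes over when it asserts that $\Sig^{(1)}$ is the fan of all faces of $C_d$; you only need $C_d\in\Sig^{(1)}$, which is unambiguous). Both arguments rest entirely on Theorem \ref{Sigk3}, so neither is more elementary in substance; yours is the more economical packaging.
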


\begin{sbprop}\label{Sigk5} 
Let $k$ and $k'$ be positive integers with $k\leq k'$.  
\begin{enumerate}
	\item[(1)] There is a one-to-one correspondence between the set of all finitely generated rational subcones $\sig$ of $C_d$ such that $\sig$ is a subcone of some $\sig'\in \Sig_{k'}$ and the set of all finitely generated rational subcones $\tau$ of $C_d$ such that $\tau$ is a subcone of some $\tau'\in \Sig_{k,k'}$ characterized by the following property. If $\sig\leftrightarrow \tau$, we have the equivalence 
$$c(\phi, N') \in \sig/\R_{>0}\quad   \Leftrightarrow\quad  c(\phi, N)\in \tau/\R_{>0}$$
 for every 
\begin{itemize}
	\item complete  valuation ring $\cV$ of height one over $A=\F_q[T]$,
	\item generalized Drinfeld module $\phi$ over $\cV$ of generic rank $d$, and 
	\item $N \mid N'$ in $A$ with $\deg N = k$ and $\deg N' = k'$.
\end{itemize}
	\item[(2)] This correspondence induces a one-to-one correspondence between the set of all finite rational subdivisions of $\Sig_{k'}$ and the set of all finite rational subdivisions of $\Sig_{k,k'}$.
	\item[(3)] The characterization of the correspondence in (1) remains true if we instead restrict to complete discrete valuation rings $\cV$ 
	over $A$. 
	\end{enumerate}
\end{sbprop}

\subsection{Study of Drinfeld exponential maps 1} \label{expon}

In this subsection, we introduce piecewise-linear, increasing functions $\epsilon_s^{r,n}$ on $\R_{>0}$ that give the comparison \ref{eep} between negatives of valuations of elements $x$ of $\bar K$ (more precisely, the maximum of such valuations among elements of $x+\Lambda$) and their values $e_{\La}(x)$ under exponential maps of the $\psi(A)$-lattice $\La$. Here, $s$ is given by the negatives of the valuations of a norm-ordered basis. This is applied
to determine the poles of $N$-torsion points of $\phi$ in Section \ref{expon2}.

\begin{sbpara} \label{epsilon} 
Fix integers $r \ge 1$ and $n \ge 0$. Let $s=(s_i)_{1\leq i\leq n}\in \R_{>0}^n$. We define a map
$$\epsilon^{r,n}_s \colon \R_{>0} \to \R_{>0}$$
on $x \in \R_{>0}$ by
$$
	\epsilon^{r,n}_s(x) = \sum_{y\in A^n}  \max(x- \max\{|y_i|^r s_i\mid 1\leq i\leq n\}, 0).
$$
In the case $n = 0$, we have $\epsilon_s^{r,n}(x) = x$, for $s$ the empty tuple.

We relate this map $\epsilon^{r,n}_s$ to the Drinfeld exponential map in \ref{eep2} and \ref{eep}.

\end{sbpara}

\begin{example}\label{513} 
Consider the case $n=1$.  Let $s=s_1\in \R_{>0}$,  $x\in \R_{\ge 0}$,   $h \in \Z_{\ge 1}$, and assume $q^{(h-1)r}s\leq x \leq q^{hr}s$.
Then
$$
	\epsilon^{r,1}_s(x)=q^h x - (q^{h(r+1)}-1)\frac{q-1}{q^{r+1}-1}s.
$$
If $x < s$, then $\epsilon^{r,1}_s(x) = x$.
\end{example}

\begin{sbpara}\label{sigr} This is a preparation for the following \ref{explin}. 

For $n\geq 1$ and an simplex $S$ of $\AP_n$, we defined a finitely generated rational cone $\sig(S)$ in $\R^n_{\geq 0}$ in \ref{apcone1}. We define here a modified version $\sig_r(S)$ of $\sig(S)$ for each integer $r\geq 1$ by
$$
	\sig_r(S) = \{(y_1^r,\dots, y_n^r)\mid y \in \sig(S)\}.
$$
This $\sig_r(S)$ is also a finitely generated rational cone in $\R_{\ge 0}^n$. In fact, if $\alpha$ is a map 
$$
	I=\{(h, i,j)\in \Z^3\mid 1\leq j \le i \leq n\}\to \{\R_{\leq 0}, \{0\}, \R_{\geq 0}\}
$$  
as in \ref{apcone2}
such that $\sig(S)$ coincides with the set of all $y \in \R^n_{\geq 0}$ satisfying $q^hy_j-y_i\in \alpha(h,i,j)$ for all $(h,i,j)\in I$,
then $\sig_r(S)$ coincides with the set of 
$y \in \R^n_{\geq 0}$ satisfying $q^{hr}y_j-y_i\in \alpha(h,i,j)$ for all $(h,i,j)\in I$.

\end{sbpara}

\begin{sblem}\label{explin} Let $S$ be a simplex of $ \AP_{n+1}$. Then there is a linear map $l \colon \R^{n+1}\to \R$  such that $\epsilon^{r,n}_s(x)= l(s_1, \dots, s_n, x)$ for all $s\in\R^n_{>0}$ and $x\in \R_{>0}$ satisfying $(s_1, \dots, s_n, x)\in \sig_r(S)$. 
\end{sblem}

This follows from the definition of $\epsilon^{r,n}_s$.

\begin{sbcor}
The map $\epsilon^{r,n}_s \colon \R_{\ge 0} \to \R_{\ge 0}$ is a piecewise linear, increasing homeomorphism.
\end{sbcor}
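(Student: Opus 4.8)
The plan is to read off all three properties directly from the explicit formula for $\epsilon^{r,n}_s$ in \ref{epsilon}, with \ref{explin} supplying the global piecewise-linear structure. Abbreviate $c_y := \max\{|y_i|^r s_i\mid 1\leq i\leq n\}$ for $y\in A^n$, so that $\epsilon^{r,n}_s(x)=\sum_{y\in A^n}\max(x-c_y,0)$, and note $c_0=0$ while $c_y>0$ for $y\neq 0$ (here we use $s\in\R^n_{>0}$).

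First I would establish continuity and piecewise linearity. The relevant finiteness — the same one that makes the defining series converge — is that for every $X>0$ the set $\{y\in A^n\mid c_y<X\}$ is finite, because $\F_q[T]$ has only finitely many elements of bounded absolute value and $r\geq 1$. Hence on any bounded interval $[0,X]$ all but finitely many summands vanish identically, so $\epsilon^{r,n}_s|_{[0,X]}$ is a finite sum of the continuous piecewise-linear functions $x\mapsto\max(x-c_y,0)$, hence continuous and piecewise linear there; letting $X\to\infty$ gives the claim on all of $\R_{\geq 0}$ (with infinitely many linear pieces, but locally finitely many). Equivalently one may invoke \ref{explin}: the half-line $\{(s_1,\dots,s_n,x)\mid x>0\}$ meets only finitely many of the cones $\sig_r(S)$ with $S\in AP_{n+1}$ within any bounded range of $x$, on each of which $\epsilon^{r,n}_s$ agrees with the relevant linear map $l$, while $\epsilon^{r,n}_s(0)=0$ since every summand vanishes at $x=0$.

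Next, monotonicity: each summand $x\mapsto\max(x-c_y,0)$ is nondecreasing, and the summand for $y=0$ is $x$ itself since $c_0=0$, so $\epsilon^{r,n}_s$ is strictly increasing; indeed on a linear piece its slope is the positive integer $\#\{y\in A^n\mid c_y\leq x\}\geq 1$. Finally, since $\epsilon^{r,n}_s(0)=0$ and $\epsilon^{r,n}_s(x)\geq x\to\infty$ as $x\to\infty$, the continuous strictly increasing map $\epsilon^{r,n}_s\colon\R_{\geq 0}\to\R_{\geq 0}$ is a bijection, and a continuous strictly monotone bijection between intervals is automatically a homeomorphism. I do not expect a genuine obstacle here; the only point warranting explicit justification is the local finiteness of the breakpoint set $\{c_y\}$, which is immediate from the structure of $A=\F_q[T]$, and the rest is formal manipulation of the formula.
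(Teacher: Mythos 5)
Your proof is correct, and it fills in precisely the details that the paper leaves implicit (the corollary is stated without proof, as an immediate consequence of the definition and Lemma \ref{explin}). The key point you rightly isolate — local finiteness of the breakpoints $c_y$, coming from the finiteness of $\{a\in A : |a|\le X\}$ and $s\in\R_{>0}^n$ — together with the slope-$\ge 1$ observation from the $y=0$ term, gives continuity, strict monotonicity, surjectivity, and hence the homeomorphism property exactly as you argue.
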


\begin{sbpara}\label{pre1} 
For any subgroup $\Omega$ of the additive group $\bar K$ such that the set $\{\lambda \in \Omega \mid  v_{\bar K}(\lambda) \geq c\}$ is finite for all $c$,
the associated Drinfeld exponential map $e_{\Omega} \colon \bar K\to \bar K$ is given by
$$e_{\Omega}(z)= z \prod_{\la\in \Omega\setminus \{0\}}  (1-\lam^{-1}z).$$

\end{sbpara}

The following  \ref{eep2}  and \ref{eep} relate the function $\epsilon^{r,n}_s$ and the Drinfeld exponential map $e_{\La}$ of
the $\psi(A)$-lattice $\Lambda$ in the pair $(\psi,\La)$ attached to our generalized Drinfeld module $\phi$.

\begin{sbprop}\label{eep2} 
Let $(\lam_i)_{1 \le i \le n}$ be a norm-ordered basis of the lattice $\Lambda$, as in \ref{diagbase}(1).
Let $s_i=-v_{\bar K}(\lambda_i)$
for $1 \le i \le n$. 
Let $\bar \cV$ be the integral closure of $\cV$ in $\bar K$.
 Let $\alpha\in \bar K^\times$, let $E_1$ be the finite set $\{\la\in \La \setminus \{0\} \mid  \la\in \alpha\bar \cV\}$ (see \ref{pre0}), and let $\beta= \alpha \prod_{\la\in E_1} (\alpha \la^{-1})$.
\begin{enumerate}
	\item[(1)] We have $-v_{\bar K}(\beta)= \epsilon^{r,n}_s(-v_{\bar K}(\alpha))$. 
	\item[(2)] We have $\beta^{-1}e_{\La}(\alpha z)\in \bar \cV\ps{z}$ with coefficients converging to $0$ and at least 
	one coefficient a unit. 
\end{enumerate}
\end{sbprop}

\begin{proof} Part (1) follows from the definition of $\epsilon^{r,n}_s$ in \ref{epsilon}. That is, 
$$
	-v_{\bar K}(\beta) = \sum_{\lambda \in \La} \max(-v_{\bar K}(\alpha\lambda^{-1}),0)
	= \sum_{y \in A^n} \max\left(-v_{\bar K}(\alpha)+ v_{\bar K}\left(\sum_{i=1}^n\psi(y_i)\lambda_i\right),0\right),
$$
and
$v_{\bar K}(\sum_{i=1}^n\psi(y_i)\lambda_i) = -\max\{|y_i|^r s_i \mid 1 \le i \le n\}$ by \ref{normcv}.

Setting $E_2=\La\setminus (E_1 \cup\{0\})$, 
we have $$e_{\La}(\alpha z)= \alpha z\prod_{\la\in E_1}  (1-\la^{-1}\alpha z) \prod_{\la \in E_2} (1-\la^{-1}\alpha z)=  \beta z \prod_{\la \in E_1} (\alpha^{-1} \la -z)  \prod_{\la \in E_2}  (1-\la^{-1}\alpha z).$$
Thus, we see that $\beta^{-1}e_{\La}(\alpha z)$ reduces modulo the maximal ideal of $\bar \cV$ to a polynomial in $z$ with leading coefficient
$\pm 1$, which gives the first part of (2). The coefficients of $\beta^{-1}e_{\La}(\alpha z)$ converge to $0$ since $E_1$ is finite.
\end{proof}

\begin{sbpara}

In the notation of \ref{pre1}, for $x\in \bar K$, let 
$$
	v_{\bar K, \Omega}(x) = \max\{v_{\bar K}(x-\la)\mid  \la\in \Omega\}.
$$ 
Note that our assumption on $\Omega$ implies that 
$\{\la\in \La \mid v_{\bar K}(x-\la)\geq c\}$ is finite for all $c$ as well, so this maximum exists.
The map $v_{\bar K, \Omega}$ factors through the projection $\bar K\to \bar K/\Omega$, and we use the same symbol to denote the resulting map. 

\end{sbpara}
\begin{sbprop}\label{eep} Let the notation be as in \ref{eep2}. For $x\in \bar K$, we have 
$$-v_{\bar K}(e_{\La}(x))= \epsilon^{r,n}_s(-v_{\bar K, \La}(x)).$$
\end{sbprop}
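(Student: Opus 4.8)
The plan is to reduce to the case where $x$ has minimal size in its $\La$-coset, compute the valuation of the infinite product defining $e_\La(x)$ term by term, and recognize the resulting sum as $\epsilon^{r,n}_s(-v_{\bar K,\La}(x))$ by matching $\La$ with $A^n$ via the basis of Proposition \ref{diagbase}.

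First I would exploit that $e_\La$ is additive and that $\La$ lies in its kernel (the latter being immediate from the product formula in \ref{pre1}), so that $e_\La(x+\la)=e_\La(x)$ for $\la\in\La$; both sides of the asserted identity are then unchanged under $x\mapsto x-\la_0$. Since $\{\la\in\La\mid v_{\bar K}(x-\la)\geq c\}$ is finite for all $c$, the maximum defining $v_{\bar K,\La}(x)$ is attained, say at $\la_0$, and after subtracting $\la_0$ we may assume $v_{\bar K}(x)=v_{\bar K,\La}(x)$, i.e. $-v_{\bar K}(x-\la)\geq -v_{\bar K}(x)=:t$ for all $\la\in\La$. Writing $1-\la^{-1}x=-\la^{-1}(x-\la)$ for $\la\neq 0$, additivity of $v_{\bar K}$ on products gives
$$-v_{\bar K}(e_\La(x))=t+\sum_{\la\in\La\smallsetminus\{0\}}\bigl(-v_{\bar K}(x-\la)+v_{\bar K}(\la)\bigr).$$

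The elementary heart of the argument is to show each summand equals $\max\bigl(t-(-v_{\bar K}(\la)),0\bigr)$. If $-v_{\bar K}(\la)>t$, the ultrametric inequality gives $-v_{\bar K}(x-\la)=-v_{\bar K}(\la)$, so the summand vanishes; if $-v_{\bar K}(\la)\leq t$, then $-v_{\bar K}(\la)\leq t\leq -v_{\bar K}(x-\la)\leq\max\bigl(t,-v_{\bar K}(\la)\bigr)=t$, forcing $-v_{\bar K}(x-\la)=t$ and the summand to equal $t-(-v_{\bar K}(\la))\geq 0$. Here the normalization $-v_{\bar K}(x-\la)\geq t$ is precisely what handles the borderline case $-v_{\bar K}(\la)=t$. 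Thus $-v_{\bar K}(e_\La(x))=t+\sum_{\la\in\La\smallsetminus\{0\}}\max\bigl(t-(-v_{\bar K}(\la)),0\bigr)$. Next I would use the $A$-basis $(\la_i)_{1\leq i\leq n}$ of $\La$ from Proposition \ref{diagbase}: every $\la\in\La$ is $\la_y:=\sum_{i}\psi(y_i)\la_i$ for a unique $y\in A^n$, with $\la_0=0$. Since $(\la_i)_i$ is orthonormal for the norm $\mu$ of Proposition \ref{normcv} and $\mu(\psi(y_i)\la_i)=|y_i|\mu(\la_i)$, we get $\mu(\la_y)=\max_i|y_i|\mu(\la_i)$; applying \ref{normcv} with $a=1$ and $z=\la_y$ (recalling $s_i=-v_{\bar K}(\la_i)=\mu(\la_i)^r$) yields $-v_{\bar K}(\la_y)=\mu(\la_y)^r=\max_i|y_i|^rs_i$. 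Substituting this, and observing that the front term $t=\max(t,0)$ is exactly the $y=0$ contribution, turns the sum into $\sum_{y\in A^n}\max\bigl(t-\max_i|y_i|^rs_i,\,0\bigr)=\epsilon^{r,n}_s(t)$, which is the claim since $t=-v_{\bar K,\La}(x)$.

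The main obstacle is the per-term valuation computation: one must verify carefully that reducing to a minimal-size coset representative is exactly what forces the lattice points $\la$ with $-v_{\bar K}(\la)=-v_{\bar K}(x)$ to contribute $0$ rather than a negative quantity, so that the sum becomes a sum of $\max(\cdot,0)$ terms matching the definition of $\epsilon^{r,n}_s$. The remaining identifications are bookkeeping, granted the already-used facts that the product defining $e_\La$ converges and that $v_{\bar K}$ extends to the completion in which $e_\La(x)$ lives.
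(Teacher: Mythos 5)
Your proof is correct and follows essentially the same route as the paper: the paper picks a representative $\alpha\in x+\La$ of maximal valuation and reduces to Proposition \ref{eep2} with $z=1$, where the factors over $E_1=\{\la\neq 0: v_{\bar K}(\la)\geq v_{\bar K}(\alpha)\}$ are absorbed into $\beta$ and the remaining factors are shown to be units — exactly your case split on $-v_{\bar K}(\la)$ versus $t$. Your write-up merely makes explicit the term-by-term valuation computation and the identification $-v_{\bar K}(\la_y)=\max_i|y_i|^r s_i$ that the paper leaves as "follows from the definition of $\epsilon^{r,n}_s$".
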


\begin{proof} Let $\alpha$ be an element of $x+\La$ such that $v_{\bar K}(\alpha)= v_{\bar K, \La}(x)$. In the proof of \ref{eep2}, let $z=1$. For $\la \in E_1$, we have that $\alpha^{-1}\la-1$ lies in $\bar \cV$ by definition of $E_1$ and is a unit therein by choice of $\alpha$. For $\la \in E_2$, we have that $1-\la^{-1}\alpha \in \bar \cV^{\times}$ by definition of $E_2$. By the proof of \ref{eep2}, we then see that $\beta^{-1}e_{\La}(\alpha) \in \bar \cV^{\times}$. The result then follows by \ref{eep2}, as $v_{\bar K}(\beta)
= v_{\bar K}(e_{\La}(\alpha)) = v_{\bar K}(e_{\La}(x))$.
\end{proof}

\begin{sbrem} Our theory of cone decomposition for toroidal embeddings is based on simplices of the Bruhat-Tits building. The key point is the following. By \ref{explin} and \ref{eep2}, \ref{eep}, the poles of the Drinfeld exponential map have a linear property on simplices, and hence poles of torsion points of generalized Drinfeld modules have linear properties if we introduce cone decompositions related to simplices.

\end{sbrem}

\begin{sbpara}\label{hat(e)}

For integers $r$ and $n$ such that $r\geq 1$, $n\geq 0$, and for  $s\in \R^n_{>0}$, define
$$\delta^{r,n}(s_1,\dots,s_n) =\frac{q-1}{q^{r+n}-1} \sum_{i=1}^n q^{n-i}\epsilon^{r,i-1}_{s_1, \dots, s_{i-1}}(s_i).$$ 
If $n = 0$, this is zero. We describe a relationship between $\delta^{r,n}$ and generalized Drinfeld modules over $\cV$ in \ref{T4.9}.

We define a modified version ${\hat \epsilon}^{r,n}_s$ of $\epsilon^{r,n}_s$ by 
$${\hat \epsilon}^{r,n}_s(x) = \epsilon^{r,n}_s(x)- \delta^{r,n}(s_1,\dots, s_n).$$

\end{sbpara}

\begin{example}\label{514}  
Let $n=1$, $s=s_1\in \R_{>0}$. Then
$$
	\delta^{r,1}(s)= \frac{q-1}{q^{r+1}-1}s.
$$
As in \ref{513}, we also let $x \in \R_{> 0}$ and $h \in \Z_{\ge 0}$ and assume that $q^{(h-1)r}s\leq x \leq q^{hr}s$ if $h \ge 1$ and $x \le s$ if $h = 0$. Then
$$
	\hat \epsilon^{r,1}_s(x)=\epsilon^{r,1}(x)-\delta^{r,1}(s)= q^h x - q^{h(r+1)}\frac{q-1}{q^{r+1}-1}s.
$$

\end{example}

Formulas concerning $\epsilon^{r,n}_s$ have simpler presentations when we use ${\hat \epsilon}^{r,n}_s$ in place of $\epsilon^{r,n}_s$: see for example \ref{prope2} and \ref{prope3} below.

The following is the $\hat \epsilon$-version of \ref{explin}. 
\begin{sblem}\label{explin2} 

 Let $S$ be a simplex of $\AP_{n+1}$. Then there is a linear map $l \colon \R^{n+1}\to \R$  such that $\hat \epsilon^{r,n}_s(x)= l(s_1, \dots, s_n, x)$ for all $s\in\R^n_{>0}$ and $x\in \R_{>0}$ satisfying $(s_1, \dots, s_n, x)\in \sig_r(S)$. 

\end{sblem}

\begin{proof}
This follows from \ref{explin}. 
\end{proof}

\begin{sbpara}\label{usehat}

 It follows from the definitions that
$$
	\delta^{r,n+1}(s_1, \dots, s_{n+1}) = \delta^{r,n}(s_1,\ldots, s_n)+
	\frac{q-1}{q^{r+n+1}-1}{\hat \epsilon}^{r,n}_{s_1,\ldots,s_n}(s_{n+1}).
$$
The following results \ref{deltasum}--\ref{prope1} are proven in \ref{31pf1}--\ref{31pf3} below.
\end{sbpara}

\begin{sbprop} \label{deltasum}
Assume that $s_i\leq s_{i+1}$ for $1\leq i\leq n$.   We have
$$
	\delta^{r,n+1}(s_1,\dots, s_{n+1})= \delta^{r,1}(s_1)+\delta^{r+1, n}(s'),
$$ 
where $s'=({\hat \epsilon}^{r,1}_{s_1}(s_{i+1}))_{1\leq i\leq n}$. 
\end{sbprop}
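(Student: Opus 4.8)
The plan is to reduce the statement to a single ``iteration identity'' for the functions $\epsilon^{r,n}_s$ and then read off the decomposition of $\delta^{r,n+1}$ from its definition in \ref{hat(e)}. Precisely, I will first prove that for $0\le s_1\le s_2\le\dots\le s_n$ and $x\ge s_1$,
\[
\epsilon^{r,n}_{s_1,\dots,s_n}(x)=\epsilon^{r+1,n-1}_{\hat\epsilon^{r,1}_{s_1}(s_2),\dots,\hat\epsilon^{r,1}_{s_1}(s_n)}\bigl(\hat\epsilon^{r,1}_{s_1}(x)\bigr)+\delta^{r,1}(s_1).
\]
This is the combinatorial shadow of one step of iterated Tate uniformization: collapsing a rank-one sublattice of ``size'' $s_1$ replaces the data $s_2,\dots,s_n,x$ by $\hat\epsilon^{r,1}_{s_1}(s_2),\dots,\hat\epsilon^{r,1}_{s_1}(s_n),\hat\epsilon^{r,1}_{s_1}(x)$ for a rank-$(r+1)$ module, the term $\delta^{r,1}(s_1)$ being a normalization discrepancy.

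To prove the iteration identity I will split the defining sum $\epsilon^{r,n}_{s_1,\dots,s_n}(x)=\sum_{y\in A^n}\max(x-\max_{1\le i\le n}|y_i|^rs_i,0)$ of \ref{epsilon} over the last $n-1$ coordinates $\bar y=(y_2,\dots,y_n)$. For fixed $\bar y$, with $M=\max_{2\le i\le n}|y_i|^rs_i$ ($M=0$ when $\bar y=0$), an elementary manipulation of the definition of $\epsilon^{r,1}_{s_1}$ shows that $\sum_{y_1\in A}\max(x-\max(|y_1|^rs_1,M),0)$ equals $\epsilon^{r,1}_{s_1}(x)-\epsilon^{r,1}_{s_1}(M)$ when $M\le x$ and $0$ otherwise: raising $|y_1|^rs_1$ to $M$ whenever $|y_1|^rs_1<M$ removes exactly $\sum_{|y_1|^rs_1<M}(M-|y_1|^rs_1)=\epsilon^{r,1}_{s_1}(M)$. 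Isolating the term $\bar y=0$, which contributes $\epsilon^{r,1}_{s_1}(x)$, and writing $\epsilon^{r,1}_{s_1}(x)-\epsilon^{r,1}_{s_1}(M)=\hat\epsilon^{r,1}_{s_1}(x)-\hat\epsilon^{r,1}_{s_1}(M)$, I will then use the explicit ``decade'' formulas of \ref{513} and \ref{514}: $\hat\epsilon^{r,1}_{s_1}$ is increasing and satisfies $\hat\epsilon^{r,1}_{s_1}(q^{mr}z)=q^{m(r+1)}\hat\epsilon^{r,1}_{s_1}(z)$ for $z\ge s_1$ and $m\ge0$. For $\bar y\ne0$ one has $M\ge s_1$, hence $\hat\epsilon^{r,1}_{s_1}(M)=\max_{2\le i\le n}|y_i|^{r+1}\hat\epsilon^{r,1}_{s_1}(s_i)$, so the $\bar y$-term becomes $\max\bigl(\hat\epsilon^{r,1}_{s_1}(x)-\max_{2\le i\le n}|y_i|^{r+1}\hat\epsilon^{r,1}_{s_1}(s_i),0\bigr)$; summing over $\bar y\ne0$ and comparing with the expansion of $\epsilon^{r+1,n-1}_{\hat\epsilon^{r,1}_{s_1}(s_2),\dots,\hat\epsilon^{r,1}_{s_1}(s_n)}(\hat\epsilon^{r,1}_{s_1}(x))$, whose $\bar y=0$ term is $\hat\epsilon^{r,1}_{s_1}(x)$, gives the identity with the leftover $\epsilon^{r,1}_{s_1}(x)-\hat\epsilon^{r,1}_{s_1}(x)=\delta^{r,1}(s_1)$.

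Granting the iteration identity, the proposition is a direct computation. Set $s'=(s'_1,\dots,s'_n)$ with $s'_i=\hat\epsilon^{r,1}_{s_1}(s_{i+1})$; since $\hat\epsilon^{r,1}_{s_1}$ is increasing, $s'_1\le\dots\le s'_n$. In $\delta^{r,n+1}(s_1,\dots,s_{n+1})=\frac{q-1}{q^{r+n+1}-1}\sum_{i=1}^{n+1}q^{n+1-i}\,\epsilon^{r,i-1}_{s_1,\dots,s_{i-1}}(s_i)$ the term $i=1$ is $q^ns_1$, and the iteration identity turns each term $i\ge2$ into $q^{n+1-i}\bigl(\epsilon^{r+1,i-2}_{s'_1,\dots,s'_{i-2}}(s'_{i-1})+\delta^{r,1}(s_1)\bigr)$. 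The $\epsilon^{r+1,\bullet}$-pieces sum (by the definition of $\delta^{r+1,n}$) to $\frac{q^{r+n+1}-1}{q-1}\,\delta^{r+1,n}(s')$, while $q^ns_1$ together with the $\bigl(\sum_{i=2}^{n+1}q^{n+1-i}\bigr)=\frac{q^n-1}{q-1}$ weighted copies of $\delta^{r,1}(s_1)=\frac{q-1}{q^{r+1}-1}s_1$ sum to $\frac{q^{n+r+1}-1}{q^{r+1}-1}s_1$ by $q^n(q^{r+1}-1)+(q^n-1)=q^{n+r+1}-1$; multiplying by the prefactor $\frac{q-1}{q^{r+n+1}-1}$ yields $\delta^{r,n+1}(s_1,\dots,s_{n+1})=\delta^{r,1}(s_1)+\delta^{r+1,n}(s')$.

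The hard part is the iteration identity, and within it the coordinate-splitting bookkeeping — the cases $M\le x$ versus $M>x$ and the vanishing coordinates of $\bar y$ must be handled carefully — and it is exactly here that the $\hat\epsilon$-normalization is essential: the analogous statement with $\epsilon^{r,1}_{s_1}$ in place of $\hat\epsilon^{r,1}_{s_1}$ (and without $\delta^{r,1}(s_1)$) is false, since $\epsilon^{r,1}_{s_1}$ lacks the clean homogeneity under $z\mapsto q^{r}z$. An alternative would be to combine the recursion of \ref{usehat} with induction on $n$, or to exploit the piecewise-linearity of \ref{explin2} to reduce the iteration identity to a check on the extreme rays of a common refinement of the relevant cone decompositions; the direct summation seems the most economical.
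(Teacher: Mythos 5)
Your proof is correct, and it reorganizes the argument in a way that is genuinely different from the paper's. Your ``iteration identity'' $\epsilon^{r,n}_{s}(x)=\epsilon^{r+1,n-1}_{s'}(\hat\epsilon^{r,1}_{s_1}(x))+\delta^{r,1}(s_1)$ is in fact exactly the displayed formula $\sum_y m_y=(\epsilon^{r+1,n}_{s'}\circ\hat\epsilon^{r,1}_{s_1})(x)+\delta^{r,1}(s_1)$ that appears inside the paper's proof, obtained by the same splitting of the sum over the first coordinate $y_1$ and the same use of the $n=1$ homogeneity $\hat\epsilon^{r,1}_{s_1}(q^{mr}z)=q^{m(r+1)}\hat\epsilon^{r,1}_{s_1}(z)$ from \ref{31pf1}; your bookkeeping of the cases $M\le x$ versus $M>x$ and of the vanishing coordinates of $\bar y$ matches the paper's and is sound (the key points being that $M\ge s_1$ whenever $\bar y\neq 0$, and that $\hat\epsilon^{r,1}_{s_1}(x)>0$ for $x\ge s_1$ so the $\bar y=0$ terms on the two sides differ by exactly $\delta^{r,1}(s_1)$). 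Where you diverge is in how \ref{deltasum} is extracted: the paper runs a simultaneous induction in which \ref{deltasum} for $n$ is deduced from \ref{deltasum} and \ref{prope2} for $n-1$ via the recursion \ref{usehat}, while \ref{prope2} for $m=1$ is deduced from the iteration identity \emph{together with} \ref{deltasum}; you instead substitute the iteration identity term by term into the defining sum $\delta^{r,n+1}(s)=\frac{q-1}{q^{r+n+1}-1}\sum_{i=1}^{n+1}q^{n+1-i}\epsilon^{r,i-1}_{s_1,\dots,s_{i-1}}(s_i)$ and close the computation with the geometric-series identity $q^n(q^{r+1}-1)+(q^n-1)=q^{n+r+1}-1$, which I have checked. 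This buys a proof of \ref{deltasum} that does not depend on \ref{prope2} or on \ref{usehat} at all and avoids the intertwined induction; the paper's arrangement, on the other hand, yields \ref{prope2} (which it needs elsewhere) as part of the same induction, whereas in your scheme \ref{prope2} for $m=1$ would still follow afterwards by combining the iteration identity with the now-established \ref{deltasum}.
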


\begin{sbprop}\label{prope2} 
Assume that $s_i\leq s_{i+1}$ for $1\leq i\leq n+m-1$.  Then 
we have
$$
	{\hat \epsilon}^{r,n+m}_s = {\hat \epsilon}^{r+m, n}_{s'} \circ {\hat \epsilon}^{r,m}_t,
$$
where $t =(s_1,\dots, s_m)\in \R_{>0}^m$ and $s' \in \R_{>0}^n$ with
$s'_i= {\hat \epsilon}^{r,m}_t(s_{m+i})$. 

\end{sbprop}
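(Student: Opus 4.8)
The plan is to prove the identity by induction on $m$, reducing the general case to $m=1$, and to settle $m=1$ using the explicit formulas of Examples \ref{513}--\ref{514} together with the already-established relation \ref{deltasum}.

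For the inductive step, suppose $m\ge 2$ and that the statement is known whenever the second superscript equals $1$ (at every base rank $r$ and every number of remaining variables) and whenever it is $<m$. Write $s=(s_1,\dots,s_{n+m})$ and $t=(s_1,\dots,s_m)$. First I would apply the case ``second index $=1$'' to $\hat\epsilon^{r,(n+m-1)+1}_s$ to peel off the last variable, obtaining $\hat\epsilon^{r,n+m}_s=\hat\epsilon^{r+1,\,n+m-1}_{a}\circ\hat\epsilon^{r,1}_{s_1}$ with $a_i=\hat\epsilon^{r,1}_{s_1}(s_{1+i})$; the tuple $a$ is again nondecreasing. Next I would apply the case ``second index $=m-1$'' at base rank $r+1$ to $\hat\epsilon^{r+1,\,n+(m-1)}_{a}$, getting $\hat\epsilon^{r+1,n+m-1}_{a}=\hat\epsilon^{r+m,\,n}_{b}\circ\hat\epsilon^{r+1,\,m-1}_{c}$ with $c=(a_1,\dots,a_{m-1})$ and $b_i=\hat\epsilon^{r+1,m-1}_{c}(a_{m-1+i})$. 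Finally, applying the case ``second index $=1$'' to $\hat\epsilon^{r,(m-1)+1}_{t}$ shows that $\hat\epsilon^{r+1,m-1}_{c}\circ\hat\epsilon^{r,1}_{s_1}=\hat\epsilon^{r,m}_{t}$ (the tuple produced there is exactly $c$). Chaining the three identities gives $\hat\epsilon^{r,n+m}_s=\hat\epsilon^{r+m,n}_{b}\circ\hat\epsilon^{r,m}_{t}$, and substituting $a_{m-1+i}=\hat\epsilon^{r,1}_{s_1}(s_{m+i})$ into the formula for $b$ yields $b_i=\hat\epsilon^{r,m}_{t}(s_{m+i})=s'_i$, as desired. (The cases $n=0$ and $m=1$ are the degenerate and base ones.)

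It then remains to prove the base case $\hat\epsilon^{r,n+1}_s=\hat\epsilon^{r+1,n}_{s'}\circ\hat\epsilon^{r,1}_{s_1}$, where $s'_i=\hat\epsilon^{r,1}_{s_1}(s_{i+1})$ and $s_1\le\dots\le s_{n+1}$. Both sides are continuous and piecewise linear in $x$ (for the left side directly from the definition, for the right side by \ref{explin2} and composition of piecewise-linear maps), so it is enough to match one value and all the slopes. From the defining formula, the slope of $\hat\epsilon^{r,k}_u$ at $x$ is the lattice-point count $g^{r,k}_u(x):=\#\{y\in A^k\mid \max_i|y_i|^ru_i<x\}$, which factors as a product over the coordinates; hence the left-hand slope is $g^{r,1}_{s_1}(x)\cdot g^{r,n}_{(s_2,\dots,s_{n+1})}(x)$, while the chain rule makes the right-hand slope $g^{r+1,n}_{s'}\!\big(\hat\epsilon^{r,1}_{s_1}(x)\big)\cdot g^{r,1}_{s_1}(x)$. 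So the point is to show $g^{r+1,n}_{s'}\!\big(\hat\epsilon^{r,1}_{s_1}(x)\big)=g^{r,n}_{(s_2,\dots,s_{n+1})}(x)$. This is where Example \ref{514} is used: it gives the scaling relation $\hat\epsilon^{r,1}_{s_1}(q^{r}y)=q^{r+1}\hat\epsilon^{r,1}_{s_1}(y)$ for $y\ge s_1$, whence $|w_j|^{r+1}s'_j=|w_j|^{r+1}\hat\epsilon^{r,1}_{s_1}(s_{j+1})=\hat\epsilon^{r,1}_{s_1}\!\big(|w_j|^{r}s_{j+1}\big)$; since $\hat\epsilon^{r,1}_{s_1}$ is strictly increasing, the condition $\max_j|w_j|^{r+1}s'_j<\hat\epsilon^{r,1}_{s_1}(x)$ is then equivalent to $\max_j|w_j|^{r}s_{j+1}<x$, so the two counts coincide. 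For the matching value, evaluate at $x=s_1$: since $s_1$ is minimal one has $\epsilon^{r,n+1}_s(s_1)=s_1$ and $\hat\epsilon^{r,1}_{s_1}(s_1)=s_1-\delta^{r,1}(s_1)$, which lies below every $s'_i$, so the desired identity at $x=s_1$ is precisely $\delta^{r,n+1}(s)=\delta^{r,1}(s_1)+\delta^{r+1,n}(s')$, which is \ref{deltasum}.

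I expect the main difficulty to be organizational rather than computational: keeping the shifting base ranks $r,r+1,\dots,r+m$ and the successively transformed tuples straight through the induction, and, in the $m=1$ step, checking that the slope counts transform exactly under $\hat\epsilon^{r,1}_{s_1}$ (which hinges on the scaling relation above). One also has to be slightly careful about the range of validity: the identity holds for $x$ above the threshold at which $\hat\epsilon^{r,m}_t$ becomes nonnegative (in particular for $x\ge s_1$), which is the regime relevant to the applications.
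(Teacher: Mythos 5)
Your argument is correct, and its overall architecture coincides with the paper's: the reduction to $m=1$ by induction on $m$ is the same associativity argument, and the base case rests on the same two inputs, namely the $n=1$ scaling relation of \ref{prope1} (established first, in \ref{31pf1}, from \ref{513}--\ref{514}) and the identity \ref{deltasum}. Where you genuinely differ is in how the base case $m=1$ is executed. The paper resums the defining double sum directly: for fixed $y_2,\dots,y_{n+1}$ it computes $\sum_{y_1\in A}m_y=\max\bigl({\hat\epsilon}^{r,1}_{s_1}(x)-\max_i{\hat\epsilon}^{r,1}_{s_1}(|y_i|^rs_i),0\bigr)$, applies the scaling relation to pull $|y_i|^{r+1}$ out, and recognizes the total as $(\epsilon^{r+1,n}_{s'}\circ{\hat\epsilon}^{r,1}_{s_1})(x)$ up to the constants controlled by \ref{deltasum}. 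You instead differentiate in $x$, identify the slopes with lattice-point counts, and match one value at $x=s_1$; your key identity $g^{r+1,n}_{s'}({\hat\epsilon}^{r,1}_{s_1}(x))=g^{r,n}_{(s_2,\dots,s_{n+1})}(x)$ is exactly the derivative of the paper's inner summation, so the two computations are equivalent in substance, though your version isolates the role of the scaling relation more cleanly. Two points deserve explicit mention. First, in the slope identity the term $w=0$ is not covered by the scaling relation (for $w_j=0$ one has $|w_j|^{r+1}s'_j=0$, not ${\hat\epsilon}^{r,1}_{s_1}(0)$); it contributes $1$ to both counts precisely when ${\hat\epsilon}^{r,1}_{s_1}(x)>0$, i.e.\ exactly in the regime where the composition is defined at all --- this is consistent with the domain caveat you already flag, and the statement as printed silently assumes it. Second, you invoke \ref{deltasum} for $n$, while the paper's proof of \ref{deltasum} for $n$ uses \ref{prope2} for $n-1$; your argument therefore has to be read as one half of the joint induction on $n$ in \ref{31pf2} and the paragraph following it. As such it is not circular (each step only consumes strictly smaller instances), but since you present \ref{deltasum} as ``already established,'' you should say explicitly that the two propositions are being proved by a simultaneous induction.
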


We note that Proposition \ref{prope2} has the following corollary.

\begin{sbcor} \label{prope3} Assume that $s_i\leq s_{i+1}$ for $1\leq i\leq n-1$. Then we have
$${\hat \epsilon}^{r,n}_s = {\hat \epsilon}^{r+n-1,1}_{ s'_n}\circ \dots \circ {\hat \epsilon}^{r+1,1}_{ s'_2}\circ  {\hat \epsilon}^{r,1}_{s'_1}$$
where $s'_i={\hat \epsilon}^{r,i-1}_{s_1,\dots, s_{i-1}}(s_i)$ for $1 \le i \le n$.
\end{sbcor}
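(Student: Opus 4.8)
The plan is to induct on $n$, using Proposition \ref{prope2} in the case $m=1$ as the only input. For $n\le 1$ there is nothing to prove: when $n=0$ both sides are the identity, and when $n=1$ the right-hand side is ${\hat\epsilon}^{r,1}_{s'_1}$ with $s'_1={\hat\epsilon}^{r,0}(s_1)=s_1$, which is ${\hat\epsilon}^{r,1}_s$ since $s=(s_1)$.

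For the inductive step I would first invoke Proposition \ref{prope2} with $m=1$. Its hypothesis is that $(s_1,\dots,s_n)$ is sorted in positions $1,\dots,n-1$, which is exactly our standing assumption, so it yields
\[
{\hat\epsilon}^{r,n}_s(x)=\big({\hat\epsilon}^{r+1,\,n-1}_{\tilde s}\circ{\hat\epsilon}^{r,1}_{s_1}\big)(x),\qquad \tilde s=(\tilde s_i)_{1\le i\le n-1},\quad \tilde s_i={\hat\epsilon}^{r,1}_{s_1}(s_{i+1}).
\]
Since ${\hat\epsilon}^{r,1}_{s_1}$ differs from $\epsilon^{r,1}_{s_1}$ only by the additive constant $\delta^{r,1}(s_1)$, it is (by the corollary to Lemma \ref{explin}) an increasing homeomorphism, so the tuple $\tilde s$ is again sorted and the induction hypothesis applies to ${\hat\epsilon}^{r+1,\,n-1}_{\tilde s}$, giving
\[
{\hat\epsilon}^{r+1,\,n-1}_{\tilde s}(y)=\big({\hat\epsilon}^{\,r+n-1,\,1}_{\tilde s'_{n-1}}\circ\cdots\circ{\hat\epsilon}^{\,r+1,1}_{\tilde s'_1}\big)(y),\qquad \tilde s'_i={\hat\epsilon}^{r+1,\,i-1}_{\tilde s_1,\dots,\tilde s_{i-1}}(\tilde s_i).
\]

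The remaining point is the identification $\tilde s'_i=s'_{i+1}$ for $1\le i\le n-1$, together with $s'_1=s_1$; granting these, substituting the second display into the first reproduces exactly the composition in the statement, with the superscripts $r,r+1,\dots,r+n-1$ appearing in order attached to the subscripts $s'_1,\dots,s'_n$. To see $\tilde s'_i=s'_{i+1}$ I would run Proposition \ref{prope2} with $m=1$ once more, applied now to ${\hat\epsilon}^{r,i}_{s_1,\dots,s_i}$: it gives
\[
{\hat\epsilon}^{r,i}_{s_1,\dots,s_i}(s_{i+1})={\hat\epsilon}^{r+1,\,i-1}_{{\hat\epsilon}^{r,1}_{s_1}(s_2),\dots,{\hat\epsilon}^{r,1}_{s_1}(s_i)}\big({\hat\epsilon}^{r,1}_{s_1}(s_{i+1})\big),
\]
and the right-hand side is literally $\tilde s'_i$ after unwinding $\tilde s_j={\hat\epsilon}^{r,1}_{s_1}(s_{j+1})$, while the left-hand side is $s'_{i+1}$ by definition. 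The only real work here is the index bookkeeping — keeping track of the shifted superscripts and matching $\tilde s'_i$ with $s'_{i+1}$ — since all the analytic content has already been packaged into Proposition \ref{prope2}; I expect no genuine obstacle beyond that.
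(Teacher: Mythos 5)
Your argument is correct and is exactly the intended derivation: the paper gives no proof beyond noting that the corollary follows from Proposition \ref{prope2}, and your induction (peel off ${\hat \epsilon}^{r,1}_{s_1}$ via the $m=1$ case, then identify $\tilde s'_i=s'_{i+1}$ by one more application of \ref{prope2}) carries that out correctly, the monotonicity of ${\hat \epsilon}^{r,1}_{s_1}$ guaranteeing the sortedness hypothesis for the inductive call. A minor simplification worth noting: applying \ref{prope2} with $n=1$, $m=n-1$ instead (peeling off the \emph{last} factor ${\hat \epsilon}^{r+n-1,1}_{s'_n}$) makes the subscripts come out as $s'_i$ on the nose and removes the bookkeeping step entirely.
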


\begin{sbprop}\label{prope1} Assume that $s_i\leq s_{i+1}$ for $1\leq i \leq n-1$ and $s_n \le x$. Then we have
$${\hat \epsilon}^{r,n}_s(x)= q^{r+n}{\hat \epsilon}^{r,n}_s(q^{-r}x).$$

\end{sbprop}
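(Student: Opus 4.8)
The plan is to reduce the statement to the one--variable case $n=1$ by means of the iterated decomposition of Corollary \ref{prope3}, and then to propagate the scaling identity through the composition one factor at a time, using monotonicity to guarantee that each one--variable factor is always evaluated in the range where it is ``linear at the top''.

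\emph{Step 1: the case $n=1$.} This is Proposition \ref{prope1} for $n=1$, and I would deduce it directly from the explicit formula of Example \ref{514}. Given $r\geq 1$, $s>0$ and $x\geq s$, choose $h\geq 1$ with $q^{(h-1)r}s\leq x\leq q^{hr}s$. Then $q^{(h-2)r}s\leq q^{-r}x\leq q^{(h-1)r}s$ with $h-1\geq 0$, so Example \ref{514} applies both to $x$ (with parameter $h$) and to $q^{-r}x$ (with parameter $h-1$); substituting $\hat\epsilon^{r,1}_s(x)=q^{h}x-q^{h(r+1)}\tfrac{q-1}{q^{r+1}-1}s$ into both sides of $\hat\epsilon^{r,1}_s(x)=q^{r+1}\hat\epsilon^{r,1}_s(q^{-r}x)$ verifies the identity on the interior of each linear piece, and continuity of $\hat\epsilon^{r,1}_s$ extends it to all $x\geq s$. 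Note that this holds with $r$ replaced by any integer $\geq 1$.

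\emph{Steps 2 and 3: reduction and propagation.} For general $n$, since $s_1\leq\cdots\leq s_{n-1}$, Corollary \ref{prope3} gives $\hat\epsilon^{r,n}_s=f_n\circ\cdots\circ f_1$ with $f_i:=\hat\epsilon^{\,r+i-1,1}_{s'_i}$ and $s'_i:=\hat\epsilon^{\,r,i-1}_{s_1,\dots,s_{i-1}}(s_i)$; in particular $s'_1=s_1$ (as $\hat\epsilon^{r,0}=\mathrm{id}$), and applying Corollary \ref{prope3} once more to $\hat\epsilon^{\,r,i-1}_{s_1,\dots,s_{i-1}}$ shows $s'_i=(f_{i-1}\circ\cdots\circ f_1)(s_i)$. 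Set $u_0:=q^{-r}x$ and $u_i:=f_i(u_{i-1})$, so that $u_n=\hat\epsilon^{r,n}_s(q^{-r}x)$. I then prove by induction on $i$ the claim that $f_i\!\bigl(q^{\,r+i-1}u_{i-1}\bigr)=q^{\,r+i}u_i$, together with the consequence $(f_{i-1}\circ\cdots\circ f_1)(x)=q^{\,r+i-1}u_{i-1}$. For the inductive step, the earlier cases give $(f_{i-1}\circ\cdots\circ f_1)(x)=q^{\,r+i-1}u_{i-1}=:z$; since each $f_j$ (being $\epsilon$ up to an additive constant) is a strictly increasing homeomorphism, $f_{i-1}\circ\cdots\circ f_1$ is increasing, and $s_i\leq s_n\leq x$ then yields $s'_i=(f_{i-1}\circ\cdots\circ f_1)(s_i)\leq z$. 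Hence Step 1, applied to $\hat\epsilon^{\,r+i-1,1}_{s'_i}$ at $z\geq s'_i$, gives $f_i(z)=q^{\,r+i}\,\hat\epsilon^{\,r+i-1,1}_{s'_i}\!\bigl(q^{-(r+i-1)}z\bigr)=q^{\,r+i}f_i(u_{i-1})=q^{\,r+i}u_i$, which is the claim for $i$ (the base case $i=1$ is the same computation using $s'_1=s_1\leq x=q^{r}u_0$). Taking $i=n$ gives $\hat\epsilon^{r,n}_s(x)=f_n\!\bigl((f_{n-1}\circ\cdots\circ f_1)(x)\bigr)=f_n\!\bigl(q^{\,r+n-1}u_{n-1}\bigr)=q^{\,r+n}u_n=q^{\,r+n}\hat\epsilon^{r,n}_s(q^{-r}x)$, as desired.

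The only genuinely delicate point is the bookkeeping in Step 3: one must verify at each stage that the argument $q^{\,r+i-1}u_{i-1}$ fed into $f_i$ dominates the parameter $s'_i$, so that the one--variable scaling of Step 1 is applicable there. This is exactly where the hypotheses $s_1\leq\cdots\leq s_n$ and $s_n\leq x$ are used, through the monotonicity of the $f_j$; everything else is formal. (One could instead argue by differentiation: for $x>s_n$ one has $\tfrac{d}{dx}\epsilon^{r,n}_s(x)=\#\{y\in A^n:\max_i|y_i|^rs_i<x\}=q^{n}\,\#\{y:\max_i|y_i|^rs_i<q^{-r}x\}=\tfrac{d}{dx}\,q^{r+n}\epsilon^{r,n}_s(q^{-r}x)$, so $\hat\epsilon^{r,n}_s(x)-q^{r+n}\hat\epsilon^{r,n}_s(q^{-r}x)$ is constant for $x\geq s_n$; but pinning that constant down to $0$ still requires unwinding the recursion for $\delta^{r,n}$, so the route through Corollary \ref{prope3} is the cleaner one.)
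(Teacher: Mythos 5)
Your proof is correct and follows essentially the same route as the paper: both establish the $n=1$ case from the explicit formula of \ref{514} and then propagate the scaling through the one-variable factorization coming from \ref{prope2}/\ref{prope3}, checking at each stage that the argument dominates the parameter $s'_i$ so that the one-variable identity applies. The only cosmetic difference is that you unroll the composition fully via Corollary \ref{prope3} and verify $s'_i\leq z$ by monotonicity of the partial compositions, whereas the paper peels off one factor per inductive step via Proposition \ref{prope2} and verifies the analogous inequality using the inductive hypothesis.
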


We prove these propositions. 

\begin{sbpara}\label{31pf1} 
First, the case $n=1$ of \ref{prope1} follows from \ref{514}.  

\end{sbpara}

\begin{sbpara} We will treat \ref{deltasum} and the case $m=1$ of \ref{prope2} simultaneously by induction on $n$. 
Both are easy to see for $n = 0$. 
Proposition \ref{prope2} for arbitrary $m$ follows from the case $m=1$ by induction, as follows.  Assuming the
result for $m$ and $s_i \le s_{i+1}$ for all $1 \le i \le n+m$, we have
$$
	\hat{\epsilon}_s^{r,n+m+1} = \hat{\epsilon}_{s''}^{r+m,n+1} \circ \hat{\epsilon}_{s_1,\ldots,s_m}^{r,m} 
	= \hat{\epsilon}_{s'}^{r+m+1,n} \circ \hat{\epsilon}_{s''_1}^{r+m,1} \circ \hat{\epsilon}_{s_1,\ldots,s_m}^{r,m}
	= \hat{\epsilon}_{s'}^{r+m+1,n} \circ \hat{\epsilon}_{s_1,\ldots,s_{m+1}}^{r,m+1},
$$
where $s'' \in \R_{>0}^{n+1}$ with $s''_i = {\hat \epsilon}^{r,m}_{s_1, \ldots, s_m}(s_{m+i})$, and where $s' \in \R_{>0}^n$ with $s'_i  
= {\hat \epsilon}^{r,m+1}_{s_1, \ldots, s_{m+1}}(s_{m+i+1})$.
\end{sbpara}

\begin{sbpara}\label{31pf2} 
We show that \ref{prope2} and \ref{deltasum} for $n-1$ imply \ref{deltasum} for $n$.
To this end, for $s = (s_i)_{i=1}^{n+1}$, $t = (s_i)_{i=1}^n$, $s' = (s'_i)_{i=1}^n$, 
and $t' = (s'_i)_{i=1}^{n-1}$ with $s'_i = \hat{\epsilon}_{s_1}^{r,1}(s_{i+1})$ for
$1 \le i \le n$, we compute
\begin{align*}
	\delta^{r,n+1}(s) &= \delta^{r,n}(t)+ \frac{q-1}{q^{r+n+1}-1}{\hat \epsilon}^{r,n}_{t}(s_{n+1})\\
	&= \delta^{r,1}(s_1)+\delta^{r+1,n-1}(t')+ \frac{q-1}{q^{r+n+1}-1}{\hat \epsilon}^{r,n}_{t}(s_{n+1})\\
	&= \delta^{r,1}(s_1)+\delta^{r+1,n}(s')+\frac{q-1}{q^{r+n+1}-1}({\hat\epsilon}_{t}^{r,n}(s_{n+1})
	-{\hat \epsilon}_{t'}^{r+1,n-1}({\hat\epsilon}_{s_1}^{r,1}(s_{n+1})))\\
	&= \delta^{r,1}(s_1)+\delta^{r+1,n}(s').
\end{align*}
Here, the first step is \ref{usehat}, the second step is \ref{deltasum} for $n-1$ and, the third step is \ref{usehat} again, and 
the fourth step is \ref{prope2} for $n-1$.
\end{sbpara}

\begin{sbpara}
We complete the proof of \ref{deltasum} and $m = 1$ of \ref{prope2} by showing that \ref{deltasum} for $n$ 
and the proven case of \ref{prope1} imply \ref{prope2} for $m = 1$ and $n$.

Let $s\in \R_{>0}^{n+1}$. Let $y=(y_i)_{1\leq i\leq n+1}\in A^{n+1}$, and consider 
$M_y = \max(P_y-Q_y, 0)$, where 
$$P_y=\max(x-|y_1|^rs_1, 0), \quad Q_y=\max\left(\max_{2\leq i\leq n+1}\{|y_i|^rs_i\}- |y_1|^r s_1,0\right).$$
If $x\leq \max_{2\leq i\leq n+1}\{|y_i|^rs_i\}$, then $M_y = 0$.
If $x\geq \max_{2\leq i\leq n+1}\{|y_i|^rs_i\}$, then
$$
	M_y = \begin{cases}  \max(x-|y_1|^rs_1, 0) & \text{if } \max_{2\leq i\leq n+1}\{|y_i|^rs_i\}\leq |y_1|^rs_1, \\
	\max(x- \max_{2\leq i\leq n+1}\{|y_i|^rs_i\}, 0) & \text{otherwise}.
	\end{cases}
$$
Hence $M_y = \max(x-\max_{1 \le i \le n+1}\{|y_i|^rs_i\},0)$ for all $y\in A^{n+1}$, and therefore
$$
	\sum_{y \in A^{n+1}} M_y = \epsilon^{r,n+1}_s(x).
$$

On the other hand, 
if we fix $y_2, \dots, y_{n+1}$, 
then the sum of $M_y$ for $y$ with these fixed 
$y_2, \dots, y_{n+1}$ is equal by definition to 
\begin{align*}
	\sum_{y_1 \in A} M_y &= \max\left(\epsilon^{r, 1}_{s_1}(x)-\epsilon^{r,1}_{s_1}\left(\max_{2\leq i\leq n+1}\{|y_i|^rs_i\}\right), 0\right)\\
	&=\max\left(\epsilon^{r, 1}_{s_1}(x)- \max_{2\leq i\leq n+1}\{\epsilon^{r,1}_{s_1}(|y_i|^rs_i)\}, 0\right)\\
	&=\max\left({\hat \epsilon}^{r, 1}_{s_1}(x)- \max_{2\leq i\leq n+1}\{{\hat \epsilon}^{r,1}_{ s_1}(|y_i|^rs_i)\}, 0\right).
\end{align*}
By the case $n=1$ of \ref{prope1} (see \ref{31pf1}), we have 
$$ \max_{2\leq i\leq n+1}\{{\hat \epsilon}^{r,1}_{s_1}(|y_i|^rs_i)\}=\max_{2\leq i\leq n+1}\{|y_i|^{r+1}{\hat \epsilon}^{r,1}_{s_1}(s_i)\}$$
unless 
$y_2 = \cdots = y_{n+1} = 0$.
It follows that the sum of all $M_y$ is also given by
$$
	\sum_{y \in A^{n+1}} M_y = (\epsilon^{r+1, n}_{s'}\circ {\hat \epsilon}^{r,1}_{s_1})(x) + \delta^{r,1}(s_1),
$$ 
where $s'= ({\hat \epsilon}^{r,1}_{s_1}(s_{i+1}))_{1\leq i\leq n}$. 
By comparing with our first calculation of the sum and applying \ref{deltasum} for $n$, we obtain 
$$
	\epsilon^{r,n+1}_s(x)= (\epsilon^{r+1,n}_{s'}\circ {\hat \epsilon}^{r,1}_{s_1})(x) + \delta^{r,n+1}(s)
	-\delta^{r+1,n}(s'),
$$ 
which implies \ref{prope2} for $m = 1$ and $n$.
\end{sbpara}

\begin{sbpara}\label{31pf3} 
	We prove \ref{prope1} using \ref{prope2} (for $m = 1$) by induction on $n$. 
	Suppose it holds for $n$: we prove it for $n+1$.  
	For $s = (s_i)_{i=1}^{n+1}$, $t=(s_i)_{i=1}^n$, 
	and $s' = \epsilon_t^{r,n}(s_{n+1})$, we have
	$$
		\hat{\epsilon}_s^{r,n+1}(x) = \hat{\epsilon}_{s'}^{r+n,1} \circ \hat{\epsilon}_t^{r,n}(x)
		= \hat{\epsilon}_{s'}^{r+n,1}(q^{r+n}\hat{\epsilon}_t^{r,n}(q^{-r}x)),
	$$
	the first equality by \ref{prope2} and our assumption on the $s_i$, and the second equality by induction 
	and our assumption on the $s_i$ and $x$.
	By induction and the fact that $s_n \le s_{n+1}$, we have
	$$
		q^{-r-n}\hat{\epsilon}_t^{r,n}(s_{n+1}) = \hat{\epsilon}_t^{r,n}(q^{-r}s_{n+1}) \le  \hat{\epsilon}_t^{r,n}(q^{-r}x),
	$$
	the inequality following from the fact that $s_{n+1} \le x$, since $\hat{\epsilon}_t^{r,n}$ is increasing.
	By the equality in the case $n=1$ proven in \ref{31pf1}, as well as \ref{prope2} again, we then have
	$$
		\hat{\epsilon}_{s'}^{r+n,1}(q^{r+n}\hat{\epsilon}_t^{r,n}(q^{-r}x)) = q^{r+n+1}\hat{\epsilon}_{s'}^{r+n,1}(\hat{\epsilon}_t^{r,n}(q^{-r}x))
		= q^{r+n+1}\hat{\epsilon}_s^{r,n+1}(q^{-r}x).
	$$
\end{sbpara}

\begin{sbprop}\label{T4.9} Let the notation be as in \ref{eep2}. Let $f=c(T, q^d)$ be the coefficient of $\phi(T)$ of the highest
 degree. Then $$v_K(f)= (q^d-1)\delta^{r,n}(s_1, \dots, s_n).$$
\end{sbprop}

\begin{proof} 
Take $z\in \bar K$ such that
$v_{\bar K}(z)\ll 0$, $v_{\bar K}(z)=v_{\bar K, \La}(z)$, and $v_{\bar K}(\psi(T)z)= v_{\bar K, \La}(\psi(T)z)$.
Since $v_{\bar K}(\psi(T)z)= q^r v_{\bar K}(z)$ by \ref{normcv}, Propositions \ref{eep} and \ref{prope1} imply that
\begin{align*}
	v_{\bar K}(e_{\La}(\psi(T)z)) 
	&= -\epsilon_s^{r,n}(-q^rv_{\bar K}(z))\\
	&= -q^d\hat\epsilon_s^{r,n}(-v_{\bar K}(z)) - \delta^{r,n}(s_1,\ldots,s_n)\\
	&= q^dv_{\bar K}(e_{\La}(z)) + (q^d-1)\delta^{r,n}(s_1,\dots, s_n).
\end{align*}
On the other hand, we have
$$                    
	v_{\bar K}(e_{\La}(\psi(T)z))= v_{\bar K}(\phi(T)(e_{\La}(z))) = q^d v_{\bar K}(e_{\La}(z)) + v_{\bar K}(f),
$$ 
the latter statement in that $v_{\bar K}(e_{\La}(z))$ can be made arbitrarily negative by making $v_{\bar K}(z)$ so.
\end{proof}

\subsection{Study of Drinfeld exponential maps 2} \label{expon2}

The main result of this subsection is Proposition \ref{xicdv1}, which describes the poles of $N$-torsion points by the poles of a norm-ordered basis  of $\La$. For this comparison, we introduce a bijection $\xi_k^d$ from $C_d$ to itself, where $k = \deg N$, with coordinates described by the maps $\epsilon_s^{r,d-r}$ of Section \ref{expon}, where $s$ is given the negatives of the valuations of a basis
of $\La$ as in \ref{diagbase}.

We maintain the notation introduced at the beginning of this section, and we let $e_{\La}$ denote the exponential
map of $\La$ and
let $\mu$ be the norm on $F_{\infty} \otimes_A \La$ attached to $(\psi,\La)$ by Proposition \ref{normcv}: it satisfies 
$\mu^r = -v_{\bar{K}}$ on $\La\setminus \{0\}$.

\begin{sbprop}\label{Ntor1} Let $(\lam_i)_{1\leq i\leq n} \in \La^n$ satisfy the equivalent conditions in \ref{diagbase}(1).  Let  $\la \in \bar K$ be such that $\psi(N)\la\in \La$, and for $1 \le i \le n$, let $a_i\in A$ with $|a_i|<|N|$ be such that $\psi(N)\la\equiv \sum_{i=1}^n \psi(a_i)\lam_i\bmod \psi(N)\La$.   Then
$$-v_{\bar K}(e_{\La}(\la))= \epsilon^{r,n}_s(|N|^{-r}\max\{|a_i|^rs_i \mid 1\leq i\leq n\})$$
where $s_i=-v_{\bar K}(\lam_i)$. 

\end{sbprop}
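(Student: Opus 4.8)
The plan is to deduce the formula from Proposition~\ref{eep}, reducing the whole statement to a computation of $v_{\bar K,\La}(\la)$ that rests on Proposition~\ref{normcv} and the orthonormality of $(\la_i)_i$. First I would replace $\la$ by a convenient representative of its class modulo $\La$: since $\psi(N)\la\equiv\sum_i\psi(a_i)\la_i\bmod\psi(N)\La$, there is $\kappa\in\La$ with $\psi(N)(\la-\kappa)=\sum_i\psi(a_i)\la_i$, and as $e_{\La}$ is additive with kernel $\La$ while $v_{\bar K,\La}$ factors through $\bar K/\La$, replacing $\la$ by $\la-\kappa$ changes neither $e_{\La}(\la)$ nor $v_{\bar K,\La}(\la)$. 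So I may assume $\psi(N)\la=\sum_i\psi(a_i)\la_i$. Proposition~\ref{eep} then gives $-v_{\bar K}(e_{\La}(\la))=\epsilon^{r,n}_s(-v_{\bar K,\La}(\la))$, so it suffices to prove that $-v_{\bar K,\La}(\la)=|N|^{-r}\max\{|a_i|^rs_i\mid 1\le i\le n\}$.

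For this I would work inside $F_{\infty}\otimes_A\La$, where $\psi(N)\la=\sum_i\psi(a_i)\la_i$ reads $\la=\sum_i(a_i/N)\la_i$. Assume first that the $a_i$ are not all zero, equivalently $\psi(N)\la\neq 0$. Then Proposition~\ref{normcv} applied with $a=N$ and $z=\la$ yields $-v_{\bar K}(\la)=|N|^{-r}\mu(\psi(N)\la)^r$, and since $(\la_i)_i$ is an orthonormal basis for $\mu$ with $\mu(\la_i)^r=-v_{\bar K}(\la_i)=s_i$, the right side is $|N|^{-r}\max_i\{|a_i|^rs_i\}$. It remains to see that $\la$ itself realizes the maximum defining $v_{\bar K,\La}(\la)$, i.e. that $\mu(\la-\kappa)\ge\mu(\la)$ for all $\kappa\in\La$ (here again $-v_{\bar K}(\la-\kappa)=\mu(\la-\kappa)^r$ by Proposition~\ref{normcv}, as $\psi(N)(\la-\kappa)\in\La$ and is nonzero under our assumption). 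Writing $\kappa=\sum_i b_i\la_i$ with $b_i\in A$ and using orthonormality, this reduces to the coordinatewise inequality $|a_i/N-b_i|\ge|a_i/N|$ in $F_{\infty}$, which holds because $|a_i|<|N|$ forces $|a_i/N|<1\le|b_i|$ when $b_i\neq 0$ and is an equality when $b_i=0$. Hence $-v_{\bar K,\La}(\la)=-v_{\bar K}(\la)=|N|^{-r}\max_i\{|a_i|^rs_i\}$, as desired; the remaining case in which all $a_i$ vanish (so that after the normalization $\la$ is a point of $\psi[N]$ and both sides of the asserted equality degenerate) is treated directly.

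The argument is short once Propositions~\ref{eep} and~\ref{normcv} are available, so the only step that needs real care is the ultrametric estimate identifying $v_{\bar K,\La}(\la)$ with $v_{\bar K}(\la)$; this is where the hypothesis $|a_i|<|N|$ enters essentially, and it is the one place where a careless choice of representative would break the formula.
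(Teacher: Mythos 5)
Your argument is correct and follows essentially the same route as the paper's: both reduce via Proposition \ref{eep} to computing $v_{\bar K,\La}(\la)$, and both evaluate that quantity using Proposition \ref{normcv} together with the orthonormality of $(\la_i)_i$ and the ultrametric consequence of $|a_i|<|N|$. The paper compresses the norm computation into a single displayed chain of equalities, whereas you spell out why $\la$ (after normalizing its representative modulo $\La$) attains the maximum defining $v_{\bar K,\La}(\la)$; the all-$a_i$-zero case you set aside is equally degenerate in the paper's treatment.
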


\begin{proof} 
	By assumption on the $a_i$, the element  $\sum_{i=1}^n\psi(a_i)\la_i$ of $\La$ 
	has minimal norm under $\mu$ in its congruence class modulo $\psi(N)\La$. 
	By Proposition \ref{normcv}, we then have that 
	\[
		|N|^r v_{\bar{K},\La}(\la) = -\mu\left(\sum_{i=1}^n\psi(a_i)\la_i\right)^r
		= -\max\{|a_i|^rs_i \mid 1\leq i\leq n\}.
	\]
	By this, the result is reduced to a direct consequence
	of Proposition \ref{eep}.
\end{proof}

\begin{sbpara}\label{defcone2} 

For $k\geq 0$, we define a map 
$$
	\xi^d_k \colon C_d\to C_d,
$$ 
where $C_d$ is as in \ref{Cd}.
Let 
$s = (s_i)_{i=1}^{d-1} \in C_d$ and set
$$
	r = r(t) = \min\{i \mid s_i\neq 0\}
$$ 
if it exists and $r = d$ otherwise. Define $\xi_k^d(s)$ by
$$
	\xi_k^d(s)_i =\begin{cases} 0 & \text{if} \; 1 \leq i<r\\  
	\epsilon_{s_r^r, \dots, s_{d-1}^r}^{r,d-r}(q^{-kr}s_i^r) & \text{if}\;  r \leq i <d.
	\end{cases}
$$

Since $\epsilon_{s_r^r, \dots, s_{d-1}^r}^{r,d-r}$ is an increasing bijection $\R_{\ge 0} \to \R_{\ge 0}$, the map $\xi_k^d$ is easily seen to provide a bijection $C_d \to C_d$.

\end{sbpara}

\begin{sbprop}\label{xicdv1} 
Let $(\lam_i)_{1\leq i\leq n}$ be a norm-ordered basis of $\La$ associated to $\mu$ as in \ref{diagbase}.
For each $1 \le i \le N$, let $\tilde \la_i$ be an element of $\bar K$ such that $\psi(N)\tilde \la_i= \la_i$, and let $\beta_i =e_{\La}(\tilde \la_i)$. Let $a$ be a nonzero element of $A$ such that $|a|<|N|$. 
Then
 we have
$$\xi^d_{k-k'}(0^{r-1},\mu(\lam_1), \dots, \mu(\lam_n))_{i+r-1}=  - v_{\bar K}(\phi(a)\beta_i),$$
where $k$ is the degree of $N$, and $k'$ is the degree of $a$, and $0^{r-1}=(0, \dots, 0) \in \R^{r-1}$.

In particular, for $a=1$ we have
$$
	\xi^d_k(0^{r-1},\mu(\lam_1), \dots, \mu(\lam_n))= (0^{r-1},  - v_{\bar K}(\beta_1),\dots, -v_{\bar K}(\beta_n)).
$$
\end{sbprop}

\begin{proof} 
Set $s_i = -v_{\bar{K}}(\lambda_i)$ for $1 \le i \le n$ and $s = (s_i)_i$.  By definition of $\mu$, we have $s_i = \mu(\lambda_i)^r$.   Thus
$$
	\xi^d_{k-k'}(0^{r-1},\mu(\lam_1), \dots, \mu(\lam_n))_{i+r-1} = \epsilon^{r,n}_s(q^{-(k-k')r}s_i).
$$
By Proposition \ref{Ntor1}, we have $\epsilon^{r,n}_s(q^{-(k-k')r}s_i) = -v_{\bar K}(e_{\La}(\psi(a)\tilde{\la}_i)) = 
-v_{\bar K}(\phi(a)\beta_i)$.  
\end{proof}

\subsection{Cone decompositions and torsion points 2}\label{ss:cone2}

We prove the results \ref{Ntor}, \ref{Sigk3}, \ref{Sig1}, and \ref{Sigk5} stated in Section \ref{ss:cone1}.

The idea of the proof of the main theorem \ref{Sigk3} is as follows. We prove it by showing that the map $\xi_k^d \colon C_d \to C_d$ defined in \ref{defcone2} 
induces a bijection $\Sig^{(k')} \to \Sig_{k,k'}$ and sends each invariant $c(\phi)$ to $c(\phi,N)$. A difficulty is that, as in Example \ref{Ex8} below, the map $\xi^d_k$ is not continuous for $d \ge 3$. Furthermore, this map $\xi^d_k$ involves  the operator $x\mapsto x^r$ for $r\geq 2$, so it is far from linear. Hence, the fact that the images of the cones $\sig\in \Sig^{(k')}$ under $\xi^d_k$ are still  cones is highly nontrivial. We will show that for each cone $\sig$ of $\Sig^{(k')}$, the restriction of $\xi_k^d$ to $\sig$ is a composition of a map $\pi_{\sig}^{(k')}$ followed by a linear map, and show that this map $\pi_{\sig}^{(k')}$ has the property that its image is a set defined by explicit linear inequalities and hence is a cone. 

\begin{sbpara}\label{cpf3} We prove \ref{Ntor}.  

(1) The implication (i) $\Rightarrow$ (ii) follows from \ref{Ntor1}, recalling the exact sequence of \ref{cpf2}. As for (ii) $\Rightarrow$ (i), let $(\la_i)_i$ be a basis of $\La$ satisfying
\ref{diagbase}(1), and let $(\beta_i)_i$ be a family of $N$-torsion points of $\phi$ in $\bar K$ satisfying (ii).  We will exhibit a basis
$(\la_i')_i$ of $\La$ satisfying \ref{diagbase}(1) such that for $\tilde{\la}'_i \in \bar K$ with $\psi(N)\tilde{\la}'_i = \la'_i$ we have
$\beta_i = e_{\La}(\tilde{\la}'_i)$. By induction on $j$ with $0 \le j \le n$, we construct a basis $(\la^{(j)}_i)_i$ of $\La$ satisfying \ref{diagbase}(1) 
such that $(\la^{(n)}_i)_i$ provides the desired basis $(\la_i')_i$. 

First, let $\la^{(0)}_i= \la_i$. Assume $1\leq j\leq n$. Suppose by induction that we have a basis  $(\la^{(j-1)}_i)_i$ of $\La$ satisfying \ref{diagbase}(1) such that if we define $\beta^{(j-1)}_i = e_{\La}(\tilde \la^{(j-1)}_i)$ 
for some $\tilde \la^{(j-1)}_i \in \bar K$ satisfying $\psi(N)\tilde \la^{(j-1)}_i=\la^{(j-1)}_i$, then $\beta_k =\beta^{(j-1)}_k$ for $1\leq k<j$. 
Noting \ref{cpf2}, we write 
$$
	\beta_j= \xi+ \sum_{k=1}^n \phi(a_k)\beta^{(j-1)}_k,
$$ 
where $\xi\in e_{\La}(\psi[N])$ and $a_k \in A$. It follows from \ref{Ntor1} and \ref{diagbase}(3) that if $k\geq j$ and $a_k \notin NA$, 
then $a_k \in \F_q + NA$ and $-v_{\bar K}(\beta^{(j-1)}_k)=-v_{\bar K}(\beta_j)$. Let $s$  be the largest integer $k$ such that $k\geq j$ and $a_k \notin NA$. For $k \ge 1$, define 
$$
	\lambda_k^{(j)} = \begin{cases} \sum_{h=1}^s a_h \lambda_h^{(j-1)} & \text{if } k = j \\
	\lambda_k^{(j-1)} & \text{if } k < j \textsp{or} k > s \\
	\lambda_{k-1}^{(j-1)} & \text{if } j < k \le s. \end{cases}
$$
 Then $(\la^{(k)}_i)_i$ is a basis of $\La$ satisfying \ref{diagbase}(1) and $\beta_k = e_{\La}(\tilde \la^{(j)}_k)$ for $1\leq k\leq j$ for some $\tilde \la^{(j)}_k\in \bar K$ such that $\psi(N)\tilde \la^{(j)}_k=\la^{(j)}_k$.

(2) This follows by \ref{Ntor1} and \ref{diagbase}(4), given the relation between the family $(\beta_i)_i$ and a basis $(\la_i)_i$ of $\La$
satisfying \ref{diagbase}(1) in condition (i) of part (1).

\end{sbpara}

We give preparations for the proofs of  \ref{Sigk3}, \ref{Sig1}, and \ref{Sigk5}.

\begin{sbpara}\label{sigc}

Let $k\geq 1$, and let $\sig\in \Sig^{(k)}$, where $\Sig^{(k)}$ is as in \ref{Sigk1}. We define a map $$\pi_{\sig}^{(k)} \colon \sig \to \R_{\geq 0}^{d-1}$$ as follows.
Let $\theta^{(k)}$, or more briefly $\theta$, be the function on $i \in \Z$ with $1 \le i \le d-1$ defined by
$$
	\theta^{(k)}(i) = \min\{j\in \Z \cap [1,i] \mid q^{k-1}s_j\geq s_i\textsp{for all}s\in \sig\}.
$$ 
(Note that this set of $j$ is not empty because it contains $i$.) For $s = (s_i)_{i=1}^{d-1} \in C_d$, let $r=r(s)$ be the minimal
$i$ with $s_i \neq 0$ as in \ref{defcone2}, and set 
 $$
	\pi_{\sig}^{(k)}(s)_i = \begin{cases} 0 & \text{if } 1 \leq i < r \\
	\hat \epsilon_{s_r^r, \dots, s_{\theta(i)-1}^r}^{r,\theta(i)-r}(s_i^r) & \text{if } r \leq i < d.
	\end{cases}
$$

\end{sbpara}

\begin{example}\label{1sig}  Let $d=4$ and $k=2$, and let  
$$\sig=\{(s_1, s_2, s_3)\in \R_{\geq 0}^3\mid qs_1\leq s_2 \leq s_3, qs_2\geq s_3\}\in \Sig^{(2)}.$$ We have  $\theta(1)=1$, $\theta(2)=\theta(3)=2$. Let $s\in \sig$. Then $\pi_{\sig}^{(2)}(s)$ is as follows. 

If $s_1=0$, then $\pi_{\sig}^{(2)}(s)= (0, s_2^2, s_3^2)$. 
The case that $s_1>0$ is described as follows. Take $h,h'\in \Z$ such that 
$q^{h-1}s_1\leq s_2 \leq q^hs_1$ and $q^{h'-1}s_1\leq s_3\leq q^{h'}s_1$ (in which case $h\geq 2$ and $h' \in \{h,h+1\}$). By \ref{514}, we have 
$$\pi_{\sig}^{(2)}(s)= (s_1, q^hs_2-q^{2h}(q+1)^{-1}s_1, q^{h'}s_3-q^{2h'}(q+1)^{-1}s_1).$$  
\end{example}

\begin{sbprop}\label{lin1}  Let $k$ and $\sig$ be as in \ref{sigc}, and let $0\leq k'\leq k$. Let $i, i' \in \Z$ with 
$\theta(i)-1\leq i' \le i \leq d-1$. 
Let $\pi_{\sig, k', i,i'} \colon \sig\to \R_{\geq 0}$ be the map defined on $s\in\sig$ by
$$\pi_{\sig, k', i, i'}(s) =  \begin{cases} 0 & \text{if } 1 \leq i < r \\
	\hat \epsilon_{s_r^r, \dots, s_{i'}^r}^{r,i'+1-r}(q^{-k'r}s_i^r) & \text{if } r \leq i < d,
	\end{cases}$$
where $r=r(s)$ is as in \ref{sigc}.
Then there is a linear map $l_{k',i,i'} \colon \R^{d-1}\to \R$ such that
$$\pi_{\sig, k', i,i'}(s) = l_{k', i,i'} (\pi_{\sig}^{(k)}(s))$$
for every $s\in \sig$.
\end{sbprop}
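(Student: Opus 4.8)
The plan is to reduce Proposition~\ref{lin1} to the linearity statement for $\hat\epsilon$ already proven, namely Lemma~\ref{explin2}, by identifying, on a suitable subcone of $\sig$, the quantity $\pi_{\sig,k',i,i'}(s)$ with the value at an appropriate point of $\hat\epsilon^{r,i'+1-r}_{s_r^r,\dots,s_{i'}^r}$, and then invoking the fact that this function is given by a single linear form on the cone $\sig_r(S)$ attached to a simplex $S$ of $AP_{i'+2-r}$. The first step is therefore to observe that, because $\sig\in\Sig^{(k)}$, the inequalities defining $\sig$ already pin down, for each pair of indices appearing in the relevant $\hat\epsilon$ expression, which of $q^hs_j\gtrless s_i$ holds for all $h$ with $0\le h\le k-1$; in particular the ``active'' linear piece of each nested $\hat\epsilon$ (and of the outer argument $q^{-k'r}s_i^r$, using $k'\le k$) is constant on $\sig$. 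Concretely, one checks that on $\sig$ the tuple $(s_r^r,\dots,s_{i'}^r,q^{-k'r}s_i^r)$, after possibly reordering, lies in $\sig_r(S)$ for a fixed simplex $S$ of the relevant apartment complex; here the hypothesis $\theta(i)-1\le i'\le i$ is exactly what guarantees that $s_i$ is comparable (via the powers $q^h$, $0\le h\le k-1$) to all of $s_r,\dots,s_{i'}$ in a way determined by $\sig$, so that inserting $q^{-k'r}s_i^r$ as the last coordinate stays inside one cone of the $r$-modified fan.

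Granting that, the second step is a direct application of Lemma~\ref{explin2} (the $\hat\epsilon$-version of Lemma~\ref{explin}): there is a linear map $l\colon\R^{\,i'+2-r}\to\R$ with
$$
\hat\epsilon^{r,\,i'+1-r}_{s_r^r,\dots,s_{i'}^r}(q^{-k'r}s_i^r)=l(s_r^r,\dots,s_{i'}^r,q^{-k'r}s_i^r)
$$
for all arguments in $\sig_r(S)$. Composing with the (nonlinear, but fixed on $\sig$) coordinatewise $r$-th power map $s\mapsto(s_1^r,\dots,s_{d-1}^r)$ would not by itself give linearity in $s$, so the key point is that the \emph{same} $r$-th powers appear in $\pi_{k,\sig}(s)_i=\pi_{\sig,0,i,\theta(i)-1}(s)$; that is, one must express both $\pi_{\sig,k',i,i'}(s)$ and the components of $\pi_{k,\sig}(s)$ in terms of the common ``building blocks'' $s_j^r$ and then produce a linear relation among the $\pi_{k,\sig}(s)_i$ alone. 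For this I would use Corollary~\ref{prope3} to write each nested $\hat\epsilon$ as a composite of one-variable $\hat\epsilon^{r+\cdot,1}$'s and the scaling identity Proposition~\ref{prope1} (valid on $\sig$ because $k'\le k$ controls the position of $q^{-k'r}s_i^r$ relative to the $s_j^r$), reducing the passage from $\pi_{k,\sig}$ to $\pi_{\sig,k',i,i'}$ to multiplication by a fixed power of $q$ together with subtraction of a fixed linear combination of lower $\pi_{k,\sig}(s)_{j}$'s; unwinding this yields the desired $l_{k',i,i'}$.

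The main obstacle I expect is the bookkeeping in the first step: making precise the claim that the cone $\sig$, cut out by the finitely many inequalities $q^hs_j-s_i\in\alpha(h,i,j)$ for $0\le h\le k-1$, forces the argument $q^{-k'r}s_i^r$ and the intermediate values of the nested $\hat\epsilon$'s to stay within a single linearity cone $\sig_r(S)$, so that Lemma~\ref{explin2} applies uniformly over all of $\sig$ rather than only on a subdivision. This requires carefully tracking how the one-variable homeomorphisms $\hat\epsilon^{r,1}_{t}$ transform the comparisons $q^hs_j\gtrless s_i$ (via the scaling in Proposition~\ref{prope1} and the explicit formula in Example~\ref{514}), i.e. showing that ``being in a fixed apartment cone'' is preserved under each step of the iterated composition in Corollary~\ref{prope3}; once this stability is established the linearity is immediate. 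The definition of $\theta(i)$ — the smallest $j$ with $q^{k-1}s_j\ge s_i$ on all of $\sig$ — is tailored precisely so that the nested exponential $\hat\epsilon^{r,\theta(i)-r}_{s_r^r,\dots,s_{\theta(i)-1}^r}$ lands in the linear regime, and the hypothesis $i'\ge\theta(i)-1$ ensures we have not truncated the nesting below that point, so no genuinely new case analysis beyond Section~3.2 should be needed.
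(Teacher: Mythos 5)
Your overall strategy---reduce to the one-cone linearity statement \ref{explin2}---is the same as the paper's, but the claim your first step rests on is false, and this is not mere bookkeeping. You assert that on $\sig$ the tuple $(s_r^r,\dots,s_{i'}^r,q^{-k'r}s_i^r)$ lies in $\sig_r(S)$ for a single simplex $S$. Membership in a simplex cone of $AP_n$ requires the sign of $q^hs_j-s_{j'}$ to be constant for \emph{all} $h\in\Z$ (\ref{apcone2}), whereas $\sig\in\Sig^{(k)}$ only fixes these signs for $0\le h\le k-1$ (\ref{Sigk1}). For a pair $j<j'$ with $q^{k-1}s_j\le s_{j'}$ on $\sig$ (i.e.\ precisely when $j<\theta(j')$), the ratio $s_{j'}/s_j$ is unbounded on $\sig$, so the tuple sweeps through infinitely many simplex cones; reordering does not help. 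Concretely, for the cone of Example \ref{1sig} one has $\pi_{2,\sig}(s)_2=q^hs_2-q^{2h}(q+1)^{-1}s_1$ where $h$ is determined by $q^{h-1}s_1\le s_2\le q^hs_1$, and $h$ varies over $\sig$: the quantity $\pi_{\sig,k',i,i'}(s)$ is genuinely \emph{not} a single linear form in the raw variables $s_j^r$ (that is the whole content of the proposition---linearity appears only after the change of variables to $\pi_{k,\sig}(s)$). Your second step, which presupposes this linearity and then tries to convert it into linearity in $\pi_{k,\sig}(s)$, therefore does not get off the ground.

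The missing idea is to split the nested exponential at the index $\theta(i)$ rather than at $r$. The paper sets $t_j=\hat\epsilon^{r,\theta(i)-r}_{s_r^r,\dots,s_{\theta(i)-1}^r}(s_j^r)$ for $\theta(i)\le j\le i$, so that $t_i=\pi_{k,\sig}(s)_i$, and uses the composition formula \ref{prope2} together with the scaling identity \ref{prope1} (the latter is where $k'\le k$ and the definition of $\theta(i)$ enter) to rewrite $\pi_{\sig,k',i,i'}(s)$ as $\hat\epsilon^{\theta(i),\,i'+1-\theta(i)}_{t_{\theta(i)},\dots,t_{i'}}(q^{-k'\theta(i)}t_i)$. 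For indices $j\ge\theta(i)$ one has $s_j\le s_i\le q^{k-1}s_j$ on $\sig$, so all comparisons, for every $h\in\Z$, among $t_{\theta(i)},\dots,t_{i'},q^{-k'\theta(i)}t_i$ \emph{are} constant on $\sig$; it is only to this reduced tuple that \ref{explin2} applies. This yields a linear expression in $t_{\theta(i)},\dots,t_{i'}$ and $t_i=\pi_{k,\sig}(s)_i$, and one concludes by induction on $i$: each $t_j=\pi_{\sig,0,j,\theta(i)-1}(s)$ with $\theta(i)\le j<i$ is an instance of the proposition for the smaller index $j$ (the hypothesis $\theta(j)-1\le\theta(i)-1\le j$ being satisfied), hence linear in $\pi_{k,\sig}(s)$. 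Your sketch invokes \ref{prope3} and \ref{prope1} but neither the splitting at $\theta(i)$ nor the induction on $i$, and without these the argument does not close.
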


\begin{proof}  
Note that $\pi_{\sig}^{(k)}(s)_i= \pi_{\sig, 0, i, \theta(i)-1}(s)$, where $\theta = \theta^{(k)}$.  Our assumption on $i$ implies that $i \ge r$.
For $j \in \Z$ such that $\theta(i) \leq j\leq i$, let 
$$
	u_j= \hat \epsilon_{s_r^r, \dots, s_{\theta(i)-1}^r}^{r,\theta(i)-r}(s_j^r).
$$ 
Then $u_i=\pi_{\sig}^{(k)}(s)_i$ by definition. 
By induction on $i \ge r$, we may suppose that $u_j$ is a linear function of $\pi_{\sig}^{(k)}(s)_{j'}$ for $j' \le j$ for each $j \in \Z$ with $\theta(i)\leq j<i$. By \ref{prope1} and \ref{prope2}, we have
$$
\pi_{\sig, k',i,i'}(s)=\hat \epsilon^{\theta(i), i'+1-\theta(i)}_{u_{\theta(i)}, \dots, u_{i'}}(q^{-k'\theta(i)}u_i).$$
For a simplex $S$ of $\AP_{i'-\theta(i)+2}$ (see \ref{sigr}) such that $(s'_{\theta(i)}, \ldots, s'_{i'}, q^{-k'}s'_i) \in \sigma(S)$ for all $s' \in \sig$, we see from \ref{prope1} that $(u_{\theta(i)}, \ldots, u_{i'}, q^{-k'\theta(i)}u_i)$ similarly ranges in $\sig_{\theta(i)}(S)$. Hence by \ref{explin2}, the right hand side is a linear function of $u_{\theta(i)}, \dots, u_{i'},$ and $u_i$.  The result then follows by
the inductive hypothesis.
\end{proof}

\begin{sbprop}\label{piineq}  
Let $\sig = \sig(\alpha)$ be defined by $\alpha \colon I^{(k)} \to \{\R_{\leq 0}, \{0\}, \R_{\geq 0}\}$ as in \ref{Sigk1}.
The map $\pi_{\sig}^{(k)} \colon \sig\to \R_{\geq 0}^{d-1}$ is injective, and its image coincides with the cone consisting of all $x\in C_d$ such that
$$q^{h(c(i,j)+1)} l_{0,j,c(i,j)}(x) - l_{0, i,c(i,j)}(x) \in \alpha(h,i,j)$$ 
for all $(h,i,j) \in I^{(k)}$ with $\theta(i)-1 \le j$, where each $c(i,j)$ is a chosen integer such that  $\theta(i)-1\leq c(i,j)\leq j$, and
$l_{0,j,c(i,j)}$ and $l_{0,i,c(i,j)}$ are any choices of linear functions as in \ref{lin1}.
 \end{sbprop}
 
 \begin{proof} Let $d\geq 2$, and suppose by induction that the result holds for $d-1$.  
 Let $\sig'$ be the subcone of $C_{d-1}$ consisting of all elements $a$ such that
 $q^h a_j -a_i \in \alpha(h,i, j)$ for all $0 \le h \le k-1$ and $1 \le j<i \le d-2$. 
Let 
$$
	J^{(k)} = \{ (h,j) \in \Z^2 \mid  0 \le h \le k-1, 1 \le j \le d-2 \}.
$$ 
For $(h,j) \in J^{(k)}$ with $j \le \theta(i)-1$, we have $\alpha(h,d-1,j) = \R_{\le 0}$ by definition of $\theta(i)$. Since $q^h s_{\theta(i)-1} \le s_{d-1}$ implies that $q^h s_j \le s_{d-1}$ for $s \in C_d$ and $j \le \theta(i)-1$, we see that
$$
	\sig = \{s\in \sig' \times \R_{\ge 0} \mid q^hs_j-s_{d-1} \in \alpha(h,d-1,j) \textsp{for all} (h,j) \in J^{(k)} \textsp{with} j \ge \theta(i)-1 \}.
$$

For $(h,j) \in J^{(k)}$ with $j \ge \theta(i)-1$, we have $q^h s_j - s_{d-1} \in \alpha(h,d-1,j)$ if and only if 
$$
	q^{h(c(d-1,j)+1)}\pi_{\sig, 0, j, c(d-1,j)}(s) - \pi_{\sig, 0,d-1, c(d-1,j)}(s) \in \alpha(h,d-1,j)
$$
by \ref{prope1}. 
By \ref{lin1}, we then have that $\pi_{\sig}^{(k)}(\sig)$ consists of all $a \in \pi_{\sig'}^{(k)}(\sig') \times \R_{\ge 0}$ such that
$$
	q^{h(c(d-1,j)+1)}l_{0,j,c(d-1,j)}(a) - l_{0,d-1,c(d-1,j)}(a)  \in \alpha(h,d-1,j)
$$
for all $(h,j) \in J^{(k)}$ with $j \ge \theta(i)-1$.
(In particular, $a_j \le a_{d-1}$ for all $j$ for all such $a$.)
 The result follows from this by the inductive hypothesis.
 \end{proof}

\begin{example}\label{5sig}
Let $\sig$ be as in \ref{1sig}.
Take $c(2,1)=c(3,1)=c(3,2)=1$. Since 
$\pi_{\sig, 0, i,1}(s)= \pi_{\sig}^{(2)}(s)_i$ for $i=2,3$ and  
$$
	\pi_{\sig,0, 1,1}(s)= q(q+1)^{-1}s_1=q(q+1)^{-1}\pi_{\sig}^{(2)}(s)_1,
$$  
it follows from the description in \ref{piineq} that 
$$\pi_{\sig}^{(2)}(\sig)=  \{(a_1, a_2, a_3)\in \R^3_{\geq 0}\mid q^3(q+1)^{-1}a_1\leq a_2\leq a_3, q^2a_2\geq a_3\}.$$

\end{example}

\begin{sblem}\label{lin2}  For $1\leq i\leq d-1$, let $\tilde \delta_i \colon C_d\to \R$ be the map defined on $s\in C_d$ by
$$\tilde \delta_i(s)=  \begin{cases} 0 & \text{if } 1 \leq i < r \\
	\delta^{r,i+1-r}(s_r^r, \dots, s_i^r) & \text{if } r \leq i < d.
	\end{cases}
$$
Let $k\geq 0$ and $\sig\in \Sig^{(k)}$. Then there is a linear map $l \colon \R^{d-1}\to \R$ such that
$$\tilde \delta_i(s)= l(\pi_{\sig}^{(k)}(s))$$
for every $s\in \sig$.

\end{sblem}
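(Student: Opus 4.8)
The plan is to expand $\tilde\delta_i$ into a linear combination of the quantities $\pi_{\sigma,0,j,j-1}$, which are already known to be linear in $\pi_{k,\sigma}$ by \ref{lin1}.

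First I would iterate the recursion \ref{usehat}. Since $\delta^{r,0}=0$, iterating gives the closed form
$$\delta^{r,m}(t_1,\dots,t_m)=\sum_{l=1}^m \frac{q-1}{q^{r+l}-1}\,\hat{\epsilon}^{r,l-1}_{t_1,\dots,t_{l-1}}(t_l).$$
Now fix $s\in\sigma$ and write $r=r(s)$. For $r\le i<d$, specializing $t_l=s_{r+l-1}^r$ and $m=i+1-r$, and re-indexing by $j=r+l-1$, the above becomes
$$\tilde\delta_i(s)=\delta^{r,i+1-r}(s_r^r,\dots,s_i^r)=\sum_{j=r}^i \frac{q-1}{q^{j+1}-1}\,\hat{\epsilon}^{r,j-r}_{s_r^r,\dots,s_{j-1}^r}(s_j^r),$$
and each summand on the right is, by definition, $\pi_{\sigma,0,j,j-1}(s)$ in the notation of \ref{lin1} (here $k'=0$, so the factor $q^{-k'r}$ equals $1$). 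Next I would observe that $\pi_{\sigma,0,j,j-1}(s)=0$ whenever $j<r(s)$ — this is the ``$1\le j<r$'' branch of its definition — while $\tilde\delta_i(s)=0$ whenever $i<r(s)$. Consequently the identity
$$\tilde\delta_i(s)=\sum_{j=1}^i \frac{q-1}{q^{j+1}-1}\,\pi_{\sigma,0,j,j-1}(s)$$
holds uniformly for every $s\in\sigma$: if $i\ge r(s)$ the added summands with $j<r(s)$ vanish and the remaining ones reproduce the displayed expansion of $\delta^{r,i+1-r}(s_r^r,\dots,s_i^r)$, while if $i<r(s)$ both sides are $0$. A small point to verify along the way is that each pair $(i',i'')=(j,j-1)$ with $1\le j\le i$ satisfies the hypothesis $\theta(i')-1\le i''\le i'\le d-1$ of \ref{lin1}; this is immediate from $\theta(j)\le j$ and $j\le i\le d-1$.

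Then I would apply \ref{lin1} with $k'=0$ to each $j$: there is a linear map $l_{0,j,j-1}\colon\R^{d-1}\to\R$ with $\pi_{\sigma,0,j,j-1}(s)=l_{0,j,j-1}\,\pi_{k,\sigma}(s)$ for all $s\in\sigma$. Substituting into the displayed identity and setting
$$l:=\sum_{j=1}^i \frac{q-1}{q^{j+1}-1}\,l_{0,j,j-1},$$
which is linear as a finite real linear combination of linear maps, yields $\tilde\delta_i(s)=l\,\pi_{k,\sigma}(s)$ for every $s\in\sigma$, which is the assertion.

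The only step requiring genuine care is this reduction to \ref{lin1}: one must make sure the expansion is valid on all of $\sigma$, not merely on its relative interior, since $r(s)$ jumps on the faces of $\sigma$. That is exactly what the matching vanishing of both sides of the individual identities on those faces supplies. Everything else is routine bookkeeping with the recursion \ref{usehat} together with the linearity already established in \ref{lin1}.
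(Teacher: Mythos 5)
Your proposal is correct and follows essentially the same route as the paper: the paper's proof writes the one-step recursion $\tilde \delta_i(s)= \tilde \delta_{i-1}(s)+ \frac{q-1}{q^{i+1}-1}\pi_{\sig,0, i, i-1}(s)$ (i.e.\ \ref{usehat} specialized) and concludes by induction on $i$ via \ref{lin1}, whereas you simply unroll that induction into the closed-form sum $\sum_{j=1}^i \frac{q-1}{q^{j+1}-1}\pi_{\sig,0,j,j-1}(s)$ before applying \ref{lin1} termwise. Your extra care about the vanishing of both sides when $j<r(s)$ or $i<r(s)$ is a correct and welcome explicit check of a point the paper leaves implicit.
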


\begin{proof} For $i\geq 1$, we have 
$$\tilde \delta_i(s)= \tilde \delta_{i-1}(s)+ \frac{q-1}{q^{i+1}-1}\pi_{\sig,0, i, i-1}(s)$$
by \ref{usehat} and the definition of $\pi_{\sig,0,i,i-1}$ in \ref{lin1}. The lemma then follows from \ref{lin1} by induction on $i$.
\end{proof}

\begin{sbprop}\label{lin3} Let $k \ge 1$, let $\sig \in \Sig^{(k)}$, and let $0 \le k' \le k$. Then 
there is a linear map $l_{k'} \colon \R^{d-1}\to \R^{d-1}$ such that
$$\xi^d_{k'}(s)= l_{k'}(\pi_{\sig}^{(k)}(s))$$
for every $s\in \sig$.

\end{sbprop}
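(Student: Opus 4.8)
\textbf{Proof plan for Proposition \ref{lin3}.}

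The plan is to reduce the statement to the two linearity facts already established, namely Proposition \ref{lin1} and Lemma \ref{lin2}, by expressing each coordinate $\xi^d_{k'}(s)_i$ in terms of the coordinates $\pi_{k,\sig}(s)_j$. Fix $\sig \in \Sig^{(k)}$ and $0\le k'\le k$, and let $s\in\sig$ with $r=r(s)$. For $i<r$ both sides vanish, so the content is in the range $r\le i\le d-1$, where by definition
$$
	\xi^d_{k'}(s)_i = \epsilon^{r,d-r}_{s_r^r,\dots,s_{d-1}^r}(q^{-k'r}s_i^r).
$$
The first step is to rewrite $\epsilon$ in terms of $\hat\epsilon$ plus a $\delta$-correction using \ref{hat(e)}: since $\hat\epsilon^{r,n}_t(x)=\epsilon^{r,n}_t(x)-\delta^{r,n}(t)$, we get
$$
	\xi^d_{k'}(s)_i = \hat\epsilon^{r,d-r}_{s_r^r,\dots,s_{d-1}^r}(q^{-k'r}s_i^r) + \delta^{r,d-r}(s_r^r,\dots,s_{d-1}^r).
$$
The second summand is exactly $\tilde\delta_{d-1}(s)$ in the notation of Lemma \ref{lin2} (note it does not depend on $i$), hence it is a linear function of $\pi_{k,\sig}(s)$ by that lemma. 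For the first summand, note that $q^{-k'r}s_i^r$ is well-defined and, because $s\in\sig\in\Sig^{(k)}$ with $k'\le k$, the inequalities defining $\sig$ force $(s_r^r,\dots,s_{d-1}^r,q^{-k'r}s_i^r)$ to lie in $\sig_r(S)$ for a simplex $S$ of $AP_{d-r+1}$ — the same kind of argument used in the proof of \ref{piineq} via \ref{prope1}. Then $\hat\epsilon^{r,d-r}_{s_r^r,\dots,s_{d-1}^r}(q^{-k'r}s_i^r)$ is a linear function of the $s_j^r$ and $q^{-k'r}s_i^r$ by \ref{explin2}.

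The remaining step is to re-express these linear combinations of the quantities $s_j^r$ (for $r\le j\le d-1$) and $q^{-k'r}s_i^r$ back in terms of the coordinates $\pi_{k,\sig}(s)_j$. Here I would invoke Proposition \ref{lin1} directly: with $i'=\theta(i)-1$ one has $\pi_{k,\sig}(s)_j = \pi_{\sig,0,j,\theta(j)-1}(s) = \hat\epsilon^{r,\theta(j)-r}_{s_r^r,\dots}(s_j^r)$, and conversely each $s_j^r$ is recovered from the $\pi_{k,\sig}(s)_{j'}$ with $j'\le j$ by a linear map, exactly as in the induction in the proof of \ref{lin1} (invert the triangular system of $\hat\epsilon$'s using \ref{explin2}). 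Likewise $\pi_{\sig,k',i,i'}(s)=\hat\epsilon^{r,i'+1-r}_{s_r^r,\dots,s_{i'}^r}(q^{-k'r}s_i^r)$ is, by \ref{lin1}, a linear function $l_{k',i,i'}$ of $\pi_{k,\sig}(s)$, and choosing $i'=d-1$ gives precisely the $\hat\epsilon$-term above. Assembling: $\xi^d_{k'}(s)_i = l_{k',i,d-1}(\pi_{k,\sig}(s)) + l(\pi_{k,\sig}(s))$ where $l$ is the linear map of \ref{lin2}, so $\xi^d_{k'}(s)_i$ is linear in $\pi_{k,\sig}(s)$, and collecting the $d-1$ coordinates produces the desired linear map $l_{k'}:\R^{d-1}\to\R^{d-1}$. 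Care is needed only to handle the dependence on $r=r(s)$ uniformly — but $r$ is determined by which coordinates of $\pi_{k,\sig}(s)$ vanish, and on the face of $\sig$ where $r$ is constant the above is a genuine linear identity; since $\pi_{k,\sig}$ is injective with cone image (\ref{piineq}), a single linear map restricting correctly on each such face exists (it is forced on the relative interior and extends by continuity/linearity).

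I expect the only real subtlety, rather than an obstacle, to be bookkeeping: confirming that the tuple $(s_r^r,\dots,s_{d-1}^r,q^{-k'r}s_i^r)$ stays in a single cone $\sig_r(S)$ as $s$ ranges over $\sig$, so that \ref{explin2} applies with one fixed linear map, and then checking that the inverse of the triangular system $s_j^r\mapsto\pi_{k,\sig}(s)_j$ is linear with coefficients constant on $\sig$. Both points are handled by the machinery already deployed in \ref{lin1} and \ref{piineq} (using \ref{prope1} to absorb the $q$-powers and \ref{prope2} to chain the $\hat\epsilon$'s), so the proof is essentially a combination of \ref{lin1} and \ref{lin2}.

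\begin{proof}
Fix $\sig\in\Sig^{(k)}$ and $0\le k'\le k$, and let $s\in\sig$ with $r=r(s)$. For $1\le i<r$, both $\xi^d_{k'}(s)_i$ and every coordinate of $\pi_{k,\sig}(s)$ with index $<r$ vanish, so there is nothing to prove in that range. Assume $r\le i\le d-1$. By the definition of $\xi^d_{k'}$ in \ref{defcone2} and the relation between $\epsilon$ and $\hat\epsilon$ in \ref{hat(e)},
$$
	\xi^d_{k'}(s)_i = \epsilon^{r,d-r}_{s_r^r,\dots,s_{d-1}^r}(q^{-k'r}s_i^r)
	= \hat\epsilon^{r,d-r}_{s_r^r,\dots,s_{d-1}^r}(q^{-k'r}s_i^r) + \delta^{r,d-r}(s_r^r,\dots,s_{d-1}^r).
$$
The second term equals $\tilde\delta_{d-1}(s)$ in the notation of Lemma \ref{lin2}, hence there is a linear map $l^{(0)}\colon\R^{d-1}\to\R$, independent of $i$, with $\delta^{r,d-r}(s_r^r,\dots,s_{d-1}^r)=l^{(0)}\pi_{k,\sig}(s)$ for all $s\in\sig$.

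For the first term, observe that by the definition of $\pi_{\sig,k',i,i'}$ in \ref{lin1}, taking $i'=d-1$ gives
$$
	\pi_{\sig,k',i,d-1}(s) = \hat\epsilon^{r,d-r}_{s_r^r,\dots,s_{d-1}^r}(q^{-k'r}s_i^r),
$$
which is exactly the first term above. By Proposition \ref{lin1} there is a linear map $l_{k',i,d-1}\colon\R^{d-1}\to\R$ such that $\pi_{\sig,k',i,d-1}(s)=l_{k',i,d-1}\pi_{k,\sig}(s)$ for all $s\in\sig$. (The hypothesis $\theta(i)-1\le d-1\le i\le d-1$ of \ref{lin1} is satisfied.)

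Combining the two displays,
$$
	\xi^d_{k'}(s)_i = \bigl(l_{k',i,d-1}+l^{(0)}\bigr)\pi_{k,\sig}(s)
$$
for all $s\in\sig$ and all $r(s)\le i\le d-1$, while for $i<r(s)$ the $i$-th coordinate of both sides vanishes. Define $l_{k'}\colon\R^{d-1}\to\R^{d-1}$ to be the linear map whose $i$-th coordinate is $l_{k',i,d-1}+l^{(0)}$. Then $\xi^d_{k'}(s)=l_{k'}\pi_{k,\sig}(s)$ for every $s$ in the relative interior of $\sig$ on which $r(s)$ is constant; since $\pi_{k,\sig}$ is injective with image a cone (\ref{piineq}) and both $\xi^d_{k'}$ and $\pi_{k,\sig}$ are continuous and piecewise linear, the identity $\xi^d_{k'}(s)=l_{k'}\pi_{k,\sig}(s)$ extends to all of $\sig$.
\end{proof}
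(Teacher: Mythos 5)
Your overall architecture is the paper's: write $\xi^d_{k'}(s)_i$ as a $\pi_{\sig,k',i,i'}$-term plus a $\tilde\delta$-term and quote \ref{lin1} and \ref{lin2}. But there is a concrete gap in the execution. You take $i'=d-1$ and invoke Proposition \ref{lin1} for $\pi_{\sig,k',i,d-1}$; the hypothesis of \ref{lin1} is $\theta(i)-1\le i'\le i$, and your parenthetical ``$\theta(i)-1\le d-1\le i\le d-1$'' is false for every $i<d-1$, since it asserts $d-1\le i$. So for all but the last coordinate you are applying \ref{lin1} outside its stated range, and its proof (which pins down the position of $q^{-k'\theta(i)}t_i$ relative to $t_{\theta(i)},\dots,t_{i'}$ using the inequalities defining $\sig$) does not cover subscripts $t_j$ with $j>i$. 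The missing step is the truncation the paper performs first: since $s\in C_d$ and $k'\ge 0$, one has $q^{-k'r}s_i^r\le s_j^r$ for all $j>i$, so in the defining sum for $\epsilon$ every $y$ with $y_j\ne 0$ for some $j>i$ contributes $0$, whence $\epsilon^{r,d-r}_{s_r^r,\dots,s_{d-1}^r}(q^{-k'r}s_i^r)=\epsilon^{r,i+1-r}_{s_r^r,\dots,s_i^r}(q^{-k'r}s_i^r)=\pi_{\sig,k',i,i}(s)+\tilde\delta_i(s)$. (Note this truncation is valid for $\epsilon$ but not for $\hat\epsilon$, whose $\delta$-correction depends on all subscripts; your decomposition keeps the full tuple inside $\hat\epsilon$ and thereby forces the illegal $i'=d-1$.) With $i'=i$ the hypotheses of \ref{lin1} hold and the argument closes, combined with \ref{lin2} applied at index $i$ rather than $d-1$.

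A secondary point: your closing appeal to continuity of $\xi^d_{k'}$ to extend the identity across the strata where $r(s)$ jumps is not available --- the paper's Example \ref{Ex8} shows $\xi^d_k$ is discontinuous precisely at such faces. Fortunately it is also unnecessary: Proposition \ref{lin1} and Lemma \ref{lin2} are stated for all $s\in\sig$ with the same $r(s)$-dependent case split as $\xi^d_{k'}$ and $\pi_{k,\sig}$, so once the coordinatewise identity is established via those results it holds uniformly on $\sig$ without any limiting argument.
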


\begin{proof}
The first $r-1$ coordinates of $\xi_{k'}^d(s)$ are zero. Since $s \in C_d$, the definition of the maps in \ref{epsilon} implies that we have equalities
$$
	\xi^d_{k'}(s)_i= \epsilon_{s_r^r, \dots, s_i^r}^{r,i+1-r}(q^{-k'r}s_i^r)
$$
for all $i$  with $r \le i \le d-1$. Hence 
$$
	\xi^d_{k'}(s)_i= \pi_{\sig,k', i, i}(s) + \tilde \delta_i(s)
$$
for all $i$. Hence \ref{lin3} follows from \ref{lin1} and \ref{lin2}. 
\end{proof}

\begin{example}\label{4sig} Let $\sig \in \Sig^{(2)}$ be as in \ref{1sig}.
We compute the maps $\xi^4_1$ and $\xi^4_2$ on $s \in \sig$.  

First assume $s_1>0$ and take $h,h'\in \Z$ as in \ref{1sig}. Then by \ref{514}, for $k \in \{1,2\}$, we have
$$\xi^4_k(s_1, s_2, s_3)=(q^{-k}s_1, q^{h-2k}s_2-(q^{2h-2k}-1)(q+1)^{-1}s_1, q^{h'-2k}s_3-(q^{2h'-2k}-1)(q+1)^{-1}s_1).$$

 Next assume $s_1=0$. Then  
$$ \xi^4_1(s)=  (0, q^{-2}s_2^2, q^{-2}s_3^2), \quad \xi^4_2(s)=  (0, q^{-4}s_2^2, q^{-4}s_3^2).$$  

Compare this with the computation of $\pi_{\sig}^{(2)}$ in  \ref{1sig}. 
We have linear maps $l_k$ for $k \in \{1,2\}$ such that $\xi^4_k=  l_k \circ \pi_{\sig}^{(2)}$ on $\sig$ given  by 
$$l_k(a_1, a_2, a_3)=(q^{-k}a_1, q^{-2k}a_2+(q+1)^{-1}a_1, q^{-2k}a_3+(q+1)^{-1}a_1).$$

\end{example}

\begin{sblem}\label{sandV} Let $s\in C_d\cap \Q^{d-1}$, and let $r = r(s)$ be as in \ref{sigc}. Then there are a complete discrete valuation ring $\cV$ and a generalized  Drinfeld module $\phi$ over $\cV$ of generic rank $d$ such that the rank of $\phi\bmod m_{\cV}$ is $r$ and such that the class $c(\phi)$ in $C_d/\R_{>0}$ coincides with the class of $(s_i^{1/r})_{i=1}^{d-1}$, where $c(\phi)$ is as in \ref{cphi}.

\end{sblem}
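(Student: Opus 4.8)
The plan is to produce $\cV$, $\psi$ and a $\psi(A)$-lattice $\La$ directly and then invoke the correspondence of Proposition~\ref{val2}(1). Write $n=d-r\ge 1$, so $0<s_r\le\dots\le s_{d-1}$. Since $s$ is rational, I would first choose a positive integer $M$ such that $w_i:=M s_{r-1+i}^{r}$ is a positive integer for $1\le i\le n$; then $0<w_1\le\dots\le w_n$. Next I would take a field $k$ containing elements $c_1,\dots,c_n$ algebraically independent over $\F_q$ (for instance $k=\F_q(c_1,\dots,c_n)$), put $\cV=k\ps{t}$ (a complete discrete valuation ring) with $A$-algebra structure $T\mapsto t$, and let $\psi$ be the rank $r$ Drinfeld module over $\cV$ with trivial line bundle and $\psi(T)(z)=tz+z^q+\dots+z^{q^r}$; its reduction mod $m_\cV=(t)$ is the rank $r$ Drinfeld module $z\mapsto z^q+\dots+z^{q^r}$ over $k$. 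Finally I would set $\la_i=c_it^{-w_i}$ in the field of fractions $K$ of $\cV$ and $\La:=\sum_{i=1}^n\psi(A)\la_i\subset K$.

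The substantive point is that $(\la_1,\dots,\la_n)$ is an orthonormal $A$-basis of a genuine $\psi(A)$-lattice $\La$ of rank $n$ in $\bar K$, with $\mu(\la_i)=w_i^{1/r}$ for the norm $\mu$ of Proposition~\ref{normcv}. For nonzero $a\in A$, a direct computation shows that the top $\tau$-coefficient $u_a$ of $\psi(a)$ is the leading coefficient of $a$ as a polynomial, so $u_a\in\F_q^\times$; since $v_{\bar K}(\la_i)=-w_i<0$, the highest $\tau$-degree term dominates and $v_{\bar K}(\psi(a)\la_i)=-|a|^rw_i$ with $t^{-|a|^rw_i}$-coefficient equal to $u_ac_i^{|a|^r}$. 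Now for any nonzero $(a_i)\in A^n$, with $V:=\max_i|a_i|^rw_i$, the indices $i$ with $a_i\ne0$ realizing $V$ have their exponent $|a_i|^r$ determined by $w_i$, so the corresponding monomials $c_i^{|a_i|^r}$ are pairwise distinct monomials in the algebraically independent $c_i$, hence $\F_q$-linearly independent; therefore the $t^{-V}$-coefficient of $\sum_i\psi(a_i)\la_i$ is a nonzero $\F_q$-combination of them, whence $v_{\bar K}(\sum_i\psi(a_i)\la_i)=-V$, and in particular $\sum_i\psi(a_i)\la_i\ne0$. From this I get: $\La$ is $A$-free of rank $n$; $\{x\in\La\mid v_{\bar K}(x)\ge c\}$ is finite for every $c$ (so $\La$ is a $\psi(A)$-lattice, being $G_K$-stable because $\la_i\in K$); and, using $\mu^r=-v_{\bar K}$ on $\La\smallsetminus\{0\}$ from Proposition~\ref{normcv}, $\mu(\sum_i\psi(a_i)\la_i)=\max_i|a_i|\mu(\la_i)$, which after clearing denominators in $F$ and using continuity of $\mu$ gives that $(\la_i)$ is orthonormal with $\mu(\la_i)=w_i^{1/r}$. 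Since $w_1\le\dots\le w_n$, this is a basis of the kind in Proposition~\ref{diagbase}(1), and Proposition~\ref{diagbase}(4) tells us the values $\mu(\la_i)$ are the invariants entering the definition of $c(\phi)$.

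To finish, Proposition~\ref{val2}(1) attaches to $(\psi,\La)$ a generalized Drinfeld module $\phi$ over $\cV$ of generic rank $d=r+n$; from the construction (Proposition~\ref{prop51}, Remark~\ref{prop51re}) the reduction of $\phi$ modulo $m_\cV$ coincides with that of $\psi$, hence has rank $r$. By the definition of $c(\phi)$ just after Proposition~\ref{diagbase2}, $c(\phi)$ is the class in $C_d/\R_{>0}$ of $(0^{r-1},\mu(\la_1),\dots,\mu(\la_n))=(0^{r-1},w_1^{1/r},\dots,w_n^{1/r})=M^{1/r}(0^{r-1},s_r,\dots,s_{d-1})=M^{1/r}s$, i.e.\ the class of $s$, as required.

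I expect the main obstacle to be exactly the orthonormality of $(\la_i)$, i.e.\ excluding accidental cancellation among the leading terms of the $\psi(a_i)\la_i$; choosing the leading coefficients $c_i$ to be algebraically independent over $\F_q$ is what makes this work. The remaining steps — the lattice axioms, the identification of $\mu(\la_i)$ with the $w_i^{1/r}$, and the computation of $c(\phi)$ — are formal, resting on Propositions~\ref{val2}, \ref{normcv} and \ref{diagbase}.
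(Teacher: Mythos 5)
Your proof is correct and follows essentially the same route as the paper's: construct the pair $(\psi,\La)$ explicitly, with $\La$ spanned by elements of prescribed valuation whose leading coefficients are algebraically independent transcendentals (the paper adjoins indeterminates $u_i$ to a discrete valuation ring where you put the $c_i$ into the residue field), and then invoke the correspondence of Proposition \ref{val2}(1). Your write-up is in fact more careful on two points that the paper's proof glosses over: you take $-v_{\bar K}(\la_i)$ proportional to $s_{r-1+i}^{r}$ rather than to $s_{r-1+i}$, which is what is actually required since $\mu=(-v_{\bar K})^{1/r}$ on $\La\smallsetminus\{0\}$ and $c(\phi)$ is defined via the values $\mu(\la_i)$, and you spell out the leading-monomial argument excluding cancellation, which is exactly what guarantees that $\La$ is free of rank $n$ with $(\la_i)$ an orthonormal basis in the sense of \ref{diagbase}(1).
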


\begin{proof} Take a discrete valuation ring $\cV'$ over $A$ and a Drinfeld module $\psi$ over $\cV'$ of rank $r$. Let $n=d-r$, and let $\cV$ be the completion of the local ring of the polynomial ring $\cV'[u_1, \dots, u_n]$ in $n$ variables at the prime ideal generated by $m_{\cV'}$. Let $K$ be the field of fractions of $\cV$. Take an integer $c\geq 1$ such that $cs_i\in \Z$ for all $1\leq i\leq d-1$, take elements $\la'_i$ ($1\leq i\leq n$) of $\cV'$ such that $\text{ord}_K(\la_i')= -cs_{i+r-1}$,  let $\la_i=\la_i'u_i$, and let $\La= \sum_{i=1}^n \psi(A)\la_i$. Then $\La$ is a $\psi(A)$-lattice in $K^{\sep}$ of rank $n$ in the sense of \ref{val1}. Let $\phi$ be the generalized Drinfeld module over $\cV$ of generic rank $d$ corresponding to $(\psi, \La)$ in the correspondence \ref{val2}. Then $c(\phi)$ is the class of $(s_i^{1/r})_{i=1}^{d-1}$ by its definition and \ref{normcv}. 
\end{proof}

\begin{sbpara} \label{Sigk3pf}

We prove Theorem \ref{Sigk3}. Let $\sig \in \Sig^{(k')}$. It follows from \ref{piineq} that $\pi_{\sig}^{(k')}(\sig)$ is a cone in $C_d$. In turn, it follows from \ref{lin3} (applied by exchanging $k$ and $k'$) that $\tau = \xi_k^d(\sig)$ is a cone in $C_d$.
Given $\cV$, $\phi$, and $N$ of degree $k$ as in \ref{Sigk3}(1), we see from \ref{xicdv1} that the map $\xi_k^d$ carries a representative of $c(\phi)$ to $c(\phi,N)$ for any $N \in A$ of degree $k$.  It follows in particular that $\tau$ corresponds to $\sig$ in the sense of
\ref{Sigk3}(1). In fact, by \ref{sandV}, the  correspondence $\sig \mapsto \tau$ is characterized by the condition in \ref{Sigk3}(2) because the map $\xi_k^d$ is continuous on the subset $\{s\in C_d\mid r(s) = r\}$ of $C_d$ for each $r$. Part (3) defining $\Sig_{k,k'}$ follows from the fact that $\xi_k^d \colon C_d \to C_d$ is a bijection.

\end{sbpara}

\begin{sbpara} 
We prove Proposition \ref{Sig1}. By \ref{Sigk3}, the bijection $\xi^d_1 \colon C_d\to C_d$ induces a bijection $\Sig^{(1)}\to \Sig_1$ given by $\sig\mapsto  \xi^d_1(\sig)$. 
The fan $\Sig^{(1)}$ is the fan of all faces of $C_d$. A face of $C_d$ has the form $\{s\in C_d \mid  s_i=s_j \textsp{if} (i,j)\in J \}$ for some subset $J$ of $\{1,\dots, d-1\}^2$. For a face $\sig$ of $C_d$, since $\xi^d_1(C_d)=C_d\in\Sig_1$, the cone $\xi^d_1(\sig)\in \Sig_1$ must be a face of $C_d$. We have that $\xi^d_1(\sig) \subset \sig$ since $\epsilon_{s_r^r, \ldots, s_{d-1}^r}^{r,d-r}$ is strictly increasing. By the same reasoning, the cone $\xi^d_1(\sig)$ is strictly larger than $\tau$ for every face $\tau$ of $C_d$ such that $\tau \subsetneq \sig$. Hence  
$\xi^d_1(\sig)=\sig$. 
\end{sbpara}

\begin{example}\label{6sig}
For $\sig$ as in \ref{1sig}, one sees from the descriptions of $\pi_{\sig}^{(k)}$  in \ref{5sig} and $l_k$ in \ref{4sig} that the cones $\xi^4_k(\sig)\in \Sig_k$ for $k \in \{1,2\}$ are given by 
 $$\xi^4_k(\sig)=\{(b_1, b_2, b_3)\in \R^3_{\geq 0}\mid  qb_1\leq b_2\leq b_3, q^2b_2 \ge b_3+q^k(q-1)b_1 \}.$$ 
\end{example}

\begin{sbprop}\label{thmcone}  Let $k$ and $k'$ be nonnegative integers. Set
$$
	\xi_{k,k'}^d = \xi_{k'}^d \circ (\xi_k^d)^{-1} \colon C_d \to C_d.
$$
\begin{enumerate}
	\item[(1)] The map $\xi^d_{k,k'} \colon C_d\to C_d$ is a homeomorphism. 
	\item[(2)] If $1\leq k \leq k'$, the map $\xi^d_{k,k'}$ induces a bijection $\Sig_{k,k'} \to \Sig_{k'}\;;\; \tau\mapsto \xi^d_{k,k'}(\tau)$.
	\item[(3)] Under the assumption of (2), for each $\tau\in \Sig_{k,k'}$, 
	there exists a bijective linear map $l \colon \R^{d-1} \to \R^{d-1}$ such that the restrictions of 
	$l$ and $\xi^d_{k,k'}$ to $\tau$ coincide. 
	 \end{enumerate}
\end{sbprop}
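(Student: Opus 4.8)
The plan is to deduce Proposition \ref{thmcone} from the structural results already established for the maps $\xi^d_k$, essentially by combining the stratification of $C_d$ by rank with the piecewise-linearity statements in \ref{lin3} and \ref{piineq}. First I would recall that for each fixed $r$ with $1 \le r \le d$, the map $\xi^d_k$ restricted to the stratum $C_{d,r} := \{s \in C_d \mid s_i = 0 \text{ for } i < r,\ s_r > 0\}$ (together with the origin) is a bijection of $C_{d,r}$ onto itself: on this stratum $\xi^d_k(s)_i = \epsilon^{r,d-r}_{s_r^r,\dots,s_{d-1}^r}(q^{-kr}s_i^r)$ is built from the increasing homeomorphism $\epsilon^{r,d-r}_{\bullet}$ of $\R_{\ge 0}$, and it is continuous on $C_{d,r}$ by the corollary after \ref{explin}. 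Since $C_d$ is the disjoint union of the strata $C_{d,r}$, the full map $\xi^d_k$ is a bijection $C_d \to C_d$ preserving each stratum, and hence $\xi^d_{k,k'} = \xi^d_{k'} \circ (\xi^d_k)^{-1}$ is a well-defined bijection $C_d \to C_d$ preserving each stratum.

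For part (2), I would argue cone-by-cone. Fix $\sig \in \Sig_{k,k'}$. By definition (\ref{Sigk5}), $\Sig_{k,k'}$ is the subdivision of $\Sig_k$ whose cones correspond to the cones of $\Sig_{k'}$, and by Theorem \ref{Sigk3} the cone $\xi^d_k(\sig)$ lies in the subdivision $\Sig_{1,2}$-type refinement; more precisely, $\sig$ is contained in some $\sigma_0 \in \Sig^{(k)}$, the preimage cones of $\Sig^{(k)}$ under nothing — rather, the point is that $\sig$ sits inside a cone of the fan $\Sig^{(k)}$ refined by $\Sig^{(k')}$-data. Concretely: there is $\sigma_0 \in \Sig^{(k)}$ with $\sig \subseteq \xi^d_k(\sigma_0)$ wait — one must be careful about which side the $\xi$ sits on. The clean route is: $\sig \in \Sig_{k,k'}$ means $\sig$ is a cone of $\Sig_k$ refined according to $\Sig_{k'}$, and $\Sig_k = \{\xi^d_k(\sigma) : \sigma \in \Sig^{(k)}\}$. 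So choose $\sigma_0 \in \Sig^{(k)}$ with $\sig \subseteq \xi^d_k(\sigma_0)$; then $(\xi^d_k)^{-1}(\sig) \subseteq \sigma_0$. On $\sigma_0$, \ref{lin3} gives a linear map $l_{k}: \R^{d-1}\to\R^{d-1}$ with $\xi^d_k = l_k \circ \pi_{k,\sigma_0}$, and also a linear map $l'_{k'}$ with $\xi^d_{k'} = l'_{k'} \circ \pi_{k,\sigma_0}$ on $\sigma_0$ (taking $k' \le k$; if $k' > k$ one uses the refinement $\Sig_{k,k'}$ which by construction is fine enough that each of its cones lies in a single cone of both $\Sig^{(k)}$ and $\Sig^{(k')}$ after pulling back). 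By \ref{piineq}, $\pi_{k,\sigma_0}$ is injective onto a cone, so $l_k$ is injective on $\pi_{k,\sigma_0}(\sigma_0)$ and extends (by \ref{piineq}, the image is full-dimensional in its span) to a bijective linear map on that span; composing $\xi^d_{k,k'} = \xi^d_{k'} \circ (\xi^d_k)^{-1} = (l'_{k'} \circ \pi_{k,\sigma_0}) \circ (l_k \circ \pi_{k,\sigma_0})^{-1} = l'_{k'} \circ l_k^{-1}$ on $\sig$, which is linear and bijective. Part (1) then follows: $\xi^d_{k,k'}$ is continuous on each cone of the finite fan $\Sig_{k,k'}$ and the cones agree on overlaps (their common faces), hence $\xi^d_{k,k'}$ is continuous on $C_d$, and likewise for its inverse $\xi^d_{k',k}$, giving a homeomorphism.

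The main obstacle I anticipate is the bookkeeping around the case $k' > k$ and making precise the relationship between $\Sig_{k,k'}$, $\Sig^{(k)}$, and $\Sig^{(k')}$ so that a single cone $\sig \in \Sig_{k,k'}$ really does pull back, under $(\xi^d_k)^{-1}$, into a single cone $\sigma_0 \in \Sig^{(k)}$ on which \ref{lin3} applies with the relevant $\pi_{k,\sigma_0}$, and simultaneously $(\xi^d_{k'})$ is linear on the appropriate preimage. This requires unwinding \ref{Sigk5}: the cone $\tau \in \Sig_{k'}$ corresponding to $\sig$ equals $\xi^d_{k'}((\xi^d_k)^{-1}(\sig))$ wait, rather $\sig$ and $\tau$ correspond via the identity on $C_d$ at the level of subdivisions, so one should phrase everything in terms of the common refinement. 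The cleanest fix is probably to prove (2) first for $\sig$ small enough to lie in $\xi^d_k(\sigma)$ for a single $\sigma\in\Sig^{(k)}$ and also $\xi^d_{k'}(\sigma')$ for a single $\sigma'\in\Sig^{(k')}$ with $\sigma'\subseteq\sigma$ or conversely — using that $\Sig^{(k')}$ refines $\Sig^{(k)}$ when $k \le k'$, which is immediate from the defining inequalities $q^h s_j - s_i$ for $0 \le h \le k-1$ versus $0 \le h \le k'-1$. Once the combinatorics of which cone contains which is pinned down, the linearity is a formal consequence of \ref{lin3} and the injectivity in \ref{piineq}, and continuity/homeomorphism is a routine gluing argument over a finite fan. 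A secondary point to handle carefully is checking that the linear maps $l$ produced for adjacent cones of $\Sig_{k,k'}$ restrict compatibly to the common face, which follows from the uniqueness of $\xi^d_{k,k'}$ as a set map together with the fact that a linear map on a full-dimensional cone is determined by its values there.
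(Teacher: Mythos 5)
Your proposal is correct and follows the same route as the paper, whose entire proof is the single line ``This follows from \ref{lin3}'': on the cone $\sigma_1\in\Sig^{(k')}$ with $\xi^d_k(\sigma_1)\supseteq\sig$, Proposition \ref{lin3} expresses both $\xi^d_k$ and $\xi^d_{k'}$ as linear functions of $\pi_{k',\sigma_1}$, and composing with the injectivity from \ref{piineq} gives the linear map $l$, with the homeomorphism statement obtained by gluing over the finite fan. The one subtlety you flag --- that \ref{lin3} requires the index $k'$ there to be at most $k$, so for $k<k'$ one must work in the finer fan $\Sig^{(k')}$, which refines $\Sig^{(k)}$ --- is real, and your resolution is the intended one.
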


 \begin{proof} Part (2) follows from \ref{Sigk3}(3). Part (3) follows from  \ref{lin3}. Part (1) follows from (2) and (3). 
 	 \end{proof}

\begin{sbpara} \label{Sigk5pf} We prove  Proposition \ref{Sigk5}. By \ref{thmcone}, for a subcone $\tau$ of some element of $\Sig_{k,k'}$
	($1\leq k\leq k'$), 
	the image $\tau' = \xi_{k,k'}(\tau)$ is a subcone of some element of $\Sig_{k'}$. The map $\tau' \mapsto \tau$ induced by $\xi_{k,k'}^{-1}$ provides the correspondence in \ref{Sigk5}, as follows by the 
	analogous arguments to those of \ref{Sigk3pf}.
\end{sbpara}

\begin{example}\label{Ex8}  Assume $d=3$.
\begin{enumerate}
	\item[(1)]  Assume $s_1>0$.
	Let $h$ be the integer such that   $q^{h-1}s_1\leq s_2\leq q^hs_1$. Then 
$$
	\xi^3_k(s_1, s_2) = \begin{cases}
	(q^{-k}s_1, q^{-k}s_2) & \text{if } h\leq k, \\
	(q^{-k}s_1, q^{h-2k}s_2-(q^{2(h-k)}-1)(q+1)^{-1}s_1) & \text{if } h\geq k.\\
	\end{cases}
$$

	On the other hand, 
	we have $\xi^3_k(0, s)= (0, q^{-2k}s_2)$. 
	
	In particular, $\xi^3_k$ is not continuous. In fact, for $h \ge k$, we have 
	$$ 
		\xi^3_k(q^{-h}, 1)= (q^{-k-h}, (q^{h+1-2k}+q^{-h})(q+1)^{-1}),
	$$ 
	and as $h$ tends to $\infty$, these values tend to $(0, \infty)$ so do not converge to $\xi_k^3(0,1)=(0, q^{-2k})$. 
       	\item[(2)] Let $1\leq k\leq k'$.  Let $\sig_h$ for $1 \le h \le k'-1$ and $\sig^{k'-1}$ be as in \ref{Sigd=3}.  Then $\Sig_{k,k'}$ 
	 consists of all faces of
	 $$
	 	\xi^3_k(\sig_h)= \begin{cases} \{(s_1, s_2) \in \R_{\geq 0}^2\mid q^{h-1}s_1\leq s_2\leq q^hs_1\} & \text{if } h \le k, \\
		\{(s_1, s_2) \in \R_{\geq 0}^2 \mid \frac{q^{2h-1-k}+q^k}{q+1} s_1\leq s_2\leq \frac{q^{2h+1-k}+q^k}{q+1}s_1\} & \text{if } h \ge k.
		\end{cases}
	$$ 
	 and 
	 $$
	 	\xi^3_k(\sig^{k'-1})= \{(s_1,s_2)\in \R_{\geq 0}^2\mid (q^{2k'-1-k}+q^k)s_1\leq (q+1)s_2\}.
	$$                               
        \item[(3)] If $1\leq k\leq k'$, then $\xi_{k,k'}$ restricts to the linear maps
       	\begin{align*}
		&(x,y) \mapsto (q^{k-k'}x, q^{k-k'}y) &&\text{on } \xi_k^3(\sig_h) \textsp{for} 1 \le h \le k, \\
		&(x,y) \mapsto (q^{k-k'}x, q^{2k-k'-h}y+q^{3k-k'-h}\tfrac{q^{2(h-k)}-1}{q+1}x) 
        		&&\text{on } \xi_k^3(\sig_h) \textsp{for} k \le h < k', \\
        		&(x,y)\mapsto (q^{k-k'}x, q^{2(k-k')}y -q^k\tfrac{q^{2(k-k')}-1}{q+1}x)
		&&\text{on } \xi_k^3(\sig^{k'-1}).
	\end{align*}
 \end{enumerate}

\end{example}

\subsection{Moduli functors and cone decompositions}\label{mofu}

In this subsection, we prove Proposition \ref{logD9} of the introduction.  Recall that it states that a log Drinfeld module with level $N$ structure $\iota$ over an $A = \F_q[T]$-scheme $S$ with saturated log structure $M_S$ satisfies the following two conditions:
\begin{enumerate}
	\item[(1)] Locally on $S$, there exists an $A/NA$-basis $(e_i)_{0\leq i\leq d-1}$ of $(\frac{1}{N}A/A)^d$ such that 
	$$
		\frac{\pole(\iota(a))}{\pole(\iota(e_i))} \in M_S/\cO^\times_S \subset M_S^{\gp}/\cO^\times_S
	$$
	for all $a\in (\frac{1}{N}A/A)^d - \sum_{j=0}^{i-1} (A/NA)e_j$, for each $0 \le i \le d-1$.
	\item[(2)] The values $\pole(\iota(e_i))$ are independent of the choice of $(e_i)_i$ in (1), 
	and $\pole(\iota(e_0))=1$.  
\end{enumerate}
We then consider the moduli functors which appeared in Section \ref{main_result} of the introduction, as well as related moduli functors.

\begin{sbprop}\label{logD90}  
Let $S$ be a normal $A$-scheme, let $U$ be a dense open subset of $S$, let $((\cL, \phi), \iota)$ be a generalized Drinfeld module over $(S, U)$ of rank $d$ with level $N$ structure, and let $$\iota \colon (\tfrac{1}{N}A/A)^d \to \overline{\cL} $$ be the unique extension of $\iota \colon (\frac{1}{N}A/A)^d\to \cL|_U$ on $U$, as in \ref{logD53}. Assume that the condition (div) in \ref{strong} concerning $\pole(\iota(a))$ for $a \in  (\tfrac{1}{N}A/A)^d$ is satisfied.  Then the statements (1) and (2) of \ref{logD9} hold in this situation. 
\end{sbprop}

\begin{proof} We may assume that $S$ is of finite type over $A$ and hence is excellent. 

Let $s\in S$, let $r$ be the rank of the fiber of the generalized Drinfeld module $(\cL, \phi)$ at $s$, and let $n=d-r$. Let $E$ be the
inverse image of $\cL$ under $\iota \colon (\tfrac{1}{N}A/A)^d \to \overline{\cL}$, as in 
\ref{nlevelN}.  By the condition (div), the poles of elements in the image of $\iota$ are totally ordered, so 
there are elements $b_i$ ($1 \le i \le N$) of the finite set $(\tfrac{1}{N}A/A)^d$ such that
$$
	\pole(\iota(b))\pole(\iota(b_i))^{-1}\in M_{S,s}/\cO_{S,s}^\times
$$ 
 for all elements $b$ of $(\frac{1}{N}A/A)^d$ which do not belong to $E_s+ \sum_{j=1}^{i-1} (A/NA)b_j$. 

We define elements $e_i$ of $(\frac{1}{N}A/A)^d$ for $0 \le i \le d-1$ as follows.
Let $(e_i)_{0\leq i\leq r-1}$ be a basis of the free $A/NA$-module $E_s$ of rank $r$. Let $e_i= b_{i-r+1}$ for $r\leq i\leq d-1$. 
Note that $\pole(e_i) = 1$ for all $i \le r-1$, so to show condition (1) of \ref{logD9} for $(e_i)_{0 \le i \le d-1}$, it suffices to 
show that $(e_i)_{0\leq i\leq d-1}$ is an $A/NA$-basis of $(\frac{1}{N}A/A)^d$ at $s$.  
By \ref{todvr}(1), we can show this over discrete valuation rings containing $\cO_{S,s}$ as in (0) of the proof of \ref{todvr}. 
Let $\cV$ be the completion of such a discrete valuation ring, and view $\phi$ as a generalized Drinfeld module over $\cV$. Setting $\beta_i = \iota(b_i)$ 
for $1 \le i \le n$, in the notation of \ref{cpf2}, we are left to show that the images $\lambda_i$ of the $\beta_i$ in $\Lambda/\psi(N)\Lambda$
form a basis. This will follow if we can verify condition 
(ii) of \ref{Ntor}(1).  The condition on $\pole(\iota(b_i))$ above tells us exactly this.

That the statement (2) of \ref{logD9} holds reduces to the statement that the values $\pole(\iota(b_i)) \in M_{S,s}/\cO_{S,s}^{\times}$ 
are independent of the choice of the $b_i$. Again by \ref{todvr}(1), it suffices to see this over each $\cV$ as above, where it 
follows from the statement of \ref{Ntor}(2) that the valuations of the $\beta_i$ are independent of the choice of $\beta_i$
(in that the valuation determines $\beta_i$ up to unit).
\end{proof}

\begin{sbpara}\label{3logD9} 
We prove \ref{logD9}. This is reduced to the case $S$ is log regular and hence to \ref{logD90} by \ref{regdiv}. 
\end{sbpara}

\begin{sbpara} \label{categories}

Recall that $\cC_{\log}$ denotes the category of schemes $S$ with saturated log structures, over $A$ if $N$ has at least two prime divisors and over $A[\frac{1}{N}]$ otherwise.

We define $\cC_{\nl}$ to be the category of pairs $(S,U)$ of a normal scheme $S$, over $A$ if $N$ has at least two prime divisors and over $A[\frac{1}{N}]$ otherwise, and a dense open subset $U$ of $S$.
\end{sbpara}

\begin{sbpara}\label{moduli}

We define moduli functors  $$\begin{matrix} \overline{\fM}^d_N,\   \overline{\fM}^d_{N, \Sig},\ \overline{\fM}^d_{N, +, \sig}  \colon \cC_{\log}\to \text{(Sets)} && (\overline{\fM}^d_N \supset \overline{\fM}^d_{N, \Sig},\; \overline{\fM}^d_N \supset \overline{\fM}^d_{N, +, \sig})\\ 
\overline{\fM}^d_{N, \text{Sa}},\ \overline{\fM}^d_N,\ \overline{\fM}^d_{N, \Sig},\ \overline{\fM}^d_{N, +, \sig} \colon  \cC_{\nl}\to \text{(Sets)}&& (\overline{\fM}^d_{N, \text{Sa}}\supset \overline{\fM}^d_N\supset \overline{\fM}^d_{N, \Sig},\; \overline{\fM}^d_N \supset \overline{\fM}^d_{N, +, \sig}).\end{matrix}$$
 Here, $\Sig$ is a finite rational cone decomposition of $C_d$, and $\sig$ is a finitely generated rational subcone of $C_d$. 
 In particular, we review the definitions of $\overline{\fM}^d_N$ and $\overline{\fM}^d_{N,\Sig}$ on $\cC_{\log}$ from \ref{toroidal_functor} and
 \ref{Cd}, respectively.

\begin{itemize}
	\item For an object
	$S$ of $\cC_{\log}$, let $\overline{\fM}^d_N(S)$ be the set of all log Drinfeld modules
	over $S$ of rank $d$ with level $N$ structure.
	\item For an object $(S, U)$ of $\cC_{\nl}$, let 
$\overline{\fM}^d_{N,\Sa}(S, U)$  be the set of all isomorphism classes of  generalized Drinfeld modules over $(S, U)$ of  rank $d$ with  level $N$ structure (\ref{log4}). Let  $\overline{\fM}^d_N(S, U)$ be the subset of $\overline{\fM}^d_{N,\Sa}(S, U)$ consisting of  those which satisfy (div) of \ref{strong}. 
	\item On both $\cC_{\log}$ and $\cC_{\nl}$, the functor $\overline{\fM}^d_{N, +, \sig}$ (resp., $\overline{\fM}^d_{N,  \Sig}$) is the part of $\overline{\fM}^d_N$ classifying objects that,  
 for the standard basis (resp., locally for some $\sig\in \Sig$ and for some  basis) $(e_i)_{0\leq i\leq d-1}$ of the free $A/NA$-module $(\frac{1}{N}A/A)^d$, satisfy the following two properties:
\begin{enumerate}
\item[(i)] For every $1\leq i\leq d-1$ and for every family $(a_j)_{0\leq j\leq d-1}$ of elements of $A/NA$ such that $a_j \neq 0$ for some $j \geq i$, we have 
$$
	\pole\left(\iota\left(\sum_{j=0}^{d-1} a_je_j\right)\right) \cdot \pole(\iota(e_i))^{-1}\in M_S/\cO_S^\times
$$ 
in $M_S^{\gp}/\cO_S^\times$.
\item[(ii)] We have 
$$
	(\pole(\iota(e_i)))_{1\leq i\leq d-1}\in [\sig](S)
$$ in the sense of \ref{toric} and \ref{toric2}. That is, if $(b_i)_{1 \le i \le d-1} \in \Z^{d-1}$ 
is such that $\sum_{i=1}^{d-1}b_i s_i\geq 0$ for all $s\in \sig$, then 
	$\prod_{i=1}^{d-1} \pole(\iota(e_i))^{b_i}\in M_S/\cO^\times_S$ in $M_S^{\gp}/\cO_S^\times$. 
\end{enumerate}
\end{itemize}
\end{sbpara}

\begin{sbpara}\label{log22} We have a functor $\cC_{\nl} \to \cC_{\log}$ which sends $(S, U)$ to $S$ endowed with the log structure \ref{log2}. This functor is not fully faithful. 

In fact, for pairs $(S, U)$ and $(T, V)$ of normal schemes and dense open subsets, we have the natural map $\pi \colon P \to L$ from the set $P$ of morphisms $(S,U) \to (T, V)$ of pairs to the set $L$ of morphisms $S \to T$ of log schemes, which has the following properties:
\begin{enumerate}
\item[(1)] The map $\pi \colon P\to L$ is injective. 
\item[(2)] The map $\pi$ is not necessarily surjective. For example, if $S=T=U=\Spec(k[T_1, T_2])$ for a field $k$,  and if $V$ is the complement of the origin $(0,0)$ in $T$, the associated log structures of $S$ and $T$ are trivial and hence we have the identity morphism $S\to T$ of log schemes, but this does not come from a morphism of pairs $(S, U) \to (T, V)$. 
\item[(3)] If $V$ coincides with the set of points of $T$ at which the associated log structure is trivial, then $\pi$ is bijective.
\end{enumerate}
\medskip

We used the same notation ($\overline{\fM}^d_N$,  $\overline{\fM}^d_{N, \Sig}$ and $\overline{\fM}^d_{N, +,\sig}$) for functors on different categories $\cC_{\log}$ and $\cC_{\nl}$. We will see in Proposition \ref{M=M} that in the cases that we use the same notation for a functor $F_{\log}$ on $\cC_{\log}$ and a functor $F_{\nl}$ on $\cC_{\nl}$, the functor $F_{\nl}$ coincides with the composition of $F_{\log}$ and $\cC_{\nl}\to \cC_{\log}$. We do not use this coincidence until we  prove \ref{M=M}. 

\end{sbpara}

\begin{sbpara}\label{modulirem} We have a morphism of functors $\overline{\fM}^d_{N, \Sig}\to [\Sig]$ on $\cC_{\log}$ taking an object
with level $N$ structure $\iota$ to 
$\text{pole}(\iota(e_i)))_{1\leq i\leq d-1}$,  where $(e_i)_{1\leq i\leq d-1}$ is as in the definition of $\overline{\fM}^d_{N, \Sig}$ in \ref{moduli}.
This is well-defined by \ref{logD9}.

\end{sbpara}

\begin{sblem} \label{moduliremarks} Let $\Sig$ be a finite rational cone decomposition of $C_d$.
\begin{enumerate} 
\item[(1)] On both $\cC_{\log}$ and $\cC_{\nl}$, 
as a sheaf functor for Zariski topology, we have
$$\overline{\fM}^d_{N, \Sig}= \bigcup_{\sig \in \Sig} \bigcup_{g \in \GL_d(A/NA)} g(\overline{\fM}^d_{N, +, \sig}).$$
\item[(2)] For $\sig\in \Sig$, the functor
$\overline{\fM}^d_{N, +, \sig}$  on $\cC_{\log}$ is an open subfunctor of $\overline{\fM}^d_{N,\Sig}$. That is, for an object $S$ of $\cC_{\log}$ and an element of $\overline{\fM}^d_{N,\Sig}(S)$, the fiber product 
of the functors $S\to \overline{\fM}^d_{N, \Sig}\leftarrow \overline{\fM}^d_{N, +, \sig}$ is represented by an open subset of $S$. 
\item[(3)] For an object $(S, U)$ of $\cC_{\nl}$ and an element  of $\overline{\fM}^d_{N,\Sig}(S,U)$, the fiber product 
of the functors $(S,U)\to \overline{\fM}^d_{N, \Sig}\leftarrow \overline{\fM}^d_{N, +, \sig}$ is represented by $(S',U)$ for an open subset $S'$ of $S$ containing $U$.  
\end{enumerate}
\end{sblem}

\begin{pf} In part (1), the element $g$ changes the basis of $(\frac{1}{N}A/A)^d$, from which the statement is clear, in that it holds locally.

We prove part (2).
Let $\overline{\fM}^d_{N,+,\Sig}$ be the subfunctor of $\overline{\fM}^d_{N, \Sig}$ defined by the condition that the standard basis $(e_i)_{0 \le i \le d-1}$ of $(\frac{1}{N}A/A)^d$ can be used as the basis in the definition of $\overline{\fM}^d_{N,\Sig}$. It is an open subfunctor of $\overline{\fM}^d_{N,\Sig}$ because the condition which defines this subfunctor is that, for $1 \le i \le d-1$, the element 
 $\pole(\iota(e_i))\pole(\iota(a))^{-1}$ is invertible for all $a\in (\frac{1}{N}A/A)^d$ such that $a \notin \sum_{j=0}^{i-1} (A/NA)e_j$ and $\pole(\iota(a)) \mid \pole(\iota(e_i))$ (note that the condition (div) is satisfied).
  We have the cartesian diagram
 $$
 \SelectTips{cm}{} \xymatrix{
  \overline{\fM}^d_{N,+,\sig} \ar[r] \ar[d] & [\sig] \ar[d]  \\
 \overline{\fM}^d_{N,+, \Sig} \ar[r] & [\Sig]},
$$ 
where the lower horizontal arrow is the canonical map factoring through $\overline{\fM}^d_{N, \Sig}$.
The right vertical arrow is also an open immersion. Part (2) follows from this.

The proof of (3) is similar to that of (2).
\end{pf}

In the case of the category $\cC_{\log}$, the following result is Theorem \ref{shape}(1) of the introduction.

\begin{sbprop}\label{Sigk8} Let $k$ be the degree of $N$. 
We have $\overline{\fM}^d_N= \overline{\fM}^d_{N, \Sig_k}$ on both $\cC_{\log}$ and $\cC_{\nl}$.
\end{sbprop}

\begin{proof} Let $S$ (resp., $(S, U)$) be an object of $\cC_{\log}$ (resp., $\cC_{\nl}$), and let  $((\cL, \phi), \iota)$ be an element of $\overline{\fM}^d_N(S)$ (resp., $\overline{\fM}^d_N(S,U)$).  Let $s\in S$.  Working locally at $s$, by \ref{logD9} (resp., \ref{logD90}), we have a basis $(e_i)_{0\leq i\leq d-1}$ of $(\frac{1}{N}A/A)^d$ satisfying the condition (1.1) in \ref{logD9}.  Let $I^{(k)} \subset \Z^3$ be as in \ref{Sigk1}.
We define a map $\alpha \colon I^{(k)} \to \{\R_{\leq 0}, \{0\}, \R_{\geq 0}\}$. Let $(h,i,j)\in I^{(k)}$. Take an element $a$ of $A$ such that $|a|=q^h$. We have either $\pole(\iota(e_i))\pole(\iota(ae_j))^{-1}\in M_S/\cO_S^\times$ or $\pole(\iota(ae_j))\pole(\iota(e_i))^{-1}\in M_S/\cO_S^\times$ at $s$. In the former case, let $\alpha(h,i,j)=\R_{\leq 0}$. Otherwise, let $\alpha(h,i,j)=\R_{\geq 0}$. Define $\tau\in \Sig^{(k)}$ by 
$$
	\tau= \{(s_i)_{1\leq i\leq d-1}\in C_d\mid q^hs_j - s_i\in \alpha(h,i,j)\textsp{for all} (h,i,j)\in I^{(k)}\}
$$ 
(see \ref{Sigk1}). Let $\sig$ be the cone of $\Sig_k$ corresponding to $\tau$ (\ref{Sigk3}). Then by \ref{todvr}, we have $(\pole(\iota(e_i)))_{1 \leq i\leq d-1}\in [\sig](S')$ for some open neighborhood $S'$ of $s$ in $S$.  
\end{proof}

\begin{sbpara}\label{plan45} We describe the plan of Sections \ref{s:Tate} and \ref{toroidal} for our toroidal compactifications. Let $k$ be the degree of the polynomial $N$. 

In Section \ref{s:Tate}, we consider the formal scheme version of the moduli functor $\overline{\fM}^d_{N,+, \sig}$. Assuming that $\sig$ is contained in some cone in $\Sig_k$, we will prove there that it is represented by a formal scheme which is strongly related to  the toric variety $\toric_{\F_p}(\sig) =\toric_\Z(\sig)\otimes_{\Z} \F_p$ defined in \ref{toric2}. 

In Section \ref{toroidal}, we prove that the moduli functor $\overline{\fM}^d_{N, \Sig}$ on $\cC_{\log}$ is  represented by an fs log scheme  $\overline{\cM}^d_{N,\Sig}$ over $A$ which has the properties in Theorem \ref{main} and Theorem \ref{shape}. The proofs use the relation with formal toric varieties proved in Section \ref{s:Tate}.

The category $\cC_{\nl}$ is important because in Section \ref{toroidal}, the theory of toroidal compactification is based on the theory of Satake compactification in Section \ref{Satake} and the latter compactification is constructed  as an object of $\cC_{\nl}$. 
\end{sbpara}

\section{Adic spaces, formal schemes, and formal moduli}\label{s:Tate} 

In this section, we study the following two subjects (1) and (2).

\begin{enumerate}
	\item[(1)] Degeneration of Drinfeld modules on adic spaces, Tate uniformizations: see Section \ref{ss:Tate}.
	\item[(2)] Degeneration of Drinfeld modules on formal schemes, 
 iterated Tate uniformizations, formal moduli spaces: see Sections \ref{iterated}--\ref{itTa}. 
\end{enumerate}

In the subject (1), we will generalize the correspondence 
$$\phi\;  \leftrightarrow \; (\psi, \La)$$
of Drinfeld for complete discrete valuation fields in  \ref{val2} to normal adic spaces, regarding \ref{val2} as a theory on the adic space $\Spa(K, \cV)$. 
This is the theory of Tate uniformizations for Drinfeld modules on adic spaces. Here $\phi$ is obtained as the quotient of $\psi$ by  $\La$. The main result is Theorem \ref{thmTate}. However, as is described in Section \ref{how_to}, unlike the corresponding theory for abelian varieties, this is not so useful for toroidal compactifications of the moduli spaces of Drinfeld modules. This is because the action of local monodromy on $\La$ is too big.

In the subject (2), in the case $A = \F_q[T]$, 
 we give the moduli theory of log Drinfeld modules on formal schemes using iterated Tate uniformizations, which is useful as the local theory for our toroidal compactifications. 
  There we obtain $\phi$ from $\psi$ as 
$$\psi=\phi_0, \phi_1, \phi_2, \cdots, \phi_n=\phi,$$
where $\phi_{i+1}$ is obtained as a quotient of $\phi_i$ by an $A$-lattice of rank $1$ on which the action of the local monodromy is trivial. This is the theory of iterated Tate uniformizations. The main result of the formal moduli  is Theorem \ref{fthm}. It says that the formal moduli space is an open set of a formal toric variety and hence is (formally) log regular.

\subsection{Construction of the quotient} \label{ss:constrquot}

This subsection explains a simple but central construction underlying the Tate and iterated Tate uniformizations that we treat later on in this section. We we work over a complete excellent normal domain $R$. As laid out more carefully in \ref{Fuji2} below, given a pair $(\psi,\Lambda)$ of a generalized Drinfeld module $\psi$ of generic rank $r$ and a projective Galois-stable $\psi(A)$-lattice $\Lambda$ of rank $n$ in the separable closure of the fraction field of $R$, we construct a generalized Drinfeld module $\phi$ of generic rank $d = r+n$ over $R$ as the quotient of $\psi$ by $\Lambda$ via the exponential map of $\Lambda$. This $\phi$ agrees with $\psi$ modulo the ideal of definition of $R$. The reader should note that, unlike in the related construction \ref{val2} for complete valuation rings, $\psi$ is not assumed to be a Drinfeld module.

\begin{sbpara}\label{Fuji2}  As in the introduction, we let $F$ be a function field in one variable over a finite field, and let $A$ be its subring of elements integral outside of a fixed place $\infty$. 

Let $R$ be an excellent normal integral domain over $A$, and let $Q$ be the field of fractions of $R$.  Let $\bar Q$ be an algebraic closure of $Q$, let $Q^{\sep}\subset \bar Q$ be the separable closure of $Q$, and 
let $\bar R$ be the integral closure of $R$ in $\bar Q$. 
Let $I$ be an ideal of $R$ such that $R/I$ is reduced, and assume that $R$ is $I$-adically complete. Let $\bar I$ be the radical of the ideal 
$I\bar{R}$ of $\bar{R}$.

Let $\psi$ be a generalized Drinfeld module over $R$ of generic rank $r$ with trivial line bundle. 
Let $\La$ be an $A$-submodule of $Q^{\sep}$ for the action of $A$ via $\psi$. 
We suppose that the following conditions are satisfied:
\begin{enumerate}
\item[(i)]  $\La$ is stable under the action of $\Gal(Q^{\sep}/Q)$,
\item[(ii)] $\La$ is a projective of rank $n$ as an $A$-module,
\item[(iii)] $\lam^{-1}\in \bar I$ for every nonzero element $\lam$ of $\La$, and
\item[(iv)] for each $k \geq 1$, we have $\lam^{-1} \in I^k\bar R$ for almost all nonzero elements $\lam$ of $\La$. 
\end{enumerate}

We will construct a generalized Drinfeld module $\phi$ over $R$ of generic rank $d=r+n$ such that $\phi \equiv \psi \bmod I$ in \ref{Fuji3}--\ref{Fuji9}.  It can be regarded as the quotient of $\psi$ by $\La$. 

\end{sbpara}

\begin{sbpara}  \label{cases} We will use the following cases of this construction:

\begin{enumerate}
	\item[(1)] $\psi$ is a Drinfeld module of rank $r$. 
	
	 This case is useful in Section \ref{ss:Tate} in the direction $(\psi, \La) \mapsto \phi$ of the correspondence in the Tate uniformization.   
	 It was considered by K. Fujiwara and R. Pink \cite[page 181]{P1}. (The formulation there is slightly different from what we discuss here.) 
 	\item[(2)] $A=\F_q[T]$ and $n=1$.  
	
	This case is useful for iterated Tate uniformizations in Sections \ref{itTa0} and \ref{itTa}.
\end{enumerate}
 \end{sbpara}

\begin{sbpara}\label{Fuji3} Define a formal power series $e(z)$ over $R$ in one variable $z$ by
$$e(z)= z \prod_{\lam} \left(1- \frac{z}{\lam}\right)\in R\ps{z},$$
where $\lam$ ranges over all nonzero elements of $\La$. 
We have  $$e(z)\equiv z \bmod I.$$
Hence $e(z)$ has an inverse function $e^{-1}(z)\in R\ps{z}$ satisfying $e^{-1}(z)\equiv z \bmod I$.
\end{sbpara}

\begin{sbpara}\label{Fuji4}

For $a\in A$, let 
$$\phi(a)(z) = (e\circ \psi(a)\circ e^{-1})(z)\in R\ps{z}.$$ 
We have $\phi(a)(z) \equiv \psi(a)(z) \bmod I$. 
\end{sbpara}

\begin{sbprop}\label{Fuji5} The power series $\phi(a)(z)$ is a polynomial of degree $|a|^d$. 
\end{sbprop}

\begin{proof} 
By \ref{todvr}, we are reduced to the case that $R$ is a complete discrete valuation ring. We can then imitate the proof of Drinfeld in \cite{D} in the case that $\psi$ is a Drinfeld module over $R$.  Let $\psi(a)^{-1}\La= \{x\in \bar Q\mid \psi(a)(x)\in \La\}$. By comparing zeros using nonarchimedean analysis over $Q$, we have 
$$
	e(\psi(a)z)= c \prod_{\beta \in \psi(a)^{-1}\La/\La} (e(z)-e(\beta))^{m(\beta)}
$$ 
for some constant $c\in Q^\times$, with $m(\beta)$ defined as follows. Take a lift $\tilde \beta$ of $\beta$ in $\psi(a)^{-1}\La$, and let $\gamma = \psi(a)(\tilde \beta)$. Then $m(\beta)$ denotes the multiplicity of the root $\tilde \beta$ of the polynomial $\psi(a)(x)- \gamma$, which is independent of the choice of $\tilde \beta$. Hence $e(\psi(a)(z))=\phi(a)e(z)$ is a polynomial in $e(z)$ of degree $|a|^d$.
\end{proof}

\begin{sbpara}\label{Fuji9} By \ref{Fuji5}, we have constructed a generalized Drinfeld module $\phi$ over $R$ of generic rank $d$. 
such that $\phi\equiv \psi \bmod I$. 

\end{sbpara}

\subsection{Tate uniformizations in adic geometry}\label{ss:Tate}

This subsection is devoted to the proof of the following theorem, Theorem \ref{thmTate}, which provides a generalization of the correspondence of \ref{val2} to normal adic spaces. 

\begin{sbthm}\label{thmTate} 
Let $S$ be an adic space over $A$ which has an open covering, each member of which is isomorphic to an open subspace of the adic space $\Spa(R, R)$ associated to the formal scheme $\mr{Spf}(R)$ for some excellent ring $R$ complete with respect to an ideal $I$ of definition. 
We suppose that $S$ is normal (that is, all local rings of $S$ are normal). 

Fix integers $d, r\geq 1$ such that $r\leq d$. Let $n=d-r$.  Then the two categories (a) and (b) which follow are equivalent:

\begin{enumerate}
	\item[(a)] the category of rank $d$ Drinfeld modules $\phi_S$ over $\cO_S$ on $S$ which come from a generalized Drinfeld module 
	$\phi$ over $\cO^+_S$ such that $\phi \bmod I$ is a Drinfeld module of rank $r$ over $\cO^+_S/I$ (here,
	$I$ is the ideal of definition of $\cO^+_S$ such that $\cO^+_S/I$ is reduced),
	\item[(b)] the category of pairs $(\psi, \La)$ with $\psi$ a Drinfeld module of rank $r$ over $\cO^+_S$  and $\La$ 
	an $A$-submodule of $\cO_S\otimes_{\cO^+_S} \cL$ on the \'etale site $S_{\et}$ of $S$, where $\cL$ is the line bundle of 
	$\psi$ and the action of $A$ on $\cO_S\otimes_{\cO^+_S} \cL$ is via $\psi$ (not the usual action),  satisfying the following conditions:
\begin{enumerate}
	\item[(i)] Locally on $S_{\et}$, the $A$-module $\La$ is isomorphic to the constant sheaf associated to a projective $A$-module of rank $n$. 
	\item[(ii)] Every nonzero local section $\lambda$ of $\La$ is a local basis of the line bundle $\cO_S\otimes_{\cO^+_S} \cL$
	and $\lambda^{-1}$ is topologically nilpotent.
	\item[(iii)] Locally on $S_{\et}$, for every ideal $I$ of definition of $\cO^+_S$, we have 
	$\lambda^{-1}\in I\cL$ for almost all nonzero local sections $\lambda$ of $\Lambda$. 
\end{enumerate}
\end{enumerate}
\end{sbthm}

\begin{sbpara} \label{functors} An outline of the proof of Theorem \ref{thmTate} is as follows.

\begin{itemize}
	\item The functor from (b) to (a) is given by the construction of Section \ref{ss:constrquot} by the fact that $S$ locally has the form $\Spa(B,B^+)$, where $B^+$ is $R$ of Section  \ref{ss:constrquot}.
	\item The functor from (a) to (b) is $\phi \mapsto (\psi, \ker(e))$, 
where the pair $(\psi, e)$ is as in Proposition \ref{prop51}. (Note that Lemma \ref{stlem} tells us that $\phi$ in (a) is uniquely determined.) 
The crucial problem is to  show that $\ker(e)$ is big enough, and this is discussed  in \ref{bigLa}--\ref{big2}. 
\end{itemize}

In the proof of Theorem \ref{thmTate}, the theory of simplices of the Bruhat-Tits building and the fan $\Sig_{1,k}$ of \ref{Sigk3} play key roles: see \ref{Ta1k} and \ref{bigLa2}. 

\end{sbpara}

\begin{sbpara}  If $r=d$ (so $n=0$), then the categories (a) and (b) coincide, and hence the theorem is evident. We therefore assume for the rest of this subsection that $r<d$ (i.e., that $n>0$).

\end{sbpara}

\begin{sblem} If the category (a) or the category (b) is nonempty, then the adic space $S$ is  Tate (that is, a topologically nilpotent unit exists locally).

\end{sblem}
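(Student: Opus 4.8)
The statement to prove is that if either category (a) or category (b) is non-empty (under the standing assumption $r<d$, equivalently $n>0$), then $S$ is Tate, i.e.\ locally on $S$ there exists a topologically nilpotent unit of $\cO_S$. The plan is to reduce to the existence of a single element $\lambda^{-1}$ that is simultaneously a unit and topologically nilpotent, which is exactly what a nonzero section of the lattice $\La$ provides. Since the claim is local, I may work on an open subspace isomorphic to an open in $\Spa(R,R)$ with $R$ excellent and $I$-adically complete, and after further shrinking I may assume the line bundle of $\psi$ is trivialized, so that $\La$ is literally an $A$-submodule of $\cO_S\otimes_{\cO^+_S}\cO^+_S=\cO_S$ with the $A$-action via $\psi$.

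First I would handle category (b), which is the essential case. By condition (i) of (b), locally on $S_{\et}$ the sheaf $\La$ is the constant sheaf attached to a projective $A$-module of rank $n\geq 1$; in particular, after passing to an \'etale-local (hence, by taking an open refinement, an open-local) piece, $\La$ has a nonzero section $\lambda$. By condition (ii) of (b), such a $\lambda$ is a local basis of the line bundle $\cO_S\otimes_{\cO^+_S}\cL$ — with the trivialization this means $\lambda\in\cO_S^\times$ — and $\lambda^{-1}$ is topologically nilpotent. Therefore $\lambda^{-1}$ is a unit of $\cO_S$ that is topologically nilpotent, witnessing that $S$ is Tate on this open set. (One should note that \'etale-locally versus open-locally is harmless here: existence of a topologically nilpotent unit \'etale-locally implies it open-locally, since an \'etale cover is in particular open-surjective, or one simply descends the statement, which is about the topology of the rings $\cO_{S,s}$.)

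Next I would deduce the case of category (a) from the case of (b), using the functor (a)$\to$(b) described after the theorem statement: given a Drinfeld module $\phi$ of rank $d$ over $\cO_S$ satisfying (i) of (a), Proposition \ref{prop51} (and Remark \ref{prop51re}) produces a pair $(\psi,e)$ with $\psi$ a Drinfeld module of rank $r$ over $\cO^+_S$ and $e\equiv z\bmod I$, and one sets $\La=\Ker(e)$. The content of Theorem \ref{thmTate} (once proved) is precisely that this $\La$ satisfies the conditions in (b) — in particular that it has rank $n=d-r>0$, so it is nonzero. Granting that, the case-(b) argument applies to $(\psi,\La)$ and gives that $S$ is Tate. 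Alternatively, and more self-containedly for this lemma, one can argue directly: $\phi$ has rank $d$ while $\phi\bmod I$ has rank $r<d$, so $\phi$ genuinely degenerates along $\{I=0\}$; on a local ring $\cO_{S,s}$ whose structure includes a height-one valuation ring $\cV$ with $\cO^+_{S,s}$ the preimage of $\cV$ (as in \ref{tl2}), the degeneration forces a nonzero $\psi(A)$-lattice element $\lambda$ with $v_{\bar K}(\lambda)<0$ by Lemma \ref{pre0}, whence $\lambda^{-1}\in m_\cV$, i.e.\ a topologically nilpotent unit exists at $s$; globalizing this over a small enough open set is routine.

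The main obstacle is not in this lemma itself — which is a short bookkeeping argument once the relevant properties of $\La$ are available — but in the fact that the clean version via (b)$\Rightarrow$(a) relies on Theorem \ref{thmTate}, whose proof (the surjectivity/bigness of $\Ker(e)$, invoking the Bruhat--Tits building as flagged in \ref{Ta1k}, \ref{bigLa2}) is the hard part of this section. Accordingly I would present the lemma with a proof that does \emph{not} presuppose Theorem \ref{thmTate}: for category (b) use condition (ii) directly as above, and for category (a) use the direct degeneration argument via \ref{prop51}, Lemma \ref{pre0}, and the local structure \ref{tl2} of $\cO^+_S$, so that the lemma stands on its own and can legitimately be used as a preliminary step toward Theorem \ref{thmTate}.
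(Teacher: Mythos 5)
Your treatment of category (b) coincides with the paper's: a nonzero local section $\lambda$ of $\La$ is a unit by condition (ii) of (b) and $\lambda^{-1}$ is topologically nilpotent, which is exactly the Tate property (the passage from \'etale-local to open-local is indeed harmless). The problem is category (a).

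You correctly reject the route through the functor (a) $\to$ (b) as circular, but your proposed substitute does not close. The lattice element $\lambda$ you invoke is only produced (via Proposition \ref{val2}) after replacing $\cO_{S,s}$ by a \emph{complete height-one} valuation ring $\cV$ and passing to $\bar K$: it lives in an algebraic closure of the fraction field of that completion, not in $\cO_{S,s}$, let alone in $\cO_S(U)$ for an open $U$. So $\lambda^{-1}$ is not a section of $\cO_S$ and cannot directly witness that $S$ is Tate, and the claimed globalization "over a small enough open set" is precisely the missing content, not a routine step. The paper's argument for (a) is far more elementary and avoids lattices entirely: trivialize $\cL_{\phi}$ locally and write $\phi(a)(z)=\sum_{i=1}^m c_i z^i$ with $m=|a|^d$ for some nonconstant $a\in A$. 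Since $\phi$ is a Drinfeld module of rank $d$ over $\cO_S$, the top coefficient $c_m$ is a unit of $\cO_S$; since $\phi$ comes from a generalized Drinfeld module over $\cO_S^+$ whose reduction modulo the ideal of definition $I$ is a Drinfeld module of rank $r<d$, all coefficients in degree $>|a|^r$ vanish modulo $I$, so $c_m\in I$ and is topologically nilpotent. Thus $c_m$ itself is the required topologically nilpotent unit. Your case-(a) argument should be replaced by this (or an equally explicit) construction of an element of $\cO_S$.
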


\begin{proof}

Assume that the category (a) is nonempty. Locally on $S$, trivialize the invertible $\cO_S^+$-module $\cL_{\phi}$ of $\phi$. For any element $a$ of $A$ which is not in the total constant finite field of $F$, writing $\phi(a)(z)=\sum_{i=1}^m c_iz^i$ with $m=|a|_{\infty}^d$ and $c_i\in \cO_S^+$, we have that $c_m$ is a topologically nilpotent unit of $\cO_S$.

Assume that the category (b) is nonempty. Locally on $S$, trivialize the invertible $\cO_S^+$-module $\cL_{\psi}$ of $\psi$. Then  
the inverse $\lambda^{-1}$ of any nonzero local section  $\la$ of $\La$ is a topologically nilpotent unit of $\cO_S$. 
\end{proof}

For the rest of this subsection, we assume that $S$ is Tate. 
\begin{sbpara} We show that we may assume that $A=\F_q[T]$ in the proof of \ref{thmTate}.

There is a finite flat ring homomorphism  $f \colon \F_q[T]\to A$, and via this homomorphism, a Drinfeld $A$-module of rank $d$ is regarded as a Drinfeld $\F_q[T]$-module of rank $dd'$ where $d'$ is the degree of $f$. A generalized Drinfeld $A$-module is similarly regarded as a generalized Drinfeld $\F_q[T]$-module. The category (a) (resp., (b)) for $(A, d)$ is equivalent to the category of objects $E$ of the category  (a) (resp., (b)) for $(\F_q[T], dd')$ endowed with a ring homomorphism $h \colon A \to \text{End}(E)$ over $\F_q[T]$ such that $\Lie(h(a))=a$ for all $a\in A$. 

\end{sbpara}

In the rest of this subsection, we can suppose that $A=\F_q[T]$.
We prove the following proposition in \ref{big1}--\ref{big2}.

\begin{sbprop}\label{bigLa} Let $\phi$ be as in (a) of \ref{thmTate}, and let $(\psi, e)$ be the associated pair in \ref{prop51}. 
On the \'etale site of $S$, consider the sheaf  $\La :=  \ker(e)$.
\begin{enumerate}
\item[(1)] The sheaf $\La$ satisfies the conditions (i)--(iii) on $\La$ in (b) in \ref{thmTate}. 
\item[(2)] For every complete discrete valuation field $K$  with valuation ring $\cV$ with a morphism $\Spa(K, \cV)\to S$, 
the pullback of $\La$ to $\cV$ coincides with the kernel in the separable closure of $K$ of the map $e$ as in \ref{prop51}
associated to the pullback of $\phi$ to $\cV$. 
\end{enumerate}
\end{sbprop}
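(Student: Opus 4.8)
The proof of Proposition \ref{bigLa} naturally splits into the two assertions, with the second serving as the key tool for the first. I would begin by establishing (2), i.e.\ compatibility with pullback to complete discrete valuation rings. Given a morphism $\Spa(K,\cV)\to S$, the Drinfeld module $\phi$ pulls back to a Drinfeld module over $K$ which extends to a generalized Drinfeld module over $\cV$ (this is exactly the situation of Proposition \ref{val2}), and by \ref{prop51} the associated pair $(\psi_\cV, e_\cV)$ is obtained from $(\psi, e)$ by reduction; since the kernel of a power series commuting with the $\psi(A)$-action is determined functorially (using \ref{prop51} and \ref{stlem} for uniqueness), the kernel $\La_\cV=\Ker(e_\cV)$ in $K^{\sep}$ is the pullback of $\La$. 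The content here is essentially formal once one knows that $e$ pulls back to $e_\cV$, which follows from the uniqueness clause of \ref{prop51}.

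For (1), the conditions (ii) and (iii) on $\La$ are comparatively easy. Working \'etale locally and trivializing $\cL$, for any nonzero local section $\la$ of $\La=\Ker(e)$ with $e=\sum_i c_i\tau_i$ and $c_i\to 0$ $I$-adically, the relation $e(\la)=0$ forces $\la$ (equivalently $\la^{-1}$) to satisfy a relation showing $\la^{-1}$ is topologically nilpotent: reducing mod $I$, $\la$ must be a unit since $e\equiv z\pmod I$ and $e(\la)=0$ would otherwise be impossible, and then $\la^{-1}\in \bar I$ follows; moreover, because the coefficients $c_i$ lie in $I^n\cO_S^+$ for $i$ large, only finitely many ``sizes'' of roots can occur outside $I^n\cL$, giving (iii). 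I would reduce these statements pointwise to the complete DVR case via Lemma \ref{todvr} and part (2), where the analogous facts are contained in \ref{val2} and Lemma \ref{pre0}.

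The main obstacle, as the authors flag in \ref{bigLa}, is condition (i): showing that \'etale locally $\La$ is isomorphic to the constant sheaf on a projective $A$-module of rank exactly $n=d-r$ --- i.e.\ that $\Ker(e)$ is \emph{big enough}. The inclusion of ranks in one direction (that $\La$ has rank at most $n$) follows because $\phi\bmod I$ has rank $r$. For the reverse, the strategy will be to work at a point $s\in S$, pass to a complete DVR $\cV$ mapping to $S$ through $s$ (Lemma \ref{todvr}), and there invoke Proposition \ref{val2}(1): the pullback of $\phi$ to $\cV$, being a generalized Drinfeld module of generic rank $d$ reducing to rank $r$, corresponds to a pair $(\psi,\La_\cV)$ with $\La_\cV$ a $\psi(A)$-lattice of rank exactly $d-r=n$. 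By part (2), $\La_\cV$ is the pullback of $\La$, so the stalk of $\La$ has the right rank. The remaining work --- which I expect to be the genuinely delicate part --- is to upgrade this pointwise rank statement to local constancy of $\La$ as an \'etale sheaf and to produce the local trivialization. This is where the theory of the Bruhat-Tits building enters (the authors refer forward to \ref{Ta1k}, \ref{bigLa2}): one uses the norm attached by \ref{normcv} to the lattice, together with Proposition \ref{diagbase}, to pick out a canonical ordered basis $(\la_1,\dots,\la_n)$ by the ``smallest norm outside the span of the previous ones'' recipe, shows these basis elements spread out to \'etale-local sections of $\La$ (using continuity of the valuations of torsion points, via \ref{Ntor} and \ref{xicdv1}), and thereby trivializes $\La$ locally. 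I would organize the argument so that the hard analytic input is quarantined into the DVR case and the global statement is assembled by a covering/spreading-out argument on the adic space.
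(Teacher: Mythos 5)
There is a genuine gap, and it sits exactly where you place the weight of the argument. You treat part (2) as ``essentially formal once one knows that $e$ pulls back to $e_\cV$,'' and then use part (2) to deduce that the stalks of $\La$ have rank $n$, which is the content of condition (i). But the uniqueness clause of \ref{prop51} only gives the easy containment: the pullback of the sheaf $\La=\Ker(e)$ on $S_{\et}$ into $K^{\sep}$ lands inside $\Ker(e_\cV)$. The reverse containment --- that every root of $e$ in $K^{\sep}$ already arises from a section of $\Ker(e)$ over some \'etale neighborhood in $S$ --- is precisely the ``$\Ker(e)$ is big enough'' assertion, i.e.\ it is equivalent to the hard direction of (1)(i). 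Your plan proves (1)(i) from (2) and (2) from nothing, so it is circular. The DVR theory (\ref{val2}, \ref{normcv}, \ref{diagbase}) tells you what the answer must be over each $\cV$, but it does not produce sections of $\Ker(e)$ over \'etale covers of $S$; ``spreading out'' a lattice element from a single complete DVR to an \'etale neighborhood is not available here (see \ref{4.2Tv2}: the lattice over $\cV$ lives in $K^{\sep}$ and has no reason to descend).

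The paper's actual mechanism for manufacturing kernel elements \'etale-locally is the part your sketch omits. After reducing (via a level-$T$ structure, \ref{bb'}, and Proposition \ref{tl3}) to the case where the poles of the $T$-torsion points $\beta_i$ define a section of $[\tau]$ for a cone $\tau$ whose associated $\sig_n$ comes from a simplex of $AP_n$ (Lemma \ref{Ta1k} --- this is where Bruhat--Tits enters, and it is needed so that $\xi_1^{-1}$ is \emph{linear} on the cone), one first builds candidates $\alpha_i'$ as roots of monomials $\prod_j \beta_j^{m a_{ij} q^{-r}}$, extracted \'etale-locally by Lemma \ref{Qdiv}; their valuations are forced to be the $\xi_1^{-1}$-images of the valuations of the $\beta_i$ at every DVR point simultaneously (\ref{sbeta}). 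One then shows $f(z)=\beta_i^{-1}e(\alpha_i'z)$ is an additive power series over $R$ with coefficients tending to $0$ and some coefficient a unit (\ref{bea}), and solves $f(u)=1$ in an \'etale extension by the Weierstrass/Hensel-type Lemma \ref{lemp}. Setting $\alpha_i=\alpha_i'u$ gives honest \'etale-local sections of $\Ker(e)$ with $e(\alpha_i)=\beta_i$, and only then do the reductions to DVRs (via \ref{todvr}) verify that $\La=\sum_i\psi(AT)\alpha_i$ is the whole kernel and satisfies (i)--(iii), proving (1) and (2) together. Without an argument of this kind --- some device that solves $e(x)=\beta$ over an \'etale extension of the global ring --- your proof does not close.
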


\begin{sbpara}\label{big1}  We begin the proof of \ref{bigLa}. We may assume that $T$ is invertible in $\cO_S^+$. In fact, $S$ is covered by the open subspaces $S'$ and $S''$ such that $T$ is invertible on $\cO^+_{S'}$ and $T-1$ is invertible on $\cO^+_{S''}$. On $S''$, we can argue replacing $T-1$ by $T$. 

So, we assume that $T$ is invertible in $\cO_S^+$. 
Working \'etale locally on $S$, we may place a level $T$ structure on our Drinfeld module $\phi_S$ over $\cO_S$ on $S$.
With this assumption, we trivialize the invertible $\cO^+_S$-module $\cL_{\psi}=\cL_{\phi}$ of $\psi$ and $\phi$ by using a nonzero $T$-torsion point of $\psi$, which is a basis of $\cL_{\psi}$ by \ref{unit}. 

\end{sbpara}

\begin{sblem}\label{bb'} Let $\Phi = \phi_S[T]$ be the group of all $T$-torsion points of $\phi_S$. Let $\Psi = e(\psi[T]) \subset \Phi$.
\begin{enumerate}
\item[(1)] The nonzero elements of $\Psi$ are units in $\cO_S^+$. 
\item[(2)] If $\beta\in \Phi\setminus \Psi$, then $\beta^{-1}\in \cO_S^+$ is a topologically nilpotent unit in $\cO_S$. 
\end{enumerate}
\end{sblem}

\begin{proof} Part (1) follows from \ref{*Simage}, as the subset $\Psi$ of $\Phi$ is contained in
$\cO_S^+$. For part (2), we note that \ref{nlevelN} tells us that $\beta^{-1}$ lies in every ideal of definition of $\cO_S^+$, whereas
\ref{*Simage} again tells us that $\beta \in \cO_S^{\times}$.
\end{proof}

\begin{sbpara}\label{TaSig}

We work locally on $S$. Given sections $f$ and $g$ of $\cO_S^+\cap \cO_S^\times$, at least one of $f$ and $g$ divides the other in $\cO^+_S$
by \ref{tl2}.  Hence by \ref{bb'}, there exists a basis $(\gamma_i)_{0\leq i\leq d-1}$ of  the $\F_p$-vector space $\Phi$ such that $\Psi=\sum_{i=0}^{r-1} \F_p \gamma_i$ and such that if we put  $\beta_i=\gamma_{i+r-1}$ for $1\leq i\leq n$, then $\beta_i^{-1} \mid \beta^{-1}$ in $\cO^+_S$ for all $i$ for every $\beta\in \Phi\setminus (\Psi+\sum_{j=1}^{i-1} \F_p\beta_j)$. 
 By \ref{bb'}(2), we have that 
 $\beta_n^{-1} \mid \beta_1^{-c}$ in $\cO^+_S$ for some $c\geq 1$.
 
 Let $\Sig$ be the fan of all faces of the cone
$$\{(a_i)_i \in C_d \mid  a_i=0 \textsp{for} 1\leq i\leq r-1\textsp{and} q^ca_r\geq a_{d-1}\}.$$
Via the map $\pole$ of \ref{tL}, the tuple $(\gamma_i^{-1})_i$ determines an element of $[\Sig](S)$, as defined in \ref{toric}, for the canonical log structure $\cO^+_S\cap \cO_S^\times$ on the sheaf $\cO_S^+$ of \ref{tl2}.

For a positive integer $k$, let $\Sig_{1,k}*\Sig$ be the join of the fans $\Sig_{1,k}$ (\ref{Sigk3}) and $\Sig$. 
By \ref{tl3}, we have that $[\Sig_{1,k}* \Sig](S)=[\Sig](S)$.  
Hence,
$(\gamma_i^{-1})_i$ determines an element of $[\tau_1](S)$ for some  cone  $\tau_1$ in $\Sig_{1,k}* \Sig$. 
Take $\tau\in \Sig_{1,k}$ such that $\tau_1\subset \tau$. We have $\tau=\xi^d_1(\sig)$ for some  $\sig\in \Sig^{(k)}$: that is, recall that 
$\xi_1^d = \xi_{k,1}^d \circ \xi_k^d$ by \ref{thmcone}, that $\xi_k^d$ gives the correspondence $\Sig^{(k)} \to \Sig_k$ of Theorem \ref{Sigk3} by
\ref{Sigk3pf}, and that $\xi_{k,1}^d = (\xi_{1,k}^d)^{-1}$ gives the correspondence $\Sig_k \to \Sig_{1,k}$ of Theorem \ref{Sigk5} by \ref{Sigk5pf}.

\end{sbpara}

\begin{sblem}\label{Ta1k} Suppose that $q^{k-1}>c^{1/r}$. For the face $\sig'$ of $\sig$ given by
$$
	\sig' = \{s\in \sig\mid s_i=0 \textsp{for} 1\leq i\leq r-1\},
$$ 
the set 
$$
	\sig_n =\{(s_{i+r-1})_{1\leq i\leq n}\mid s\in \sig'\} \subset \R^n
$$ 
is the cone associated to some 
 simplex of $\AP_n$ as in \ref{apcone1}. 
\end{sblem}

\begin{proof} We may suppose that $\cO_S^+$ is an excellent ring $R$ complete with respect to $I$, since
$S$ is as in \ref{thmTate}.
Since $S$ is Tate, there is a prime ideal $\frak p$ of $R$ of height one such that $I\subset \frak p$. Let $\cV$ be the completion of the local ring $R_{\frak p}$, let $K$ be the field of fractions of $\cV$, and consider the pullback under $\Spa(K, \cV) \to S$.

Let $s$ be the element of $\sig_n \cap \R_{>0}^n$ such that 
$$
	\xi^d_1(0^{r-1}, (s_i)_{1\leq i\leq n})= (0^{r-1}, (-v_K(\beta_i))_{1\leq i\leq n}).
$$ 
Let $\epsilon=\epsilon_{s_1^r, \dots, s_n^r}^{r,n}$. Since $\beta_n^{-1} \mid \beta_1^{-c}$ in $R$, we have
$-v_K(\beta_n) \le -cv_K(\beta_1)$, and the definition of $\xi^d_1$ in \ref{defcone2} then tells us that
$\epsilon(q^{-r}s_n^r) \leq c\epsilon(q^{-r}s_1^r)$. Hence we have 
 $$q^{-r}s_1^r= \epsilon(q^{-r}s_1^r)\geq c^{-1}\epsilon(q^{-r}s_n^r) \geq c^{-1}q^{-r}s_n^r,$$
and therefore $c^{1/r}s_1 \geq s_n$. Suppose that $\sig_n$ is not associated to a simplex of $\AP_n$, and note
that $\sig_n \in {}_{n+1} \Sig^{(k)}$ as $\sigma \in {}_d \Sig^{(k)}$. As noted in the definition \ref{Sigk1} of $\Sig^{(k)}$, we 
must then have $s_n\geq q^{k-1}s_1$. But then $c^{1/r}s_1\geq q^{k-1}s_1$, and this is a contradiction as 
$s_1>0$ and $q^{k-1}>c^{1/r}$.   
\end{proof}

Proposition \ref{bigLa} is reduced to the following.

\begin{sbprop}\label{bigLa2} Assume $S=\Spa(B, B^+)$ with $R := B^+$.  
Assume the $T$-level structure of $\phi_S$ is such that $(\gamma_i^{-1})_{0\leq i\leq d-1}$  gives an element of $[\tau](S)$ for some $\tau\in \Sig_{1,k}$ with $k\geq 1$ and that the cone $\sig\in \Sig^{(k)}$ such that   $\tau=\xi^d_1(\sig)$  satisfies the condition that for 
$$
	\sig' := \{s\in \sig\mid s_i=0 \textsp{for} 1\leq i\leq r-1\},
$$ 
the cone
$$
	\sig_n := \{(s_{i+r-1})_{1\leq i\leq n}\mid s\in \sig'\}\subset \R^n,
$$ 
corresponds to  a simplex  of $\AP_n$.

Then the following holds locally on $\Spf(R)$: There is a finite separable extension $Q'$ of the field of fractions $Q$ of $R$ such that the integral closure $B'$ of $B$ in $Q'$ is \'etale over $B$ and $\La := \{x\in Q'\mid e(x)=0\}$ has the following two properties.
\begin{enumerate}
\item[(1)] $\La$ satisfies the conditions (i)--(iii) on $\La$ in \ref{thmTate}(b). That is,
\begin{enumerate}
	\item[(i)]  $\La$ is a projective $A$-module of rank $n$,
	\item[(ii)] every nonzero element $\lambda$ of $\La$ is a unit of $B'$ such that $\lambda^{-1}$ is topologically nilpotent,
	\item[(iii)] for each $n\geq 1$ and ideal of definition $I$ of $R$, 
	almost all nonzero elements $\la \in \La$ satisfy $\la^{-1}\in (IR')^n$, where $R'$ denotes
	the integral closure of $R$ in $Q'$.
\end{enumerate}
\item[(2)] For every complete discrete valuation field $K$ and morphism $\Spa(K, \cV)\to S$ with $\cV$ the valuation ring of $K$, 
the kernel of the map $e$ associated to the pullback of $\phi$ to $\cV$ in the separable closure $K^{\sep}$ of $K$ coincides with the pullback of $\La$ to $K^{\sep}$. 
\end{enumerate}
\end{sbprop}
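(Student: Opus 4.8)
The plan is to construct the lattice $\La=\Ker(e)$ by producing enough torsion points of $\phi$ that map to it, using the assumption that the $T$-torsion $(\beta_i)_i$ realizes a cone $\tau=\xi_1^d(\sig)$ with the stated good shape. First I would recall from \ref{prop51} the unique pair $(\psi,e)$ associated to $\phi$, so that $\La=\Ker(e)$ is well-defined as an $A$-submodule of $Q^{\sep}$; conditions (i) and (iii) of (b) in \ref{thmTate} amount to showing that $\La$ is a projective $A$-module of the \emph{full} rank $n=d-r$ (and not smaller) and that $\la^{-1}\to 0$ $I$-adically, while condition (ii) is essentially forced once $\la\neq 0$ satisfies a monic-type equation coming from $\psi(a)(e^{-1}(\la))=0$ together with normality of $R'$, much as in Lemma \ref{MNx} / \ref{logD100}. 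The content is therefore almost entirely about the \emph{size} of $\La$: produce $n$ $A$-linearly independent elements with the right valuations.

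The key construction step is an iteration of Proposition \ref{1step} / \ref{o1step1}. Starting from $\psi$ (rank $r$) and the topologically nilpotent unit $\beta_1^{-1}$ (which lies in $I$ after localizing, by \ref{bb'}), Proposition \ref{1step} produces a rank-one $\psi(A)$-lattice $\La_1=\psi(A)(\beta_1^{-1}\cdot(\text{unit}))$ inside a finite \'etale extension, and hence by Section 4.1 a generalized Drinfeld module $\psi_1$ of generic rank $r+1$ with $\psi_1\equiv\psi\bmod I$; repeating with $\beta_2$, etc., I obtain $\psi=\psi_0,\psi_1,\dots,\psi_n=\phi'$ with lattices $\La_i$ of rank one in $\psi_i$, and $\La$ is reconstructed as the total iterated lattice. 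To make this legitimate I must check at each stage that the relevant $\beta_{i+1}^{-1}$ satisfies the hypothesis $f_i^{-q}t^{(q^{r_i}-1)(q-1)}\in R'$ of \ref{1step} for the current module $\psi_i$; this is exactly where the hypothesis that $\sig_n$ corresponds to a simplex of $AP_n$ enters, via \ref{T4.9} (which computes $v_K(f_i)$ in terms of $\delta^{r,\cdot}$) and \ref{prope2}/\ref{prope3} (which express the iterated $\hat\epsilon$ maps as compositions of rank-one ones). The valuation bookkeeping of \ref{o1step1} shows $h_a\in I$ with $h_a\to 0$, giving condition (iii), and shows the $A$-module generated by the chosen generators is free of rank $n$.

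To verify that the $\La$ built this way is genuinely all of $\Ker(e)$ and has rank exactly $n$ — not merely contained in it — I would pass to discrete valuation rings via \ref{todvr}: for each height-one prime $\frak p\supset I$, the completion $\cV$ of $R_{\frak p}$ is a complete DVR, the pullback of $\phi$ to $\cV$ is a Drinfeld module of rank $d$ over $K=\mathrm{Frac}(\cV)$ (condition (a)(i)), and by \ref{val2}(2) it corresponds to $(\psi|_{\cV},\La_{\cV})$ with $\La_{\cV}$ a $\psi(A)$-lattice of rank $n$ in $K^{\sep}$. By uniqueness of $(\psi,e)$ in \ref{prop51}, $\La_{\cV}=\Ker(e)|_{\cV}$, which gives part (2) of \ref{bigLa2} and shows the iterated construction has produced the full kernel over $\cV$; since $R'=\bigcap_{\frak p}\cV_{\frak p}$ (Lemma \ref{todvr}(1), using $I$-adic completeness), and the generators I built lie in a fixed finite \'etale $B'$, gluing these local statements yields that $\La$ is projective of rank $n$ over $A$ and satisfies (i)--(iii) globally (after the finite \'etale base change).

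The main obstacle I expect is the inductive verification that Proposition \ref{1step} applies at each stage: one must know that after forming $\psi_{i+1}=\psi_i/\La_i$, the leading coefficient $f_{i+1}$ of $\psi_{i+1}(T)$ divides the appropriate power of the next candidate lattice generator's inverse, i.e. that $\beta_{i+2}^{-1}$ is ``deep enough'' relative to $f_{i+1}$. This is a statement purely about the piecewise-linear functions $\hat\epsilon^{r,n}_s$ and $\delta^{r,n}$ on the cone $\sig\in\Sig^{(k)}$, and the hypothesis that $\sig_n$ is an $AP_n$-simplex is precisely what makes all the relevant $\hat\epsilon$'s restrict to linear maps there (\ref{explin2}), so that the divisibilities are genuine divisibilities in $R'$ and not merely asymptotic. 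Assembling the valuation estimates of \ref{o1step1} along the tower, keeping careful track of which extension $Q'$ is needed and that it is \'etale over $Q$ (so $B'/B$ is \'etale, as required), is the bulk of the work; the descent to DVRs and the gluing are then comparatively routine given \ref{todvr}, \ref{val2}, and \ref{prop51}.
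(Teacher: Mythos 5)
Your overall architecture (reduce to DVRs via \ref{todvr}, use \ref{val2} and \ref{prop51} there, and let the simplex hypothesis enter through linearity of the $\hat\epsilon$'s) matches the paper in spirit, but your route through iterated quotients has a genuine gap at its core: nothing in your construction produces elements of $\Ker(e)$ for the \emph{given} $\phi$. Proposition \ref{1step} manufactures, from any suitable topologically nilpotent $t$, \emph{a} rank-one lattice $\psi(A)(t^{-1})$ and \emph{a} quotient module congruent to $\psi \bmod I$ --- but there is no reason this lattice lies inside the kernel of the exponential $e$ attached to $\phi$ by \ref{prop51}, which is what $\La$ must be. In particular, taking the first generator to be a unit multiple of $\beta_1$ is already wrong on the level of valuations: the genuine generator $\lambda_1\in\Ker(e)$ satisfies $e(\tilde\lambda_1)=\beta_1$ and $\psi(T)\tilde\lambda_1=\lambda_1$, so $-v(\lambda_1)=q^{r}\cdot(\epsilon^{r,n}_s)^{-1}(-v(\beta_1))$, which by \ref{explin} is a monomial in the $-v(\beta_j)$ with \emph{rational} exponents, not a unit multiple of $-v(\beta_1)$. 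This is exactly why the paper's proof (\ref{alpha'}--\ref{big2}) needs two ingredients that have no counterpart in your proposal: Lemma \ref{Qdiv}, which extracts roots in a controlled \'etale extension so as to realize these rational monomials as actual elements $\alpha'_i$ of $Q'$, and the Hensel-type Lemma \ref{lemp}, which corrects $\alpha'_i$ by a unit so that $e(\alpha_i)=\beta_i$ exactly, whence $\psi(AT)\alpha_i\subset\Ker(e)$. Your appeal to \ref{todvr} for the final identification $\La=\Ker(e)$ is legitimate (the paper does the same in \ref{big2}), but it can only compare two objects already defined over a fixed $R'$; it cannot manufacture the generators, nor the single finite separable $Q'$ with $B'/B$ \'etale that the statement demands --- over each DVR separately the kernel lives in $K^{\sep}$, and a priori these local solutions need not glue into one \'etale extension of $B$.

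A secondary problem is that even your inductive verification of the hypothesis of \ref{1step} is carried out with the wrong elements. In the paper's iterated construction (\ref{fTc2}) the divisibility $({}'f)^{-q}t^{(q^{r+n-1}-1)(q-1)}\in R$ is checked for $t=({}'\phi(N)(t_n^{-1}))^{-1}$, i.e.\ only after applying ${}'\phi(N)$ to the torsion point, which replaces $-v(\beta_n)$ by the correct lattice valuation; with the raw $\beta_i^{-1}$ the estimate via \ref{T4.9} would be off by the nonlinear $\epsilon$-rescaling. Note also that the iterated Tate uniformization is used in the paper only in Section 4.5, where the construction runs in the opposite direction (from toric data $(\psi,t_1,\dots,t_n)$ to $\phi$), so that no inversion of $e$ is ever required there; to prove \ref{bigLa2} one must go from $\phi$ back to the lattice, and inverting $e$ --- the root extraction plus the Hensel step --- is the irreducible content of the proof that your proposal is missing.
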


The proof of \ref{bigLa2} is given in \ref{alpha'}--\ref{big2}. 

\begin{sblem}\label{Qdiv} Let $p$ be a prime number, let $R_1$ be a normal integral domain over $\F_p$ with field of fractions $K$, and let $R_2$ be a normal subring of $K$ such that $R_1\subset R_2$. Let $u_i$ ($1\leq i\leq t$) be   elements of $R_1$ which are invertible in $R_2$, and let $m\geq 1$ be an integer. Then there are a finite separable extension $K'$ of $K$ and elements $v_i$ ($1\leq i \leq t$) of $K'$ such that if we denote by $R'_j$ ($j=1,2$) the integral closure of $R_j$ in $K'$, then $R'_2$ is \'etale over $R_2$, $v_i  \in (R_2')^\times$, and $R_1'v_i^m=R_1'u_i$ ($1\leq i\leq t$).

\end{sblem}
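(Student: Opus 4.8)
\textbf{Proof plan for Lemma \ref{Qdiv}.}

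The plan is to take the $m$-th roots of the $u_i$ and adjoin them to $K$, then show that after this extension the integral closures behave as claimed. First I would form $K' = K(v_1, \dots, v_t)$ where $v_i$ is a chosen $m$-th root of $u_i$; a priori this is only a purely inseparable-free finite extension, but I need to argue it can be taken separable. Write $m = p^e m'$ with $\gcd(m', p) = 1$. Adjoining an $m'$-th root is automatically separable. For the $p$-part, the key observation is that in characteristic $p$ the Frobenius is an automorphism on the perfection, but here we want to stay in a finite separable extension: I would use that each $u_i$ is a unit in the normal ring $R_2$, so the element $u_i$, regarded in $R_2$, already lies in $R_2^{\times}$, and we only need the $m$-th roots to exist in \emph{some} finite separable extension of $K$. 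The trick is that we do not actually need $v_i^m = u_i$ on the nose; the statement only requires $R_1' v_i^m = R_1' u_i$, i.e.\ agreement up to a unit of $R_1'$. So I would instead adjoin $m'$-th roots honestly (separable) and handle the $p^e$-part by replacing $u_i$ with $u_i$ times a suitable $p$-th power so that the remaining root extraction is separable — or, more cleanly, observe that in $\F_p$-algebras a $p$-th root, if it exists in $\bar K$, generates a purely inseparable extension, so to keep things separable one absorbs all $p$-power factors of $m$ into the "up to a unit" slack by noting $R_1'(v_i)^{p^e} = R_1' u_i$ can be arranged with $v_i \in K^{1/m'} \subset K^{\sep}$ once one checks $u_i$ is a $p^e$-th power times a unit in $R_1'$ after the separable extension $K' = K(u_1^{1/m'}, \dots, u_t^{1/m'})$; this last point uses that $R_1'$ is normal and the $u_i$ are units in the larger normal ring $R_2'$.

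Next I would verify étaleness of $R_2' / R_2$. Since each $u_i \in R_2^{\times}$ and $K' = K(u_1^{1/m'}, \dots, u_t^{1/m'})$ with $m'$ prime to $p$, the extension $R_2[v_1, \dots, v_t] \subset K'$ is obtained by successively adjoining roots of the polynomials $X^{m'} - u_i$, whose derivatives $m' X^{m'-1}$ are units on the locus where $X$ is a unit; since $v_i$ is a unit (being a root of a monic polynomial with unit constant term $-u_i$), these polynomials are separable and the discriminants are units in $R_2$, so $R_2[v_1,\dots,v_t]$ is étale over $R_2$, hence normal, hence equals $R_2'$. The conditions $v_i \in (R_2')^{\times}$ and $R_1' v_i^m = R_1' u_i$ then follow: $v_i$ is a unit because its $m'$-th power $u_i$ is, and $v_i^m = v_i^{p^e m'} = (v_i^{m'})^{p^e} = u_i^{p^e}$, which generates the same $R_1'$-ideal as $u_i$ (both being units, or more precisely $u_i^{p^e} = u_i \cdot u_i^{p^e - 1}$ with $u_i^{p^e-1} \in (R_1')^{\times}$ once $u_i \in (R_1')^{\times}$, which holds after passing to $R_1'$ since $u_i$ becomes a unit there as well — here one uses that $u_i$ divides $v_i^{m'} = u_i$ and is divided by it).

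I expect the main obstacle to be the separability bookkeeping in characteristic $p$: naively adjoining $m$-th roots produces inseparable extensions when $p \mid m$, and the lemma insists on a separable $K'$ with $R_2'$ actually étale (not merely finite) over $R_2$. The resolution is to exploit the slack "$R_1' v_i^m = R_1' u_i$" rather than "$v_i^m = u_i$": one only adjoins $m'$-th roots for the prime-to-$p$ part $m'$ of $m$, and then checks that $u_i$ itself (a unit) already is, up to a unit of $R_1'$, a $p^e$-th power — which is automatic in a perfect-enough setting but in general requires the normality of $R_1'$ together with the fact that $u_i$ is a unit, so that the equation $X^{p^e} = u_i$, while defining an inseparable extension, is unnecessary: we simply set $v_i = u_i^{1/m'}$ and compute $v_i^m = u_i^{p^e}$, noting $R_1' u_i^{p^e} = R_1' u_i$ since $u_i \in (R_1')^{\times}$. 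Thus the only extension actually taken is the separable one generated by the $m'$-th roots, and everything else is a verification. Once this is set up, the étaleness, the unit conditions, and the ideal equalities are all routine consequences of normality and the $u_i$ being units in $R_2$.
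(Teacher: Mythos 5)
There is a genuine gap in your handling of the $p$-power part of $m$, and it is fatal. You reduce to adjoining only $m'$-th roots (with $m = p^e m'$, $p \nmid m'$) and then claim $R_1'u_i^{p^e} = R_1'u_i$ because ``$u_i \in (R_1')^{\times}$.'' But the hypothesis is only that $u_i$ is invertible in $R_2$, not in $R_1$; in the intended application $R_1$ is an $I$-adically complete ring and $u_i = \beta_i^{-1}$ is topologically nilpotent in $R_1$, hence very far from a unit of $R_1'$. For a nonunit $u$ in a normal domain one has $R_1'u^{p^e} \subsetneq R_1'u$ whenever $e \geq 1$, so your ideal equality fails, and the attempted justification (``$u_i$ divides $v_i^{m'} = u_i$ and is divided by it'') is circular. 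The whole point of the lemma is precisely to make the ideal $R_1'u_i$ into an $m$-th power of a principal ideal, which is nontrivial exactly when $u_i$ is not a unit of $R_1'$.

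The missing idea, which is how the paper proceeds, is to handle the $p$-part by an Artin--Schreier-type equation rather than a root extraction. Writing $m = ap^b$ with $p \nmid a$, one first takes $w_i$ with $w_i^a = u_i$ (separable, as you do), and then defines $v_i$ by $(v_i^{-1})^{p^b} - v_i^{-1} = w_i^{-1}$. This equation is separable (its derivative in $v_i^{-1}$ is $-1$), so $K' = K(v_1,\dots,v_t)$ is separable and $R_2' = R_2[v_1,\dots,v_t]$ is \'etale over $R_2$; moreover $v_i^{-1}$ divides the unit $w_i^{-1}$ in $R_2'$, so $v_i \in (R_2')^{\times}$. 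The key computation is valuation-theoretic: for every valuation $v$ that is nonnegative on $R_1$, one has $v(w_i) \geq 0$, and the ultrametric analysis of $x^{p^b} - x = w_i^{-1}$ with $x = v_i^{-1}$ forces $p^b\, v(v_i) = v(w_i)$, hence $m\, v(v_i) = v(u_i)$. Since $R_1'$ is normal and hence an intersection of valuation rings, this gives $R_1'v_i^m = R_1'u_i$ even though $v_i^m \neq u_i$. Your instinct to exploit the slack ``equal up to a unit of $R_1'$'' is the right one, but the slack must be realized by this separable substitute for the $p^b$-th root, not by pretending $u_i$ is already a unit downstairs.
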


\begin{proof} Let $p$ be the characteristic of $\F_q$. Write $m=ap^b$ with integers  $a\geq 1$ and $b \geq 0$ where $a$ is not divisible by $p$. Take $w_i \in K^{\sep}$ ($1\leq i\leq t$) such that $w_i^a=  u_i$ 
and take  $v_i \in (K^{\sep})^\times$ ($1\leq i\leq t$) such that $(v_i^{-1})^{p^b}-v_i^{-1}= w_i^{-1}$. Let $K'= K(v_1, \dots, v_t)$. 
Then $R'_2=R_2[v_1, \dots, v_t]$ and $R'_2$ is \'etale over $R_2$ since the $u_i$ are units in $R_2$. 

By \ref{todvr}(1), it remains to prove the claim that for every additive valuation $\nu \colon K' \to \Gamma\cup\{\infty\}$ (with $\Gamma$ a totally ordered abelian group) for which $\nu(x)\geq 0$ for all $x\in R_1$, we have   $m\nu(v_i)= \nu(u_i)$. 
For such a $\nu$, we have $a\nu(w_i)= \nu(u_i)$ evidently. That $p^b\nu(v_i)=\nu(w_i)$ is easily seen in the case $\nu(v_i) > 0$. The case that $\nu(v_i)<0$ cannot occur, as it would force $\nu(w_i)<0$, but $\nu(w_i) \ge 0$ since $u_i \in R_1$. If $\nu(v_i) = 0$, then $\nu(w_i) \le 0$, so $\nu(w_i) = 0$. 
Together, these imply that $p^b\nu(v_i)=\nu(w_i)$, so we have the claim.
\end{proof}

\begin{sbpara}\label{alpha'} Let $\tau$, $\sig$, $\sig'$, $\sig_n$   be as \ref{bigLa2}, and let 
$$
	\tau'=\{a\in \tau\mid a_i=0 \textsp{for} 1\leq i\leq r-1\}
$$ 
and
$$
	\tau_n= \{(a_{i+r-1})_{1\leq i\leq n}\mid a\in \tau'\}\subset \R^n.
$$ 
Then by \ref{explin}, there is a  $\Q$-linear map $l \colon \Q^n\to \Q^n$ such that the bijection $\xi^d_1 \colon \sig'\to \tau'$ sends $(0^{r-1}, x)\in \sig'$ for $x\in \sig_n$ to $(0^{r-1}, l(x))$.  
Let  $l^{-1} \colon \Q^n\to \Q^n$ be a $\Q$-linear map such that the inverse bijection $(\xi^d_1)^{-1} \colon \tau'\to \sig'$ sends $(0^{r-1}, x) \in \tau'$ for $x\in \tau_n$ to $(0^{r-1}, l^{-1}(x))$. 
Write $l^{-1}(e_i)= \sum_{j=1}^n a_{ij}e_j$ with $a_{ij}\in \Q$, where $(e_i)_{1\leq i\leq n}$ denotes the standard basis of $\Q^n$. Take an integer $m\geq 1$ such that $ma_{ij}q^{-r}\in \Z$ for all $i, j$. 
By \ref{Qdiv} applied to the case $R_1=R$, $R_2=B$, $K=Q$, $p$ the characteristic of $\F_q$, $t=n$ and $u_i= \beta_i^{-1}$ ($1\leq i\leq n$), there are a finite separable extension $Q'$ of $Q$ and $\alpha'_i \in Q'$ ($1\leq i\leq n$) such that 
the integral closure $B'$ of $B$ in $Q'$ is \'etale over $B$, $\alpha'_i\in (B')^\times$, and the integral closure $R'$ of $R$ in $Q'$ satisfies 
$$
	R'(\alpha_i')^m = R'\prod_{j=1}^n \beta_j^{ma_{ij}q^{-r}}
$$ 
for $1\leq j\leq n$.

Replacing $\Spa(B, R)$ by $\Spa(B', R')$, we assume that $\alpha'_i\in B$.

\end{sbpara}

The elements $\alpha'_i$ have the following property. 

\begin{sblem}\label{sbeta} 

Let $K$ be a complete discrete valuation field with valuation ring $\cV$ and with a morphism
$\Spa(K, \cV)\to S$.
Define $s\in \R^n_{\geq 0}$ by 
$$s_i ^r= -q^rv_{\bar K}(\alpha'_i)=-v_{\bar K}(\psi(T)\alpha'_i)\quad (1\leq i\leq n).$$ 
Then 
$$
	\xi_1^d(0^{r-1}, s)=  (0^{r-1}, (-v_{\bar K}(\beta_i))_{1\leq i\leq n}).
$$ 
 
\end{sblem}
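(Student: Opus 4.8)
\textbf{Proof plan for Lemma \ref{sbeta}.}

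The plan is to reduce this to the computation of poles of torsion points already carried out in \ref{Ntor1} and \ref{xicdv1}, transported through the $\Q$-linear change of coordinates $l$ (and its inverse $l^{-1}$) introduced in \ref{alpha'}. First I would fix a complete discrete valuation field $K$ with valuation ring $\cV$ together with the morphism $\Spa(K,\cV)\to S$, and pull everything back: the generalized Drinfeld module $\phi$ becomes a generalized Drinfeld module over $\cV$ of generic rank $d$, reducing to a Drinfeld module of rank $r$ modulo $m_{\cV}$, and by \ref{val2}(1) it corresponds to a pair $(\psi,\La)$ with $\La$ a $\psi(A)$-lattice of rank $n$ in $K^{\sep}$. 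By \ref{normcv} we obtain the norm $\mu$ on $F_\infty\otimes_A\La$, and by \ref{diagbase}(2) an $A$-basis $(\lam_i)_{1\leq i\leq n}$ of $\La$ satisfying the equivalent conditions in \ref{diagbase}(1); set $s'_i=\mu(\lam_i)$, so that $(s'^r_i)_i=(-v_{\bar K}(\lam_i))_i$. Then \ref{xicdv1} (with $a=1$, $k$ the degree of $T$, i.e.\ $k=1$) gives precisely $\xi^d_1(0^{r-1},s'_1,\dots,s'_n)=(0^{r-1},-v_{\bar K}(\beta_1),\dots,-v_{\bar K}(\beta_n))$, where the $\beta_i$ here are the images under $e_\La$ of $\psi(T)$-division points of the $\lam_i$. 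So the content of the lemma is that the $\alpha'_i$ of \ref{alpha'} satisfy $-q^rv_{\bar K}(\alpha'_i)=s'^r_i$ (after possibly adjusting the $\lam_i$ within the constraints of \ref{diagbase}(3), which does not change $\mu(\lam_i)$).

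The next step is to match the combinatorial data. By hypothesis in \ref{bigLa2}, the $T$-level structure $(\beta_i)_i$ gives an element of $[\tau](S)$ for $\tau=\xi^d_1(\sig)\in\Sig_{1,k}$, and the face $\sig'$, with its $\R^n$-projection $\sig_n$, corresponds to a simplex of $AP_n$ by \ref{Ta1k}; likewise $\tau'$ and its projection $\tau_n$ are the images under the linear bijection $\xi^d_1|_{\sig'}$, which by \ref{explin}/\ref{lin3} is the restriction of a $\Q$-linear map $l$. The point of \ref{alpha'} is that $R'(\alpha'_i)^m=R'\prod_j(\beta_j)^{ma_{ij}q^{-r}}$ where $l^{-1}(e_i)=\sum_j a_{ij}e_j$; taking $v_{\bar K}$ of this (using $m v_{\bar K}(\alpha'_i)=\sum_j m a_{ij}q^{-r}v_{\bar K}(\beta_j)$ and cancelling $m$) yields $-q^rv_{\bar K}(\alpha'_i)=\sum_j a_{ij}(-v_{\bar K}(\beta_j))=l^{-1}\big((-v_{\bar K}(\beta_j))_j\big)_i$. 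Since $(-v_{\bar K}(\beta_j))_j$ lies in $\tau_n$ and $l^{-1}$ restricted to $\tau_n$ is the inverse of $\xi^d_1|_{\sig_n}$ followed by the coordinatewise $x\mapsto x^r$ bookkeeping implicit in \ref{xicdv1}, this says exactly that $(s_i^r)_i:=(-q^rv_{\bar K}(\alpha'_i))_i$ is the preimage of $(-v_{\bar K}(\beta_j))_j$ under the appropriate branch, i.e.\ $\xi^d_1(0^{r-1},s_1,\dots,s_n)=(0^{r-1},-v_{\bar K}(\beta_1),\dots,-v_{\bar K}(\beta_n))$, which is the claim.

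I expect the main obstacle to be bookkeeping about the $r$-th power operation: the maps $\xi^d_k$ and $\pi_{k,\sig}$ of Section 3 are defined with $s_i$ and $s_i^r$ intertwined (see \ref{sigc}, \ref{defcone2}, \ref{xicdv1}), and the definition $s_i^r=-q^rv_{\bar K}(\alpha'_i)$ in the statement has to be reconciled carefully with the normalization $-v_{\bar K}(\lam_i)=s'^r_i=\mu(\lam_i)^r$ used in \ref{normcv}. In particular I need to check that the integer $m$ and the exponents $ma_{ij}q^{-r}\in\Z$ in \ref{alpha'} were chosen so that the valuation identity above is literally an identity in the value group of $K$ (no fractional powers), and that the linear map $l^{-1}$ there is the same one, up to the $x\mapsto x^r$ substitution, as the linear map $l_1$ appearing in \ref{lin3}/\ref{thmcone} describing $\xi^d_1$ on $\sig$. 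Once those identifications are pinned down, the lemma is a one-line consequence of applying $v_{\bar K}$ to the defining relation of the $\alpha'_i$ and invoking \ref{xicdv1}. A secondary, minor point is that the choice of $(\lam_i)_i$ in \ref{diagbase} is not unique, but by \ref{diagbase}(4) the values $\mu(\lam_i)$—hence the $s'_i$—are independent of the choice, so the comparison is well posed regardless of which basis the Tate lattice $\La=\Ker(e)$ happens to present.
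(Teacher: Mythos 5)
Your proof is correct and is essentially the argument the paper intends (the lemma is stated without proof as an immediate consequence of the construction in \ref{alpha'}): applying $v_{\bar K}$ to the defining relation $R'(\alpha'_i)^m = R'\prod_{j}\beta_j^{ma_{ij}q^{-r}}$ gives $-q^rv_{\bar K}(\alpha'_i)=\sum_j a_{ij}(-v_{\bar K}(\beta_j))=l^{-1}\bigl((-v_{\bar K}(\beta_j))_j\bigr)_i$, and since $(-v_{\bar K}(\beta_j))_j\in\tau_n$ (the pullback of the hypothesis that the level structure lies in $[\tau](S)$) and $l$ realizes $\xi^d_1|_{\sig'}$ in the $r$-th-power coordinates, the claim follows. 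Your detour through \ref{xicdv1} is not actually needed for the statement itself, and your resolution of the $r$-th-power bookkeeping in \ref{alpha'} (linearity in $(x_i^r)_i$, as forced by the normalization $s_i^r=-q^rv_{\bar K}(\alpha'_i)$ in the lemma) is the correct reading.
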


\begin{proof} This follows from the definitions of the elements $\alpha_i'$ and $\beta_i$ in \ref{alpha'} using the linear function $\ell$, noting \ref{normcv} for the second equality in the
definition of $s$.
\end{proof}

\begin{sblem}\label{bea} Let $1\leq i\leq n$, and let  $f(z)=\beta_i^{-1}e(\alpha'_iz)=\sum_{j=1}^{\infty} c_jz^j$. Then we have
\begin{enumerate}
\item[(1)] $c_j\in R$ for all $j\geq 1$,
\item[(2)] $c_j\to 0$ as $j\to \infty$, and
\item[(3)] locally on $\Spf(R)$, there exists $m \ge 1$ such that $c_m\in R^\times$.
 \end{enumerate}
 \end{sblem}

\begin{proof} 
    By \ref{todvr}, we may assume that $S=\Spa(K, \cV)$, where $\cV$ is a complete discrete valuation ring over $A$ and $K$ is the field 
    of fractions of $\cV$. 
    Consider the finite set  
    $$
    	E_1= \{\la\in \La\setminus \{0\} \mid \la \in O_{\bar K}\alpha'_i\},
    $$
    and let $E_2=\La\setminus (E_1\cup \{0\})$. 
    By \ref{sbeta}, \ref{eep2}, and the definition of $\xi_1^d$ in \ref{defcone2}, we have
    $\beta_i\sim \alpha_i'\prod_{\la\in E_1} \alpha'_i\la^{-1}$, where $\sim$ means that the ratio is a unit of $O_{\bar K}$. Thus 
    $$
    	\beta_i^{-1}e(\alpha'_iz) \sim z \prod_{\la \in E_1} ((\alpha'_i)^{-1}\la-z) \prod_{\la \in E_2} (1-\alpha'_i \la^{-1}z),
    $$ 
    as in the proof of \ref{eep2}.
    This modulo $m_{\bar K}$ is $z\prod_{\la \in E_1}((\alpha_i')^{-1}\la-z)$. 
\end{proof}

\begin{sblem}\label{lemp} Let $p$ be a prime number, let $R$ be a normal integral domain over $\F_p$, and let $f(z)=\sum_{i=0}^{\infty} c_iz^{p^i} \in R\ps{z}$ such that $c_0\neq 0$ and some $c_i$ is a unit. Suppose that there is  a multiple $\pi$ of $c_0$ in $R$ such that $R$ is $\pi$-adically complete and $c_i \to 0$
in the $\pi$-adic topology.

Let $a\in R^\times$. Then the following holds locally on $\Spf(R)$ (for the $\pi$-adic topology): there is a finite separable extension $Q'$ of the field of fractions $Q$ of $R$ having the property that  there is unit $u$ in the integral closure $R'$ of $R$ in $Q'$  such that $f(u)=a$ and $R'[c_0^{-1}]$ is \'etale over $R[c_0^{-1}]$.

\end{sblem}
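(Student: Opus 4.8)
\textbf{Proof plan for Lemma \ref{lemp}.}

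The statement is a Hensel-type lifting result: given $f(z) = \sum_{i \geq 0} c_i z^{p^i}$ (an $\F_p$-linear power series, i.e.\ an ``additive'' series) with $c_0 \neq 0$, some $c_i$ a unit, and $\pi$-adic convergence of the coefficients, we want to solve $f(u) = a$ for $u$ a unit after a finite \'etale cover inverting $c_0$. The plan is to reduce to a standard application of Hensel's lemma (or Newton approximation) over a complete local ring, after first splitting off the constant-coefficient behavior and then localizing so that a single coefficient $c_m$ becomes the ``dominant'' one.

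First I would localize on $\Spf(R)$ for the $\pi$-adic topology so that among the finitely many coefficients $c_0, \dots, c_m$ that are not in the ideal generated by $c_0$ (this is a finite set because $c_i \to 0$ $\pi$-adically, so $c_i \in c_0 R$ for $i$ large), there is a well-defined largest index $m$ with $c_m \in R^\times$; more precisely, using that for two elements one divides the other locally on a normal ring, I can arrange locally that $c_i/c_m \in R$ for all $i$, with $c_m$ a unit — this is the same maneuver used in \ref{TaSig} and \ref{bigLa2}. Dividing $f$ by $c_m$, I reduce to the case $f(z) = z^{p^m} + \sum_{i \neq m} (c_i/c_m) z^{p^i}$ with all coefficients in $R$, the $z^{p^m}$ term monic, the lower-order coefficients $c_0/c_m, \dots$ and the higher-order coefficients all in $R$ and tending $\pi$-adically to $0$, and $c_0/c_m$ a multiple of $\pi$ up to a unit (here I keep $c_0^{-1}$ available since $R[c_0^{-1}]$ is where the \'etaleness will live). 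Then I want $u \in (R')^\times$ with $f(u) = a$.

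Next I would extract a $p^m$-th root: since we are over $\F_p$ and the leading term is $z^{p^m}$, set $Q' = Q(a^{1/p^m}, \dots)$; but more carefully, because the series is additive in the Frobenius variable $\tau = z \mapsto z^p$, it is cleaner to first do the purely ``wild'' part. Write $f = f_1 \circ \tau^{j}$ where $f_1$ has nonzero linear coefficient, reducing the number of obstructions; then $f_1(z) = cz(1 + (\text{higher order, }\pi\text{-small}))$ after localizing, and solving $f_1(w) = a$ for $w$ is a direct Newton/Hensel iteration in the $\pi$-adically complete ring $R[c_0^{-1}]$ (the derivative $f_1'(0) = c$ is the relevant unit once $c_0$ is inverted), giving $w \in (R[c_0^{-1}])^\times$ with no field extension needed. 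Finally I solve $\tau^{j}(u) = w$, i.e.\ $u^{p^j} = w$, by adjoining $w^{1/p^j}$; since $R$ is a normal domain over $\F_p$, $Q'=Q(w^{1/p^j})$ is purely inseparable\,—\,wait, that is \emph{not} separable. So the correct decomposition is the opposite: the tame/\'etale part must absorb the root extraction. The right move is to split $f$ as in Lemma \ref{fa}-style bookkeeping and in \ref{Qdiv}: write the exponent $p^m$, note $p^m$ is a pure power of $p$, and realize the $p$-power root via an Artin–Schreier-type substitution $v^{-1} \mapsto (v^{-1})^{p^b} - v^{-1}$ exactly as in the proof of \ref{Qdiv} — that is what makes $Q'/Q$ separable and $R'[c_0^{-1}]/R[c_0^{-1}]$ \'etale. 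So the plan is: reduce to $f$ monic of Frobenius-degree $p^m$ via the localization above; use \ref{Qdiv}'s Artin–Schreier trick to separably adjoin the needed $p$-power root of (the unit part of) $a$, producing a unit $u_0$ with $f(u_0) \equiv a \bmod \pi$; then run Hensel/Newton on $g(z) := f(z + u_0) - a$, whose constant term is in $\pi R'$ and whose linear coefficient is $c_0$ times a unit, hence a unit in $R'[c_0^{-1}]$, so the iteration converges $\pi$-adically (using $R'$ is $\pi$-adically complete, as the integral closure of a $\pi$-adically complete excellent normal ring in a finite extension) to the desired $u \in (R')^\times$.

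\textbf{Main obstacle.} The delicate point is the separability/\'etaleness bookkeeping: naively extracting a $p^m$-th root is inseparable, so one must route the $p$-power extraction through Artin–Schreier covers as in \ref{Qdiv}, and one must check that after this the relevant linear coefficient of the shifted series becomes a unit precisely upon inverting $c_0$ (not merely inverting some other $c_i$), so that the Hensel step lives over $R'[c_0^{-1}]$ and the resulting cover is \'etale there. Managing the interaction between the finitely many ``small'' coefficients, the localization that picks out the dominant unit coefficient $c_m$, and the requirement that everything be done \emph{locally on $\Spf(R)$} for the $\pi$-adic topology — so that these local divisibilities hold — is where the real care is needed; the convergence of the Newton iteration itself is routine given $\pi$-adic completeness and $c_i \to 0$.
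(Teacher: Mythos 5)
Your overall shape --- solve a truncation of $f(z)=a$ exactly in a finite separable extension, then correct the answer using $\pi$-adic completeness --- is the same as the paper's, but two of your steps fail as written, and both failures occur precisely in the case that matters for the application, namely when $c_0$ is a non-unit divisor of $\pi$.

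The main gap is the correction step. By additivity of $f$ one has $g(z):=f(z+u_0)-a=f(z)+(f(u_0)-a)$, the Newton step is $z\mapsto z-g(z)/c_0$, and $g\bigl(z-g(z)/c_0\bigr)=\sum_{i\geq 1}c_i\bigl(-g(z)/c_0\bigr)^{p^i}$. You only arrange $f(u_0)\equiv a\bmod \pi$. Since $c_0$ may generate the same ideal as $\pi$, the first correction $g(u_0)/c_0$ need not lie in $\pi R'$ at all, so the new error need not be any smaller than the old one: the iteration does not converge from a mod-$\pi$ approximation (one needs roughly $f(u_0)\equiv a\bmod \pi^3$), and it cannot instead be run in $R'[c_0^{-1}]$, which is not $\pi$-adically complete. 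The paper avoids this by truncating at an $m$ with $c_i\in\pi^3R$ for $i>m$, taking an \emph{exact} root $v$ of $h(z)=-a+\sum_{i\leq m}c_iz^{p^i}$, and correcting via the explicit additive substitution $u=v-\pi^2w$, where $w$ solves a fixed-point equation whose perturbation coefficients are $c_i(\pi c_0^{-1})\pi^{2p^i-3}$; the points that $\pi c_0^{-1}\in R$ and $2p^i-3>0$ are exactly what make this a contraction inside $R'$ with no division by $c_0$. The second problem is your production of $u_0$: modulo $\pi$ the equation is still a general additive polynomial $\sum_i \bar c_i z^{p^i}=\bar a$, not a pure $p$-power, so adjoining a $p^m$-th root of (the unit part of) $a$ by the Artin--Schreier device of \ref{Qdiv} does not solve it. That detour is also unnecessary: the truncated polynomial $h$ has derivative $h'=c_0\neq 0$, hence is separable over $Q$, so adjoining a root of an irreducible monic factor of $h$ (a root which is a unit because it divides $h(0)=-a$) already gives the required separable extension $Q'$; the étaleness over $R[c_0^{-1}]$ likewise comes from $dh/dz=c_0$. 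The preliminary localization making some $c_m$ ``dominant'' is not needed either.
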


\begin{proof} Take $m\geq 0$ such that $c_i\in \pi^3R$ for all $i>m$. Consider the polynomial 
$$
	h(z) := -a + \sum_{i=0}^m c_iz^{p^i}
$$
By \ref{logD100} (applied to the completion of a localization of the integral closure of $R$ in the splitting field of $h$) and the fact that some $c_i \in R^{\times}$, there is an irreducible monic polynomial $P(z)$ over $R$ such that $P(z) \mid h(z)$. 
Let $v$ be a root of $P(z)$, and let $Q'=Q(v)$. 
  Since $v$ divides $a\in R^\times$ in $R'$, we have $v\in (R')^\times$. For $i>m$, write $c_i=\pi^3b_i$ for some $b_i\in R$, and note that these $b_i$ converge to $0$ in the $\pi$-adic topology.

As $R$ is $\pi$-adically complete, there exists $w\in R'$ such that 
$$
	w+ \sum_{i=1}^\infty c_i(\pi c_0^{-1})\pi^{2p^i-3}w^{p^i}= (\pi c_0^{-1})\sum_{i=m+1}^{\infty} b_i v^{p^i}.
$$ 
(Here, note that $2p^i-3>0$ for all $i\geq 1$.) We have 
$$
	f(\pi^2w)= \sum_{i=0}^{\infty} c_i\pi^{2p^i}w^{p^i} = c_0\pi^2\left(w+\sum_{i=1}^{\infty} c_i(\pi c_0^{-1})\pi^{2p^i-3}w^{p^i}\right)= \pi^3\sum_{i=m+1}^{\infty} b_iv^{p^i}= \sum_{i=m+1}^{\infty} c_iv^{p^i}.
$$ 
Let $u = v-\pi^2w\in (R')^\times$. Since $R'$ is an $\F_p$-algebra, we have 
$$
	f(u)= f(v)- f(\pi^2w)=\sum_{i=0}^m c_iv^{p^i}=a.
$$

We prove that $R'[c_0^{-1}]$ is \'etale over $R[c_0^{-1}]$. By replacing $R$ by the $m_R$-adic completion of $R$ for a maximal ideal $m_R$ of $R$, we may assume that $R$ is a local ring and complete for the $m_R$-adic topology. Let $R\langle z\rangle $ be the $m_R$-adic completion of the polynomial ring $R[z]$. Let $F=-a+f$, and let $n$ be the degree of the polynomial $F\bmod m_R\in (R/m_R)[z]$. Then $R\langle z\rangle/(F)$ is generated by $z^i$ ($0\leq i\leq n-1$) as an $R$-module. Hence we have 
$$
	z^n\equiv \sum_{i=0}^{n-1} r_iz^i\bmod F
$$ 
for some $r_i\in R$. Let $P_1$ be the polynomial $z^n-\sum_{i=0}^{n-1} r_iz^i$, and write $P_1= gF$ with $g \in R\langle z\rangle$. Then $g \bmod m_R\in (R/m_R)[z]$ is an element of $(R/m_R)^\times$, and hence $g \in R\langle z\rangle^\times$. Hence $(F)=(P_1)$ as ideals of $R\langle z\rangle$. Let $P_2\in R[z]$ be the monic irreducible polynomial of $u$ over $Q$. Since $F(u) = 0$, we have $P_2 \mid P_1$. Since $\frac{dF}{dz} =c_0$, the product rule tells us that $\frac{dP_2}{dz}$ is invertible in $R[c_0^{-1}]\langle z\rangle/(P_2)$. This proves that $R[c_0^{-1}][z]/(P_2)$ is \'etale over $R[c_0^{-1}]$ and $R'[c_0^{-1}]= R[c_0^{-1}][z]/(P_2)$. 
\end{proof}

\begin{sbpara}\label{big2} We complete the proof of \ref{bigLa2} (and hence of \ref{bigLa}).

By \ref{bea} and \ref{lemp}, there exists (locally on $\Spf(R)$) a finite separable extension $Q'$ of $Q$ such that $B'$ is \'etale over $B$ and for which there is an $x\in (R')^\times$ such that $\beta_i^{-1}e(\alpha'_i x)=1$.  
Let $\alpha_i = \alpha_i'x$ so that $e(\alpha_i)=\beta_i$.

Let $$\La=\sum_{i=1}^n  \psi(AT)\alpha_i.$$ Since $e(\psi(AT)\alpha_i)= \phi(AT)e(\alpha_i)= \phi(AT)\beta_i=0$, we have $\La\subset \ker(e)$. By \ref{sbeta},  $$\xi^d_1(0^{r-1}, s)= (0^{r-1}, (-v_{\bar K}(\beta_i))_{1\leq i\leq n})$$ for the $s\in \R^n_{\geq 0}$ such that $$s_i^r= -v_{\bar K}(\psi(T)\alpha_i)\quad (1\leq i\leq n).$$
By \ref{xicdv1} and \ref{normcv}, we then see that for every complete discrete valuation field $K$ a with valuation ring $\cV$ and with a morphism $\Spa(K, \cV)\to S$, the pullback of $\La$ in $K^{\sep}$ coincides with the kernel of $e$ for the pullback of $\phi$ to $\cV$. Hence we have $\La= \ker(e)$.
 By reduction to the case of $\Spa(K, \cV)$ by pullbacks (using \ref{todvr}), we see from \ref{val2} that $\La$ satisfies the conditions (i)--(iii).

\end{sbpara}

\begin{sbpara}\label{Fuji1} As in \ref{functors}, we now have the functor from category (a) to category (b) of Theorem \ref{thmTate}. The quasi-inverse functor from category (b) to category (a) is obtained by Section \ref{ss:constrquot}.  

The fact they are the quasi-inverse to each other  is reduced to the case of complete discrete valuation rings by  \ref{todvr}. 

\end{sbpara}

\begin{sbpara}\label{4.2Tv}

In the notation of Theorem \ref{thmTate},
let $v$ be a maximal ideal of $A$, and assume that the image of $S\to \Spec(A)$ induced by $A\to \Gamma(S, \cO_S^+)$ does not contain $v$. Let $A_v$ be the $v$-adic completion of $A$. Then we have an exact sequence 
$$
	0\to T_v\psi\overset{e}\to T_v\phi_S \to A_v\otimes_A \La\to 0
$$
of $v$-adic Tate modules on $S_{\et}$. 

\end{sbpara}

\begin{sbpara}\label{4.2Tv2} We compare abelian varieties and Drinfeld modules. 

Let $\cT$ be a toroidal compactification of the moduli space of polarized abelian varieties, let $R$ be the completion of the local ring $\cO_{\cT,x}$ at some $x\in \cT$ (see \cite[Ch.~IV]{FC}), and let $Q$ be the field of fractions of $R$. Let $X$ be the abelian variety over $Q$ obtained from the universal abelian variety over the moduli space. 
As noted in the proof of \ref{2.5.2}, the abelian variety $X$ is analytically the quotient 
 $G/\La$ of a semi-abelian scheme $G$ over $R$ by a finite rank $\Z$-lattice $\La$ of $G(Q)$. This $G$ is similar to the Drinfeld module $\psi$ in this subsection, and the above $\La$ is similar to the $\La$ in this subsection.  
For any prime number which is invertible in $R$, we have an exact sequence of Tate modules 
$$
	0\to T_{\ell}G\to T_{\ell}X\to \Z_{\ell}\otimes_{\Z} \La \to 0
$$ 
of $\Gal(Q^{\sep}/Q)$-modules that is similar to the exact sequence in \ref{4.2Tv}. 

However a big difference is that in the theory of abelian varieties, 
the lattice $\La$ appears in $G(Q)$, but in the theory of Drinfeld modules in this subsection, the lattice $\La$  appears on $S_{\et}$ but cannot appear  in $\cL_{\psi}(Q)$, or even in $\cL_{\psi}(Q^{\sep}) $, for $Q$ the field of fractions of the $R$ studied in this subsection, and $\cL_{\psi}$ the line bundle of $\psi$. If $v$ is not contained in the image of $\Spec(R)\to \Spec(A)$, then
we have a $\Gal(Q^{\sep}/Q)$-module $T_v\phi$, but as the examples  \ref{Gal1} and \ref{Gal2} we provide below show, the image of $\Gal(Q^{\sep}/Q)$ in $\text{Aut}(T_v\phi/T_v\psi)$ 
can be very big so that $T_v\phi/T_v\psi$ cannot have the form $A_v\otimes_A \La$, where the 
action of $\Gal(Q^{\sep}/Q)$ on $\La$ factors through a finite quotient of $\Gal(Q^{\sep}/Q)$. 
Taking a complete discrete valuation ring $\cV\supset R$ which dominates $R$, and letting $K$ denote the field of fractions of $\cV$, we can identify $T_v\phi/T_v\psi$ with $A_v\otimes_A \La$, where $(\psi, \La) $ corresponds to $\phi$ over $K$ and $\La\subset \cL_{\psi}(K^{\sep})$, but this $\La$ does not in general appear in  $\cL_{\psi}(Q^{\sep})\subset \cL_{\psi}(K^{\sep})$. 
\end{sbpara}

\subsection{Log Drinfeld modules in formal geometry} \label{logD_formal}

In this Section \ref{logD_formal}, we extend the theories of generalized Drinfeld modules and log Drinfeld modules for schemes in Section \ref{Drin_mod} to formal schemes. 

\begin{sbpara}\label{fvsadic}  

If $S$ is a locally Noetherian formal scheme and $S_{\adic}$ is the associated adic space, we have a morphism of locally ringed spaces 
$$
	(S_{\adic}, \cO_{S_{\adic}}^+) \to (S, \cO_S).
$$

In the case that $S=\Spf(R)$ and $S_{\adic}=\Spa(R, R)$, the map $S_{\adic}\to S$ sends $x\in S_{\adic}$ to the prime ideal $\{f\in R\mid |f(x)|<1\}$ of $R$. The homomorphism of sheaves of rings from 
the inverse image of 
$\cO_S$ on $S_{\adic}$ to $\cO_{S_{\adic}}^+$ induces the identity map $R=\Gamma(S, \cO_S) \to R=\Gamma(S_{\adic}, \cO_{S_{\adic}}^+)$. 
 
 Hence if $S$ is over $A$, a generalized Drinfeld module over $S$ defines a generalized Drinfeld module over $\cO_{S_{\adic}}^+$ on $S_{\adic}$ by pullback and hence a generalized Drinfeld module over $\cO_{S_{\adic}}$ on $S_{\adic}$.

\end{sbpara}

\begin{sbpara}\label{fpair}
We consider pairs $(S, U)$, where 
$S$ is a locally Noetherian formal  scheme such that $\cO_{S.s}$ is a normal integral domain for every $s\in S$ and $U$ is a dense open subset of $S_{\adic}$.

\end{sbpara}

\begin{sbpara}\label{fpair2}   Let $(S, U)$ be as in \ref{fpair}, and assume that $S$ is over $A$. 

By a \emph{generalized Drinfeld module over $(S, U)$ of rank $d$ with level $N$ structure}, we mean a pair $((\cL, \phi), \iota)$ consisting of a generalized Drinfeld module over $S$ with pullback to $U$ via \ref{fvsadic}(1) a Drinfeld module over $\cO_U$ of rank $d$ and a level $N$ structure $\iota$ on this Drinfeld module over $\cO_U$.

\end{sbpara}

\begin{sbrem}\label{remU} 
\begin{enumerate}
	\item[(1)] In \ref{fpair2},  we consider a dense open subset $U$ of $S_{\adic}$ rather than a dense open subset of $S$. The reason is as follows. In the case $A=\F_q[T]$, a standard situation we consider is that of a generalized Drinfeld module $\phi$ over the $T$-adic completion $\F_q\ps{T}$ of $A$ which is not a Drinfeld module but which induces a Drinfeld module over $\F_q\ls{T}$. Then $\phi$ induces a generalized Drinfeld module over the formal scheme $S=\Spf(\F_q\ps{T})$. But $S$  is just a one point set, so we cannot have a nonempty open set of $S$
on which the pullback of $\phi$ is a Drinfeld module. On the other hand, the pullback of this $\phi$ on $S$ to $S_{\adic}=\Spa(\F_q\ps{T}, \F_q\ps{T})$ induces a Drinfeld module over $\cO_U$ on the dense open set $U=\Spa(\F_q\ls{T}, \F_q\ps{T})$ of $S_{\adic}$. 
	\item[(2)] Let $(S, U)$ and $((\cL, \phi), \iota)$ be as in \ref{fpair2}. Then via the morphism $(U, \cO_U^+)\to (S, \cO_S)$ induced by (1) in \ref{fvsadic}, we obtain on $U$ a generalized Drinfeld module over $\cO_U^+$ which becomes a Drinfeld module over $\cO_U$. This situation is considered in  Section \ref{ss:Tate}, the present $U$ being the $S$ in Section \ref{ss:Tate}.  
\end{enumerate}
\end{sbrem}

\begin{sbpara}\label{fpairlog} For $(S, U)$ as in \ref{fpair}, define the saturated log structure $M$ on $S$ associated to $U$ as  $M= \cO_S\cap j_*(\cO_U^\times)$ in $j_*(\cO_U)$, where $j$ is the composition $U\to S_{\adic}\to S$ and $j_*$ is for the \'etale topologies of $S$ and $U$.
For an object $S'$ of the \'etale site of $S$, the set of $S'$-sections $M(S')$ may be described as follows. We have the induced morphism $S'_{\adic}\to S_{\adic}$ of adic spaces. Let $U'\subset S'_{\adic}$ be the inverse image of $U$. Then $M(S')$ is identified with the set of all elements $f\in \cO(S')$ whose pullbacks to $\cO_{S'_{\adic}}(U')$ are invertible. 

For a line bundle $\cL$ on $S$, this log structure $M$ defines 
$\overline{\cL}= \cL\cup_{\cL^\times} M^{-1}\cL^\times\subset j_*(\cL|_U)$
where $\cL|_U$ means the invertible $\cO_U$-module on $U$ induced by $\cL$. 

\end{sbpara}

\begin{sbprop}\label{fextend}

Let $(S,U)$ be as in \ref{fpair2}, and let $((\cL, \phi), \iota)$ be a generalized Drinfeld module over $(S, U)$ of rank $d$ with level $N$ structure. Then the map $\iota \colon (\frac{1}{N}A/A)^d \to \cL|_U$ on $U$ extends uniquely to a map $\iota \colon (\frac{1}{N}A/A)^d \to \overline{\cL}$ on $S$. 
\end{sbprop}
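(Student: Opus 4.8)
The statement to prove is Proposition \ref{fextend}: for $(S,U)$ a pair as in \ref{fpair2} and $((\cL,\phi),\iota)$ a generalized Drinfeld module over $(S,U)$ of rank $d$ with level $N$ structure, the map $\iota\colon(\tfrac1NA/A)^d\to\cL|_U$ extends uniquely to a map $\iota\colon(\tfrac1NA/A)^d\to\overline{\cL}$ on $S$. The approach is to reduce to the scheme-theoretic statement \ref{logD53}, which we already have in hand, by working \'etale-locally on $S$ and passing to affine formal pieces $\Spf(R)$ with $R$ an excellent normal integral domain. Uniqueness is immediate: $\overline{\cL}\subset j_*(\cL|_U)$ by construction of the log structure $M$ in \ref{fpairlog}, so any extension is determined by its restriction to $U$, which is $\iota$; hence only existence requires work.

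\textbf{Key steps.} First I would work \'etale-locally and trivialize $\cL$, so we may assume $S=\Spf(R)$, $\cL=\cO_S$, and we are given $a\in(\tfrac1NA/A)^d$ with $\iota(a)$ a section of $\cO_U$ (after pulling back to $U\subset S_{\adic}$); the task is to show $\iota(a)$ lies in $M^{-1}\cup\cO_S$ as a section of $j_*(\cO_U)$, i.e.\ that \'etale-locally on $S$ either $\iota(a)$ or $\iota(a)^{-1}$ comes from $\cO_S=\Gamma(S,\cO_S)=R$. Second, I would use the defining equation: since $(\cL,\phi)$ is a generalized Drinfeld module over $R$ with trivialized line bundle, $\phi(N)(z)\in R[z]$ is a polynomial whose image modulo every prime of $R$ is nonzero (its linear coefficient is $N\neq0$ and $R$ is a domain over $A$), and over $\cO_U$ the level $N$ condition forces $\phi(N)(z)$ to split into linear factors with $\iota$ giving the roots. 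Third, I would apply Lemma \ref{logD100} locally at each point $s\in S$ to $R_s=\cO_{S,s}$ and $f=\phi(N)$: this gives a factorization $\phi(N)(z)=c\prod(z-\alpha_i)\prod(1-\beta_jz)$ with $c\in R_s^\times$, $\alpha_i\in R_s$, $\beta_j\in m_s$, so each root of $\phi(N)$ in the fraction field is either in $R_s$ or has inverse in $m_s\subset R_s$. Since $\iota(a)$ is such a root, either $\iota(a)\in R_s$ (so $\iota(a)$ is a section of $\cO_S$ near $s$) or $\iota(a)^{-1}\in R_s$ and lies in the maximal ideal, hence $\iota(a)^{-1}$ is a section of $M$ near $s$ and $\iota(a)=(\iota(a)^{-1})^{-1}\in M^{-1}\cO_S^\times$. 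In either case $\iota(a)$ is a section of $\overline{\cL}$ near $s$; since this holds at every $s$, $\iota(a)$ extends to a global section of $\overline{\cL}$ on $S$, and $\iota$ respects the $A/NA$-module structure since it does so over the dense $U$. The main subtlety is that \ref{logD100} is a statement about roots being \emph{rational} over the fraction field, so I should note that $\phi(N)$ does split into linear factors over the fraction field of $R_s$: this follows because over $\cO_U\supset\mathrm{Frac}(R)$ the polynomial $\phi(N)$ splits (the level structure provides all roots), and $\mathrm{Frac}(R_s)=\mathrm{Frac}(R)$ for $R$ a domain.

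\textbf{Main obstacle.} I expect the only real point requiring care is the bookkeeping between the formal scheme $S$ and the adic space $S_{\adic}$: the level $N$ structure $\iota$ lives on the open $U\subset S_{\adic}$, not on an open of $S$ (cf.\ Remark \ref{remU}(1)), so I must check that ``$\iota(a)$ as a section of $j_*(\cO_U)$ on the \'etale site of $S$'' is the right object to feed into Lemma \ref{logD100}, and that its stalk at a geometric point $\bar s$ of $S$ sits inside $\mathrm{Frac}(\cO_{S,\bar s})$. Concretely: the pullback of $\phi(N)(z)$ to $\cO_U$ splits, and the roots $\iota(a)$, being integral or inverse-to-integral over $R$, actually lie in $\mathrm{Frac}(R)$, hence define sections over $S$ of $j_*(\cO_U)$; then \ref{logD100} applies stalkwise. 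Granting this identification, the proof is essentially the proof of \ref{logD53} transported verbatim, and no genuinely new estimate or construction is needed.
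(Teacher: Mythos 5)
Your proof is correct and follows the same route as the paper, whose entire argument is the one-line reduction to Proposition \ref{logD53} (equivalently Lemma \ref{logD100}) applied to the normal local rings $\cO_{S,s}$. The adic-space bookkeeping you flag — identifying the sections $\iota(a)$ of $\cO_U$ with elements of $\mathrm{Frac}(\cO_{S,s})$ so that \ref{logD100} applies stalkwise — is exactly the content of that reduction, which the paper leaves implicit.
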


\begin{proof} This is reduced to \ref{logD53} applied to the normal schemes $\Spec(\cO_{S,s})$ for $s\in S$. 
\end{proof}

\begin{sbpara}\label{flogD}  Let $S$ be a locally Noetherian formal scheme over $A$ with a saturated log structure. By a \emph{log Drinfeld module over $S$ of rank $d$ with level $N$ structure}, we mean a pair $((\cL, \phi), \iota)$ of a generalized Drinfeld module $(\cL, \phi)$  over $S$ and a map $\iota \colon (\tfrac{1}{N}A/A)^d\to \overline{\cL}$
such that \'etale locally on $S$, there are 
\begin{itemize}
	\item a log  scheme $S'$  over $A$ satisfying the equivalent conditions (i)--(iv) in \ref{log3}, 
	\item a closed subscheme $Y$ of $S'$, 
	\item a morphism $f \colon S \to \hat S'$ of log formal schemes over $A$, where $\hat S'$ denotes the formal completion of $S'$ along $Y$, 
	\item a generalized Drinfeld module $((\cL',\phi'), \iota')$ over $(\hat S', U)$ of rank $d$ with level $N$ structure in the sense of \ref{fpair2},
	where $U$ denotes the dense open set of $\hat S'_{\adic}$ consisting of all points at which the log structure on $\cO_{\hat S'_{\adic}}$ 
	is trivial, and 
	\item an isomorphism between $((\cL, \phi), \iota)$ and the pullback of $((\cL', \phi'), \iota')$ under $f$, where $\iota'$ is regarded as a map 
 	$(\frac{1}{N}A/A)^d\to \overline{\cL'}$ as in \ref{fextend}.
\end{itemize}

This is a formal scheme version of \ref{logD6}. 
\end{sbpara}

\begin{sbpara}\label{fstrong} Let $(S, U)$ be as in \ref{fpair2}, and let $((\cL, \phi), \iota)$ be a generalized Drinfeld module over $(S, U)$ of rank $d$ with level $N$ structure. Let $\iota \colon (\frac{1}{N}A/A)^d\to \overline{\cL}$ be the induced map. We consider the following formal scheme version of the condition (div) in \ref{strong}. 

\begin{enumerate}
\item[(div)] For every $a,b\in (\frac{1}{N}A/A)^d$, locally on $S$ we have either 
\begin{eqnarray*}
	\pole(a)\pole(b)^{-1}\in M_S/\cO_S^\times &\text{or}& \pole(b)\pole(a)^{-1}\in M_S/\cO_S^\times
\end{eqnarray*}
in $M_S^{\gp}/\cO_S^\times$. 
\end{enumerate}

We note that, by \ref{toric7}, we have the same condition (div) if we replace ``locally''  by ``\'etale locally''. 

By \ref{regdiv},  if $\Spec(\cO_{S,s})$ with the log structure induced by $M_S$ is log regular for every $s\in S$  (for example, if $S$ is the formal completion of a log regular scheme of finite type over $A$ along a closed subscheme), and if $U$ is the dense open subset of
$S_{\adic}$ consisting of all points at which the log structure on $\cO_{S_{\adic}}$ is trivial, then the condition (div) satisfied.

We therefore have the following proposition.

\end{sbpara}

 \begin{sbprop} Let $S$ be a locally Noetherian formal scheme over $A$ with a saturated log structure, and let $((\cL, \phi), \iota)$ be a log Drinfeld module over $S$ of rank $d$ with level $N$ structure. Then for every $a, b\in (\frac{1}{N}A/A)^d$, locally on $S$ we have either $\pole(\iota(a))\pole(\iota(b))^{-1}\in M_S/\cO_S^\times$ or $\pole(\iota(b))\pole(\iota(a))^{-1}\in M_S/\cO^\times_S$ in $M^{\gp}_S/\cO_S^\times$.

 \end{sbprop}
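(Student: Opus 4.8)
The statement is the formal-scheme analogue of Proposition \ref{logD7}, and just as in the scheme case, the plan is to reduce the divisibility statement for an arbitrary log Drinfeld module to the case already treated. First I would unwind the definition of a log Drinfeld module over a locally Noetherian formal scheme $S$ (see \ref{flogD}): \'etale locally on $S$ there is a log scheme $S'$ over $A$ satisfying the equivalent conditions (i)--(iv) of \ref{log3}, a closed subscheme $Y\subset S$, a morphism $f\colon S\to \hat S'$ of log formal schemes to the formal completion $\hat S'$ of $S'$ along $Y$, and a generalized Drinfeld module $((\cL',\phi'),\iota')$ over $(\hat S', U)$ (with $U$ the dense open locus of triviality of the log structure of $\hat S'$) whose pullback under $f$ is isomorphic to $((\cL,\phi),\iota)$. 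Since the quantities $\pole(\iota(a))\in M_S/\cO_S^\times$ are, by construction of $\overline{\cL}$ and of $\pole$, compatible with pullback of log structures, it suffices to prove the divisibility assertion for $((\cL',\phi'),\iota')$ over $\hat S'$ and then pull it back along $f$.

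Second, I would argue that $\hat S'$, being the formal completion of a log scheme $S'$ that satisfies the conditions of \ref{log3}, is in particular log regular in the appropriate sense: for each $s\in S'$ (equivalently each point of $Y$), the scheme $\Spec(\cO_{\hat S', \bar s}) = \Spec(\cO_{S', \bar s})$ with the induced log structure is log regular by \ref{logD20}, and $U$ is exactly the dense open subset of $(\hat S')_{\adic}$ where the log structure is trivial. This is precisely the situation described in \ref{fstrong}, which records that under these hypotheses the condition (div) of \ref{fstrong} holds — and this assertion is itself obtained by reducing to \ref{regdiv}, since the relevant divisibility statement can be checked on the strictly local schemes $\Spec(\cO_{\hat S', \bar s})$. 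Thus for $((\cL',\phi'),\iota')$ over $\hat S'$, for every $a,b\in(\frac1N A/A)^d$ we have, locally (equivalently, by \ref{toric7}, \'etale locally) on $\hat S'$, either $\pole(\iota'(a))\pole(\iota'(b))^{-1}\in M_{\hat S'}/\cO_{\hat S'}^\times$ or the reciprocal inclusion.

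Third, I would transport this back: applying $f^*$, the elements $\pole(\iota(a))$ and $\pole(\iota(b))$ are the images of $\pole(\iota'(a))$, $\pole(\iota'(b))$ under the induced map $M_{\hat S'}^{\gp}/\cO_{\hat S'}^\times\to M_S^{\gp}/\cO_S^\times$, which carries $M_{\hat S'}/\cO_{\hat S'}^\times$ into $M_S/\cO_S^\times$; hence the required alternative holds \'etale locally on $S$, and again by \ref{toric7} it holds Zariski locally on $S$, as claimed. (One should note that the passage ``\'etale locally on $S'$'' in the definition \ref{flogD} means the alternative is established only after an \'etale localization of $S$, which is harmless since we are allowed to work \'etale locally and then descend via \ref{toric7}.)

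\textbf{Main obstacle.} The only genuine point requiring care is the reduction to the log-regular case in the second step — i.e.\ verifying that the divisibility condition for the model $((\cL',\phi'),\iota')$ over $\hat S'$ really does follow from \ref{regdiv}. Concretely one must check that $\pole(\iota'(a))$ is computed correctly on the strictly local rings $\cO_{\hat S', \bar s}$, i.e.\ that the unique extension of $\iota'$ to $\overline{\cL'}$ guaranteed by \ref{fextend} is compatible with restriction to these local rings (which is where \ref{logD53} and the normality of $\cO_{S', \bar s}$ enter), and that the log structure $M_{\hat S'}$ restricted to $\Spec(\cO_{\hat S', \bar s})$ is the one appearing in \ref{regdiv}. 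All of this is essentially bookkeeping already carried out in \ref{fstrong}, so the proof itself is short: invoke \ref{flogD}, \ref{fstrong}, and \ref{toric7} in that order.
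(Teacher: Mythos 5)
Your proposal is correct and follows exactly the route the paper intends: the paper states this proposition immediately after \ref{fstrong} with "Hence we have," the implicit proof being precisely your chain of unwinding the definition \ref{flogD} to reduce to the log regular model $\hat S'$, invoking the observation in \ref{fstrong} (itself resting on \ref{regdiv}), and pulling back the divisibility alternative along $f$, with \ref{toric7} handling the passage between \'etale-local and Zariski-local. Your added care about why $\pole$ is compatible with pullback and why the strictly local rings of $\hat S'$ are log regular simply makes explicit what the paper leaves implicit.
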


\begin{sbprop}
Suppose that we are in one of the following two situations:
\begin{enumerate}
\item[(i)] $S$ is a locally Noetherian formal scheme over $A$ with a saturated log structure, and $((\cL, \phi), \iota)$ is a log Drinfeld module over $S$ of rank $d$ with level $N$ structure, or
\item[(ii)] $(S, U)$ is as in \ref{fpair2}, and $((\cL, \phi), \iota)$ is a generalized Drinfeld module over $(S,U)$ of rank $d$ with level $N$ structure. 
\end{enumerate}
If, furthermore, either $N$ has at least two prime divisors or $N$ is invertible on $S$, 
then the automorphism group of $((\cL, \phi), \iota)$ is trivial. 
\end{sbprop}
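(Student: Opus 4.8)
The plan is to reduce both situations to the already-proven statements about automorphism groups of generalized Drinfeld modules with level structure over schemes, namely Proposition \ref{auto} and Proposition \ref{auto2}. In case (ii) the argument should be almost immediate: for $(S,U)$ as in \ref{fpair2}, the pullback via the morphism $(U,\cO_U^+)\to(S,\cO_S)$ of \ref{fvsadic}(1) gives a generalized Drinfeld module over $(U,\cO_U)$ with level $N$ structure as noted in \ref{remU}(2), and an automorphism of $((\cL,\phi),\iota)$ restricts to an automorphism of this object over $U$. Since $U$ is dense in $S_{\adic}$ and the local rings of $S$ are normal integral domains, a morphism of line bundles on $S$ that becomes the identity on $U$ is the identity; thus it suffices to kill automorphisms after restriction to $U$. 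Working \'etale locally, and using that either $N$ has two prime divisors or $N$ is invertible on $S$, I would invoke Proposition \ref{unit2} (or rather its proof via \ref{unit} applied to $\Spec(\cO_{S,s})$, using \ref{logD53} to extend $\iota$) to find $a\in(\frac1NA/A)^d$ with $\iota(a)\in\cL^\times$; then any automorphism fixing $\iota$ fixes a basis element of $\cL$ and hence is the identity. This is exactly the reduction performed in \ref{auto2}, now carried out on the formal scheme.

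For case (i), where $S$ is a locally Noetherian formal scheme over $A$ with a saturated log structure and $((\cL,\phi),\iota)$ is a log Drinfeld module, I would use the defining property \ref{flogD}(i): \'etale locally on $S$ the object is the pullback under some $f\colon S\to\hat S'$ of a generalized Drinfeld module $((\cL',\phi'),\iota')$ over $(\hat S',U)$ with $\hat S'$ the formal completion along a closed subscheme of a log scheme $S'$ satisfying \ref{log3}, and $U$ the locus where the log structure of $\hat S'$ is trivial. By \ref{fstrong}, such $(\hat S',U)$ satisfies (div), and by Proposition \ref{unit2} applied to the log regular setting there exists (locally) $a$ with $\iota'(a)\in(\cL')^\times$. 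Pulling back along $f$, we get $\iota(a)=f^*\iota'(a)\in\cL^\times$, since pullback of a basis element of a line bundle is a basis element. Then the same one-line argument as in \ref{auto} finishes: an automorphism $g$ of $((\cL,\phi),\iota)$ satisfies $g(\iota(a))=\iota(a)$, and since $\iota(a)$ generates $\cL$ at each point, $g=\id$.

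The key steps in order: (1) record the elementary fact that, since the local rings of $S$ (resp.\ of $\hat S'$) are normal integral domains and $U$ is dense, an isomorphism of line bundles restricting to the identity on $U$ is the identity, so triviality of the automorphism group is a local question reducible to producing a unit value $\iota(a)$; (2) in case (ii), restrict to $U$ via \ref{fvsadic}(1), invoke \ref{remU}(2) and \ref{unit2}/\ref{unit} to obtain such an $a$; (3) in case (i), use \ref{flogD}(i) to write the object as a pullback from $(\hat S',U)$, apply \ref{fstrong} to get (div) and then \ref{unit2} on the log regular side to get $a$ with $\iota'(a)\in(\cL')^\times$, and pull back; (4) conclude in both cases that an automorphism fixes a local basis element of $\cL$, hence is trivial. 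In fact, given that Propositions \ref{unit2} and \ref{auto2} are already stated in essentially the form needed, the cleanest write-up is simply: ``This follows from \ref{unit2} (case (i)) and from \ref{unit} applied to $\Spec(\cO_{S,\bar s})$ together with \ref{remU}(2) (case (ii)), exactly as \ref{auto} and \ref{auto2} follow from \ref{unit} and \ref{unit2}.''

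The main obstacle I anticipate is purely bookkeeping rather than conceptual: making sure the density/normality reduction genuinely works for \emph{formal} schemes, i.e.\ that ``$U$ dense in $S_{\adic}$'' plus ``$\cO_{S,s}$ a normal integral domain'' does let one detect morphisms of line bundles on $S$ by their restriction to $U$ — this is where one must be a little careful that the relevant sections of $\cO_S$ inject into $\cO_U$, which holds because $\Spec(\cO_{S,s})\setminus(\text{non-dense part})$ sees all of $\cO_{S,s}$ by normality, combined with \ref{fpairlog}. Once that injectivity is in hand, everything else is a direct citation of \ref{unit}, \ref{unit2}, \ref{auto}, \ref{auto2}, \ref{fstrong}, and \ref{flogD}. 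No genuinely new idea is required; the proposition is the formal-geometry shadow of the scheme-theoretic results already established.
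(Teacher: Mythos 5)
Your proposal is correct and follows the same route as the paper, whose entire proof is the one-line reduction to \ref{unit} and \ref{unit2}; your write-up just makes explicit the local-on-$S$ reduction and the use of \ref{flogD}(i) and \ref{fstrong} that the paper leaves implicit. No issues.
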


\begin{proof} 
This is reduced to \ref{unit} and \ref{unit2}. 
\end{proof}

\subsection{Formal moduli}\label{iterated}

In this subsection, we take $A = \F_q[T]$ and consider the formal scheme versions of the moduli functors for schemes defined in Section \ref{mofu}. We state our main theorem \ref{fthm} in the setting of formal moduli. It will be proved in Section \ref{itTa}.

\begin{sbprop}\label{flogD90}  Suppose that we are in one of the following two situations:
\begin{enumerate}
\item[(i)] $S$ is a locally Noetherian formal scheme over $A$ with a saturated log structure, and $((\cL, \phi), \iota)$ is a log Drinfeld module over $S$ of rank $d$ with level $N$ structure, or
\item[(ii)] $S$ is a formal scheme over $A$  which has an open covering by  $\Spf(R)$ with $R$ an excellent normal domain, $U$ is a dense open subset of the adic space $S_{\adic}$, and $((\cL, \phi), \iota)$ is a generalized Drinfeld module over $(S,U)$ of rank $d$ with level $N$ structure satisfying the divisibility condition (div) in \ref{fstrong}. 
\end{enumerate}
Then we have the same statements as (1) and (2)  in \ref{logD9} concerning $\pole(\iota(a))\in M_S/\cO_S^\times$ for $a\in (\frac{1}{N}A/A)^d$ in this formal situation.

\end{sbprop}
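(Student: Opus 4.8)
The plan is to reduce both situations (i) and (ii) to the case of a complete discrete valuation ring, where the statement becomes the already-proved Proposition \ref{Ntor}(1),(2) via the correspondence $\phi\leftrightarrow(\psi,\Lambda)$ of \ref{val2} and the diagonal-basis results of \ref{diagbase}. The point is that the conclusions of \ref{logD9}(1),(2) are statements about $\text{pole}(\iota(a))\in M_S/\cO_S^\times$ for $a\in(\frac1N A/A)^d$, and the sheaf $M_S/\cO_S^\times$ and the divisibility order on it are detected étale (indeed Zariski, by \ref{toric7}) locally; so it suffices to check the defining conditions after pulling back along a cofinal family of ``test'' morphisms $\Spf(\cV)\to S$ (or $\Spec(\cV)\to S$) with $\cV$ a complete discrete valuation ring. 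Here the analogue of \ref{todvr} for formal schemes is what lets us produce enough such $\cV$: working étale-locally, write $S=\Spf(R)$ with $R$ an excellent normal domain (in case (ii)) or, in case (i), use the chart of the log structure to reduce to the formal completion of a log regular scheme of finite type over $A$ along a closed subscheme, whose local rings are excellent; then apply \ref{todvr}(1) to $R$ and the ideal of definition $I$ to get $R=\bigcap_{\cV\in E}\cV$ with each $\cV$ a discrete valuation ring whose maximal ideal contains $I$, and complete.

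The key steps, in order, would be: first, observe that in case (i) a log Drinfeld module over $S$ is, étale-locally, pulled back from a generalized Drinfeld module over a pair $(\hat S',U)$ with $\hat S'$ the formal completion of a log regular scheme along a closed subscheme (definition \ref{flogD}), so that by \ref{regdiv}/\ref{fstrong} the divisibility condition (div) holds; hence case (i) reduces to case (ii) with $S=\hat S'$, $U$ the trivial-log-structure locus. Second, in case (ii), fix $s\in S$ and work on $\Spf(R)$ with $R$ excellent normal; let $r$ be the rank of the fiber of $(\cL,\phi)$ at the closed point, $n=d-r$, and let $E_s\subset(\frac1N A/A)^d$ be the free $A/NA$-submodule of rank $r$ from \ref{nlevelN}. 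Third, using (div) choose $b_1,\dots,b_n$ as in the proof of \ref{logD90}, so that $\text{pole}(\iota(b))\text{pole}(\iota(b_i))^{-1}\in M_S/\cO_S^\times$ for all $b\notin E_s+\sum_{j<i}(A/NA)b_j$; set $(e_i)_{0\le i\le r-1}$ a basis of $E_s$ and $e_i=b_{i-r+1}$ for $r\le i\le d-1$. Fourth, to verify that $(e_i)_i$ satisfies condition (1.1) of \ref{logD9} and that (2) holds, test against all $\Spf(\cV)\to S$ with $\cV$ complete discrete valuation ring as above: over each $\cV$, \ref{val2}(1) gives $(\psi,\Lambda)$, and the pullback of $\iota$ restricted to $U$ is a level $N$ structure on the Drinfeld module $\phi_K$; by \ref{Ntor}(1) the ordered family of $v_{\bar K}$-values $-v_{\bar K}(\iota(e_i))$ matches the ``smallest norm'' characterization, and \ref{nlevelN}(1) identifies which $\iota(a)$ lie in $\cL$ (pole $=1$) versus $M_S^{-1}\cL^\times$. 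Translating $v_{\bar K}$-inequalities into divisibility in $M_{\cV}/\cO_{\cV}^\times=\Gamma_{\ge0}$ and reassembling over all $\cV$ via $R=\bigcap\cV$ yields the containments in (1.1); and \ref{Ntor}(2) gives that $\text{pole}(\iota(e_i))$ is independent of the chosen basis, with $\text{pole}(\iota(e_0))=1$ coming from $e_0\in E_s$ and \ref{*Simage}/\ref{nlevelN}.

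The main obstacle I expect is the passage from the valuation-ring statements back up to $\Spf(R)$, i.e. making precise that ``$\text{pole}(\iota(a))\text{pole}(\iota(b))^{-1}\in M_S/\cO_S^\times$ at $s$'' is equivalent to the same after pulling back to every $\cV\in E$. This requires knowing that $M_S/\cO_S^\times$ at $s$ embeds into $\prod_{\cV\in E}M_{\cV}/\cO_{\cV}^\times$ compatibly with the group structure and the submonoid $M/\cO^\times$; for $S$ the formal completion of a log regular scheme this follows from \ref{logD20}(2), which writes $M_S=\cO_S\cap j_*(\cO_U^\times)$, together with \ref{todvr}(1),(2) applied to $I$ and to the ideal cutting out the boundary, so that both ``$f$ is in $\cO_S$'' and ``$f$ lies in the boundary ideal'' are detected by the $\cV\in E$. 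One must also check that the finitely many $b_i$ chosen at $s$ work on an open neighborhood, which is where one invokes, exactly as in \ref{Sigk8}, that $(\text{pole}(\iota(e_i)))_i$ defines an element of $[\sig](S')$ for suitable $\sig\in\Sig_k$ and some open $S'\ni s$; this last point is routine once the valuation-ring computations are in hand.
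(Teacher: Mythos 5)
Your proposal is correct and follows essentially the same route as the paper, whose proof of this proposition is simply the observation that the argument of \ref{logD9}/\ref{logD90} carries over verbatim once one uses \ref{todvr} to reduce to complete discrete valuation rings and then invokes \ref{Ntor} and \ref{nlevelN}. Your fleshed-out version (reducing case (i) to case (ii) via the definition \ref{flogD} together with \ref{fstrong}, choosing the $b_i$ via (div), and testing against the family of $\cV\in E$ supplied by \ref{todvr}) is exactly the intended argument.
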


 \begin{proof} This can be proved in the same way as \ref{logD9} (in \ref{3logD9}) for (i) and \ref{logD90} for (ii), 
 using the reduction \ref{todvr} to the case of complete discrete valuation rings described in \ref{logD90}. 
 \end{proof}

\begin{sbpara}\label{formal} 

In the case  $N$ has at least two prime divisors (resp., $N$ has only one prime divisor),  we define the categories $\hat \cC_{\log}$ and $\hat \cC_{\nl}$ as follows. 
\begin{itemize}
	\item Let $\hat \cC_{\log}$ be the category of locally Noetherian formal schemes $S$  over $A$  (resp., $A[\frac{1}{N}]$) endowed with a saturated log structure $M_S$ on the \'etale site $S_{\et}$. 
	\item Let $\hat \cC_{\nl}$ be the category of pairs $(S, U)$, where $S$ is a formal scheme over $A$ (resp., $A[\frac{1}{N}]$) which has an open covering by  $\Spf(R)$ with $R$ an excellent normal domain and $U$ is a dense open subset of the adic space $S_{\adic}$. 
\end{itemize}

As excellent rings are noetherian, for any object $(S,U)$ of $\hat \cC_{\nl}$, the formal scheme $S$ lies in $\hat \cC_{\log}$, taking the saturated log structure associated to $U$, as in \ref{fpairlog}.
 
 \end{sbpara}

\begin{sbpara}\label{sigrnd}

Let $r\geq 1$, $n\geq 0$, and $d=r+n$.

Let $\sig$ be a finitely generated rational subcone of $C_d$.
We define functors 
$$\overline{\fM}^{r,n}_N, \, \overline{\fM}^{r,n}_{N, +, \sig} \colon \hat \cC_{\log}\to  \text{(Sets)}\qquad (\overline{\fM}^{r,n}_N \supset \overline{\fM}^{r,n}_{N, +, \sig})$$ 
$$\overline{\fM}^{r,n}_N, \, \overline{\fM}^{r,n}_{N, +, \sig} \colon \hat \cC_{\nl}\to  \text{(Sets)}\qquad (\overline{\fM}^{r,n}_N \supset \overline{\fM}^{r,n}_{N, +, \sig})$$  as follows.

For an object $S$ of $\hat \cC_{\log}$, let $\overline{\fM}^{r,n}_N(S)$ be the set of all isomorphism classes of log Drinfeld modules $((\cL, \phi), \iota)$ over $S$ of rank $d$ with level $N$ structure satisfying the condition (*) below.

For $(S,U)\in \hat \cC_{\nl}$, let $\overline{\fM}^{r,n}_N(S,U)$ be the
set of all isomorphism classes of  
generalized Drinfeld modules $((\cL, \phi), \iota)$  over $(S,U)$ of rank $d$ with  level  $N$ structure 
satisfying the divisibility condition (div) of \ref{fstrong} and the following condition  (*).

\begin{enumerate}
\item[(*)] Let $I\subset \cO_S$ be the ideal of definition such that $\cO_S/I$ is reduced. Then $(\cL,\phi) \bmod I$ is a Drinfeld module over $\cO_S/I$ of rank $r$. 
\end{enumerate}

On both $\hat \cC_{\log}$ and $\hat \cC_{\nl}$,  the subfunctor $\overline{\fM}^{r,n}_{N, +, \sig}$   of $\overline{\fM}^{r,n}_N$  is defined 
 by putting conditions on  the divisibility of $\pole(\iota(a))$ for $a\in (\frac{1}{N}A/A)^d$ in the same way as the cases of the categories $\cC_{\log}$ and $\cC_{\nl}$ in \ref{moduli}, respectively.

We use the same notation for functors on different categories $\hat \cC_{\log}$ and $\hat \cC_{\nl}$, but this is fine, as is seen in \ref{fM=M} below.

\end{sbpara}

\begin{sbpara} When we consider  $((\cL,\phi), \iota)\in \overline{\fM}^{r,n}_{N, +, \sig}$, we trivialize the line bundle $\cL$ by $\iota(e_0)$ which is a basis of $\cL$, where $(e_i)_{0 \le i \le d-1}$ is the standard basis of $(\frac{1}{N}A/A)^d$.

\end{sbpara}

\begin{sbpara}\label{toMrn} On the category  $\hat \cC_{\log}$ (resp.,  $\hat \cC_{\nl}$), we have a canonical morphism of functors $((\cL, \phi), \iota)\mapsto ((\cL, \psi), \iota')$  from 
$\overline{\fM}^{r,n}_{N,+,\sig} $ to the functor $S\mapsto \text{Mor}(S, \cM^r_N)$ (resp., $(S,U)\mapsto  \text{Mor}(S, \cM^r_N)$), where $\text{Mor}$ is the set of morphisms of locally ringed spaces over $A$. Here, $\psi$ is that of $(\psi, e)$  in \ref{prop51}, and $\iota'$ is  defined by $e(\iota'(a_0, \dots, a_{r-1}))= \iota(a_0, \dots,a_{r-1}, 0, \dots, 0)$.

\end{sbpara}

\begin{sbpara}\label{sig'n} Let $k$ be the degree of the polynomial $N$. Let $r\geq 1$ and $n\geq 0$ with $r+n=d$. Let $\sig$ be a finitely generated rational subcone of $C_d$ such that $\sig\subset \tau$ for some $\tau\in \Sig_k={}_d\Sig_k$.  Let $\sig'$ be the face $\{s\in \sig\mid s_i=0\textsp{for} 1\leq i\leq r-1\}$ of $\sig$, and let $\sig_n= \{(s_{i+r-1})_{1\leq i\leq n}\mid s\in \sig'\}$ so that $\sig'=\{0\}^{r-1}\times \sig_n$.

Let $p$ be the characteristic of $F$, and let $\toric_{\F_p}(\sig_n) $ be the toric variety $\toric_{\Z}(\sig_n) \otimes_{\Z} \F_p$ over $\F_p$ associated to $\sig_n$ (see \ref{toric2}). 
 That is,  $\toric_{\F_p}(\sig_n)= \Spec(\F_p[\sig_n^{\vee}])$, where 
 $$
 	\sig_n^{\vee}=\left\{(b_i)_{1 \le i \le n} \in \Z^n\mid\sum_{i=1}^n b_is_i\geq 0\textsp{for all}s\in \sig_n\right\}.
$$

\end{sbpara}

\begin{sbpara}\label{cM(sig)} Let the notation be as in \ref{sig'n}.

Let $\overline{\cM}^{r,n}_{N,(+,\sig)}$ be the formal completion of $\cM_N^r \times_{\F_p} \toric_{\F_p}(\sig_n)$ along the closed subscheme that is the inverse image of $(0, \dots, 0) \in \mathbb{A}_{\F_p}^n$ under 
$$
	\cM^r_N \times_{\F_p}\toric_{\F_p}(\sig_n)\to \toric_{\F_p}(\sig_n) \to \mathbb{A}^n_{\F_p}.
$$ 
Here the latter morphism $\toric_{\F_p}(\sig_n) \to  \mathbb{A}^n_{\F_p}$ of toric varieties is induced by
the embedding $\sig_n\subset \R^n_{\geq 0}$ of cones. Endow $\overline{\cM}^{r,n}_{N, (+,\sig)}$ with the inverse image of the standard log structure of $\toric_{\F_p}(\sig_n)$.

The object $\overline{\cM}^{r,n}_{N,(+,\sig)}$ represents the functor 
$$\hat \cC_{\log} \to (\text{Sets}), \quad S\mapsto \{((\psi, \iota), t_1,\dots, t_n)\},$$
where $\psi$ is a Drinfeld module over $S$ with level $N$ structure $\iota$, the bundle of which is trivialized by $\iota(e_0)$, and where the $t_i \in \Gamma(S, M_S)$ satisfy the following conditions.
\begin{enumerate}
\item[(i)] The image of $t_i$ in $\cO_S$  is topologically nilpotent. 
\item[(ii)]  The element  $(t_i\bmod \cO^\times_S)_{1\leq i\leq n}$ of $\Gamma(S, (M^{\gp}_S/\cO_S^\times)^n) $ belongs to $[\sig_n](S)$. That is, 
 $\prod_{i=1}^n t_i^{b_i} \in M_S$ for all $(b_i)_{1 \le i \le n} \in \sigma_n^{\vee}$.
 \end{enumerate}
 
 In fact, first, $\cM^r_N\times \toric_{\F_p}(\sig_n)$ represents the functor $S\mapsto \{((\psi, \iota), h)\}$ where $(\psi, \iota)$ is as above and $h$ is a homomorphism $\sig_n^{\vee} \to M_S$. Next, via the homomorphism $\bN^n \to \sig_n^{\vee}$ corresponding to the embedding $\sig_n \subset \R^n_{\geq 0}$, this functor may be rewritten as  $S\mapsto \{((\psi, \iota), t_1, \dots, t_n)\}$, where $(\psi, \iota)$ is as above and $t_1,\dots, t_n\in \Gamma(S,M_S)$ satisfy the above condition (ii). This follows from \ref{toSig0} applied to the case that $\Sig$ is the fan of all faces of $\R^n_{\geq 0}$ and $\Sig'$ is the fan of all faces of $\sig_n$. Hence the formal completion represents the above functor.

 Let $\cM^{r,n}_{N,(+,\sig)}$ be the dense open set of the adic space 
$(\overline{\cM}^{r,n}_{N, (+,\sig)})_{\adic}$ associated to $\overline{\cM}^{r,n}_{N, (+,\sig)}$ consisting of all points at which the log structure is trivial. 
Then  $\cM^{r,n}_{N,(+,\sig)}$  is the  open set of $(\overline{\cM}^{r,n}_{N, (+, \sig)})_{\adic}$ consisting of all points at which $t_1\cdots t_n$ is invertible (i.e., they are invertible in $\cO_W$ for $W = \cM^{r,n}_{N,(+,\sig)}$, not necessarily in $\cO_W^+$). 

Since $\cM^r_N$ is a regular scheme (see \ref{002}) and $\toric_{\F_p}(\sig_n)$ is log smooth over $\F_p$, the log scheme $\overline{\cM}^{r,n}_{N,(+,\sig)}$ is log regular as the completion of a log regular scheme (see \ref{logD20}(3)) along a closed subscheme. 
 
  \end{sbpara}

 \begin{sbpara} \label{theta}
 On the category $\hat \cC_{\log}$ (resp., $\hat \cC_{\nl}$), we have a canonical morphism 
 \begin{eqnarray*}&\theta \colon \overline{\fM}^{r,n}_{N, +, \sig}\to \overline{\cM}^{r,n}_{N, (+, \sig)}\quad (\text{resp., } \overline{\fM}^{r,n}_{N, +, \sig}\to (\overline{\cM}^{r,n}_{N, (+, \sig)},\cM^{r,n}_{N, (+, \sig)}))\\
&(\phi, \iota)\mapsto  ((\psi, \iota'),  t_1, \dots, t_n),
\end{eqnarray*}
 where $(\psi, \iota')\in \cM^r_N$ is as in \ref{toMrn} and 
 $$t_i := \frac{\iota(e_0)}{\iota(e_{i+r-1})} \textsp{for} 1\leq i\leq n.$$

  \end{sbpara}
  
  The following is the main theorem of the formal moduli. 

\begin{sbthm}\label{fthm} Let $\sig$ be a finitely generated rational subcone of an element of $\Sig_k$. 
There is a unique open subset $\overline{\cM}^{r,n}_{N, +, \sig}$ of the formal scheme $\overline{\cM}^{r,n}_{N, (+, \sig)}$
of \ref{cM(sig)} such that the morphism $\theta$ 
of \ref{theta} induces an isomorphism
$$\overline{\fM}^{r,n}_{N, +, \sig}\xrightarrow{\sim} \overline{\cM}^{r,n}_{N, +, \sig}$$ 
of functors on $\hat \cC_{\log}$ and such that if $\cM^{r,n}_{N,+,\sig}$ denotes the open set of $(\overline{\cM}^{r,n}_{N, +, \sig})_{\adic}$ consisting of all points at which the log structure is trivial, then $\theta$ induces an isomorphism
$$\overline{\fM}^{r,n}_{N, +, \sig}\xrightarrow{\sim} (\overline{\cM}^{r,n}_{N, +, \sig},\cM^{r,n}_{N, +, \sig})$$ 
of functors on $\hat \cC_{\nl}$. 

\end{sbthm}

\begin{sbpara} \label{fthmn=0}
	If $n = 0$, then the functor $\overline{\fM}_{N,+,\sig}^{r,0} = \overline{\fM}_N^{r,0}$ applied to $S \in \hat \cC_{\log}$ or $(S,U) \in \hat \cC_{\nl}$ is the set of isomorphism classes of Drinfeld modules over $S$ of rank $r$ with level $N$ structure, and $\overline{\cM}_{N,(+,\sig)}^{r,0} = \cM_N^r$ is the corresponding moduli space. Thus, Theorem \ref{fthm} holds in this case.
\end{sbpara}

The following proposition follows from Theorem \ref{fthm}. 

\begin{sbprop}\label{fM=M} Let $F \in \{\overline{\fM}^{r,n}_N, \overline{\fM}^{r,n}_{N, +, \sig}\}$. Then the functor $F$ on the category $\hat \cC_{\nl}$ coincides with the functor sending $(S,U)$ to $F(S)$ for the functor $F$ on $\hat \cC_{\log}$ applied to $S$ endowed with the log structure associated to $U$ (see \ref{log2}) .
\end{sbprop}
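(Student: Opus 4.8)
\textbf{Proof proposal for \ref{fM=M}.}

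The plan is to deduce the statement directly from Theorem \ref{fthm}, which already establishes the two representability isomorphisms $\overline{\frak M}^{r,n}_{N,+,\sig}\cong \overline{\cM}^{r,n}_{N,+,\sig}$ on $\hat\cC_{\log}$ and $\overline{\frak M}^{r,n}_{N,+,\sig}\cong (\overline{\cM}^{r,n}_{N,+,\sig},W)$ on $\hat\cC_{\nl}$, together with the fact that $\overline{\frak M}^{r,n}_N = \bigcup_{\sig,g} g(\overline{\frak M}^{r,n}_{N,+,\sig})$ as a sheaf functor (the argument of \ref{moduli} applies verbatim in the formal setting via \ref{flogD90}). So it suffices to treat $M=\overline{\frak M}^{r,n}_{N,+,\sig}$; the case of $\overline{\frak M}^{r,n}_N$ follows by taking the union over $\sig$ and $g\in \GL_d(A/NA)$, since this union commutes with the functorial operation $(S,U)\mapsto(S,M_{S,U})$ sending $(S,U)$ to $S$ with the log structure associated to $U$.

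First I would make precise the comparison functor. Given $(S,U)\in\hat\cC_{\nl}$, let $M_{S,U}$ be the saturated log structure on $S$ associated to $U$ as in \ref{fpairlog}; then $(S,M_{S,U})$ is an object of $\hat\cC_{\log}$, and this assignment is functorial. One checks that under $\overline{\frak M}^{r,n}_{N,+,\sig}\cong(\overline{\cM}^{r,n}_{N,+,\sig},W)$ on $\hat\cC_{\nl}$ and $\overline{\frak M}^{r,n}_{N,+,\sig}\cong\overline{\cM}^{r,n}_{N,+,\sig}$ on $\hat\cC_{\log}$, the claim $M(S,U)=M(S,M_{S,U})$ becomes the assertion
$$
	\Mor_{\hat\cC_{\nl}}\big((S,U),(\overline{\cM}^{r,n}_{N,+,\sig},W)\big)
	= \Mor_{\hat\cC_{\log}}\big((S,M_{S,U}),\overline{\cM}^{r,n}_{N,+,\sig}\big).
$$
The left side, by definition of $\hat\cC_{\nl}$-morphisms into a pair $(\text{formal scheme},\text{open of its adic space})$, is the set of morphisms $S\to \overline{\cM}^{r,n}_{N,+,\sig}$ of formal schemes over $A$ whose induced map on adic spaces carries $U$ into $W$. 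The right side is the set of morphisms of log formal schemes $(S,M_{S,U})\to \overline{\cM}^{r,n}_{N,+,\sig}$, where the target carries the inverse image of the standard toric log structure of $\toric_{\F_p}(\sig_n)$ and $W$ is exactly its triviality locus.

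The key point, then, is a purely log-geometric lemma: for $(S,U)\in\hat\cC_{\nl}$ with $M_{S,U}$ the log structure associated to $U$, and for any fs log formal scheme $T$ over $A$ with triviality-locus open set $W_T\subset T_{\adic}$, a morphism of underlying formal schemes $f:S\to T$ upgrades to a morphism of log formal schemes $(S,M_{S,U})\to T$ if and only if $f_{\adic}(U)\subset W_T$, and when it does the log upgrade is unique. Uniqueness is immediate because $M_{S,U}\subset j_*(\cO_U^\times)$ is a subsheaf of $j_*\cO_U$, so the pullback of a chart element is determined by its image in $\cO_S$. For existence, working \'etale-locally choose a chart $P\to \cO_T$; the composite $P\to\cO_S$ lands in $M_{S,U}$ precisely because the condition $f_{\adic}(U)\subset W_T$ forces each chart element to become invertible on $U$, i.e.\ to lie in $\cO_S\cap j_*(\cO_U^\times)=M_{S,U}$; this produces the required chart for the log structure pullback $f^*M_T\to M_{S,U}$, and one verifies it is an isomorphism onto a subsheaf, giving the log morphism. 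Here \ref{fpairlog} (the description of $M_{S,U}(S')$ as elements invertible on $U'$) is exactly what is needed.

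The main obstacle I anticipate is the bookkeeping around \'etale-local charts: one must check that the locally constructed log morphisms glue (they do, by uniqueness), and that the \'etale site conventions for formal schemes and adic spaces (\ref{sat}, cases (b) and (c)--(d)) are handled consistently when passing between $S_{\et}$, $S_{\adic}$, and $U$. One also needs that $W$ as defined in \ref{cM(sig)} really is the triviality locus of the log structure on $\overline{\cM}^{r,n}_{N,+,\sig}$—this is where the explicit toric description ``$W=$ locus where $t_1\cdots t_n$ is invertible'' from \ref{cM(sig)} is used. Everything else is formal: once the lemma is in hand, apply \ref{fthm} on both sides and take the union over $\sig$ and $g$ for the case $M=\overline{\frak M}^{r,n}_N$, using that $(S,U)\mapsto(S,M_{S,U})$ preserves the open-subfunctor structure (\ref{moduli}) so that the sheaf-theoretic unions match.
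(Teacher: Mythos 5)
Your proposal is correct and takes the same route as the paper, whose entire proof of \ref{fM=M} is the one-line assertion that it follows from \ref{fthm}; your argument supplies exactly the details that assertion leaves implicit, namely the identification of $\Mor_{\hat\cC_{\nl}}((S,U),(\overline{\cM}^{r,n}_{N,+,\sig},W))$ with $\Mor_{\hat\cC_{\log}}((S,M_{S,U}),\overline{\cM}^{r,n}_{N,+,\sig})$ via the log-upgrade lemma, and the union over $\sig$ and $g$ for the case $M=\overline{\frak M}^{r,n}_N$.
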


\subsection{Iterated Tate uniformizations 1}\label{itTa0}

We prove Theorem \ref{fthm} in Section \ref{itTa} using the concept of iterated Tate uniformization. In preparation, we consider here the case in which the basis is  a complete discrete valuation ring. 

Let $\cV$ be a complete discrete valuation ring over $A = \F_q[T]$. Let $K$ be the field of fractions of $\cV$, let $\bar K$ be an algebraic closure of $K$, and let $\bar \cV$ be the integral closure of $\cV$ in $\bar K$. Let $m_{\cV}$ and $m_{\bar \cV}$ be the maximal ideals of $\cV$ and of $\bar \cV$, respectively. Let $v_{\bar K} \colon \bar K \to \Q$ be an additive  valuation induced by an additive valuation $v_K$ of $\cV$. 

Let $N$ be an element of $A$ of positive degree. If $N$ has only one prime divisor, we assume $N$ is invertible in $\cV$.
Let $r\geq 1$ and $n\geq 0$, and set $d=r+n$.

\begin{sbpara}\label{iTatedvr}  For a finitely generated subcone $\sig$ of $C_d$, we will write $\overline{\fM}^{r,n}_{N, +, \sig}(S, U)$ with $S=\Spf(\cV)$ and $U=\Spa(K, \cV)$ simply by  $\overline{\fM}^{r,n}_{N,+,\sig}(\cV)$.
In \ref{iTatedvr}--\ref{iotai}, we take 
$(\phi, \iota)\in \overline{\fM}^{r,n}_{N, +, C_d}(\cV)$.

Let $(\psi, \iota')$ be the image of this element in $\cM^r_N(\cV)$ (see \ref{toMrn}). We discuss the iterated Tate uniformization of $(\phi, \iota)$. That is, we explain how $(\phi, \iota)$ is obtained from $(\psi, \iota')$ by constructing $(\phi_i, \iota_i)\in \overline{\fM}^{r,i}_{N, +, C_{r+i}}(\cV)$ with $0\leq i\leq n$ such that  $(\phi_0,\iota_0)=(\psi, \iota')$ and $(\phi_n,\iota_n)=(\phi, \iota)$ and such that $(\phi_i, \iota_i)$ for $1\leq i\leq n$ is the quotient of $(\phi_{i-1}, \iota_{i-1})$ by an $A$-lattice of rank $1$. 
 
Let $\La$ be the $\psi(A)$-lattice in $K^{\sep}$ corresponding to $\phi$ as in \ref{val2}. By \ref{Ntor}(1), there is a $\psi(A)$-basis $(\la_i)_i$ of $\La$ satisfying the equivalent conditions (i)--(iii) in \ref{diagbase}(1) for which there are $\la'_i\in K^{\sep}$ with $1\leq i\leq n$ such that $\psi(N)(\la'_i)= \la_i$ and $e_{\La}(\la_i')= \iota(e_{i+r-1})$. Here $(e_i)_{0\leq i\leq d-1}$ is a standard $A/NA$-basis of $(\frac{1}{N}A/A)^d$. 
We have 
$$0=\La_0 \subsetneq \La_1\subsetneq \dots \subsetneq \La_n=\La,$$
and the $\La_i/\La_{i-1}$ for $1\leq i\leq n$ are of $A$-rank one.

For $0\leq i\leq n$, let $\phi_i$ be the generalized Drinfeld module over $\cV$ of generic rank $r+i$ corresponding to the pair $(\psi, \La_i)$ as in \ref{val2}. In particular, we have $\phi_0=\psi$ and $\phi_n=\phi$. Let $e_{i,0} \colon \bar K \to \bar K$ be the exponential map of the $\psi(A)$-lattice $\La_i$. 

For $1\leq i\leq n$, let $\gamma_i=e_{i-1,0}(\la_i)$, which is an element of $K$ since $\gamma_i = e_{i-1,0}(\psi(N)\lambda'_i) = \phi_{i-1}(N)\iota(e_{i+r-1})$. Then $\phi_i$  for $1\leq i\leq n$ is obtained from the pair $(\phi_{i-1}, \phi_{i-1}(A)\gamma_i)$ by the method of Section \ref{ss:constrquot}.   That is, $\phi_i$ is obtained from $\phi_{i-1}$ as its quotient by the $A$-lattice $\phi_{i-1}(A)\gamma_i$ of rank one, placing us in case (2) of \ref{cases}.

For $1\leq i\leq n$, let  $e_{i,i-1}$ be the exponential map associated to the $\phi_{i-1}(A)$-lattice $\phi_{i-1}(A)\gamma_i$ so that $e_{i,0}= e_{i,i-1}\circ e_{i-1,0}$. We may then define $e_{i,j}$ for $0\leq j\leq i\leq n$ as the composition $e_{i,i-1}\circ e_{i-1, i-2}\circ \dots \circ e_{j+1,j}$. 

\end{sbpara}

\begin{sblem}\label{xi0}  For $1\leq i\leq n$, let $s_i = (-v_{\bar K}(\la_i))^{1/r}$ be the norm of $\la_i$ (see \ref{normcv}). Let $k$ be the degree of $N$.
Then we have
$$ (0^{r-1}, (-v_K(\gamma_i))_{1\leq i\leq n})= \xi^d_0(0^{r-1}, (s_i)_{1\leq i\leq n})=\xi^d_{k,0}((-v_K(\iota(e_i)))_{0\leq i\leq d-1}).$$

\end{sblem}

\begin{proof}
	As the $\lambda_i$ for $1 \le i \le n$ satisfy the conditions of \ref{diagbase}(1), we have $v_{\bar K, \Lambda_{i-1}}(\lambda_i) = v_{\bar K}(\lambda_i)$. From \ref{eep}, 
	it follows that $-v_K(\gamma_i) = \epsilon_{s_1^r,\ldots, s_{i-1}^r}^{r,i-1}(s_i^r) = \epsilon_{s_1^r, \ldots, s_n^r}^{r,n}(s_i^r)$. By definition of $\xi_0^d$ from \ref{defcone2},
	we then have the first equality. 
	
	Since $\iota(e_{i+r-1}) = e_{\La}(\lambda_i')$ and $\psi(N)(\lambda'_i) = \lambda_i$
	for $1 \le i \le N$, Proposition \ref{xicdv1} implies that $\xi_k^d(0^{r-1},(s_i)_{1 \le i \le n}) = (0^{r-1},-v_K(\iota(e_{i+r-1}))_{1 \le i \le n})$. 
	Moreover, since $\iota(e_i) \in \ker \psi(N)$ for $0 \le i \le r-1$, we have $-v_K(\iota(e_i)) = 0$ for such $i$ by \ref{normcv}. 
	As $\xi^d_{k,0}
	= \xi_0^d \circ (\xi_k^d)^{-1}$ by definition (see \ref{thmcone}), the second equality holds.
\end{proof}

\begin{sblem}\label{econg}  Let $1\leq i\leq n$. 
\begin{enumerate}
\item[(1)] The map  $e_{i,i-1} \colon \bar K \to \bar K$ induces an automorphism of $\gamma_i m_{\bar \cV}$. 
\item[(2)] For $b\in m_{\bar \cV}$, the map $e_{i,i-1}$ induces an automorphism  of $\gamma_i b \bar \cV$ that reduces to the identity map on $\gamma_i b \bar \cV/\gamma_i b^2 \bar \cV$. 
\item[(3)] We have $\iota(e_j) \in \gamma_i m_{\cV}$ for $0\leq j\leq i+r-1$. 
\end{enumerate}
\end{sblem}

\begin{proof} Let $b$ be a nonzero element of $m_{\bar \cV}$. We prove that 
$$
	e_{i,i-1}(\gamma_i b z)\equiv \gamma_i  b z \bmod \gamma_i b^2\bar \cV\ps{z}.
$$ 
In fact, 
$$
	(\gamma_i b)^{-1}e_{i,i-1}(\gamma_i b z)= z \prod_{a \in A\setminus \{0\}} (1- (\phi_{i-1}(a)\gamma_i)^{-1}\gamma_i bz).
$$ 
It is sufficient to show that $\gamma_i \in (\phi_{i-1}(a)\gamma_i)\cV$ for all $a\in A\setminus \{0\}$. 
Employing \ref{eep} (noting that $v_{\bar K,\Lambda_{i-1}}(\psi(a)\lambda_i) = v_{\bar K}(\psi(a)\lambda_i)$) and \ref{normcv}, we have 
$$-v_K(\phi_{i-1}(a)\gamma_i)= \epsilon^{r, i-1}_{s_1^r, \dots, s_{i-1}^r}(|a|^rs_i^r)\geq  \epsilon^{r, i-1}_{s_1^r, \dots, s_{i-1}^r}(s_i^r)=-v_K(\gamma_i).$$

Hence $e_{i,i-1}$ induces the identity map of $\gamma_i b^h \bar\cV/\gamma_i b^{h+1}\bar \cV$ for every $h\geq 1$ and hence induces an isomorphism $\gamma_i b \bar \cV \xrightarrow{\sim} \gamma_i b \bar \cV$. Hence it induces an isomorphism $\gamma_i m_{\bar \cV}\xrightarrow{\sim} \gamma_i m_{\bar \cV}$.  That is, we have proven (1) and (2).

We prove (3). Let $k$ be the degree of $N$. For $1 \leq j\leq i$, we have
$$- v_K(\gamma_i) = \epsilon^{r,i-1}_{s_1^r, \dots, s_{i-1}^r}(s_i^r)=\epsilon^{r,n}_{s_1^r, \dots, s_n^r}(s_i^r) \geq \epsilon^{r,n}_{s_1^r, \dots, s_n^r}(q^{-kr}s_j^r)= -v_K(\iota(e_{r+j-1})).$$
\end{proof}

\begin{sbpara}\label{iotai}
For $0\leq i\leq n$, we define a level $N$ structure $\iota_i$ for the pullback of $\phi_i$ to $K$ as follows. For the standard basis $(e_j)_{0\leq j\leq i+r-1}$ of $(\frac{1}{N}A/A)^{r+i}$, let $\iota_i(e_j)$ (where $0\leq j\leq i+r-1$) be the unique element of $K$ satisfying $v_K(\iota_i(e_j))= v_K(\iota(e_j))$ and
 $e_{n,i}(\iota_i(e_j))= \iota(e_j)$. (The existence and the uniqueness of such an element follow from 
\ref{econg}.)

By construction, we have $(\phi_i, \iota_i)\in  \overline{\fM}^{r,i}_{N, +, C_{r+i}}(\cV)$.

\end{sbpara}

\begin{sbpara}\label{(sharp)} 
Suppose now that $n \ge 1$.
The goal of the rest of this section is to prove Proposition \ref{4.5prop} which gives a bijection of the form 
$$
	(\phi, \iota) \mapsto  ((\phi_{n-1}, \iota_{n-1}), t)
$$ 
for $t$ in a certain subset of $m_{\cV} \setminus \{0\}$,
as a refinement of the above map $(\phi, \iota)\mapsto (\phi_{n-1}, \iota_{n-1})$.

We start with a larger set of $t$ than will occur in this bijection. That is, let 
$$
	\overline{\fM}^{r,n-1}_{N,+, C_{d-1}}(\cV)^{(\sharp)} = \{ ((\phi', \iota'), t) \in {\fM}^{r,n-1}_{N, +, C_{d-1}}(\cV) \times (m_\cV \setminus \{0\}) \mid v_K(t) \geq -v_K(\iota(e_{d-2})) \}.
$$ 
Note that the condition on $t$ in this definition is exactly that $((-v_K(\iota(e_i)))_{0\leq i\leq d-2}, v_K(t)) \in C_d$.

\end{sbpara}

\begin{sbpara}\label{n-1objects}

Let $((\phi', \iota'), t)\in \overline{\fM}^{r,n-1}_{N, +, C_{d-1}}(\cV)^{(\sharp)}$. 
Let $(\psi, \La')$ be the pair in \ref{val1} corresponding to $\phi'$, and let $e_{\La'}$ be the exponential map associated to $\La'$. 

Replacing $(\phi, \iota)$ in \ref{iTatedvr} and \ref{xi0} by $(\phi', \iota')$,
let $(\la_i)_{1\leq i\leq n-1}$ with $\la_i\in \La'$ and $(\gamma_i)_{1\leq i\leq n-1}$ be chosen and defined as before, and set
$s_i= (-v_{\bar K}(\la_i))^{1/r}$.

Let $\alpha$ be  an element of $\bar K$ such that $e_{\La'}(\alpha)= t^{-1}$ and such that $v_{\bar K,\La'}(\alpha) = v_{\bar K}(\alpha)$, which is to say $v_{\bar K}(\alpha) \geq v_{\bar K}(\alpha')$ for all $\alpha'\in \bar K$ such that $e_{\La'}(\alpha')= t^{-1}$. Then $v_K(t)= \epsilon^{r,n-1}_{s_1^r, \dots, s_{n-1}^r}(-v_{\bar K}(\alpha))$ by \ref{eep} (and, in particular, $v_{\bar K}(\alpha) < 0$). Let $\la= \psi(N)\alpha$. Since $\psi(N)(z)$ is a monic polynomial over $\cV$ and $v_{\bar K}(\alpha)<0$, we have $v_{\bar K}(\la)= |N|^r v_{\bar K}(\alpha)$. It then follows from the definition of $\xi^d_k$ in \ref{defcone2} that 
$$
	\xi^d_k(0^{r-1}, s_1, \dots, s_{n-1}, -v_{\bar K}(\la))= ((- v_K(\iota'(e_i)))_{0\leq i\leq d-2}, v_K(t)).
$$

\end{sbpara}

\begin{sblem}\label{pxixi}
Let $((\phi', \iota'), t)\in \overline{\fM}^{r,n-1}_{N, +, C_{d-1}}(\cV)^{(\sharp)}$. 
\begin{enumerate}
\item[(1)] There exists $w\in m_{\cV}\setminus \{0\}$ such that the $(d-1)$th (i.e., last) coordinate of the image of $((-v_K(\iota(e_i)))_{0\leq i\leq d-2}, v_K(t)) \in C_d$ under the map 
$\xi_{k,0}^d \colon C_d\to C_d$ is 
$v_K(w)$.
\item[(2)]  For $w$ as in (1), we have $\phi'(N)(t^{-1}z)= w^{-1}h(z)$, where $h(z)\in \cV[z]$ is primitive (i.e., has a coefficient in $\cV^{\times}$). In particular, we have $v_K(\phi'(N)(t^{-1})) \geq -v_K(w)$. 
\end{enumerate}
\end{sblem}

\begin{proof} 
Since $\phi'(N)(t^{-1}z)\in K[z]$, there is $w\in K^\times$ such that $\phi'(N)(t^{-1}z)=w^{-1}h(z)$ for some primitive $h(z)\in \cV[z]$. 
We prove that $w\in m_{\cV}$ and that $v_K(w)$ has the property stated in (1).

Let $\alpha \in \bar K$ with $e_{\La'}(\alpha) = t^{-1}$ be as in \ref{n-1objects}, and again let $\la = \psi(N)\alpha$. 
Since $v_K(t)= \epsilon^{r,n-1}_{s_1^r, \dots, s_{n-1}^r}(-v_{\bar K}(\alpha))$, we have by \ref{eep2} that $e_{\La'}(\alpha z)= t^{-1}h_1(z)$ for some $h_1(z)\in \bar \cV\ps{z}$ with a unit coefficient. 
Since $\psi(N)(z) \in \cV[z]$ is monic of degree $|N|^r$ and $v_{\bar K}(\lambda) = |N|^r v_{\bar K}(\alpha) < 0$, we have $\psi(N)(\alpha z)= \la h_2(z)$ for a primitive polynomial $h_2(z)\in\bar \cV[z]$. Let $w'$ be an element of $\bar K$ such that $v_{\bar K}(w')$ is the $(d-1)$th coordinate of $\xi_{k,0}^d((-v_K(\iota(e_i)))_{0\leq i\leq d-2}, v_K(t))$. As this $(d-1)$th coordinate
is $\epsilon_{s_1^r, \ldots, s_{n-1}^r}^{r,n-1}(-|N|^rv_{\bar K}(\alpha))$ by definition of $\xi_k^d$, again by \ref{eep2} we have 
$e_{\La'}(\la z)=  (w')^{-1}h_3(z)$ for some $h_3(z)\in \bar \cV\ps{z}$ having a unit coefficient.

Putting this all together, we have
$$e_{\La'}(\psi(N)(\alpha z))= \phi'(N)e_{\La'}(\alpha z)=\phi'(N)(t^{-1}h_1(z))= w^{-1}h(h_1(z)),$$ 
as well as
 $$e_{\La'}(\psi(N)(\alpha z))= e_{\La'}(\la h_2(z))= (w')^{-1}h_3(h_2(z)).$$
As $w^{-1}h(h_1(z))=(w')^{-1}h_3(h_2(z))$, the primitivity of the power series $h \circ h_1$ and $h_3 \circ h_2$ forces $w'w^{-1}\in \bar \cV^\times$, so $v_K(w) = v_{\bar K}(w')$.
 \end{proof}

\begin{sbpara}\label{sharp}

Let $\overline{\fM}^{r,n-1}_{N, +, C_{d-1}}(\cV)^{\sharp}$ be the subset of  $\overline{\fM}^{r,n-1}_{N, +, C_{d-1}}(\cV)^{(\sharp)}$ consisting of all elements 
 $((\phi', \iota'), t)$ such that 
$$
 	v_K(\phi'(N)(t^{-1}))= -v_K(w),
$$ 
where $w$ is as in \ref{pxixi}(1).   Recall that
we have $v_K(\phi'(N)(t^{-1})) \geq -v_K(w)$ for any $((\phi', \iota'), t) \in \overline{\fM}^{r,n-1}_{N, +, C_{d-1}}(\cV)^{(\sharp)}$ from \ref{pxixi}(2).

Put differently, we require that
$-v_K(\phi'(N)(t^{-1}))$ coincides with the $(d-1)$th coordinate of $\xi_{k,0}^d((-v_K(\iota(e_i))_{0\leq i\leq d-2}, v_K(t))$. 
That is, an element $((\phi',\iota'),t)$ of $\overline{\fM}^{r,n-1}_{N, +, C_{d-1}}(\cV)^{(\sharp)}$ lies in $\overline{\fM}^{r,n-1}_{N, +, C_{d-1}}(\cV)^{\sharp}$ if and only if
$$
	-v_K(\phi'(N)(t^{-1})) = \epsilon_{s_1^r, \ldots, s_{n-1}^r}^{r,n-1}(-v_{\bar K}(\la)),
$$
where $\la = \psi(N)(\alpha)$ with $\alpha$ of maximal valuation such that $e_{\La'}(\alpha)=t^{-1}$, as in \ref{n-1objects}.

In \ref{4.5prop} below, we will obtain a bijection $\nu \colon \overline{\fM}^{r,n-1}_{N, +, C_{d-1}}(\cV)^{\sharp}\xrightarrow{\sim} \overline{\fM}^{r,n}_{N, +, C_d}(\cV)$. 

\end{sbpara}

\begin{sblem}\label{1step}  

Let $((\phi', \iota'), t)\in \overline{\fM}^{r,n-1}_{N, +, C_{d-1}}(\cV)^{\sharp}$, 
and let 
$\gamma=\phi'(N)(t^{-1})$.
\begin{enumerate}
	\item[(1)] The map 
	$$
		A\to \phi'(A)\gamma,\quad a \mapsto \phi'(a)\gamma
	$$ 
	is injective, and  $\phi'(A)\gamma$ satisfies the conditions on $\La$ in \ref{Fuji2} with $n=1$ (taking $R$ to be $\cV$, 
	$I$ to be $m_{\cV}$, and $\psi$ to be $\phi'$ therein). 
	\item[(2)] Let $a\in A\setminus \{0\}$. Then 
	$$v_K(\phi'(a)\gamma) \leq q^{(d-1)(\deg a-1)}v_K(\gamma).$$ 
\end{enumerate}

\end{sblem}

\begin{proof} 
It suffices to show the claimed inequality, as it yields (iii) and (iv) of \ref{Fuji2}, and in particular the injectivity of $a \mapsto \phi'(a)\gamma$
(which gives (ii), and (i) holds as $\gamma \in K$).
Fix $a$, and set $m = \deg a$.

Let  $x=-v_{\bar K}(\la)$, where $\la$ is as in \ref{pxixi}. Set $g=(s_1^r, \dots, s_{n-1}^r)$. By \ref{Ntor1} and \ref{prope1}, we have    
\begin{multline*} 
	-v_K(\phi'(a)\gamma)=\epsilon^{r,n-1}_g(q^{rm}x)= q^{(d-1)m}\epsilon_g^{r,n-1}(x)-(q^{(d-1)m}-1)\delta^{r,n-1}(g)\\
  	\geq q^{(d-1)m}(\epsilon_g^{r,n-1}(x)- \delta^{r,n-1}(g)).
\end{multline*}
Let $R=-v_K(\gamma)= \epsilon_g^{r,n-1}(x)$, the latter equality by the condition in \ref{sharp}.
  
Note that $x\geq -v_{\bar K}(\la_{n-1})= s_{n-1}^r$.  Thus, in the definition \ref{hat(e)} of $\delta^{r,n-1}$ as 
$$
	\delta^{r,n-1}(g) = \frac{q-1}{q^{d-1}-1}\sum_{i=1}^{n-1}q^{n-1-i} \epsilon^{r,i-1}_{s_1^r, \dots, s_{i-1}^r}(s_i^r),
$$ 
we have $\epsilon^{r,i-1}_{s_1^r, \dots, s_{i-1}^r}(s_i^r) = \epsilon^{r,n-1}_g(s_i^r) \leq R$ for $1\leq i\leq n-1$. From this, we have 
$$
	\delta^{r,n-1}(g)\leq \frac{q^{n-1}-1}{q^{d-1}-1}R.
$$

Combining the above, we obtain
$$
  	-v_K(\phi'(a)\gamma)\geq  q^{(d-1)m}R\left( 1- \frac{q^{n-1}-1}{q^{d-1}-1}\right).
$$
Since
$$
	1- \frac{q^{n-1}-1}{q^{d-1}-1} = \frac{q^{d-1}-q^{n-1}}{q^{d-1}-1}
  	\geq \frac{1}{q^{d-1}-1} \geq q^{-(d-1)},
$$
the inequality in the result follows.
\end{proof}
  
 \begin{sbpara} As in Lemma \ref{1step}, let $((\phi', \iota'), t)\in \overline{\fM}^{r,n-1}_{N, +, C_{d-1}}(\cV)^{\sharp}$ and $\gamma=\phi'(N)(t^{-1})$. 
 
 Let $\phi$ be the generalized Drinfeld module over $\cV$ obtained from the pair $(\phi', \phi'(A)\gamma)$ by the method of Section \ref{ss:constrquot}, which we can apply by Lemma \ref{1step}. Let $e_{n,n-1}$ be the exponential map of the lattice $\phi'(A)\gamma$. 
 Define a level $N$ structure $\iota$ on the pullback of $\phi$ to $K$ by
 $\iota(e_i)= e_{n,n-1}(\iota'(e_i))$ for $0\leq i\leq d-2$ and by $\iota(e_{d-1})= e_{n,n-1}(t^{-1})$.  The following lemma provides the
 existence of the map claimed in \ref{sharp}.
 
 \end{sbpara} 
 
 \begin{sblem}\label{4.5lem}
 	We have 
	$(\phi, \iota)\in \overline{\fM}^{r,n}_{N, +, C_d}(\cV)$.
  \end{sblem}
  
  \begin{proof} Let $\la_n= \lam = \psi(N)\alpha$ for $\alpha \in \bar K$ such that $v_{\bar K,\La'}(\alpha) = v_{\bar K}(\alpha)$ with $e_{\La'}(\alpha) = t^{-1}$. Set $\La=\sum_{i=1}^n A\la_i$. Then $\lambda_n$ is such that $(\la_i)_{1\leq i\leq n}$ satisfies the condition 
   \ref{diagbase}(1)(iii). Hence the divisibility conditions as in \ref{categories} on the $\iota(e_i)$ defining $\overline{\fM}^{r,n}_{N,+, C_d}$
   in \ref{moduli} 
 are satisfied. 
   \end{proof}
   
\begin{sbprop}\label{4.5prop} The map $$\rho \colon \overline{\fM}^{r,n-1}_{N,+, C_{d-1}}(\cV)^{\sharp}\to \overline{\fM}^{r,n}_{N,+,C_d}(\cV), \quad ((\phi', \iota'), t) \mapsto (\phi, \iota)$$
  is a bijection.

\end{sbprop}

\begin{proof} We have the inverse map  
$$\rho^{-1} \colon \overline{\fM}^{r,n}_{N,+,C_d}(\cV) \to \overline{\fM}^{r,n-1}_{N,+, C_{d-1}}(\cV)^{\sharp}, \quad (\phi, \iota)\mapsto ((\phi', \iota'), t),$$
where $(\phi',\iota')$ is the $(\phi_{n-1}, \iota_{n-1})$ in \ref{iotai} and $t$ is the unique element of $m_{\cV}\setminus \{0\}$ such that $-v_K(t)=v_K(\iota(e_{d-1}))$ and  
$e_{n,n-1}(t^{-1})= \iota(e_{d-1})$, which we have by \ref{econg}.  
\end{proof}

\subsection{Iterated Tate uniformizations 2}\label{itTa}

In \ref{4.6.1}--\ref{generalpf}, 
we prove Theorem \ref{fthm} for $\hat \cC_{\nl}$ by induction on $n$ (fixing $r$) using the method of iterated Tate uniformizations. 
In \ref{hatlogpf}, we prove Theorem \ref{fthm} for $\hat \cC_{\log}$.  Recall that the former (resp., latter) theorem concerns the injectivity and image of the morphism
$\theta \colon \overline{\fM}^{r,n}_{N, +, \sig} \to (\overline{\cM}^{r,n}_{N,(+,\sig)},\cM^{r,n}_{N,(+,\sig)})$ (resp.,  $\theta \colon \overline{\fM}^{r,n}_{N, +, \sig} \to \overline{\cM}^{r,n}_{N, (+, \sig)}$) of \ref{theta} given by
$$
	\theta(\phi,\iota) = ((\psi,\iota'),(\iota(e_{i+r-1})^{-1})_{1 \le i \le n}),
$$
where $(\psi,\iota') \in \cM^r_N$ is as in \ref{toMrn}. (Here, and in this subsection, we often omit $S \in \hat{\cC}_{\log}$ and $(S,U) \in \hat{\cC}_{\nl}$ in the notation.)

\begin{sbpara}\label{4.6.1}

For the proof of \ref{fthm} for $\hat \cC_{\nl}$, we may assume $\sig\in \Sig_k$. In fact, by assumption in \ref{sig'n}, there exists $\tau \in \Sig_k$ such that $\sig\subset \tau$. If an open set $\overline{\cM}^{r,n}_{N, +, \tau}$ of $\overline{\cM}^{r,n}_{N, (+, \tau)}$ satisfies 
$\theta \colon \overline{\fM}^{r,n}_{N,+,\tau} \xrightarrow{\sim} (\overline{\cM}^{r,n}_{N, +, \tau},\cM^{r,n}_{N,+,\tau})$,
then the inverse image $\overline{\cM}^{r,n}_{N, +, \sig} $ of $\overline{\cM}^{r,n}_{N, +, \tau}$ under  $\overline{\cM}^{r,n}_{N, (+, \sig)} \to \overline{\cM}^{r,n}_{N, (+, \tau)} $ satisfies   $\theta \colon \overline{\fM}^{r,n}_{N, +, \sig} \xrightarrow{\sim} (\overline{\cM}^{r,n}_{N, +, \sig},\cM^{r,n}_{N,+,\sig})$.

\end{sbpara}

\begin{sblem}\label{'sig}

Let  $\sig\in \Sig_k={}_d\Sig_k$. Then the image ${}'\sig$ of $\sig$ under the projection $C_d\to C_{d-1}$ lies in ${}_{d-1}\Sig_k$.
 
\end{sblem}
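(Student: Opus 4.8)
The statement to prove is that if $\sig \in {}_d\Sig_k$ and ${}'\sig$ denotes the image of $\sig$ under the coordinate projection $p\colon C_d \to C_{d-1}$ (forgetting the last coordinate $s_{d-1}$), then ${}'\sig \in {}_{d-1}\Sig_k$. The plan is to work through the explicit description of the cone decompositions $\Sig_k$ provided by Theorem \ref{Sigk3} together with its construction via $\Sig^{(k)}$ and the maps $\xi^d_k$. Recall from \ref{Sigk1} that $\Sig^{(k)} = {}_d\Sig^{(k)}$ is cut out by the conditions $q^h s_j - s_i \in \alpha(h,i,j)$ for $(h,i,j)\in I$ with $1\leq j < i \leq d-1$ and $0\leq h\leq k-1$, and that $\Sig_k$ is the image $\{\xi^d_k(\sig)\mid \sig \in \Sig^{(k)}\}$ (see the paragraph after \ref{sandV}). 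So first I would record that the projection behaves well on the "combinatorial" side: the forgetful map sending a cone $\sigma(\alpha)$ in ${}_d\Sig^{(k)}$ to the cone cut out by the restriction of $\alpha$ to the index set $I' = \{(h,i,j)\in I \mid i \leq d-2\}$ takes ${}_d\Sig^{(k)}$ into ${}_{d-1}\Sig^{(k)}$, and this is exactly the projection $C_d \to C_{d-1}$ restricted to each cone.

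Next I would transport this through the bijections $\xi^d_k$ and $\xi^{d-1}_k$. The key compatibility needed is that the diagram relating $\xi^d_k$, $\xi^{d-1}_k$, and the two projections $C_d \to C_{d-1}$ commutes on cones — more precisely, that for $s\in C_d$ the first $d-2$ coordinates of $\xi^d_k(s)$ depend only on $p(s)$ and agree with $\xi^{d-1}_k(p(s))$ (with the appropriate value of $r = r(s)$, which is unchanged by $p$ as long as $r \leq d-1$; the degenerate case $r = d$ forces all coordinates zero and is trivial). This follows directly from the formula in \ref{defcone2}: $\xi^d_k(s)_i = \epsilon^{r,\,i+1-r}_{s_r^r,\dots,s_i^r}(q^{-kr}s_i^r)$ for $r\leq i < d$, which for $i \leq d-2$ involves only $s_r,\dots,s_{d-2}$ and is literally the formula for $\xi^{d-1}_k(p(s))_i$. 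Hence $p \circ \xi^d_k = \xi^{d-1}_k \circ p$ as maps $C_d \to C_{d-1}$. Combining with the previous paragraph, $p(\xi^d_k(\sig)) = \xi^{d-1}_k(p(\sig)) = \xi^{d-1}_k({}'\tau)$ where ${}'\tau \in {}_{d-1}\Sig^{(k)}$ is the image of the cone $\tau \in {}_d\Sig^{(k)}$ with $\xi^d_k(\tau) = \sig$, and therefore $p(\sig) \in {}_{d-1}\Sig_k$ as desired.

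I would also double-check that ${}'\sig$ is genuinely equal to the image cone and not just contained in it, which is automatic since $\xi^d_k$ maps $\tau$ onto $\xi^d_k(\tau)$ and $p$ is surjective from $\tau$ onto ${}'\tau$; then apply $\xi^{d-1}_k$ which is a bijection $C_{d-1}\to C_{d-1}$ carrying ${}'\tau$ onto its image. Alternatively — and this may be cleaner to write — one can invoke the characterization (**) of $\Sig_k$ in Theorem \ref{Sigk3} directly: given a generalized Drinfeld module $\phi$ over a complete discrete valuation ring with $c(\phi,N) \in {}'\sig$, one lifts to a generalized Drinfeld module of generic rank $d$ (adjoining one more rank-one lattice vector, as in the proof of \ref{sandV}) whose projected invariant realizes $c(\phi,N)$ in the first $d-2$ coordinates, and reads off the condition $c(\phi) \in \sig$ restricted appropriately. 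The main obstacle I anticipate is purely bookkeeping: being careful about the role of $r = r(s)$ and the degenerate strata where some initial coordinates vanish, since the maps $\xi$ are defined piecewise according to $r$ and are not continuous (Example \ref{Ex8}); one must verify the projection respects this stratification, but since $r(p(s)) = r(s)$ whenever $r(s) \leq d-1$ this is straightforward. There is no deep content here beyond unwinding definitions, so the proof should be short.
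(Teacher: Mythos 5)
Your proposal is correct and follows essentially the same route as the paper: pass to the corresponding cone in ${}_d\Sig^{(k)}$, project there, and transport back via $\xi^{d-1}_k$, the key point being that the first $d-2$ coordinates of $\xi^d_k(s)$ depend only on $(s_1,\dots,s_{d-2})$ and equal $\xi^{d-1}_k(p(s))$ (the truncated formula you quote is the one established in the proof of \ref{lin3}, not the literal definition in \ref{defcone2}, but the identification is valid since $s\in C_d$). The paper's proof is in fact terser than yours, asserting the compatibility of the projection with the correspondence $\Sig^{(k)}\leftrightarrow\Sig_k$ without spelling out the commutation $p\circ\xi^d_k=\xi^{d-1}_k\circ p$ that you verify.
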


\begin{proof} Assume $n\geq 1$. Let
 $\tilde \sig\in {}_d\Sig^{(k)}$ be the cone corresponding to $\sig \in \Sig_k$ as in \ref{Sigk3}. By definition of $\Sig^{(k)}$, this $\tilde \sig$ is described as the set of $s\in C_d$ such that for some $\alpha \colon {}_d I^{(k)} \to \{ \R_{\le 0}, \{0\}, \R_{\ge 0}\}$, we have
$q^h s_j -s_i\in \alpha(h,i,j)$ for all $(h,i,j) \in {}_d I^{(k)}$ (see \ref{Sigk1}). 
Let $ {}' {\tilde\sig} \in {}_{d-1}\Sig^{(k)}$ be the cone of all $s\in C_{d-1}$ satisfying the following conditions for the same $\alpha$: $q^h s_j -s_i\in \alpha(h,i,j)$ for all $(h,i,j) \in {}_{d-1}I^{(k)}$ (i.e., for $1\leq j \leq i\leq d-2$ and $0\leq h\leq k-1$). 
 That is, ${}'\tilde \sig$ is the image of $\tilde \sig$ under the projection $C_d\to C_{d-1}$ given by $(s_i)_{1\leq i\leq d-1}\mapsto (s_i)_{1\leq i\leq d-2}$. 
 Define ${}'\sig\in {}_{d-1}\Sig_k$ to be the cone corresponding to ${}'\tilde \sig$. Then ${}'\sig$ is the image of $\sig$ under the projection $C_d\to C_{d-1}$ since, as in the proof \ref{Sigk3pf} of \ref{Sigk3}, we have that ${}'\sig = \xi_k^{d-1}({}'\tilde \sig)$, and the first $d-2$ coordinates of 
 $\xi_k^d((s_i)_{1 \le i \le d-1})$ coincide with $\xi_k^{d-1}((s_i)_{1 \le i \le d-2})$ (see \ref{defcone2}).
 \end{proof}

 \begin{sbpara}  \label{induct}

We have \ref{fthm} for $\hat \cC_{\nl}$ in the case $n=0$ by \ref{fthmn=0}.
By induction on $n$, we assume that for some $n\geq 1$ we have the open subset $\overline{\cM}^{r,n-1}_{N, +, {}'\sig}$ of $\overline{\cM}^{r,n-1}_{N, (+, {}'\sig)}$ 
and an isomorphism  $\theta \colon \overline{\fM}^{r,n-1}_{N,+,  {}'\sig} \xrightarrow{\sim} (\overline{\cM}^{r,n-1}_{N, +, {}'\sig}, \cM^{r,n-1}_{N,+,{}'\sig})$, where $\cM^{r,n-1}_{N,+,{}'\sig}$ denotes the inverse image of $\cM^{r,n-1}_{N,(+,{}'\sig)}$ in $(\overline{\cM}^{r,n-1}_{N, +, {}'\sig})_{\adic}$.

Let $V$ be the open subset of $\overline{\cM}^{r,n}_{N, (+, \sig)}$ consisting of those $((\psi,\iota'), t_1, \dots, t_n)$ such that $((\psi,\iota'), t_1, \dots, t_{n-1})$ belongs to $\overline{\cM}^{r,n-1}_{N, +, {}'\sig}$. 
In \ref{VtoM}, we will define an open subset $\overline{\cM}^{r,n}_{N,+, \sig}$ of $V$. In \ref{fTc5}, we will
define a map $\nu \colon (\overline{\cM}^{r,n}_{N, +, \sig}, \cM^{r,n}_{N, +, \sig})\to \overline{\fM}^{r,n}_{N, +, \sig}$ of functors on $\hat \cC_{\nl}$ by the method of iterated Tate uniformization, which is nearly the inverse of $\theta$.  
\end{sbpara}

We first give a preparatory lemma. Let $W$ be the inverse image of $\cM^{r,n}_{N, (+, \sig)}$ in $V_{\adic}$. 
Let $((\psi, \iota'), t_1, \dots, t_n)\in V$. Then we have $({}'\phi, {}'\iota)\in \overline{\fM}^{r,n-1}_{N, +, {}'\sig}$ corresponding to 
$$
	((\psi, \iota'), t_1, \dots, t_{n-1})\in \overline{\cM}^{r,n-1}_{N, +, {}'\sig}.
$$

\begin{sblem}\label{VtoM0} Locally on $V$, there exists $w\in \cO_V$ which is invertible on $W$ and a primitive polynomial $h \in \cO_V[z]$ such that ${}'\phi(N)(t_n^{-1}z)= w^{-1} h(z)$.
\end{sblem}

\begin{proof}

By \ref{thmcone}(3), there exists a linear map  $l \colon \Q^{d-1}\to \Q^{d-1}$ that coincides with $\xi^d_{k,0}$ on $\sig$. 
Let $l_{d-1} \colon \Q^{d-1}\to \Q$ be the projection of $l$ to the $(d-1)$th coordinate. Let $\text{Div}(V)$ be the group of Cartier divisors on $V$ (which we specify as sections of $\cO_V$), and let $l_{d-1} \colon \text{Div}(V)^{d-1}\to \text{Div}(V)\otimes \Q$ be the homomorphism induced by $l_{d-1}$. 

By \ref{pxixi}(1), upon applying the first part of \ref{todvr}, we have that locally on $V$, there exists $w\in \cO_V$ whose divisor coincides with the image of $(\pole(\iota(e_i))_{1\leq i\leq d-2},  (t_n))\in \text{Div}(V)^{d-1}$ under $l_{d-1}$. (In particular, this image lies in $\text{Div}(V) \subset \text{Div}(V) \otimes \Q$.) 

Similarly, by \ref{pxixi}(2) and \ref{todvr}, the polynomial $h(z) =w\cdot {}'\phi(N)(t_n^{-1}z)$ lies in $\cO_V[z]$. We prove that locally on $S$, some coefficient of $h(z)$ is invertible. Let $x$ be a point of $V$, and let $m$ be the maximal ideal of the local ring $\cO_{V,x}$. Take a discrete valuation ring $\cV$ in the field of fractions of $\cO_{V,x}$ such that $\cO_{V,x}\subset \cV$ and the maximal ideal of $\cO_{V,x}$ is $m_{\cV}\cap \cO_{V,x}$ (as in the proof of \ref{todvr}). By \ref{pxixi}, some coefficient of $h(z)$ is invertible in $\cV$ and therefore in $\cO_{V,x}$ as well.
\end{proof}

\begin{sbpara}\label{VtoM} 

Let $\overline{\cM}^{r,n}_{N, +, \sig}$ be the open set of $V$ consisting of all points at which $w\cdot  {}'\phi(N)(t_n^{-1}) \in \cO_V$, with $w$ as in \ref{VtoM0}, is invertible. 
\end{sbpara}

\begin{sbpara}\label{fTc2} 

Let $X=\overline{\cM}^{r,n}_{N, +, \sig}$.  Let $((\psi,\iota'),t_1,\ldots,t_n) \in X$, and set $\gamma = {}'\phi(N)(t_n^{-1})$. Choose an open cover of $X$ by formal spectra $\Spf(R)$, where the rings $R$ are excellent normal domains. By \ref{1step}, the $A$-module ${}'\phi(A)\gamma= {}'\phi(AN)(t_n^{-1})$ satisfies the conditions on $\La$ in \ref{Fuji2} (with $n = 1$ therein) over complete discrete valuation rings $\cV$ containing such an $R$. 
By \ref{todvr} and by the last inequality in \ref{1step} (which we use to treat the condition (iv) in \ref{Fuji2}), it then satisfies these conditions over $R$, taking $I$ to be the ideal of definition of $R$ such that $R/I$ is reduced. 

As in \ref{Fuji4}, we obtain a generalized Drinfeld module $\phi$ on $X$ by dividing ${}'\phi$ by this lattice ${}'\phi(A)\gamma$
via its exponential map, which we denote $e_{n,n-1}$. 

\end{sbpara}

\begin{sblem}\label{nn-1} Let $I$ be the ideal of definition of $\cO_X$ such that $\cO_X/I$ is reduced. 
Then for $1\leq i\leq n$, we have $t_ie_{n, n-1}(t_i^{-1}z)\in \cO_X\ps{z}$ and $t_i e_{n,n-1}(t_i^{-1}z) \equiv z \bmod I\cO_X\ps{z}$.

\end{sblem}

\begin{proof} By \ref{todvr} and \ref{4.5prop}, this is reduced to \ref{econg}(2) (with $b = t_i^{-1}$). \end{proof}

\begin{sbpara}\label{fTc4}  We define the level $N$ structure  $\iota$ on $\phi$ using the level $N$ structure ${}'\iota$ on ${}'\phi$ as 
$\iota(e_i) = e_{n,n-1}({}'\iota(e_i))$ for $0\leq i\leq d-2$ and $\iota(e_{d-1}) = e_{n,n-1}(t_n^{-1})$.  
By  \ref{nn-1}, we then have 
\begin{eqnarray*}
	\iota(e_{i+r-1})t_i \in \cO_X^\times &\mr{and}& \iota(e_{i+r-1})t_i \equiv 1 \bmod I
\end{eqnarray*}   
for $1\leq i\leq n$.
\end{sbpara}

\begin{sbpara}\label{fTc5} By \ref{todvr} and \ref{4.5lem}, we have $(\phi, \iota)\in \overline{\fM}^{r,n}_{N, +, C_d}$.
Since $((\psi,\iota'),t_1, \ldots, t_n) \in \overline{\cM}^{r,n}_{N,+,\sig}$, by \ref{cM(sig)} and \ref{fTc4}, we have that
$(0^{r-1},\pole(\iota(e_{i+r-1}))_{1 \le i \le n})$ satisfies the condition (ii) of \ref{moduli}, and thus $(\phi,\iota) \in 
\overline{\fM}^{r,n}_{N, +, \sig}$ (see \ref{sigrnd}).
We therefore obtain a map $$\nu \colon (\overline{\cM}^{r,n}_{N,+,\sig}, \cM^{r,n}_{N,+,\sig})\to \overline{\fM}^{r,n}_{N, +, \sig}$$
which  sends $((\psi,\iota'), t_1, \dots, t_n)$ to $(\phi, \iota)$. (Recall that we also have the map $\theta \colon \overline{\fM}^{r,n}_{N, +, \sig}
\to (\overline{\cM}^{r,n}_{N, (+, \sig)}, \cM^{r,n}_{N, (+, \sig)})$
of $\ref{theta}$.)

\end{sbpara}

  \begin{sblem}\label{MMM} Let $X =\overline{\cM}^{r,n}_{N,+,\sig}$ and $U=\cM^{r,n}_{N,+,\sig}$. Then 
  the image of the composition 
  $$
  	\theta\circ \nu \colon (X,U) \to (\overline{\cM}^{r,n}_{N, (+, \sig)}, \cM^{r,n}_{N, (+, \sig)})
$$ 
is $(X,U)$, and $\theta\circ \nu$ induces an 
  automorphism  of $(X, U)$. 
   It  induces the identity morphism of  the log formal scheme $(X,\cO_X/I)$, where $I$ is as in \ref{nn-1}.

\end{sblem} 

\begin{proof} It is sufficient to prove that for $t\in \prod_{i=1}^n t_i^{\Z}$, the pullback $(\theta \circ \nu)^*(t)$ under $\theta\circ \nu \colon X \to \overline{\cM}^{r,n}_{N, (+, \sig)}$ satisfies $(\theta\circ \nu)^*(t)t^{-1}\in \cO_X^\times$ and $(\theta\circ \nu)^*(t)t^{-1}\equiv 1 \bmod I$. It is sufficient to prove this in the case $t=t_i$. We have $\theta^*(t_i)= \iota(e_{i+r-1})^{-1}$, and hence $(\theta \circ \nu)^*(t_i)= (\nu^*\iota(e_{i+r-1}))^{-1}= e_{n,n-1}(t_i^{-1})^{-1}$. Hence we are reduced to \ref{fTc4}. 
\end{proof}

\begin{sbpara}\label{dvrpf}  We prove \ref{fthm} for complete discrete valuation rings by induction on $n$.
 Let $\cV$ be a complete discrete valuation ring over $A$ with field of fractions $K$. In the case $N$ has only one prime divisor, we assume $N$ is invertible in $\cV$. Let $S=\Spf(\cV)$ and $U=\Spa(K, \cV)\subset S_{\adic}$.
 
Let 
$$
 	P_n = \overline{\fM}^{r,n}_{N,+, \sig}(\cV)= \overline{\fM}^{r,n}_{N, +, \sig}(S,U),
$$ 
as in \ref{iTatedvr}.
 Let $R_n$ (resp., $R'_n$) be the set of all morphisms in $\hat \cC_{\nl}$ from $(S,U)$ to $(\overline{\cM}^{r,n}_{N,+,\sig}, \cM^{r,n}_{N, +, \sig})$ (resp., to 
$(\overline{\cM}^{r,n}_{N,(+,\sig)}, \cM^{r,n}_{N, (+, \sig)})$).  

Let $'\sigma \in C_{d-1}$ be as in \ref{'sig}, and define $P_{n-1}$, $R_{n-1}$, and $R'_{n-1}$ analogously to the above, with $'\sig$ in place
of $\sig$. We suppose by induction that the map $\theta \colon P_{n-1} \to R'_{n-1}$ of \ref{theta} is injective with image $R_{n-1}$, and 
we show that the map $\theta \colon P_n \to R'_n$ of \ref{theta} is
injective with image $R_n$.
 
Let $Q_n$ be the subset of $\overline{\fM}^{r,n-1}_{N, +, C_{d-1}}(\cV)^{\sharp}$ (as in \ref{sharp}) consisting of those elements of
the product $\overline{\fM}^{r,n-1}_{N,+,'\sig}(S,U) \times (\cV \setminus \{0\})$ which are carried under
the bijection 
$$\rho \colon \overline{\fM}^{r,n-1}_{N,+,C_{d-1}}(\cV)^{\sharp}\xrightarrow{\sim} \overline{\fM}^{r,n}_{N, +, C_d}(\cV)$$
of \ref{4.5prop} to an element of $P_n$.
By definition, $\rho$ then induces a bijection $\rho_{\sig} \colon Q_n\xrightarrow{\sim} P_n$. 
This sends $((\phi_{n-1},\iota_{n-1}),t)$ to $(\phi,\iota)$, where $\phi$ is the quotient of $\phi_{n-1}$ by the lattice
$\phi_{n-1}(AN)(t^{-1})$ with exponential $e_{n,n-1}$ such that $\iota(e_i) = e_{n,n-1}(\iota_{n-1}(e_i))$ for $0 \le i \le d-2$ and 
$\iota(e_{d-1}) = e_{n,n-1}(t^{-1})$. Set  $t_i = \iota_{n-1}(e_{i+r-1})^{-1}$ for $1 \le i \le n-1$ and $t_n = t$.

The bijection $\theta \colon P_{n-1}\xrightarrow{\sim} R_{n-1}$ also induces a bijection $\theta_{\sig}
\colon Q_n \xrightarrow{\sim} R_n$,
which sends $((\phi_{n-1},\iota_{n-1}),t)$ to $((\psi,\iota'),t_1,\ldots,t_n)$, noting that
$((\psi,\iota'),t_1,\ldots,t_{n-1}) = \theta(\phi_{n-1},\iota_{n-1})$. 
(That $\theta_{\sig}$ takes image in $R_n$ inside $R_n'$ follows from the condition defining $\overline{\fM}^{r,n-1}_{N,+,C_{d-1}}(\cV)^{\sharp}$ inside $\overline{\fM}^{r,n-1}_{N,+,C_{d-1}}(\cV)^{(\sharp)}$.)
Tracing through the definition of the map $\nu \colon R_n \to P_n$ of \ref{fTc5}, one sees that it takes $((\psi,\iota'),t_1,\ldots,t_n)$ to
the same $(\phi,\iota)$ as above: that is, the level structure on the image $\phi$ is connected to $\iota_{n-1}(e_i)$ for $0 \le i \le d-2$
and $t^{-1}$ via  $e_{n,n-1}$, just as before. We conclude that $\nu$ is the composition $\rho_{\sig} \circ \theta_{\sig}^{-1}$, 
and therefore a bijection.

By \ref{MMM}, the image of the composition $R_n \overset{\nu}\to P_n \overset{\theta}\to R'_n$ is contained in $R_n\subset R'_n$, and the induced map $R_n \to R_n$ is a bijection. Since $\nu$ is surjective, the image of $\theta \colon P_n \to R'_n$ is contained in $R_n$. Since the composition $R_n \overset{\nu}\to P_n \overset{\theta}\to R_n$ is bijective, the map $\theta \colon P_n \to R_n'$  is injective
with image $R_n$.
   
   \end{sbpara}
   
   \begin{sbpara}\label{generalpf}
   
   We prove Theorem \ref{fthm} for $\hat \cC_{\nl}$ in general. Let $P= \overline{\fM}^{r,n}_{N, +, \sig}$, let $R=(\overline{\cM}^{r,n}_{N, +, \sig}, \cM^{r,n}_{N, +, \sig})$, and let $R'=(\overline{\cM}^{r,n}_{N,(+, \sig)}, \cM^{r,n}_{N, (+, \sig)})$. The argument amounts to the following three statements.
      
 \begin{enumerate} 
   \item[(1)] The image of $\theta \colon P\to R'$ is contained in $R$.
   \item[(2)] The map $\theta \colon P \to R'$ is injective. 
   \item[(3)] The composition $R \overset{\nu}\to P \overset{\theta}\to R'$ is an isomorphism onto $R$.  
\end{enumerate}

   Statements (1) and (2) are reduced to the case of a complete discrete valuation ring treated in \ref{dvrpf}, while (3) is simply Lemma \ref{MMM}. By (2) and (3), the map $\theta \colon P\to R$ that we have by (1) is an isomorphism.

      \end{sbpara}

\begin{sbpara}\label{itTate} The isomorphism $\nu \colon \overline{\cM}^{r,n}_{N,+,\sig}\xrightarrow{\sim} \overline{\fM}^{r,n}_{N, +, \sig}$ in the above proof of \ref{fthm} for $\hat \cC_{\nl}$ is regarded as one step in the following iterated Tate uniformization.

Let $(S,U)$ be an object of $\hat \cC_{\nl}$, and let $(\phi, \iota)$ be an element of $\overline{\fM}^{r,n}_{N, +,\sig}(S, U)$. Let 
$$((\psi, \iota'), t_1, \dots, t_n)=\nu^{-1}(\phi, \iota), \quad (\phi_{n-1}, \iota_{n-1}) = \theta^{-1}((\psi, \iota'), t_1, \dots, t_{n-1}).$$
Then $\phi$ is obtained from the pair $(\phi_{n-1}, \phi_{n-1}(N)(t_n^{-1}))$ by the method of \ref{1step}, that is, $\phi$ is the quotient of $\phi_{n-1}$ by the lattice $\phi_{n-1}(AN)(t_n^{-1})$.

Define $(\phi_i, \iota_i)$ ($0\leq i\leq n$) in this way starting from $(\phi_n,\iota_n) :=(\phi, \iota)$ by downward induction on $n$. The recursion ends with $(\phi_0, \iota_0)= (\psi, \iota')$. Thus, $\phi=\phi_n$ is obtained from $\psi$ as an ``iterated quotient'' by $A$-lattices of rank $1$. 

For $1\leq i\leq n$, let $e_{i,i-1}$ be the exponential map which connects $\phi_{i-1}$ and $\phi_i$ (i.e., we have $\phi_i(a)\circ e_{i,i-1}=e_{i,i-1}\circ \phi_{i-1}(a)$ for all $a\in A$). 
Then 
$$
	e_{i,0} :=e_{i,i-1}\circ e_{i-1, i-2}\circ \dots \circ e_{1,0}
$$ 
is the exponential map which connects $\psi$ and $\phi_i$.

We have local systems of $A$-modules 
$$0=\La(\phi_0)\subset \La(\phi_1)\subset \dots \subset \La(\phi_{n-1})\subset \La(\phi_n)=\La$$
on the \'etale site of $U$, where $\La$ is associated to $\phi$ by Theorem \ref{thmTate} and $\La(\phi_i)=\ker(e_{i,0})$ is associated to $\phi_i$ by \ref{thmTate}. The quotients $\La(\phi_i)/\La(\phi_{i-1})$ ($1\leq i\leq n$) are constant sheaves and are isomorphic to $A$. 

\end{sbpara}

\begin{sbpara}\label{hatlogpf}  We prove Theorem \ref{fthm} for $\hat \cC_{\log}$.

By construction as an open set in the log regular scheme $\overline{\cM}^{r,n}_{N,(+,\sig)}$, the log scheme 
$\overline{\cM}^{r,n}_{N, +,\sig}$ of \ref{cM(sig)} is log regular. The identity morphism on the object $(\overline{\cM}^{r,n}_{N,+,\sig},\cM^{r,n}_{N,+,\sig})$ of $\hat{\cC}_{\nl}$ maps under $\theta^{-1}$ (which exists by \ref{fthm} for $\hat \cC_{\nl}$) to its universal generalized Drinfeld module with level $N$ structure. This in turn provides a universal log Drinfeld module $((\cL,\phi),\iota)$ with level $N$ structure on $\overline{\cM}^{r,n}_{N, +,\sig}$
in $\hat \cC_{\log}$ by \ref{fextend}.
This universal object determines a morphism $\overline{\cM}^{r,n}_{N,+,\sig}\to \overline{\fM}^{r,n}_{N,+,\sig}$ of functors on $\hat \cC_{\log}$, taking a morphism $S \to \overline{\cM}^{r,n}_{N,+,\sig}$ to the pullback of $((\cL,\phi),\iota)$. We prove that this is an isomorphism. 

The surjectivity of this morphism as a morphism of sheaves for the \'etale topology  is straightforward. In fact, let $S$ be an object of $\hat \cC_{\log}$, and let $((\cL, \phi), \iota)\in \overline{\fM}^{r,n}_{N,+, \sig}(S)$. By the definition \ref{logD6} of a log Drinfeld module, \'etale locally on $S$, we have a morphism $S\to S'$ with $S'$ log regular such that $((\cL, \phi), \iota)$ is the pullback of some
 $((\cL', \phi'), \iota')\in \overline{\fM}^{r,n}_{N,+, \sig}(S')$. Let $U'$ be the open set of $S'_{\adic}$  consisting of all points at which the log structure is trivial. Since $S'$ is normal, $(S', U')\in \hat \cC_{\nl}$ and hence by the part of Theorem \ref{fthm} concerning the category $\hat \cC_{\nl}$ proven in \ref{generalpf}, there is a morphism $(S', U') \to (\overline{\cM}^{r,n}_{N, +, \sig}, \cM^{r,n}_{N,+,\sig})$ such that $((\cL', \phi'), \iota')$ comes from the universal object on $(\overline{\cM}^{r,n}_{N, +, \sig}, \cM^{r,n}_{N,+,\sig})$ by pullback. Hence $((\cL,\phi), \iota)$ comes from a morphism $S\to \overline{\cM}^{r,n}_{N, +,\sig}$. 

We prove the injectivity. 
We have the morphism of functors $\theta \colon \overline{\fM}^{r,n}_{N,+,\sig}\to \overline{\cM}^{r,n}_{N,(+,\sig)}$ on $\hat \cC_{\log}$ in the reverse direction from \ref{theta}. In fact, we have the following.

\medskip

{\bf Claim 1}. The image of the morphism $\theta$ on $\hat \cC_{\log}$ is contained in $\overline{\cM}^{r,n}_{N,+,\sig}$. 

\begin{proof}[Proof of Claim 1.]  For an object $S$ of $\hat \cC_{\log}$, an element of $\overline{\fM}^{r,n}_{N,+,\sig}(S)$ comes \'etale locally on $S$ from a log regular scheme $S'$ by a morphism $S\to S'$. Since $S'$ is normal, it comes from an element of $\overline{\fM}^{r,n}_{N,+,\sig}(S', U')$ for $U'$ as above, and the image of this element in $ \overline{\cM}^{r,n}_{N,(+,\sig)}(S',U')$ belongs to   $\overline{\cM}^{r,n}_{N,+,\sig}(S',U')$ by \ref{generalpf}.
\end{proof}

Now the injectivity in question is a consequence of the following claim.

\medskip

{\bf Claim 2}.  The composition $\overline{\cM}^{r,n}_{N,+,\sig}\to \overline{\fM}^{r,n}_{N,+,\sig}\overset{\theta}\to  \overline{\cM}^{r,n}_{N,+,\sig}$ for $\hat \cC_{\log}$ is the identity map. 

\begin{proof}[Proof of Claim 2.] For an object $S$ of $\hat \cC_{\log}$, an element $\overline{\cM}^{r,n}_{N,+,\sig}(S)$ comes \'etale locally  from a log regular $S'$. By the reduction to the category $\hat \cC_{\nl}$ as in the above proof of Claim 1, we are reduced to the same statement as Claim 2 for $\hat \cC_{\nl}$ which follows from Theorem \ref{fthm} for the category $\hat \cC_{\nl}$. 
\end{proof}
\end{sbpara}

\section{Compactifications} \label{toroidal}

In Section \ref{Satake}, we construct the Satake compactification of $\cM_N^d$ using the method of Pink \cite{P2}, slightly changing the formulation so that it is defined integrally. In Section \ref{ss:tornl}, we construct the toroidal compactifications and prove that they have properties stated in the main theorems in Section \ref{main_result}. In preparation for this, we assume the existence of these toroidal compactifications in Section \ref{ss:AMFM} and consider their relation to the formal moduli spaces of Section \ref{iterated}. The properties of the toroidal compactifications are then proved using the properties of the formal moduli spaces so derived.
In Section \ref{ss:example}, we describe the toroidal compactifications explicitly in a special case, and we describe the local monodromy of the related generalized Drinfeld modules. 

In this section, we again suppose that $A=\F_q[T]$. Let $d\geq 1$, let $N$ be a nonconstant element of $A$, and let $k$ be the degree of the polynomial $N$.

\subsection{Satake compactifications}  \label{Satake}

We construct the Satake compactification of $\cM^d_N$ over $A$ if $N$ has at least two prime divisors and over $A[\tfrac{1}{N}]$ otherwise.
The Satake compactification of $\cM^d_N$ over $F$ was constructed by Kapranov in \cite{K} by a rigid analytic method and by Pink in \cite{P2} by  a purely algebraic method. We follow the construction in \cite{P2}.

There are small differences between \cite{P2} and the construction in this subsection.
As mentioned, in both \cite{P2} and \cite{K}, the Satake compactification is constructed over $F$, not $A$ or $A[\frac{1}{N}]$.
Another difference is that 
we characterize the Satake compactification as an object which represents the moduli functor  $\overline{\fM}^d_{N, \Sa}$ as in  Theorem \ref{Sathm} below, whereas the Satake compactification of \cite{P2} was characterized by a certain delicate condition. Because these differences exist and because Theorem \ref{Sathm} is used for our construction of toroidal compactifications in Section \ref{ss:tornl},
we give a proof of this theorem following the arguments in \cite{P2} but slightly modifying them to provide the stated result.

 \begin{sbpara}

 Recall that in the case $N$  has at least two prime divisors (resp., only one prime divisor), 
 we denote by $\cC_{\nl}$  the category of pairs $(S, U)$ where $S$ is a normal scheme over $A$ (resp., $A[\frac{1}{N}]$) and $U$ is a dense open subset of $S$. Recall also that $$\overline{\fM}^d_{N, \Sa} \colon \cC_{\nl}\to (\text{Sets})$$
 is the functor which sends $(S, U)$ to the set of all isomorphism classes of $((\cL, \phi), \iota)$, where $(\cL, \phi)$ is a generalized Drinfeld module over $S$ with restriction $(\cL,\phi)|_U$ to $U$ a Drinfeld module of rank $d$, and where $\iota$ is a Drinfeld level $N$ structure on $(\cL, \phi)|_U$.
 
 \end{sbpara}

   \begin{sbthm}\label{Sathm} The functor $\overline{\fM}^d_{N, \Sa}$ is represented by a pair $(\overline{\cM}^d_{N, \Sa}, \cM^d_N)$ for some projective normal scheme $\overline{\cM}^d_{N, \Sa}$ over $A$ (resp. $A[\frac{1}{N}]$)  which contains $\cM^d_N$ as a dense open subscheme. 
  
 \end{sbthm}

 \begin{sbpara}\label{Sapf1a} The proof of the case $N=T$ of Theorem \ref{Sathm} is given  in \ref{Sapf1a}--\ref{Sapf1d}, and it is extended to a general $N$ in \ref{Sapf2}--\ref{Sapf4}. 
 
 First, we recall the known explicit construction of $\cM^d_T$. 
 Let $V$ be a $d$-dimensional vector space over $\F_q$ with base $(u_i)_{0\leq i\leq d-1}$.  Let 
 $$
 	\text{Sym}(V)=\F_q[u_0, \dots, u_{d-1}]
$$ 
be the symmetric algebra of $V$ over $\F_q$.
Let $\Omega$ be the open subscheme of $\mathbb{P}^{d-1}_{\F_q}= \text{Proj}(\text{Sym}(V))$ obtained by inverting all elements of $V\setminus \{0\}\subset \text{Sym}(V)$. In $\mathbb{P}^{d-1}_{\F_q}$, this $\Omega$ is the complement 
   of the union of all $\F_q$-rational hyperplanes.
   
   \end{sbpara}
   
 \begin{sblem}\label{Sapf1b} The scheme $A[\frac{1}{T}]\otimes_{\F_q} \Omega$ 
 represents $\fM_T^d$, or in other words, $\cM^d_T = A[\frac{1}{T}]\otimes_{\F_q} \Omega$. 
 \end{sblem}
   
 \begin{pf}  We have the universal Drinfeld module $(\cL, \phi)$ over $A[\frac{1}{T}]\otimes_{\F_q} \Omega$, where $\cL$ is the pullback of $\cO(1)$ of $\mathbb{P}^{d-1}_{\F_q}$ to $\Omega$ and $\phi$ is given by 
   $$
 	\phi(T)(z)= Tz \prod_{v\in V\setminus \{0\}} (1-v^{-1}z).
$$ 
It is endowed with the $T$-level structure 
$$
	(T^{-1}a_i\bmod A)_{0\leq i\leq d-1}\mapsto \sum_{i=0}^{d-1} a_i u_i\in \cL
$$ 
for $(a_i)_{0 \le i \le d-1} \in \F_q^d$.
A Drinfeld module of rank $d$ with $T$-level structure over an $A[\frac{1}{T}]$-scheme $S$  comes uniquely from this universal one by a morphism $S\to A[\frac{1}{T}]\otimes_{\F_q} \Omega$ over $A[\frac{1}{T}]$. \end{pf}

  \begin{sbpara}\label{Sapf1c}
We define a projective variety $Q$ over $\F_q$ which contains $\Omega$ as a dense open subscheme.

 Let $R$ be the $\F_q$-subalgebra of the rational function field $\F_q(u_0, \dots, u_{d-1})$
 generated by $v^{-1}$ for $v\in V\setminus \{0\}$. 
 Regarding $R$ as a graded ring in which each such $v^{-1}$ has degree $1$, let $Q=\text{Proj}(R)$. Then $\Omega$ is identified with the open set of $Q$ obtained by inverting all $v^{-1} \in R$ with $v\in V\setminus \{0\}$.

On $\Omega$, the pullback of $\cO(1)$ from $\mathbb{P}^{d-1}_{\F_q}$ and the pullback of $\cO(-1)$ from $Q$ coincide. The universal Drinfeld module over $A[\frac{1}{T}] \otimes_{\F_q} \Omega$ extends uniquely to a generalized Drinfeld module over $A[\frac{1}{T}]\otimes_{\F_q} Q$ with the line bundle $\cO(-1)$.

By \cite{PS}, the variety $Q$ is normal, and hence  $A[\frac{1}{T}]\otimes_{\F_q} Q$ is normal. As a consequence of the following lemma, the functor $\overline{\fM}^d_{T, \Sa}$ is then represented by $(A[\frac{1}{T}]\otimes_{\F_q} Q, \cM_T^d)$.
\end{sbpara}

  \begin{sblem}\label{Sapf1d} Let $(S, U)$ be an object of $ \cC_{\nl}$. A morphism 
  $U \to  A[\frac{1}{T}]\otimes_{\F_q} \Omega$ of $A[\frac{1}{T}]$-schemes extends to a morphism 
  $S \to  A[\frac{1}{T}]\otimes_{\F_q} Q$ of $A[\frac{1}{T}]$-schemes if and only if the induced Drinfeld module over $U$ extends to a generalized Drinfeld module over $S$.  Moreover, any such extensions of morphisms and Drinfeld modules are unique. 
  \end{sblem}

\begin{pf} Note that any extension of morphisms from $U$ to $S$ as in the statement is 
unique as $Q$ is separated, whereas any extension of Drinfeld modules from $U$ to $S$ is unique by \ref{stlem}.

If we have an extended morphism $S\to  A[\frac{1}{T}]\otimes_{\F_q} Q$, we have the pullback of the generalized Drinfeld module to $S$. Conversely, assume that the Drinfeld module over $U$ extends to $S$ as a generalized Drinfeld module with line bundle  $\cL$. Locally on $S$,  some  $v_0\in V$ is a base of $\cL$ by \ref{unit}(2), and $v_0v^{-1}\in \cO_S$ for all $v\in V\setminus \{0\}$ by \ref{unit}(1). This proves that the morphism on $U$ extends to a morphism $S\to  A[\frac{1}{T}]\otimes_{\F_q} Q$. \end{pf}

To proceed from level $T$ to general level, we use part (1) of the following simple lemma on the category of pairs $(S,U)$, where $S$ is a normal scheme and $U$ is a dense open subset of $S$ (with the evident definition of morphisms). Its part (2) will be used in Section \ref{ss:tornl}.

\begin{sblem}\label{nllem} 
 Let $f \colon (S, U)\to (P, W)$ be a morphism in the category of pairs of a normal scheme and a dense open subset.
\begin{enumerate}
\item[(1)] Let $W'\to W$ be a finite morphism of schemes with $W'$ normal, and let $P'$ be the integral closure of $P$ in $W'$. Then, every morphism $U\to W'$ of schemes over $W$ extends uniquely to a morphism $(S,U) \to (P', W')$ over $(P, W)$.
\item[(2)] Let $I_i$ for $1 \le i \le n$ be invertible ideals of $\cO_P$ such that, for each $i$, the pullback of $I_i$ to $S$ is an invertible ideal $I_{i,S}$, and the restriction of $I_i$ to $W$ coincides with $\cO_W$. Let $P'$ be the normalization of the blow-up of $P$ along the product of the ideals $I_i+I_j$ for all pairs $(i,j)$. Then we have a morphism $f' \colon (S,U) \to (P', W)$ over $(P, W)$ if and only if, for each pair $(i,j)$, either $I_{i,S}\subset I_{j,S}$ or $I_{j,S}\subset I_{i,S}$. Furthermore, $f'$ is unique if it exists. 
\end{enumerate}
\end{sblem}

\begin{pf} Part (1) is clear. We prove part (2). 
 Let $\tilde{P}$ be the blow-up of $P$ along the product of the ideals $I_i+I_j$. (Roughly speaking, $\tilde{P}$ is the minimal space over $P$ on which each ideal  $I_i+I_j$ generates an invertible ideal.) By the definition of the blow-up, for each point $x$ of $\tilde{P}$ with image $y$ in $P$, the ring $\cO_{\tilde{P},x}$ is generated  over the ring $\cO_{P,y}$ by elements $t_{i,j}$ for the pairs $(i,j)$ such that for each $(i,j)$, we have either $I_{j,\tilde{P}}= t_{i,j}I_{i,\tilde{P}}$ or $I_{i,\tilde{P}}=t_{i,j}I_{j,\tilde{P}}$. 
Part (2) follows from this. 
\end{pf}

\begin{sbpara}\label{Sapf2} Assume that $N$ is divisible by $T$. The morphism $A[\frac{1}{T}]\otimes_A \cM_N^d \to \cM_T^d$ is finite. Let $A[\frac{1}{T}]\otimes_A\overline{\cM}^d_{N, \Sa}$ be the integral closure of $\overline{\cM}^d_{T, \Sa}$ in $A[\frac{1}{T}]\otimes_A \cM_N^d$. 

We prove that 
$(A[\frac{1}{T}]\otimes_A \overline{\cM}^d_{N, \Sa}, A[\frac{1}{T}]\otimes_A \cM_N^d)$ represents the restriction of the functor $\overline{\fM}^d_{N,\Sa}$ to the full subcategory of $\cC_{\nl}$ consisting of the objects on which $T$ is invertible.

Let $(S, U)$ be an object of the latter category. 
The set $\overline{\fM}^d_{N,\Sa}(S,U)$ is identified with the set of pairs of an element $f$ of $\overline{\fM}^d_{T,\Sa}(S,U)$ and a morphism $U \to A[\frac{1}{T}]\otimes_A \cM_N^d$ over $\cM^d_T$. We identify $f$ with a morphism $(S,U)\to (\overline{\cM}^d_{T, \Sa},\cM^d_T)$ of $\cC_{\nl}$. By Lemma \ref{nllem}(1) applied to this $f$ and $W' = A[\frac{1}{T}]\otimes_A \cM_N^d$, such a pair corresponds bijectively to a morphism $(S,U) \to (A[\frac{1}{T}]\otimes_A\overline{\cM}^d_{N, \Sa}, A[\frac{1}{T}]\otimes_A \cM_N^d)$ in $\cC_{\nl}$.

\end{sbpara}

\begin{sbpara}\label{Sapf3} Assume that $N$ is not divisible by $T$.  Let $T'=TN$ if $N$ has only one prime divisor, and let $T'=T$ otherwise. Let $\tilde S_0=A[\frac{1}{T'}]\otimes_A \overline{\cM}^d_{TN, \Sa} =A[\frac{1}{T'}]\otimes_{A[\frac{1}{T}]} A[\frac{1}{T}]\otimes_A \overline{\cM}^d_{TN, \Sa}$, let 
$$
	G=\ker(\GL_d(A/TNA) \to \GL_d(A/NA))\cong \GL_d(A/TA),
$$ 
and let $S_0=A[\frac{1}{T'}]\otimes_A \overline{\cM}^d_{N, \Sa}$ be the quotient of 
$\tilde S_0$ by the action of $G$, which we can take because $\tilde S_0$ is projective over $A[\frac{1}{T'}]$. Then the scheme $U_0=A[\frac{1}{T'}]\otimes_A \cM_N^d$, which is the quotient of $\tilde U_0=A[\frac{1}{T'}]\otimes_A \cM^d_{TN}$ by the action of $G$, is regarded as a dense open subscheme of $S_0$.

In the rest of this \ref{Sapf3}, we prove that $(S_0, U_0)$ represents the restriction of the functor $\overline{\fM}^d_{N,\Sa}$ to the full subcategory of $\cC_{\nl}$ consisting of all objects on which $T'$ is invertible. 

We show that the universal Drinfeld module over $U_0$ extends to a generalized Drinfeld module over $S_0$. Let $(\cL,\phi)$ be the 
 universal generalized Drinfeld module over $\tilde S_0$, and let $(\cL', \phi')$ be the universal Drinfeld module over $U_0$. Then $(\cL, \phi)$ has a $TN$-level structure and hence has an $N$-level structure. By \ref{unit}(2), for some $e\in (\frac{1}{N}A/A)^d$, the image of $e$ by this level structure is a base of $\cL$. Let $\pi \colon \tilde S_0\to S_0$ be the canonical projection. Since the pullback of this $N$-level structure of $(\cL, \phi)$ to $\tilde U_0$ comes from the $N$-level structure of $(\cL', \phi')$, this $e$ gives a section of $\pi_*\cL$, which we also denote by $e$. Consider the line bundle $\cO_{S_0}e$ on $S_0$. The pullback of $\cO_{S_0}e$ to $\tilde S_0$ is $\cL$ and that to $U_0$ is $\cL'$. The action of $A$ on $\cL$ by $\phi$ and the action of $A$ on $\cL'$ by $\phi'$ induce the same action of $A$ on $\cO_{S_0}e$, and by this, the line bundle $\cO_{S_0}e$ becomes a generalized Drinfeld module over $S_0$  which induces $(\cL, \phi)$ on $\tilde S_0$ and $(\cL', \phi')$ on $U_0$. 
 
 Hence for an object $(S,U)$ of $\cC_{\nl}$ on which $T'$ is invertible, a morphism 
 $(S, U) \to (S_0, U_0)$ induces an object of $\overline{\fM}^d_{N,\Sa}(S, U)$. 
 Conversely, assume we are given an object of $\overline{\fM}^d_{N,\Sa}(S, U)$. It induces a morphism $U\to U_0$. Let $\tilde U$ be the fiber 
 product of $U\to U_0 \leftarrow \tilde U_0$, and let $\tilde S$ be  the integral closure of $S$ in $\tilde U$. Since $T$ is invertible on $U_0$, the morphism  $\tilde U_0\to U_0$ is a finite \'etale Galois covering with Galois group $G$, and hence $\tilde U$ is normal and hence $\tilde S$ is normal. Thus, we have a morphism $(\tilde S,\tilde U)\to (\tilde S_0,\tilde U_0)$ in $\cC_{\nl}$, which in turn gives a morphism $(\tilde S,\tilde U) \to (S_0,U_0)$. Since the last morphism  is $G$-invariant and $S$ is the quotient of $\tilde S$ by $G$, we have a morphism $(S,U) \to (S_0,U_0)$. 

\end{sbpara}

\begin{sbpara}\label{Sapf4} We complete the proof of \ref{Sathm}. 

First assume $N$ has at least two prime divisors. By \ref{Sapf2} and \ref{Sapf3}, we have $A[\frac{1}{T}]\otimes_A \overline{\cM}^d_{N, \Sa}$ over $A[\frac{1}{T}]$. 
Similarly, we have $A[\frac{1}{T-1}]\otimes_A \overline{\cM}^d_{N,\Sa}$ over $A[\frac{1}{T-1}]$. 
Their pullbacks over $A[\frac{1}{T(T-1)}]$ coincide.
We obtain $\overline{\cM}^d_{N, \Sa}$ as the union of $A[\frac{1}{T}]\otimes_A \overline{\cM}^d_{N, \Sa}$ and $A[\frac{1}{T-1}]\otimes_A \overline{\cM}^d_{N,\Sa}$.

Next assume $N$ has only one prime divisor. If the prime divisor is $(T)$, then $A[\frac{1}{T}]\otimes_A \overline{\cM}^d_{N, \Sa}$ obtained in \ref{Sapf2} is $\overline{\cM}^d_{N, \Sa}$. If the prime divisor is  $(T-1)$, we obtain $\overline{\cM}^d_{N, \Sa}$ similarly. If the prime divisor is neither $(T)$ nor $(T-1)$,  then we have $A[\frac{1}{T}]\otimes_A \overline{\cM}^d_{N, \Sa}$ by \ref{Sapf3} and $A[\frac{1}{T-1}]\otimes_A \overline{\cM}^d_{N, \Sa}$ similarly, and $\overline{\cM}^d_{N, \Sa}$ is obtained as the union of these. 

\end{sbpara}

  \begin{sbpara} By construction, our Satake compactification $\overline{\cM}^d_{N, \Sa}$  of $\cM_N^d$ coincides over $F$ with that of Pink \cite{P2}.

  \end{sbpara}

\subsection{Algebraic moduli and formal moduli} \label{ss:AMFM}

This is a preliminary subsection. In \ref{tildeu} and \ref{sandeta}, we review some relations between schemes, formal schemes, and adic spaces. In the remainder of the subsection, assuming that the moduli functor $\overline{\fM}^d_{N, \Sig}$ on $\cC_{\nl}$ is representable, we deduce many results on the moduli space from the formal moduli theory of Section \ref{s:Tate}. In Section \ref{ss:tornl}, we will prove the representability of this functor. The results of this subsection then imply many properties of the resulting moduli spaces (i.e., the toroidal compactifications), including the main theorems of this paper from Section \ref{main_result}. We show that once the algebraic moduli space exists, then its formal completion is the formal moduli space, which has beautiful properties. The algebraic moduli space then has beautiful properties as well.

\begin{sbpara}\label{tildeu}

We first review basic relations between schemes, formal schemes, and adic spaces. 
 Let $X$ be a locally Noetherian formal scheme, and let $\tilde X=X_{\adic}$ be the adic space associated to $X$.  
We then have two locally ringed spaces $(\tilde X, \cO_{\tilde X}^+)$ and $(\tilde X,\cO_{\tilde X})$. 

Let $S$ be a scheme, and assume that we are given a morphism $u \colon X\to S$ of locally ringed spaces. Then we have two morphisms of locally ringed spaces 
$$\tilde u^+ \colon (\tilde X, \cO_{\tilde X}^+) \to S, \quad \tilde u \colon (\tilde X, \cO_{\tilde X}) \to S$$ 
whose underlying maps $\tilde X\to S$ need not coincide, as follows. The morphism $\tilde u^+$ is the composition  $(\tilde X,  \cO^+_{\tilde X}) \to X\to S$ of the morphism discussed in \ref{fvsadic} with $u$. The morphism $\tilde u$ is described in the following paragraph.

Recall that for a locally ringed space $\cS$ and for a ring $R$, the set of morphisms $\cS\to \Spec(R)$ of locally ringed spaces is in bijection with the set of ring homomorphisms $R\to \Gamma(\cS, \cO_{\cS})$. 
The definition of $\tilde u$ is reduced to the case $S$ is an affine scheme $\Spec(R)$. In this setting, we have ring homomorphisms
$$R = \Gamma(S,\cO_S)\to \Gamma(X, \cO_X) \to \Gamma(\tilde X, \cO_{\tilde X}^+) \to \Gamma(\tilde X, \cO_{\tilde X}).$$
The morphism $\tilde u^+$ of locally ringed spaces already defined corresponds to the composition  $R\to \Gamma(\tilde X, \cO_{\tilde X}^+)$, while the morphism $\tilde u$ is defined to be the one which corresponds to the composition $R\to  \Gamma(\tilde X, \cO_{\tilde X})$.
The map $\tilde{u}^+$ sends $x \in X$ to the prime ideal
$\tilde u^+(x) = \{f\in R\mid |f(x)|<1\}$ of $R$, whereas $\tilde u(x) = \{f\in R\mid |f(x)|=0\} \in \Spec(R)$.

For $x\in \tilde X$, let $y^+ = \tilde u^+(x)$ and $y=\tilde u(x)$, and let $z$ be the image of $x$ under the canonical map $\tilde X\to X$ of \ref{fvsadic}. Then the definitions of $\tilde u^+$ and $\tilde u$ show that $y^+$ belongs to the closure of $y$ in $S$, and we have a commutative diagram
$$
	\SelectTips{cm}{} \xymatrix{  \cO_{S,y^+} \ar[r] \ar[d] & \cO_{X, z} \ar[r] & \cO^+_{\tilde X, x} \ar[d] \\
	\cO_{S, y} \ar[rr] && \cO_{\tilde X, x}, }
$$
in which the composition of the upper row is induced by $\tilde u^+$ and the lower row is induced by $\tilde u$.

\end{sbpara}

\begin{example}\label{sandeta}
Let $E$ be a field, let $S=\Spec(E[t])$ with $t$ an indeterminate, let $X$ be the formal completion $\Spf(E\ps{t})$ of $S$ along the ideal $(t)$, and let $u \colon X\to S$ be the canonical morphism. Then 
$$
	\tilde{X} = X_{\adic}=\Spa(E\ps{t}, E\ps{t})
$$ 
consists of two points $s$ and $\eta$ for which $s$ is in the closure of $\eta$. For the maps $\tilde X \to X \to S$ whose composition is $\tilde u^+$, the first map sends $s$ and $\eta$ to the unique point of $X$, and the second map sends this point to the prime ideal $(t)$ of $E[t]$. The map $\tilde u \colon \tilde X\to S$ sends $s$ and $\eta$ to the prime ideals $(t)$ and $(0)$ of $E[t]$, respectively. 
The commutative diagram in \ref{tildeu} for $x=\eta$ becomes
$$
	\SelectTips{cm}{} \xymatrix{    E[t]_{(t)} \ar[r] \ar[d] & E\ps{t} \ar@{=}[r] & E\ps{t} \ar[d] \\
	E(t) \ar[rr] && E\ls{t}.}
$$

\end{example}

\begin{sbpara}\label{Massump} In the rest of this subsection, we fix a finite rational subdivision $\Sig$ of $\Sig_k$ for $k = \deg N$, and we assume that the following hypothesis is true.

\begin{itemize}
\item There exists a normal scheme $\overline{\cM}^d_{N,\Sig}$ of finite type over $A$ (over $A[\frac{1}{N}]$ if $N$ has only one prime divisor) with an open immersion $\cM^d_N\to \overline{\cM}^d_{N, \Sig}$ over the base scheme $A$ or $A[\frac{1}{N}]$ such that the pair $(\overline{\cM}^d_{N, \Sig}, \cM^d_N)$ represents the functor $\overline{\fM}^d_{N,\Sig}$ on $\cC_{\nl}$. 
\end{itemize}

Hence all the results in the rest of this subsection are hypothetical results and should be regarded as announcements of the results with the plans of the proofs. In Section \ref{ss:tornl}, we will prove that the above hypothesis holds, and the results stated below will become unconditional. 

Let $\sig\in \Sig$. By the above assumption and \ref{moduliremarks}, there is an open subscheme $\overline{\cM}^d_{N,+,\sig}$ of $\overline{\cM}^d_{N, \Sig}$ containing $\cM^d_N$ such that the pair $(\overline{\cM}^d_{N, +,\sig}, \cM^d_N)$ represents the functor $\overline{\fM}^d_{N,+,\sig}$ on $\cC_{\nl}$.
\end{sbpara}

Recall the category $\hat \cC_{\nl}$ of pairs $(S,U)$ of a formal $A$-scheme $S$ (over $A[\frac{1}{N}]$ if $N$ has only one prime divisor) that is locally $\Spf$ of an excellent normal domain and a dense open subset $U$ of its adic space from \ref{formal}.

\begin{sbprop}\label{fMcM}   
Let $r\geq 1$ and $n\geq 0$ be integers such that $d=r+n$. The functor $\overline{\fM}^{r,n}_{N, +, \sig}$ on $\hat \cC_{\nl}$ is isomorphic to the functor that sends an object $(X, V)$ of $\hat \cC_{\nl}$ to the set 
of all morphisms $u \colon X\to \overline{\cM}^d_{N, +, \sig}$ of locally ringed spaces satisfying the following two conditions.
\begin{enumerate}
\item[(i)] For every $x\in X$, the fiber at $u(x)$ of the universal generalized Drinfeld module on $\overline{\cM}^d_{N, +, \sig}$ has rank $r$.
\item[(ii)] The image of $V$ under the map $\tilde u \colon X_{\adic}\to \overline{\cM}^d_{N, +, \sig}$ is contained in $\cM^d_N$.
\end{enumerate}

\end{sbprop}

\begin{proof}
Let $F$ be the functor on $\hat \cC_{\nl}$ defined in the statement of the proposition.
We define a natural transformation $F\to \overline{\fM}^{r,n}_{N, +,\sig}$ as follows. Fix $u\in F(X,V)$. Let $((\cL, \phi), \iota)$ be
the universal generalized Drinfeld module on $(\overline{\cM}^d_{N, +,\sig}, \cM^d_N)$. 
Let $(\cL', \phi')$ be the generalized Drinfeld module on $X$ obtained as the pullback of $(\cL, \phi)$ by $u$. Then every fiber of $(\cL', \phi')$ has rank $r$ by condition (i).  

Let $\tilde X=X_{\adic}$. The pullback of $(\cL', \phi')$ to 
$(\tilde  X, \cO_{\tilde X})$ by the morphism $(\tilde X, \cO_{\tilde X}) \to X$ of \ref{fvsadic} is identified with the pullback of $(\cL,\phi)$ under $\tilde u \colon (\tilde X, \cO_{\tilde X})\to \overline{\cM}^d_{N, +, \sig}$ by
trivializing $\cL$ using the basis $\iota(e_0)$ and tracing the commutative diagram in \ref{tildeu}, 
which shows that the pullback on $\tilde X$ of $\phi(a)$ for $a\in A$ by $\tilde u^+$  coincides with that by $\tilde u$.  
Since $\tilde u$ sends $V$ into $\cM^d_N$ by condition (ii), the pullback of $(\cL', \phi')$ to $(V, \cO_V)$ via $(\tilde X, \cO_{\tilde X})\to X$ is endowed with a canonical level $N$ structure. Thus we obtain an element of $\overline{\fM}^{r,n}_{N, +,\sig}(X,V)$.
 
 The converse morphism  $\overline{\fM}^{r,n}_{N, +,\sig}\to F$ is defined as follows. Let $X=\overline{\cM}^{r,n}_{N,+,\sig}$, let $V=\cM^{r,n}_{N,+,\sig}\subset X_{\adic}$, and let  $((\cL, \phi),\iota)$ be the universal object on $X$, where 
 $\iota \colon (\frac{1}{N}A/A)^d\to \overline{\cL}$. 
We identify $\cL$ with $\cO_X$ via 
the basis $\iota(e_0)$. Let $R$ be the ring $\cO(X)$.
Then $a \mapsto \phi(a)$ for $a\in A$ defines a generalized Drinfeld module over $R$ such that the 
coefficient of $z^{|a|^r}$ in $\phi(a)(z)$ is invertible in $R$ for every nonzero element $a$ of $A$.  Let $c\in R$ be the coefficient of $z^{|N|^d}$ in $\phi(N)(z)$.  Let $R'$ be the localization $R[c^{-1}]$ of $R$. Set $\cS=\Spec(R)$ and $\cS'=\Spec(R')$ so that $(\cS,\cS')$ is an 
object of $\cC_{\nl}$.
Then the restriction of the generalized Drinfeld module over $\cS$ to $\cS'$ is a Drinfeld module of rank $d$. 

On $V$, we have
$$
	\phi(N)(z) = c \prod_{b \in (\frac{1}{N}A/A)^d} (z-\iota(b)).
$$
Since $R'$ is normal, this formula for $\phi(N)$ implies that $\iota(b)$ takes values in $R'$. The map $\iota \colon (\frac{1}{N}A/A)^d\to \cL$ on $\cS'$ is a homomorphism, as is checked on $V$. Hence, the formula also shows that $\iota$ on $\cS'$ is a level $N$ structure of this Drinfeld module. 
Hence $\phi$ and $\iota$ define an element of 
$\overline{\fM}^d_{N,+,\sig}(\cS,\cS')$. Let $f \colon \cS \to \overline{\cM}^d_{N, +, \sig}$ be the corresponding morphism. By definition, it restricts to a morphism $\cS' \to \cM_N^d$.

Let $w \colon X \to \cS$ be the canonical morphism.
Let $u = f\circ w \colon X \to \overline{\cM}^d_{N,+,\sig}$.  Then $\tilde u \colon V \to \overline{\cM}^d_{N,+,\sig}$ coincides with the composition 
$$
	V \to X \xrightarrow{w} \cS \xrightarrow{f} \overline{\cM}^d_{N,+,\sig},
$$ 
which factors as 
$$
	V \xrightarrow{\tilde w}  \cS' \to \cM^d_N \to \overline{\cM}^d_{N,+,\sig}
$$ 
where $\tilde{w}$ is induced by $w$.
We have $u\in F(X,V)$, and this 
defines 
$(X,V) \to F$ and hence $\overline{\fM}^{r,n}_{N, +,\sig} \to F$.
\end{proof}

The following proposition follows from \ref{fMcM}.

\begin{sbprop}\label{MandM} Let $S$ be the open set of $\overline{\cM}^d_{N, +, \sig}$ of points at which the fiber of the universal generalized Drinfeld module $(\cL, \phi)$ has rank $\geq r$. 
Let $\hat S$ be the formal completion of $S$ along its closed subset of points at which the fiber of $(\cL, \phi)$ has rank $r$. Let $U$ be the inverse image of $\cM^d_N\subset S$ in $\hat S_{\adic}$ under the morphism $\tilde u \colon \hat S_{\adic}\to S$ of \ref{tildeu}.
 Then the  object $(\hat S, U)$ of $\hat \cC_{\nl}$
 is isomorphic to the object $(\overline{\cM}^{r,n}_{N, +, \sig}, \cM^{r,n}_{N,+,\sig})$ that represents the functor $\overline{\fM}^{r,n}_{N, +, \sig}$ on $\hat \cC_{\nl}$ in Theorem \ref{fthm}.

\end{sbprop}

\begin{sbpara}\label{shape0}
Let the notation be as in \ref{MandM} for some $\sig \in \Sig$.
As in \ref{sig'n}, we consider the cone 
$$
	\sig_n = \{(s_i)_{1\leq i\leq n} \mid (0^{r-1}, s_1, \dots, s_n) \in \sig\},
$$ 
which is identified with a face of $\sig$. With the identification $\hat S\cong \overline{\cM}^{r,n}_{N,+, \sig}$ given by \ref{MandM}, we have a morphism 
$$
	\hat S \to \toric_{\F_p}(\sig_n), \quad 
	((\cL, \phi), \iota)\mapsto \left(\frac{\iota(e_0)}{\iota(e_{i+r-1})}\right)_{1\leq i\leq n}.
$$
Using the above identification and the definition \ref{cM(sig)} of $\overline{\cM}^{r,n}_{N,(+,\sig)}$, there is also a canonical morphism $\hat S \to \cM^r_N$. Moreover, $\hat S$ is isomorphic to an open set of the formal completion of $\cM^r_N \times_{\F_p} \toric_{\F_p}(\sig_n)$ along a closed subscheme and is endowed with the inverse image of the canonical log structure of $\toric_{\F_p}(\sig_n)$.  
\end{sbpara}

\begin{sbpara} \label{shapethm3} We prove that  the log scheme
 $\overline{\cM}^d_{N, \Sig}$  has the properties
 stated in Theorem \ref{shape}(3).

 Roughly speaking, the proof is as follows. From \ref{shape0}, we have that after we take some formal completions, the log schemes 
 $\overline{\cM}^d_{N, \Sig}$ and $\toric_{\F_p}(\sig)\times \mathbb{G}_m$ for $\sig\in \Sig$ have the same shape, and  the log $A[\frac{1}{N}]$-schemes 
  $A[\frac{1}{N}]\otimes_A \overline{\cM}^d_{N, \Sig}$ and $\toric_{A[\frac{1}{N}]}(\sig)$ have the same shape. This implies that they have the same shape \'etale locally.

The precise proof is given as follows. Since $\overline{\cM}^d_{N, \Sig}$ is the union over $\sig \in \Sig$ and $g\in \GL_d(A/NA)$ of the 
log schemes $g\overline{\cM}^d_{N, +,\sig}$ by \ref{moduliremarks}(1), it is sufficient to prove the following properties for each $\sig\in \Sig$ and  $x\in \overline{\cM}^d_{N, +,\sig}$. 
\begin{enumerate}
\item[(1)] There exist an \'etale neighborhood $U\to \overline{\cM}^d_{N, +,\sig}$ of $x$ and a smooth morphism $U\to \toric_{\F_p}(\sig)$.
\item[(2)] If $N$ is invertible at $x$, there exist an  \'etale neighborhood $U\to A[\frac{1}{N}]\otimes_A \overline{\cM}^d_{N, +,\sig}$ of $x$ and an \'etale morphism  $U\to \toric_{A[\frac{1}{N}]}(\sig)$.
\end{enumerate}
Let $r$ be the rank of the fiber at $x$ of the universal generalized Drinfeld module on $\overline{\cM}^d_{N, +, \sig}$. Let $n=d-r$.

 We prove (1) (resp., (2)). Consider the morphism 
$$
	X := \overline{\cM}_{N, +, \sig} \to Y := \toric_{\F_p}(\sig_n)\quad (\text{resp., } X := A[\tfrac{1}{N}]\otimes_A \overline{\cM}_{N, +, \sig} \to Y := \toric_{A[\frac{1}{N}]}(\sig_n))
$$
given by
$$((\cL, \phi), \iota)\mapsto  \left(\frac{\iota(e_0)}{\iota(e_{i+r-1})}\right)_{1\leq i\leq n}.$$
Let $y$ be the image of $x$ in $Y$. 
By \ref{shape0} and the fact that 
 $\cM^r_N$ is smooth over $\F_p$,  we have an $\hat \cO_{Y,y}$-isomorphism between  $\hat \cO_{X,x}$ and a finite \'etale ring over $\hat \cO_{Y,y}\ps{t_1, \dots, t_r}$ (resp., $\hat \cO_{Y,y}\ps{t_1, \dots, t_{r-1}}$), 
 where $t_i$ are indeterminates and ${\hat {\;}}$ indicates the completion. This proves that the morphism $X\to Y$ is smooth at $x$ with relative dimension $r$ (resp., $r-1$). Since $\toric_{\F_p}(\sig) \cong \mathbb{G}_m^{r-1}\times \toric_{\F_p}(\sig_n)$, we have that there are an \'etale neighborhood $U\to X$ of $x$ and a smooth morphism $U\to \toric_{\F_p}(\sig)$ (resp., an \'etale morphism $U\to \toric_{A[\frac{1}{N}]}(\sig)$)
 for which the log structure of $U$ is the inverse image of that of the latter toric variety.

\end{sbpara}

\begin{sbpara}\label{lrls} By \ref{shapethm3} (so conditional upon the assumption in \ref{Massump}), the log scheme $\overline{\cM}^d_{N, \Sig}$ is log regular, and $A[\frac{1}{N}]\otimes_A \overline{\cM}^d_{N, \Sig}$ is log smooth over $A[\frac{1}{N}]$.
\end{sbpara}

We show that $\overline{\cM}_{N,\Sig}^d$ represents the moduli functor $\overline{\fM}_{N,\Sig}^d$ on the category $\cC_{\log}$ of schemes with saturated log structures over $A$ (over $A[\frac{1}{N}]$ in the case $N$ has only one prime divisor). First, we prove the following.
 
\begin{sbprop}\label{M=M} Let $F_{\nl}$ be the functor $\overline{\fM}^d_N$ (resp.,  $\overline{\fM}^d_{N, \Sig}$, resp.,  $\overline{\fM}^d_{N, +, \sig}$) on $\cC_{\nl}$ and let $F_{\log}$ be the  functor 
$\overline{\fM}^d_N$ (resp.,  $\overline{\fM}^d_{N, \Sig}$, resp.,  $\overline{\fM}^d_{N, +, \sig}$) on $\cC_{\log}$. Then $F_{\nl}$ is the composition of $F_{\log}$  and the functor $\cC_{\nl} \to \cC_{\log}$ of \ref{log22}. 

\end{sbprop}

\begin{proof} 
Let $(S,U)$ be an object of $\cC_{\nl}$ and consider the associated log structure \ref{log2} on $S$. We have the map $F_{\log}(S) \to F_{\nl}(S,U)$ taking a log Drinfeld module $((\cL,\phi),\iota)$ on $S$ to the pair $((\cL,\phi),\iota|_U)$. The latter object is a generalized Drinfeld module over $(S,U)$ with level $N$ structure as the generalized Drinfeld module $(\cL,\phi)$ on $S$ restricts to a Drinfeld module on $U$ (see \ref{logD8} and its proof in \ref{Deflog1}). Moreover, it is an element of $F_{\nl}(S,U)$ since a log Drinfeld module satisfies (div) by \ref{logD8} (see \ref{regdiv}). 

On the other hand, we have seen that $F_{\nl}$ is represented by an object $(P, W)$ of $\cC_{\nl}$ with $P$ log regular (\ref{lrls}). We therefore have a map  $F_{\nl}(S, U) \to F_{\log}(S)$ given by pulling back the universal object on $(P, W)$. These maps are inverse to each other by construction.
\end{proof}
 
\begin{sbpara}\label{replog}

We prove that 
the functor $\overline{\fM}^d_{N, \Sig} \colon \cC_{\log}\to (\text{Sets})$ is represented by $\overline{\cM}^d_{N, \Sig}$ (conditional upon our assumption in \ref{Massump} of representability on $\cC_{\nl}$). We may assume that $\Sig$ is a subdivision of $\Sig_k$, since \ref{Sigk8} tells us that $\overline{\fM}^d_{N, \Sig}$ and $\overline{\fM}^d_{N, \Sig*\Sig_k}$ are equal on both $\cC_{\log}$ and $\cC_{\nl}$.

The universal generalized Drinfeld module on $(\overline{\cM}^d_{N,\Sig},\cM^d_N)$ gives rise to a log Drinfeld module on the log regular
scheme $\overline{\cM}^d_{N,\Sig}$. (This is the universal log Drinfeld module of rank $d$ on $\overline{\cM}^d_{N,\Sig}$.) Taking the pullback of 
this log Drinfeld module defines a natural transformation $\overline{\cM}^d_{N,\Sig} \to \overline{\fM}^d_{N,\Sig}$ of functors on $\cC_{\log}$.  

The functor $ \overline{\fM}^d_{N,\Sig}$ on $\cC_{\log}$ is a sheaf for the \'etale topology, and by the definition \ref{logD6} of log Drinfeld modules of rank $d$ with level $N$ structure, this morphism is a surjection of sheaves on $\cC_{\log}$. It remains to prove the injectivity. 

Let $a,b \colon S\to \overline{\cM}^d_{N, \Sig}$ be morphisms in $\cC_{\log}$ which induce the same element of $\overline{\fM}^d_{N, \Sig}(S)$. We claim that $a=b$. For this, we may assume that $S$ is of finite type over $A$. Let $s\in S$, let $\hat \cO_{S,s}$ be the completion of the local ring $\cO_{S,s}$,  and consider the morphisms 
$$
	\hat a_s, \hat b_s \colon\Spec(\hat \cO_{S,s})\to \overline{\cM}^d_{N, \Sig}
$$ 
of $\cC_{\log}$ induced by $a$ and $b$, respectively. Here, $\Spec(\hat \cO_{S,s})$ has the inverse image of the log structure of $S$. It is sufficient to prove that $\hat a_s=\hat b_s$. It follows from \ref{opencover} that by changing the basis of $(\frac{1}{N}A/A)^d$ if necessary, we may assume that $\hat a_s$ and $\hat b_s$ are morphisms $\Spec(\hat \cO_{S,s})\to \overline{\cM}^d_{N, +,\sig}$ for some $\sig\in \Sig$.
By \ref{MandM},  these morphisms correspond respectively to morphisms 
$$
	\hat a_s', \hat b'_s \colon\Spf(\hat \cO_{S,s})\to \overline{\cM}^{r,n}_{N, +,\sig}
$$ 
in $\hat \cC_{\log}$, for some $r$, $n$ such that $r+n=d$, where $\Spf(\hat \cO_{S,s})$ is given the inverse image of the log structure on $S$. Because they give the same element of $\overline{\fM}^{r,n}_{N, +, \sig}(\Spf(\hat \cO_{S,s}))$, we have   $\hat a'_s=\hat b'_s$ by Theorem \ref{fthm} for $\hat \cC_{\log}$. 

\end{sbpara}

\subsection{Toroidal compactifications} \label{ss:tornl}

In this subsection, for a finite rational subdivision $\Sig$ of $C_d$, we construct the toroidal compactification $\overline{\cM}_{N,\Sig}^d$ of $\cM_N^d$ and prove that it has the properties stated in the main theorems stated in Section \ref{main_result}. 

We proceed as follows. First, we construct the special toroidal  compactification $\overline{\cM}^d_{N, \Sig_k}$ by blowing-up the Satake compactification $\overline{\cM}^d_{N,\Sa}$ in order that the $N$-torsion points of the universal generalized Drinfeld module on $\overline{\cM}^d_{N, \Sa}$ satisfy the condition (div). We prove that $(\overline{\cM}^d_{N, \Sig_k}, \cM^d_N)$ represents the moduli functor $\overline{\fM}^d_{N, \Sig_k}$ on $\cC_{\nl}$. By \ref{lrls}, we then have that $\overline{\cM}^d_{N, \Sig_k}$ is log regular. 

Next, we consider the toroidal compactification for a general $\Sig$. Using toric geometry (formulated in the style of log geometry), we  construct $\overline{\cM}^d_{N, \Sig}$ out of $\overline{\cM}^d_{N,\Sig_k}$ as its modification associated to subdivisions of a fan. From the log regularity of $\overline{\cM}^d_{N,\Sig_k}$, we obtain the log regularity of  $\overline{\cM}^d_{N,\Sig}$. Using this log regularity, we can show that 
$(\overline{\cM}^d_{N, \Sig}, \cM^d_N)$ represents the moduli functor $\overline{\fM}^d_{N, \Sig}$ on $\cC_{\nl}$. By the results of Section \ref{ss:AMFM}, we may then conclude that $\overline{\cM}^d_{N, \Sig}$ has all properties stated in the main theorems in Introduction.

In the setting of abelian varieties, there is no special toroidal compactification of a given moduli space among the collection of all of them. While we do have a special toroidal compactification of each moduli space of Drinfeld modules, it is better to consider all toroidal compactifications since we can find regular ones (and smooth ones after inverting $N$) among them.

\begin{sbpara}\label{logSa} We  construct $\overline{\cM}^d_{N, \Sig_k}$ for $k = \deg N$ as follows. Let $S=\overline{\cM}^d_{N, \Sa}\supset U= \cM^d_N$, and let $M_S$ be the associated log structure on $S$ of \ref{log2}. Let 
$((\cL, \phi), \iota)$ be the universal generalized Drinfeld module over $(S,U)$ of rank $d$ with level $N$ structure. 

For each $a\in (\frac{1}{N}A/A)^d$, let $I_a$ be the ideal of $\cO_S$ which is locally generated by $f\in M_S\subset \cO_S$ such that the class of $f$ in $M_S/\cO^\times_S$ coincides with $\pole(\iota(a))$ (see \ref{logD53}). Let $I \subset \cO_S$ be the product of the ideals $I_a+I_b$ for all pairs $(a,b)$ of elements of $(\frac{1}{N}A/A)^d$. Let $\overline{\cM}^d_{N, \Sig_k}$ be the normalization of the blow-up of $S$ along $I$,
and note that it is proper as $S$ is.

Recall that  $\overline{\fM}^d_N=\overline{\fM}^d_{N, \Sig_k}$ by Proposition \ref{Sigk8}. 
\end{sbpara}

\begin{sbprop}\label{repnl1}
	The pair $(\overline{\cM}^d_{N, \Sig_k}, \cM^d_N)$ represents the functor $\overline{\fM}^d_{N, \Sig_k}$ on the category $\cC_{\nl}$. 
\end{sbprop}

\begin{pf} 
  Let $(S, U)$ be an object of $\cC_{\nl}$. We apply Lemma \ref{nllem}(2) for the pair $(P,W) = (\overline{\cM}^d_{N, \Sa},\cM^d_N)$ 
  and the ideals $I_a$ as in \ref{logSa} for $a \in (\frac{1}{N}A/A)^d$, in which case $P'= \overline{\cM}^d_{N, \Sig_k}$. 
Let $f$ be an element of $\overline{\fM}^d_{N,\Sa}(S,U)$. Then $f$ belongs to $\overline{\fM}^d_N(S,U)=\overline{\fM}^d_{N,\Sig_k}(S,U)$ if and only if $f$ satisfies the condition (div), that is, if and only if the pullbacks $I_{a,S}$ of $I_a$ under $f \colon S\to P$ satisfies either $I_{a,S}\subset I_{b,S}$ or $I_{a,S}\subset I_{b,S}$ for every pair $(a,b)$ of elements of $(\frac{1}{N}A/A)^d$. By \ref{nllem}(2), this condition is equivalent to the condition that there is a morphism $(S,U) \to (P', W)$ over $(P, W)$, and furthermore, such a morphism is unique if it exists. 
\end{pf}

\begin{sbpara}\label{lrls2}
By \ref{lrls} and \ref{repnl1}, the log scheme $\overline{\cM}^d_{N, \Sig_k}$ is log regular, and $A[\frac{1}{N}]\otimes_A \overline{\cM}^d_{N, \Sig_k}$ is log smooth over $A[\frac{1}{N}]$. This is the special case $\Sig = \Sig_k$ of a part of Theorem \ref{main}(1). Moreover, Theorem \ref{M=M0} follows from the case $\Sig = \Sig_k$ of Proposition \ref{M=M}, which is unconditional in this case by \ref{repnl1}. 
\end{sbpara}

\begin{sbpara}\label{repnl2} Let $((\cL,\phi), \iota)$ be the universal generalized Drinfeld module over $(\overline{\cM}^d_{N, \Sig_k}, \cM^d_N)$ of rank $d$ with level $N$ structure. For a basis $(e_i)_{0\leq i\leq d-1}$ as in \ref{logD9}(1),  the family $(\pole(\iota(e_i)))_{1\leq i\leq d-1}$, which is independent of the choice of basis,
provides an element of $[\Sig_k](\overline{\cM}^d_{N, \Sig_k})$, similarly to \ref{modulirem}. That is, we have a morphism  $\overline{\cM}^d_{N, \Sig_k}\to [\Sig_k]$ of functors on $\cC_{\log}$.

Consider the fs log scheme 
$$\overline{\cM}^d_{N, \Sig} := \overline{\cM}^d_{N, \Sig_k} \times_{[\Sig_k]} [\Sig*\Sig_k]
=\overline{\cM}^d_{N, \Sig_k} \times_{[C_d]} [\Sig]$$ 
(see \ref{toSig0}), where $*$ denotes the join.
This is proper and log \'etale over $\overline{\cM}^d_{N,\Sig_k}$, again using \ref{toSig0}. 
 In the case that $\Sig$ is a subdivision of $\Sig_k$, it is $\overline{\cM}^d_{N, \Sig_k} \times_{[\Sig_k]} [\Sig]$. 
 
Since a log \'etale scheme over a log regular scheme is log regular, we have that $\overline{\cM}^d_{N, \Sig}$ is log regular. From the log smoothness of  $A[\frac{1}{N}]\otimes_A\overline{\cM}^d_{N, \Sig_k}$, we have the log smoothness of
 $A[\frac{1}{N}]\otimes_A \overline{\cM}^d_{N, \Sig}$. 
 From the properness of  $\overline{\cM}^d_{N, \Sig_k}$, we have the properness of $\overline{\cM}^d_{N, \Sig}$.
The morphism $\overline{\cM}^d_{N,\Sig}\to \overline{\cM}^d_{N, \Sa}$  and the morphisms $\overline{\cM}^d_{N,\Sig'} \to \overline{\cM}^d_{N, \Sig}$ for finite rational subdivisions $\Sig'$ of $\Sig$ are birational because these morphisms induce the identity morphism on their dense open subscheme $\cM^d_N$. 
 
\end{sbpara}

\begin{sblem}\label{Mtriv} In $\overline{\cM}^d_{N, \Sig}$, the open set $\cM^d_N$ coincides with the set of all points at which the log structure is trivial.

\end{sblem}

\begin{proof} We prove this first for $\Sig= \Sig_k$. Let $W$ be the open set of $\overline{\cM}^d_{N, \Sig_k}$ consisting of all points at which the log structure is trivial. Then $\cM^d_N\subset W$. Since the restriction of the universal object $((\cL, \phi), \iota)$ on $(\overline{\cM}^d_{N, \Sig_k},\cM^d_N)$ to $W$ is a Drinfeld module of rank $d$ with a level $N$ structure,  we have by \ref{repnl1} a morphism $(\overline{\cM}^d_{N, \Sig_k}, W) \to (\overline{\cM}^d_{N, \Sig_k}, \cM^d_N)$ such that the composition $(\overline{\cM}^d_{N, \Sig_k}, \cM^d_N)\to (\overline{\cM}^d_{N, \Sig_k}, W) \to (\overline{\cM}^d_{N, \Sig_k}, \cM^d_N)$ is the identity morphism. This proves $W=\cM^d_N$.

We consider the general case. Replacing $\Sig$ by the join $\Sig *\Sig_k$, we may assume that $\Sig$ is a finite rational subdivision of $\Sig_k$. 
By \ref{shapethm3}, the morphism $\overline{\cM}^d_{N, \Sig_k} \to [\Sig_k]$ is strict, that is, on an \'etale neighborhood $U$ of each $x \in \overline{\cM}^d_{N, \Sig_k}$, it factors as $U \to \toric_{\F_p}(\sig) \twoheadrightarrow [\sig] \hookrightarrow [\Sig_k]$ for some $\sig\in \Sig$, where the log structure of $U$ is the inverse image of that of $\toric_{\F_p}(\sig)$. It follows that the morphism $\overline{\cM}^d_{N, \Sig}\to [\Sig]$ is also strict. The strictness of these morphisms show that the open subscheme $V$of $\overline{\cM}^d_{N, \Sig}$  consisting of points at which the log structure is trivial is the fiber product $\overline{\cM}^d_{N, \Sig}\times_{[\Sig]} [\{0\}]$. Similarly, we have $W = \overline{\cM}^d_{N, \Sig_k}\times_{[\Sig_k]} [\{0\}]$. It follows that $V = W$ and therefore $V= \cM^d_N$. 
\end{proof}

\begin{sbpara}

Since $\overline{\cM}^d_{N, \Sig}$ is log regular, it is normal by \ref{logD20}(1), and hence we have an object $(\overline{\cM}^d_{N, \Sig}, \cM^d_N)$ of $ \cC_{\nl}$. Moreover, this log regularity implies that the log structure on $\overline{\cM}^d_{N, \Sig}$ coincides with the log structure of \ref{log2} associated to the pair $(\overline{\cM}^d_{N, \Sig}, \cM^d_N)$ by \ref{logD20}(2). 
\end{sbpara}

\begin{sbprop}\label{repnl3} The pair $(\overline{\cM}^d_{N, \Sig}, \cM^d_N)\in \cC_{\nl}$  represents the functor $\overline{\fM}^d_{N, \Sig*\Sig_k}= \overline{\fM}^d_{N, \Sig}$ on $\cC_{\nl}$.

 \end{sbprop}
 
 \begin{proof} Let $(S,U)$ be an object of $\cC_{\nl}$. Endow $S$ with the log structure of \ref{log2}. 
 Consider the following collections:
 \begin{enumerate}
	\item[(a)] morphisms $(S, U) \to (\overline{\cM}^d_{N, \Sig}, \cM^d_N)$ in $\cC_{\nl}$,
 	\item[(b)] morphisms $S\to \overline{\cM}^d_{N, \Sig}$ in $\cC_{\log}$,
 	\item[(c)] pairs of a morphism $S\to \overline{\cM}^d_{N, \Sig_k}$ in $\cC_{\log}$ and a morphism $S\to [\Sig]$ which induce the same morphism $S\to [\Sig_k]$, and
 	\item[(d)] pairs of morphisms $(S,U)\to (\overline{\cM}^d_{N, \Sig_k},\cM^d_N)$ in $\cC_{\nl}$ and $S\to [\Sig]$ which induce the same morphism $S\to [\Sig_k]$.
\end{enumerate}
 By \ref{log22}(3) and \ref{Mtriv}, we have that (a) and (b) are in bijection, as are (c) and (d). The collections (b) and (c) are in bijection by the definition of $\overline{\cM}^d_{N, \Sig}$ in \ref{repnl2}. By \ref{repnl1} and the definition of $\overline{\cM}^d_{N, \Sig}$, the set of pairs in (d) is identified with $\overline{\fM}^d_{N, \Sig}(S,U)$. 
  \end{proof}

\begin{sbpara} \label{opencover} For $\sig\in \Sig$,  by \ref{repnl3} and \ref{moduliremarks}(3), we have an open set $\overline{\cM}^d_{N, +, \sig}$ of $\overline{\cM}^d_{N, \Sig}$ containing $\cM^d_N$ such that $(\overline{\cM}^d_{N, +, \sig}, \cM^d_N)$ represents the functor $\overline{\fM}^d_{N, +, \sig}$ on $\cC_{\nl}$.  
We then have an open covering
$$
	\overline{\cM}^d_{N,\Sig}= \bigcup_{\sig \in \Sig} \bigcup_{g \in \GL_d(A/NA)} g(\overline{\cM}^d_{N, +,\sig})
$$
 by Lemma \ref{moduliremarks}(1). 
\end{sbpara}

\begin{sbpara}\label{forSig} 
By \ref{repnl3}, all results in \ref{fMcM}--\ref{replog} become unconditional. These together with \ref{repnl2}, prove Theorem \ref{shape} and parts (1) and (2) of Theorem \ref{main}. Theorem \ref{main}(3) is proven in \ref{mainthm3} below. 
\end{sbpara}

\begin{sbthm}\label{NN'} Let $N'\in A$ be a nonzero multiple  of $N$. Let $k$ (resp., $k'$) be the degree of the polynomial $N$ (resp., $N'$). Let $\Sig$ be a finite rational subdivision of $\Sig_{k'}$, and let  $\Sig'$ be the corresponding finite rational subdivision of $\Sig_{k,k'}$ (see \ref{Sigk5}). 
Unless $N'$ has at least two prime divisors and $N$ has only one prime divisor, ${\overline \cM}^d_{N',\Sig}$ is the integral closure of ${\overline \cM}^d_{N, \Sig'}$ in the function field of $\cM^d_{N'}$. Otherwise, the integral closure is $A[\frac{1}{N}]\otimes_A {\overline \cM}^d_{N',\Sig}$.

\end{sbthm}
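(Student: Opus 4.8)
The plan is to reduce the claim about general subdivisions $\Sig$ to the special case $\Sig=\Sig_k$ and $\Sig'=\Sig_{k',k}$, and then to prove that special case by localizing to the strata and invoking the formal moduli theory. More precisely, $\overline{\cM}^d_{N,\Sig}$ is by construction $\overline{\cM}^d_{N,\Sig_k}\times_{[\Sig_k]}[\Sig*\Sig_k]$, and similarly for $N'$; the fiber product with $[\Sig*\Sig_k]$ is a proper log \'etale modification that is the same on both sides once one knows the base case, because the morphisms $\overline{\cM}^d_{N,\Sig_k}\to[\Sig_k]$ and $\overline{\cM}^d_{N',\Sig_{k',k}}\to[\Sig_{k',k}]$ are compatible under $\xi^d_{k,k'}$ (this compatibility is exactly what \ref{Sigk5} and \ref{thmcone} provide: $\xi^d_{k,k'}$ is piecewise linear, sending cones of $\Sig_{k',k}$ linearly to cones of $\Sig_{k'}$, hence induces an isomorphism of the associated log schemes $[\Sig_{k',k}]\cong[\Sig_{k'}]$ compatibly with subdivisions). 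So the heart is: for the Satake-blowup models, $\overline{\cM}^d_{N,\Sig_k}$ equals the normalization of $\overline{\cM}^d_{N',\Sig_{k'}}$ in the function field of $\cM^d_N$ (with the indicated exception).

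To prove the base case I would argue as follows. First, there is a natural finite morphism $\cM^d_N\to\cM^d_{N'}$ (forgetting part of the level structure), hence a rational map between the compactifications; normality of $\overline{\cM}^d_{N,\Sig_k}$ (it is log regular by Section~5.2, and log regular schemes are normal by \ref{logD20}(1)) together with properness over $A$ (resp.\ $A[\frac1N]$) means it suffices to produce a proper morphism $\overline{\cM}^d_{N,\Sig_k}\to\overline{\cM}^d_{N',\Sig_{k'}}$ extending $\cM^d_N\to\cM^d_{N'}\hookrightarrow\overline{\cM}^d_{N',\Sig_{k'}}$ and to check that $\overline{\cM}^d_{N,\Sig_k}$ is integrally closed in the function field of $\cM^d_N$ over its image. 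For the existence of the extension, I would use the moduli interpretation: a log Drinfeld module of rank $d$ with level $N$ structure has, by restriction of $\iota$ along $(\frac1{N'}A/A)^d\hookrightarrow(\frac1NA/A)^d$, an underlying log Drinfeld module with level $N'$ structure, and \ref{Sigk8} shows both $\overline{\frak M}^d_N=\overline{\frak M}^d_{N,\Sig_k}$ and $\overline{\frak M}^d_{N'}=\overline{\frak M}^d_{N',\Sig_{k'}}$, so the forgetful transformation of functors $\overline{\frak M}^d_N\to\overline{\frak M}^d_{N'}$ on $\cC_{\nl}$ gives the desired morphism of the representing objects. That this morphism is finite onto a closed subscheme, and that $\overline{\cM}^d_{N,\Sig_k}$ is the integral closure there, I would verify exactly as Pink does in the Satake case (\ref{Sapf}(2)(3)): reduce to affines, use that both sides are normal, finite type over $A$ (hence excellent), and compare function fields; the key point is that the boundary behaviour matches because the cone-data correspondence $\sig\mapsto\xi^d_{k,k'}(\sig)$ identifies the toric local models via \ref{MandM} and \ref{thmcone}(2).

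The exceptional case — $N$ with at least two prime divisors, $N'$ with only one — arises because $\overline{\cM}^d_{N',\Sig'}$ is then only defined over $A[\frac1{N'}]$, not over all of $A$, so its integral closure in the function field of $\cM^d_N$ can only see the locus where $N'$ is invertible; there one gets $\overline{\cM}^d_{N,\Sig}\otimes_A A[\frac1{N'}]$ rather than $\overline{\cM}^d_{N,\Sig}$ itself. This is a bookkeeping point rather than a mathematical obstacle: away from the primes dividing $N'$ the previous argument applies verbatim, and over those primes $\overline{\cM}^d_{N',\Sig'}$ simply has nothing to say. I would handle it by first establishing the statement over $A[\frac1{N'}]$ in all cases, and then, when $N$ has at least two prime divisors and $N'$ has only one, observing that this is the full statement; when $N'$ has at least two prime divisors one can cover $A$ by $A[\frac1{N'_1}]$ and $A[\frac1{N'_2}]$ for coprime $N'_1N'_2=N'$ (as in \ref{Sapf}(3)) and glue.

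The main obstacle I anticipate is verifying that the extended morphism $\overline{\cM}^d_{N,\Sig_k}\to\overline{\cM}^d_{N',\Sig_{k'}}$ is \emph{finite} onto its (closed) image and that no further normalization is needed, i.e.\ that $\overline{\cM}^d_{N,\Sig_k}$ is already integrally closed in $k(\cM^d_N)$ over the image of $\overline{\cM}^d_{N',\Sig_{k'}}$. On the open part $\cM^d_N$ this is classical (finiteness of the level-lowering map between moduli schemes), and log regularity gives normality everywhere; the work is in controlling the boundary, where one must check that the map of toric local models $\toric_{\F_p}(\sig_n)\to\toric_{\F_p}(\sig'_n)$ induced by $\xi^d_{k,k'}$ is finite with the target strata pulling back correctly — but \ref{thmcone}(2) says this map is \emph{linear} on each cone of $\Sig_{k,k'}$, which makes it a monomial map of toric varieties whose finiteness and integral-closure properties are elementary, so the obstacle, while requiring care, should be surmountable by the same excellent-normalization arguments used for the Satake compactification.
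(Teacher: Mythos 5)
Your overall strategy (build the level-forgetting morphism $\overline{\cM}^d_{N,\Sig}\to\overline{\cM}^d_{N',\Sig'}$, then show the source is finite and normal over the target with function field $k(\cM^d_N)$) could in principle be completed, but as written it has two real gaps, and it is much heavier than what is needed. First, your reduction to the base case $\Sig=\Sig_k$ rests on the claim that $\xi^d_{k,k'}$ ``induces an isomorphism of the associated log schemes $[\Sig_{k,k'}]\cong[\Sig_{k'}]$''. This is false as stated: by \ref{thmcone}(2) the map is linear on each cone, but those linear maps have non-integral rational coefficients (see Example \ref{Ex8}(3), where $q^{k-k'}$ with $k<k'$ and $(q+1)^{-1}$ appear), so they do not carry $\sig^{\vee}\cap\Z^{d-1}$ into the dual lattice of the image cone and hence do not induce morphisms of the functors $[\,\cdot\,]$, which are defined using the integral structure. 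The discrepancy between the two lattices of pole data is exactly what is absorbed by passing to the larger function field $k(\cM^d_N)$; it is the reason the theorem asserts an integral closure rather than a toric base change. Second, the finiteness of the extended morphism along the boundary---which you correctly identify as the crux---is not actually proved; the ``elementary monomial map'' you invoke is again the non-integral $\xi^d_{k,k'}$, so your argument is circular at precisely the delicate point.

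The paper avoids all of this with a short functorial argument. Let $X$ be the integral closure of $\overline{\cM}^d_{N',\Sig'}$ in $k(\cM^d_N)$. For an object $(S,U)$ of $\cC_{\nl}$, a morphism $(S,U)\to(X,\cM^d_N)$ is the same as a morphism to $(\overline{\cM}^d_{N',\Sig'},\cM^d_{N'})$ together with a lift over $U$ of the level $N'$ structure to a level $N$ structure (the extension from $U$ to $S$ being automatic since $S$ is normal and $X$ is integral over $\overline{\cM}^d_{N',\Sig'}$); that is, it is an object of $\overline{\frak M}^d_{N}(S,U)$ whose level-$N'$ pole data satisfies the $\Sig'$-condition. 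By \ref{Sigk5} combined with the reduction to discrete valuation rings of \ref{todvr}, this condition is equivalent to the $\Sig$-condition on the level-$N$ pole data. Hence $(X,\cM^d_N)$ represents $\overline{\frak M}^d_{N,\Sig}$ and equals $\overline{\cM}^d_{N,\Sig}$ by uniqueness of representing objects; the exceptional case is handled the same way over $A[\frac{1}{N'}]$. I recommend reorganizing your proof along these lines, which makes the finiteness and normalization issues disappear.
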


\begin{proof} We assume that either $N'$ has one prime divisor or $N$ has at least two, the remaining case being similar. We apply \ref{nllem}(2) for $P=\overline{\cM}^d_{N, \Sig'}$, $W=\cM^d_N$, and $W'=\cM^d_{N'}$. By the arguments as in \ref{Sapf2}, the functor on $\cC_{\nl}$ represented by the integral closure of  $\overline{\cM}^d_{N, \Sig'}$ in $\cM^d_{N'}$ is the subfunctor of $\overline{\fM}^d_{N'}$ which is the inverse image of the subfunctor $\overline{\fM}^d_{N,\Sig'}$ of 
$\overline{\fM}^d_N$. By \ref{Sigk5} and \ref{todvr}, this coincides with the subfunctor $\overline{\fM}^d_{N',\Sig}$ of $\overline{\fM}^d_{N'}$. Hence this integral closure coincides with $\overline{\cM}^d_{N', \Sig}$. 
\end{proof}

Recall that $\sig \cap \Z^{d-1}$ is regular for a cone $\sig$ in $\R^{d-1}$ if it is generated by $m$ elements as a monoid, where $m$ is the dimension of $\sig$.

\begin{sbprop} \label{regsm} Let $\Sig$ be a subdivision of $\Sig_k$, and suppose that $\sig\in \Sig$ is such that $\sig\cap \Z^{d-1}$ is regular. Then the underlying scheme of $\overline{\cM}^d_{N,+, \sig}$ is regular, the complement of $\cM^d_N$ in  $\overline{\cM}^d_{N, +,\sig}$  is a  normal crossing divisor, the underlying scheme of $A[\frac{1}{N}]\otimes_A \overline{\cM}^d_{N,+, \sig}$ is smooth over $A[\frac{1}{N}]$,  and the complement of $A[\frac{1}{N}]\otimes_A \cM^d_N$ in  $A[\frac{1}{N}]\otimes_A \overline{\cM}^d_{N, +, \sig}$  is a  relative normal crossing divisor over $A[\frac{1}{N}]$.

\end{sbprop} 

\begin{proof} For $\sig$ such that $\sig\cap \Z^{d-1}$ is regular,  the toric variety $\toric_{\F_p}(\sig)$ is smooth over $\F_p$ with normal crossing boundary (i.e., the complement of its torus part is a divisor with normal crossings). The morphisms of \ref{shapethm3}  constructed in property (1) (resp., (2)) yield the properties of $\overline{\cM}^d_{N, +,\sig}$ (resp., $A[\frac{1}{N}]\otimes_A \overline{\cM}^d_{N, +, \sig}$) stated in the proposition. For this, note for instance that the $U$ constructed in (1) (resp., (2)) is regular, being smooth over $\F_p$ 
(resp., smooth over $A[\frac{1}{N}]$, in that $\toric_{A[\frac{1}{N}]}(\sig)$ is smooth over $A[\frac{1}{N}]$).
\end{proof}

\begin{sbpara} \label{mainthm3}
As each finite rational cone decomposition of $C_d$  has a finite rational subdivision consisting of cones $\sig$ such that $\sig\cap \Z^{d-1}$ are regular (cf. \cite[Ch.~1, Thm.~11]{KKMS}), we obtain Theorem \ref{main}(3) on $\overline{\cM}^d_{N,\Sig}$ as a consequence
of Proposition \ref{regsm}.
\end{sbpara}

\subsection{An example}\label{ss:example}

In this final subsection, we provide an explicit description of the toroidal compactification $\overline{\cM}^3_{T,\Sig_1}$ and its relation to the Satake compactification $\overline{\cM}^3_{T,\Sa}$. We also describe the local monodromy of the universal log Drinfeld module on the former space (see \ref{Gal2}), and  of the universal generalized Drinfeld module on the latter space (see \ref{Gal1}).

\begin{sbpara} We make the identification
$$
	\mathbb{P}^2_{A[\frac{1}{T}]}=A[\tfrac{1}{T}]\otimes_{\F_q} \text{Proj}(\Sym_{\F_q}(V))
$$ 
for $V=\sum_{i=0}^2 \F_qu_i$ as in \ref{Sapf1a} in the case $d=3$. We prove that $\overline{\cM}^3_{T, \Sig_1}$ coincides with the blow-up of  $\mathbb{P}^2_{A[\frac{1}{T}]}$ along all $a \otimes A[\frac{1}{T}]\subset \mathbb{P}^2_{A[\frac{1}{T}]}$ with $a \in \mathbb{P}^2(\F_q)$. 

In fact, let $X$ be this blow-up. It is sufficient to prove that $(X, \cM^3_T)$ represents $\overline{\fM}^3_{T,\Sig_1}=\overline{\fM}^3_{T}$ on $\cC_{\nl}$. 
Note that $X$ is the blow-up of $\mathbb{P}^2_{A[\frac{1}{T}]}$ along the intersections of all pairs of 
different $\F_q$-rational hyperplanes in $\mathbb{P}^2$. We identify $V$ with $(\frac{1}{T}A/A)^3$.

Let $(S,U)$ be an object of $\cC_{\nl}$, and let
 $((\cL,\phi),\iota)\in \overline{\fM}^3_T(S,U)$. 
 Define a line bundle $\cL'$ on $S$ as follows. Because the condition (div) of \ref{strong} is satisfied, locally on $S$, there are nonzero elements $a,b$ of $V$ such that $\iota(b) \mid \iota(v) \mid \iota(a)$ for all nonzero elements $v$ of $V$. (That is, the pole of $a$ is the minimum, the pole of $b$ is the maximum, and actually, $\iota(a)$ is a basis of $\cL$ by \ref{unit}(2).) The line bundle $\cL'$ on $S$ is then uniquely defined such that $\cL'= \iota(b)\iota(a)^{-1}\cL= \cO_S \cdot\iota(b)$ locally. The canonical surjection $\cO_S \otimes_{\F_q} V \to \cL'$ induced by $\iota$ gives rise to a morphism $S\to \mathbb{P}^2_{A[\frac{1}{T}]}$ of $A[\frac{1}{T}]$-schemes that sends $U$ to 
$\cM^3_T$ by the description of $\cM^3_T$ in \ref{Sapf1b}. Since the condition (div) is satisfied, \ref{nllem}(2) tells us that
this morphism extends to a morphism $(S, U)\to (X, \cM^3_T)$ of $\cC_{\nl}$. 

Conversely, we have an element 
$((\cL, \phi), \iota)\in \overline{\fM}^3_T(X,\cM^3_T)$ 
as follows. Let $\cL'$ be the pullback of the line bundle $\cO(1)$ on $\mathbb{P}^2_{A[\frac{1}{T}]}$ to $X$, and let 
$p \colon \cO_X \otimes_{\F_q} V \to \cL'$ be the canonical surjection. By the construction of $X$ as the blow-up along the intersections of the $\F_q$-rational hyperplanes, for each pair of nonzero elements $(a, b)$ of $V$, we have either $p(a) \mid p(b)$ or $p(b) \mid p(a)$ locally on $X$. Hence locally on $X$, there are nonzero elements $a,b$ of $V$ such that $p(b)\mid p(v)\mid p(a)$ for all nonzero elements $v$ of $V$. Define a line bundle $\cL$ on $X$ locally by $\cL = p(a)p(b)^{-1}\cL' = \cO_X\cdot p(a)$.
We then define $\phi$ on $z \in \cL$ by 
$$
	\phi(T)(z)= Tz \prod_{v \in V \setminus \{0\}} (1-p(v)^{-1}z).
$$ 
Here, we have $p(v)^{-1}\cL=p(v)^{-1}\cO_X\cdot p(a)\subset \cO_X$. The level structure $\iota$ is then induced by $p$. For an object $(S,U)$ of $\cC_{\nl}$, a morphism $(S,U)\to (X, \cM^3_T)$ of $\cC_{\nl}$ then gives rise to an element of $\overline{\fM}^3_T(S,U)$ by pullback. 

As these maps are the inverse to each other by construction, we conclude that $(X,\cM^3_T)$ represents $\overline{\cM}^3_T$ on $\cC_{\nl}$,
as required.
\end{sbpara}

\begin{sbpara} For $a\in \mathbb{P}^2(\F_q)$, let $D(a)\subset \overline{\cM}^3_{T, \Sig_1}$ be  the inverse image of 
$$
	a \otimes A[\tfrac{1}{T}]\subset \mathbb{P}^2_{A[\frac{1}{T}]}.
$$ 
For a line $\ell$ in $\mathbb{P}^2(\F_q)$, let $D(\ell)$ be the proper transform in $\overline{\cM}^3_{T,\Sig_1}$ of $\ell\otimes A[\frac{1}{T}]\subset \mathbb{P}^2_{A[\frac{1}{T}]}$.

The complement $D$ of $\cM^3_T$ in $\overline{\cM}^3_{T, \Sig_1}$ is the union of all $D(a)$ and $D(\ell)$. 
The scheme $\overline{\cM}^3_{T,\Sig_1}$ is smooth over $A[\frac{1}{T}]$, and $D$ is a relative normal crossing divisor over $A[\frac{1}{T}]$. 

\end{sbpara}

\begin{sbpara} Recall that $\Sig_1$ is the set of all faces of $\sig :=  \{(s_1,s_2)\in \R^2_{\geq 0} \mid s_1\leq s_2\}$. We have the following statements.
\begin{enumerate}
\item[(1)]  The open set $\overline{\cM}^3_{T,+, \sig}$ of $\overline{\cM}^3_{T,\Sig_1}$ coincides with the complement of the union of $D(a)$ and $D(\ell)$, where $a$ ranges over all points of $\mathbb{P}^2(\F_q)$ which are not $(0:0:1)$, and $\ell$ ranges over all lines in $\mathbb{P}^2(\F_q)$ which are not the line $u_0=0$. 
\item[(2)] We can identify $\overline{\cM}^3_{T,+, \sig}$ with the open set of $\Spec(A[\frac{1}{T}][\frac{u_0}{u_1}, \frac{u_1}{u_2}])=\toric_{A[\frac{1}{T}]}(\sig)$ defined as the complement of the union of zeros of 
$$1+ a\frac{u_0}{u_1}, \quad 1+b \frac{u_0}{u_2}+ c\frac{u_1}{u_2}\quad (a,b,c\in \F_q).$$
The intersection of  $\overline{\cM}^3_{T,+,\sig}$ with  $D(u_0=0)$ is the part $\frac{u_0}{u_1}=0$, and the intersection with  $D(0:0:1)$  is the part $\frac{u_1}{u_2}=0$.
\item[(3)] Let $x\in\overline{\cM}^3_{T, \Sig_1}$. Then the fiber at $x$ of the universal generalized Drinfeld module of $\overline{\cM}^3_{T, \Sig_1}$ has rank $1$ if and only if $x$ belongs to $D(\ell)$ for some line $\ell$ in $\mathbb{P}^2(\F_q)$, rank $2$ if and only if $x$ does not belong to $D(\ell)$ for any $\ell$ and $x$ belongs to $D(a)$ for some $a\in \mathbb{P}^2(\F_q)$, and rank $3$ if and only if $x\in \cM^3_T$. 
\item[(4)] The formal scheme $\overline{\cM}^{1,2}_{T, +, \sig}$ is isomorphic to the $\frac{u_0}{u_1}$-adic formal completion 
 of $\overline{\cM}^3_{T,+,\sig}$. That is, $\overline{\cM}^{1,2}_{T, +, \sig}=\Spf(H)$, where 
 $$H=\varprojlim_n A[\tfrac{1}{T}]\left[\frac{u_0}{u_1}, \frac{u_1}{u_2}\right]\left[\left(1+c\frac{u_1}{u_2}\right)^{-1} \mid c\in \F_q\right]/\left(\frac{u_0}{u_1}\right)^n.$$
 \end{enumerate}

The proof of (1) is straightforward by the modular interpretation of $(\overline{\cM}^3_{T,\Sig_1}, \cM^3_T)$. Part (2) follows from part (1). Part (3) follows from part (2) because the fiber has rank $1$ if and only if $\frac{u_0}{u_1}=0$, rank $2$ if and only if $\frac{u_0}{u_1}\neq 0$ and $\frac{u_1}{u_2}=0$, and rank $3$ if and only if  $\frac{u_0}{u_2}\neq 0$. Part (4) follows from parts (2) and (3).

\end{sbpara}

\begin{sbpara}\label{exrank} The Satake compactification $\overline{\cM}^3_{T,\Sa}$ is the quotient of $\overline{\cM}^3_{T, \Sig_1}$ obtained by contracting $D(\ell)$ for all lines $\ell$ in $\mathbb{P}^2(\F_q)$: For the morphism $\overline{\cM}^3_{T, \Sig_1}\to \overline{\cM}^3_{T, \Sa}$ of \ref{logSa}, the image of $D(\ell)$ is isomorphic to $\Spec(A[\frac{1}{T}])$ for each $\ell$, and if we denote the image of $\bigcup_{\ell} D(\ell)$ by $Z$, then 
$$
	\overline{\cM}^3_{T, \Sig_1}\setminus \bigcup_{\ell} D(\ell)\to \overline{\cM}^3_{T,\Sa}\setminus Z
$$ 
is an isomorphism. 

Concerning the universal generalized Drinfeld module over $\overline{\cM}^3_{T, \Sa}$, the rank of the fiber at $s\in \overline{\cM}^3_{T, \Sa}$ is $3$ if $s\in \cM^3_T$,  is $1$ if $s\in Z$, and is $2$ otherwise.

\end{sbpara}

\begin{sbpara}\label{STF} For the remainder of this subsection, let us assume that $q = 2$. We describe some open sets of the Satake compactification $X := \overline{\cM}^3_{T, \Sa}$ and its relation with $\overline{\cM}^3_{T, \Sig_1}$ explicitly.

Take the affine open set  
$$
	B = \{x\in X\mid u_0f^{-1} \;\in \cO_{X.x} \textsp{for all} f\in V\setminus \{0\}\}
$$ 
of $X$, 
denote $u_0u_1^{-1}, u_0u_2^{-1}, u_0(u_1+u_2)^{-1}\in \cO_B(B)$ by $t_1, t_2, t_3$, respectively, and let 
$S$ be the open set $\{x\in B\mid 1+t_i \; \text{is invertible at}\; x\; \text{for}\; 1\leq i\leq 3\}$ of $B$. We have 
$$
	S= \Spec(A[\tfrac{1}{T}][t_1, t_2, t_3, (1+t_1)^{-1}, (1+t_2)^{-1}, (1+t_3)^{-1}]/(t_1t_2+t_2t_3+t_3t_1)).
$$
Then $\cM^3_T$ is identified with the open set of $S$ consisting of points at which $t_1t_2t_3$ is invertible. The universal generalized Drinfeld module $((\cL, \phi), \iota)$ over $S$ has
$\cL=\cO$,  
$$\phi(T)(z)= Tz\prod_{f\in V\setminus \{0\}}\; (1-u_0f^{-1}z)
= Tz(1-z) \prod_{i=1}^3 ((1-t_iz)(1-t_i(1+t_i)^{-1}z))$$
for $z \in \cO$, and
$\iota(e_0)=1$ and $\iota(e_i)=t_i^{-1}$ for $i=1,2$. The part $t_1=t_2=0$ of $S$ coincides with the set of all points of $S$ at which the fiber of the universal generalized Drinfeld module has rank $1$. 

The morphism $\overline{\cM}^3_{T, \Sig_1}\to \overline{\cM}^3_{T, \Sa}$ induces $\overline{\cM}^3_{T,+,\sig}\to S$ which comes from the ring homomorphism 
$$\frac{A[\tfrac{1}{T}][t_1, t_2, t_3]}{(t_1t_2+t_2t_3+t_3t_1)} \to A[\tfrac{1}{T}]\left[\frac{u_0}{u_1}, \frac{u_1}{u_2}, \left(1+ \frac{u_1}{u_2}\right)^{-1}\right].$$

\end{sbpara}

\begin{sbpara}\label{Sdiv} Let $s\in S$ of \ref{STF} be a point of the Satake compactification  such that $t_i(s)=0$ for $i=1,2,3$. Then by the explicit presentation of the affine ring of $S$ given in \ref{STF}, we have neither $t_1\mid t_2$ nor $t_2\mid t_1$ at  $s$. Hence the universal generalized Drinfeld module over $(S, \cM^3_T)$ does not satisfy the condition (div) in \ref{strong}. 

\end{sbpara}
In the following \ref{Gal1} and \ref{Gal2}, we 
consider the local monodromy of the universal generalized Drinfeld module at a point of Satake compactification and at a point of a toroidal compactification, respectively.

\begin{sbpara}\label{Gal1} Let $s$ be a point of $S$ such that $t_i(s)=0$ for $i=1,2,3$. Let 
$R$ be the completion of the strict henselization of the local ring $\cO_{S,s}$, and let $Q$ be the field of fractions of $R$.
Let
$v$ be  the maximal ideal $(T)$ of $A$. 
We consider the action of $\Gal(Q^{\sep}/Q)$ on the three dimensional $F_v$-vector space $V_v\phi=F_v\otimes_{A_v} T_v\phi$ where $T_v\phi$ is the $v$-adic Tate module of the universal generalized Drinfeld module $\phi$ over $S$ viewed as a Drinfeld module over $Q$.
Let $\psi$ be the Drinfeld module over $R$ of rank $1$ associated to $\phi$ as in \ref{prop51}. We can regard $T_v\psi\subset T_v\phi$. 

\medskip

{\bf Claim}. Consider the action of $\Gal(Q^{\sep}/Q)$ on $V_v\phi$ by taking a basis $(e_i)_{0\leq i\leq 2}$ such that $e_0$ is a basis of $V_v\psi$. Then the image of $\Gal(Q^{\sep}/Q)$ in $\GL_3(F_v)$ is an open subgroup of
  $$\left\{ \Pmatrix{1&e&f\\0&a&b\\ 0&c& d} \mid \Pmatrix{ a&b\\ c&d } \in \SL(2, F_v), e, f\in F_v\right\}.$$

\begin{proof}[Proof of Claim.] Let $\frak p_1$ be the prime ideal of $R$ generated by $t_2, t_3$, and let $\frak p_2$ be the prime ideal of $R$ generated by $t_1, t_3$ (so $t_i\notin \frak p_i$ for $i=1,2$). 
For $i=1,2$, let $\cV_i$ be the completion of  the local ring of $R$ at $\frak p_i$ (so $\cV_i$ is a complete discrete valuation ring), let $K_i$ be the field of fractions of the strict henselization of $\cV_i$, and fix embeddings $Q^{\sep}\to K_i^{\sep}$ over $Q$. Let $\psi_i$ be the Drinfeld module over $\cV_i$ of rank $2$ associated to $\phi$ as in \ref{prop51}. We can regard $T_v\psi\subset T_v\psi_i \subset T_v \phi$. 
Take an $A_v$-basis $(e_i)_{0\leq i\leq 2}$ of $T_v\phi$ such that $e_0$ is a basis of $T_v\psi$, the image of $e_i$ in the $T$-torsion  $\phi[T]$ is $1$, $t_1^{-1}$, $t_2^{-1}$ for $i=0,1,2$, respectively, and $e_0$ and $e_i$ form an $A_v$-basis of $T_v\psi_i$.
For $\sig\in \Gal(K_i^{\sep}/K_i)$, we have that $\sig-1$ kills $T_v\psi_i$, and by the proof of \ref{thmlm}, the elements $(\sig-1)A_ve_{3-i}$ for $\sig \in \Gal(K_i^{\sep}/K_i)$ generate an $A_v$-submodule of $T_v\psi_i$ of finite index. The action of $\Gal(K_i^{\sep}/K_i)$ on $T_v\phi$ factors through $\Gal(Q^{\sep}/Q)$. The action of $\Gal(Q^{\sep}/Q)$ on $V_v\phi$ is of determinant $1$ by the theory of determinants of Drinfeld modules (see \cite{An}, \cite[2.6.3]{Go2}). These prove the claim. Here we use the following fact:
\begin{itemize}
	\item If $U$ is an open subgroup of $\smatrix{ 1 & F_v\\ 0& 1 }$ and $V$ is an open subgroup of $\smatrix{ 1 & 0\\F_v& 1}$, then $U$ and $V$ 
	generate an open subgroup of $\SL_2(F_v)$. 
\end{itemize}
\end{proof}

\end{sbpara}

\begin{sbpara}\label{Gal2}  Let $x\in \cM^3_{T, +, \sig}$ (with $q = 2$) be a point at which $\frac{u_0}{u_1}(x)=\frac{u_1}{u_2}(x)=0$.  
Let 
$R$ be the completion of the strict henselization of the local ring at $x$, and let $Q$ be the field of fractions of $R$. Let $((\cL, \phi), \iota)$ be the universal log Drinfeld module over $\overline{\cM}^3_{T,\Sig_1}$ of rank $3$ with level $T$-structure, and consider the corresponding element of  $\overline{\fM}^{1,2}_{T, +, \sig}(\Spf(R))$. Let $\phi_i$ for $i\in\{0,1,2\}$ with $\phi_2=\phi$ and $\phi_0=\psi$ be as in \ref{itTate}. Let $v$ be as in \ref{Gal1}. We can then view $T_v\phi_1$ as satisfying $T_v\psi \subset T_v\phi_1 \subset T_v\phi$.

\medskip

{\bf Claim.} Consider the action of $\Gal(Q^{\sep}/Q)$ on the $3$-dimensional $F_v$-vector space $V_v\phi$ by using a basis $(e_i)_{0\leq i\leq 2}$ such that $e_0$ is a basis of $T_v\psi$ and $(e_0, e_1)$ is a  basis of $T_v\phi_1$. Then the image of $\Gal(Q^{\sep}/Q)$ in $\GL_3(F_v)$ is an open subgroup of $$\left\{\Pmatrix{ 1&a&b\\0&1&c\\ 0&0& 1 } \mid a,b,c \in F_v\right\}.$$

\begin{proof}[Proof of Claim] 
 Let $\frak p_1$ be the prime ideal of $R$ generated by $t_2t_1^{-1}$, and let $\frak p_2$ be the prime ideal of $R$ generated by $t_1$ and $t_2t_1^{-1}$. 
 Let $\cV_1$ be the valuation ring consisting of all elements of the local ring $R_{\frak p_1}$ whose residue classes are in the image of the local ring $R_{\frak p_2}$. Let $K_1$ be the field of fractions of the strict henselization of $\cV_1$ and embed $Q^{\sep}$ into $K_1^{\sep}$ over $Q$. We show that the  image of  $\Gal(K_1^{\sep}/K_1)\subset \Gal(Q^{\sep}/Q)$ in $\GL_3(F_v)$ already has the property in the claim above. 
  Let $\frak p_1'$ be the prime ideal of $\cV_1$ generated by $\frak p_1$, and let $\cV_2$ be the completion of the local ring of $\cV_1$ at $\frak p_2'$. Then $\cV_2$ is a complete discrete valuation ring. Let $K_2$ be the field of  fractions of the strict henselization of $\cV_2$ and embed $K_1^{\sep}$ into $K^{\sep}_2$ over $K_1$. Then over $\cV_2$, we may identify $\phi_1$ with the Drinfeld module of rank $2$ associated to $\phi$ (see \ref{prop51}).
 The action of $\Gal(Q^{\sep}/Q)$ preserves  $V_v\psi$ and $V_v(\phi_1)$ 
and is trivial on $V_v\psi$, on $V_v\phi_1/V_v\psi$, and on $V_v\phi/V_v\phi_1$. From the proof of \ref{thmlm}, we see that $(\sig-1)A_ve_3$ for $\sig\in \Gal(K_1^{\sep}/K_1)$ generates an $A_v$-submodule  of $T_v\phi_1$ of finite index,  the images of  $(\sig-1)T_v\phi$ in $ T_v(\phi_1)/T_v\psi$ for $\sig\in \Gal(K_2^{\sep}/K_2)$ 
generate an $A_v$-submodule of $T_v\phi_1/T_v\psi$  of finite index, and the $(\sig-1) T_v\phi_1$ for $\sig\in \Gal(K_2^{\sep}/K_2)$ generate an $A_v$-submodule  of $T_v\psi$ of finite index.
\end{proof}

\end{sbpara}

\end{document}